\providecommand{\keywords}[1]
{
  \small	
  {\textit{Keywords:}} #1
}
\providecommand{\subjclass}[1]
{
{ \textit{2020 MSC:}} #1
}
\newcommand{\email}[1]{\href{mailto:#1}{\tt #1}}
\title{Exponential
Stability of Large BV Solutions\\ in a Model of Granular flow}
\author{
Fabio Ancona 
\thanks{Dipartimento di Matematica ``Tullio Levi-Civita'', Universit\`a di Padova, Via Trieste 63, 35121 Padova, Italy. \email{ancona@math.unipd.it}}
\and
Laura Caravenna 
\thanks{Dipartimento di Matematica ``Tullio Levi-Civita'', Universit\`a di Padova, Via Trieste 63, 35121 Padova, Italy. \email{laura.caravenna@unipd.it}}
\and
Cleopatra Christoforou
\thanks{Department of Mathematics and Statistics,
 Univer. of Cyprus, 1678 Nicosia, Cyprus. \email{christoforou.cleopatra@ucy.ac.cy}}
}
\begin{document}
\maketitle

\baselineskip=18pt

\renewcommand{\footnote}{\endnote}

\newcounter{stepnb}
\newcounter{substepnb}
\newcommand{\firststep}{\setcounter{stepnb}{0}}
\newcommand{\firstsubstep}{\setcounter{substepnb}{0}}
\newcommand{\step}[1]{{{\sc \addtocounter{stepnb}{1}\noindent $\circleddash$ Step \arabic{stepnb}:} #1.}} 
\newcommand{\substep}[1]{\vskip.3\baselineskip{\bf \addtocounter{substepnb}{1} \arabic{stepnb}.\arabic{substepnb}: #1.} }

\newcommand{\sgn}{\mathrm{sgn}}
\newcommand{\wkarr}{\; \rightharpoonup \;}
\def\Weak{\,\,\relbar\joinrel\rightharpoonup\,\,}
\def\del{\partial}
\font\msym=msbm10
\def\Real{{\mathop{\hbox{\msym \char '122}}}}
\def\R{\Real}
\def\cS{\mathcal{S}}
\def\A{\mathbb A}
\def\Z{\mathbb Z}
\def\K{\mathbb K}
\def\J{\mathbb J}
\def\L{\mathbb L}
\def\D{\mathbb D}
\def\M{\mathbb M}
\def\Integers{{\mathop{\hbox{\msym \char '132}}}}
\def\Complex{{\mathop{\hbox{\msym\char'103}}}}
\def\C{\Complex}

\newcommand{\ptpS}[4]{ \frac{ \partial\mathfrak {p} \mathbf {S}_{1} }{\partial #1} \left(#3;#4\right)} 
\newcommand{\pthS}[4]{ \frac{ \partial\mathfrak {h} \mathbf {S}_{2} }{\partial #1} \left(#3;#4\right)} 
\newcommand{\ptlls}[2]{ \frac{\partial^{}#2}{\partial #1} }
\newcommand{\ptll}[1]{ \frac{\partial^{}}{\partial #1} }
\newcommand{\ptl}[2]{ \frac{\partial^{#2}}{\partial #1^{#2}} }
\newcommand{\ptltdu}[2]{ \frac{\partial^{3}}{\partial {(#1)^{2}}\partial{#2}} }
\newcommand{\ptld}[2]{ \frac{\partial^{2}}{\partial {#1}\partial{#2}} }
\newcommand{\ddn}[2]{ \frac{d^{#2}}{d #1^{#2}} }

\def\Fdot{\dot F}

\def\bU{\bar U}
\def\bF{\bar F}
\def\bFdot{ \dot {\bar F}}
\def\bQ{\bar Q}
\def\bZ{\bar Z}
\def\bSigma{\bar \Sigma}

\def\sC{{\gamma}}
\def\sR{{\rho}}

\def\bu{\bar u}
\def\bv{\bar v}
\def\btheta{\bar \theta}
\def\bsigma{\bar \sigma}
\def\bareta{\bar \eta}
\def\bkappa{\bar \kappa}
\def\bmu{\bar \mu}
\def\br{\bar r}
\def\barf{\bar f}

\newcommand{\Bild}[2]{\begin{center}\begin{tabular}{l}
\setlength{\epsfxsize}{#1}
\epsfbox{#2}
\end{tabular}\end{center}}
\newcommand{\lessim}{\stackrel{<}{\sim}}
\newcommand{\gesim}{\stackrel{>}{\sim}}
\newcommand{\ignore}[1]{}
\newcommand{\oh}{{\textstyle\frac{1}{2}}}
\newcommand{\red}[1]{{\color{purple}#1}}
\newcommand{\blue}[1]{{{{\color{blue}#1\frown}}}}

\newtheorem{lemma}{Lemma}
\newtheorem{theorem}[lemma]{Theorem}
\newtheorem{maintheorem}[lemma]{Main Theorem}
\newtheorem{corollary}[lemma]{Corollary}
\newtheorem{proposition}[lemma]{Proposition}
\newtheorem{definition}[lemma]{Definition}
\newtheorem{remark}[lemma]{Remark}
\newtheorem{remarks}[lemma]{Remarks}
\newtheorem{Notation}[lemma]{Notation}

\newcommand{\be}[1]{\begin{equation}\label{#1}}
\newcommand{\ee}{\end{equation}}
\newcommand{\bes}{\begin{equation*}}
\newcommand{\ees}{\end{equation*}}
\def\ba#1\ea{\begin{align}#1\end{align}}
\def\bas#1\eas{\begin{align*}#1\end{align*}}

\newcommand{\Suno}[2]{{\mathbf S_{1}\!\left(#1;#2\right)}}
\newcommand{\Sdue}[2]{{\mathbf S_{2}\!\left(#1;#2\right)}}
\newcommand{\SC}[3]{{\mathbf {S}_{#1}\!\left(#2;#3\right)}}
\newcommand{\hSC}[3]{{\mathfrak {h}\mathbf {S}_2\!\left(#2;#3\right)}}
\newcommand{\pSC}[3]{{\mathfrak {p}\mathbf {S}_1\!\left(#2;#3\right)}}
\newcommand{\dSdue}[2]{{\mathbf {\dot S}_{2}\!\left(#1;#2\right)}}
\newcommand{\dSuno}[2]{{\mathbf {\dot S}_{1}\!\left(#1;#2\right)}}
\newcommand{\dotSuno}[2]{{\dot \mathbf S_{1}\!\left(#1;#2\right)}}
\newcommand{\dotSdue}[2]{{\dot \mathbf S_{2}\!\left(#1;#2\right)}}

\newcommand{\PHI}[1]{{\left\lvert#1-1\right\rvert}}

\newcommand{\ca}{{\cal A}}
\newcommand{\cd}{{\cal D}}
\newcommand{\ce}{{\cal E}}
\newcommand{\cj}{{\cal J}}
\newcommand{\co}{{\cal O}}
\newcommand{\cs}{{\cal S}}
\newcommand{\cp}{{\cal P}}
\newcommand{\cc}{{\cal C}}
\newcommand{\cw}{{\cal W}}
\newcommand{\ccp}{{\cal P}}
\newcommand{\cb}{{\cal B}}
\newcommand{\cl}{{\cal L}}
\newcommand{\cg}{{\cal G}}
\newcommand{\cq}{{\mathcal Q}}
\newcommand{\cM}{{\cal M}}
\newcommand{\cK}{{\cal K}}
\newcommand{\er}{{z}}

\newcommand{\dist}{{\mbox{dist}}}
\newcommand{\m}{\mbox{\boldmath $ \mu$}}
\newcommand{\ebb}{\mbox{\boldmath $\epsilon$}}

\newcommand{\oeps}{\overline{\varepsilon}}
\newcommand{\kr}{\hbox{Ker}}
\newcommand{\qinfo}{\stackrel{\circ}{Q}_{\infty}}
\newcommand{\mat}{\hbox{Mat}}
\newcommand{\cof}{\hbox{cof}\,}
\renewcommand{\det}{\hbox{det}\,}
\renewcommand{\del}{\partial}
\newcommand{\delt}{\partial_t}
\newcommand{\dela}{\partial_{\alpha}}
\newcommand{\eps}{\varepsilon}
\newcommand{\barQT}{{\overline Q}_T}
\newcommand{\esssup}{\operatornamewithlimits{esssup}}
\newcommand{\essinf}{\operatornamewithlimits{essinf}}

\font\msym=msbm10
\def\Real{{\mathop{\hbox{\msym \char '122}}}}
\def\Integers{{\mathop{\hbox{\msym \char '132}}}}
\font\smallmsym=msbm7
\def\smr{{\mathop{\hbox{\smallmsym \char '122}}}}
\def\torus{{{\text{\rm T}} \kern-.42em {\text{\rm T}}}}
\def\T3{\torus^3}
\def\div{\hbox{div}\,}
%
%

\date{}
\numberwithin{equation}{subsection}
\numberwithin{lemma}{section}

%

%
%
\begin{abstract}
We consider a $2\times 2$ system of hyperbolic balance laws, in one-space dimension,
that describes the evolution of a granular material 
with slow erosion and deposition. The dynamics is expressed in terms of the thickness of a moving layer on top and of a standing layer at the bottom. 
The system is linearly degenerate along two straight lines in the phase plane and genuinely nonlinear in the subdomains
confined by such lines. In particular, the characteristic speed of the first characteristic family is strictly increasing 
in the region above the line of linear degeneracy and strictly decreasing in the region below such a line.
The non dissipative source term is the product of two quantities that are transported with the two different characteristic speeds.

The global existence of entropy weak solutions of the Cauchy problem for such a system was established by Amadori and Shen~\cite{AS}
for initial data with bounded but possibly large total variation, under the assumption that the initial height
of the moving layer be sufficiently small.

In this paper we establish the Lipschitz ${\bf L^1}$-continuous dependence of the solutions on the initial data
with a Lipschitz constant that grows exponentially in time.
The proof of the ${\bf L^1}$-stability of solutions is based on the construction of a Lyapunov like functional
equivalent to the ${\bf L^1}$-distance, in the same spirit of the functional introduced by Liu and Yang~\cite{LY} and then developed by Bressan, Liu, Yang~\cite{bly} for systems of conservation laws with genuinely nonlinear or linearly degenerate characteristic fields.
\end{abstract}

	\keywords{Balance laws; granular flow; stability;  large $BV$; weakly linearly degenerated system.}
	
	\subjclass{35L65; 76T25; 35L45.}
	
%
%
%
\tableofcontents 

%
%
\section{Introduction}
\label{intro}
We consider a model for the flow of granular material
proposed by Hadeler and Kuttler~\cite{HK} 
where the evolution of a moving layer on top and of a resting layer at the bottom
is described by the two balance laws:
\be{E:S1system-d}
\begin{aligned}
h_t&=\div(h\nabla \mathfrak{s})-(1-|\nabla \mathfrak{s}|)h,\\
\mathfrak{s}_t&=(1-|\nabla \mathfrak{s}|)h.
\end{aligned}
\ee
Here, the unknown $h=h(x,t)\in\R$ and $\mathfrak{s}(x,t)\in\R$ represent, respectively, the thickness of the rolling layer and the height of the standing layer, while $t\geq 0$ and $x\in \R^n$ are the time and space
variables.
 The evolution equations~\eqref{E:S1system-d} show that the moving layer slides downhill with speed proportional to the slope of the standing layer in the direction of steepest
descent. The model is written in normalised form, assuming that the critical slope is $|\nabla \mathfrak{s}|=1$. This means that, if $|\nabla \mathfrak{s}|>1$, then grains initially at rest are hit by rolling matter of the moving layer
and hence they start moving as well. As a consequence the moving layer gets thicker. On the other hand,
if $|\nabla \mathfrak{s}|<1$, then rolling grains can be deposited on the standing bed. Hence the moving
layer becomes thinner. Typical examples of granular material whose dynamics is described by such models are dry sand and gravel 
in dunes and heaps, or snow in avalanches. 

In the one-space dimensional setting, assuming that the thickness of the
moving layer and the slope of the resting layer remain non-negative, 
if we differentiate the second equation of~\eqref{E:S1system-d} with respect to $x\in\R$ and
set $p\doteq \mathfrak{s}_x$, we obtain the system of balance laws
%
\be{S1system}
\begin{aligned}
&h_t-(hp)_x=(p-1)h,\\
&p_t+((p-1)h)_x=0,
\end{aligned}
\ee
with $h\ge 0$ and $p\ge 0$. The purpose of the present paper is to study the 
well-posedness of the 
Cauchy problem for~\eqref{S1system}.

Observing that the Jacobian matrix of the flux
function $F(h, p)= \big(hp, (p-1)h\big)$ associated to~\eqref{S1system} is 
\bes
A(h,p)=\left[\begin{array}{cc}
-p & -h\\
p-1 & h\end{array}\right],
\ees
by a direct computation one finds that the system~\eqref{S1system} is strictly hyperbolic 
on the domain 
\begin{equation}
\label{Om-def}
\Omega\doteq\big\{(h,p) : h\geq 0,\ p>0\big\}
\end{equation}
and weakly linearly degenerate at the point $(h,p)=(0,1)$. Namely, 
letting $\lambda_1(h,p)<\lambda_2(h,p)$ denote the eigenvalues of $A(h,p)$
when $p>0$, 
one can verify that the first characteristic family is genuinely nonlinear on each
domain $\{(h,p)\, | \, h\geq 0,\, p>1\}$, $\{(h,p)\, | \, h\geq 0,\, 0<p<1\}$, and linearly degenerate on
the semiline $\{(h,1)\, | \, h\geq 0 \}$ since, along the rarefaction curves of the first family, 
$\lambda_1$ is strictly increasing for $p>1$, strictly decreasing for $p<1$
and constant for $p=1$. Moreover, each region $\{(h,p)\, | \, h\geq 0,\, p>1\}$, $\{(h,p)\, | \, h\geq 0,\, 0<p<1\}$
is an invariant domain for solutions of the Riemann problem.
Instead, the second characteristic family is genuinely nonlinear 
for $h>0$ and linearly degenerate along $h=0$. We recall that hyperbolic systems of balance laws
generally do not admit smooth solutions and, therefore, 
weak solutions in the sense of distributions are considered.
Moreover, for the sake of uniqueness, an entropy criterion for
admissibility is usually added.
In~\cite{tplrpnpn} T.P. Liu proposed an 
admissibility criterion 
valid for general 
systems of conservation laws with non genuinely nonlinear
characteristic fields. 
For system~\eqref{S1system}, since the characteristic families enjoy the above 
properties, this criterion 
is equivalent to the classical {\it Lax stability condition}:
\begin{description}
\item[]\hspace{19pt}
 A shock connecting the left state $(h_\ell,p_\ell)$ and the right state $(h_r,p_r)$,
 traveling with speed \ $s$ \ is an admissible discontinuity
 of the $k$-th family if
 \begin{equation}
 \label{Lax-adm}
 \lambda_k\big((h_\ell,p_\ell)\big)\geq s\geq\lambda_k\big((h_r,p_r)\big).
 \end{equation}
\end{description}
Thus, throughout the paper,
with an {\it entropy-admissible weak solution} of~\eqref{S1system}
we shall always mean a standard weak solution, admissible 
in the sense of Lax.

Global existence of classical smooth solutions to~\eqref{S1system} were established for a special class of initial data 
by Shen~\cite{S}. 
In the case of more general initial data with bounded but possible large total variation, the existence of entropy weak solutions
globally defined in time was proved by Amadori and Shen~\cite{AS}.
In the present paper we tackle the problem of stability of entropy weak solutions to~\eqref{S1system}
with respect to the ${\bf L}^1$--topology. 

For systems without source term and small BV data, the 
Lipschitz ${\bf L}^1$-continuous dependence of solutions 
 on the initial data, 
was first established by Bressan and collaborators in~\cite{BC,BCP}
 under the assumptions that all characteristic families
are genuinely nonlinear (GNL) or linearly degenerate (LD), relying on a (lengthy and technical) homotopy method.
This approach requires careful a-priori estimates on a suitable defined weighted norm of the generalized tangent vector
to the flow generated by the system of conservation laws.
These results were then extended with the same techniques in~\cite{AM} to a class of $2\times 2$ systems with non GNL characteristic fields
 that does not comprise the convective part of system~\eqref{S1system}. 
A much simpler, more transparent proof of the ${\bf L}^1$-stability of solutions for conservation laws
with GNL or LD characteristic fields 
was later achieved by a technique
introduced by Liu and Yang in~\cite{LY} and then developed in~\cite{bly}. 
The heart of the matter here is to construct a Lyapunov-like nonlinear functional, equivalent
to the ${\bf L}^1$-distance, which is decreasing in time along any pair of solutions. 

Extensions of ${\bf L}^1$-stability results to the setting of large BV data was obtained
 for systems of conservation laws with Temple type characteristic fields, adopting the homotopy
 approach in~\cite{BG,B1,B2}, and constructing a Lyapunov-like functional in~\cite{CC1}.
This latter approach was followed also in~\cite{L1,L2,LT} to prove ${\bf L}^1$-stability of
solutions for general systems with GNL or LD characteristic fields,
within a special class
of 
initial data with large total variation.

In the case of balance laws with GNL or LD characteristic families
and small BV data, the ${\bf L}^1$-stability of solutions was 
first obtained in~\cite{CP} 
for $2\times 2$ systems via the homotopy method and the a-priori bounds
on a weighted distance.
Next, 
this result was established for $N\times N$ systems producing
 a Lyapunov-like functional 
for balance
 laws with dissipative source~\cite{AG} and non-resonant source~\cite{AGG}.
 An extension of these results for systems of balance laws of Temple class
 with large BV data is given in~\cite{CC2}.
 
We remark that all of the above results, with the only exception of~\cite{AM, B1,CC2}, 
deal with 
 systems having GNL or LD characteristic fields. Unfortunately, 
 for systems as~\eqref{S1system} that do not fulfill these classical assumptions,
 the 
 derivation of a-priori bounds on generalized tangent vectors, already hindered by
 heavy technicalities in the classical Lax setting,
 is further hampered 
 by the occurrence of richer nonlinear wave phenomena exhibited by such equations.
 This is mainly due to the presence in the solutions of discontinuous waves of the first characteristic family,
 shocks or contact discontinuities, which may 
 turn into rarefaction waves (and vicecersa)
 after interactions with waves of the other family.

In order to establish the ${\bf L}^1$-stability of solutions
 for system~\eqref{S1system}, we have thus followed the second approach,
 introducing in the present paper a Lyapunov-like functional
 controlling the growth of the ${\bf L}^1$-distance between
 pairs of approximate solutions with large BV data.
 Namely, in the same spirit of~\cite{bly}, 
 we explicitly construct a functional $\Phi=\Phi(u,v)$ 
for piecewise constant functions $u,\,v\in {\bf L}^1(\R;\,\Omega)$,
such that:
\begin{enumerate}
\item[(i)]
it is equivalent to the $ {\bf L}^1$-distance. Namley, for every pair of piecewise constant functions
$u=(u_h, u_p), v=(v_h,v_p)\in {\bf L}^1(\R;\,\Omega)$, with bounded total variation, there holds
\begin{equation}
\frac{1}{C}\cdot
\big\|u-v\big\|_{\strut {\bf L}^1}\le
\Phi(u,v)\le C\cdot
\big\|u-v\big\|_{\strut {\bf L}^1}
\end{equation}
for some constant $C>0$ depending only on the system~\eqref{S1system},
on the total variation of $u,v$,
and on the ${\bf L}^\infty$ norm of $u_h, v_h$.
\item[(ii)]
it is exponentially increasing in time
along pairs of
approximate solutions of~\eqref{S1system} generated by a front-tracking algorithm
combined with an operator splitting scheme with time steps $t_k=k \Delta t$.
Namely, for every couple of such approximate solutions 
$u(x,t),\, v(x,t),$ the right limits 
of $u(\,\cdot,t), v(\,\cdot,t)$ at $t_h<t_k$
satisfy
%
\begin{equation}
\label{gammaest2}
\begin{aligned}
\Phi\big(u(\,\cdot,t_k+), v(\,\cdot,t_k+)\big)
&\leq
\Phi\big(u(\,\cdot,t_h+), v(\,\cdot,t_h+)\big)
\big(1+{\mathcal{O}}(1)\Delta t\big)^{(k-h)}+
\\
\noalign{\smallskip}
&\qquad + {\mathcal{O}}(1)\cdot \varepsilon\,\Delta t \sum_{i=1}^{k-h} \big(1+{\mathcal{O}}(1)\Delta t\big)^i
\qquad\quad\forall~0\leq h<k,
\end{aligned}
\end{equation}
where $\varepsilon$ denotes a small parameter that controls the errors in
the wave speeds and the maximum size of rarefaction fronts 
in $u$ and in $v$. 

In particular, $\Phi$ is ``almost decreasing'' in time if the only effect of the convective part of~\eqref{S1system}
is taken into account:
\begin{equation}
\label{gammaest1}
\Phi\big(u(\,\cdot, \tau_2), v(\,\cdot, \tau_2)\big)\leq
\Phi\big(u(\,\cdot, \tau_1), v(\,\cdot, \tau_1)\big)+
{\mathcal{O}}(1)\cdot \varepsilon
(\tau_2-\tau_1)\qquad\forall~t_k<\tau_1<\tau_2<t_{k+1}.
\end{equation}
Here, and throughout the paper, we use the Landau symbol $\co(1)$ to denote a 
quantity whose absolute value
satisfies a uniform bound that depends only on the system~\eqref{S1system}. In particular, this 
bound does not depend on the front tracking parameter $\varepsilon$, or on the two solutions $u,v$ considered.
\end{enumerate}

\noindent
The value of $\Phi$ is defined as follows. 
Given two piecewise constant functions $u,\,v\in {\bf L}^1(\R;\,\Omega)$,
for each $x\in\R$, connect $u(x)$ with $v(x)$
moving along the Hugoniot curves of the first and second families and 
let $\eta_i(x)$, $i=1,2$, denote the size of the corresponding $i$-shock in the jump $(u(x), v(x))$.
Then, define
 \begin{equation}
 \label{Phi-def-1}
 \Phi\big(u,v)\doteq \sum_{i=1}^2 \int_{-\infty}^\infty W_i(x)\big|\eta_i(x)\big|~dx \cdot \exp(\kappa_\cg\mathcal{B} )\,,
 \end{equation}
 where the weights $W_i$ have the following form:
 \begin{equation}
 \label{W-def-1}
 W_i(x)\doteq \exp(\kappa_{i\ca 1}\cdot\mathcal{A}_{i,1}(x)+ \kappa_{i\ca 2}\cdot\mathcal{A}_{i,2}(x))\,,
 \qquad i=1,2\,,
 \end{equation}
 with
  \begin{equation}
 \label{W-def-2}
 \allowdisplaybreaks
\begin{aligned}
\mathcal{A}_{1,1}(x)
&\doteq
\Big\{
\text{total}\ \Big[\big[\text{strength}\big]\cdot\big[\text{distance from}~1~\text{of the p-component of the left state}\big] \Big]
\\
\noalign{\vspace{-2 pt}}
&\hskip 0.6in
\text{of $1$-waves in $u$ and in $v$ which approach the $1$-wave $\eta_1(x)$}
\Big\}
\\
\noalign{\bigskip}
\mathcal{A}_{1,2}(x)
&\doteq \Big\{
\text{total}\ \big[\text{strength}\big] \
\text{of $2$-waves in $u$ and in $v$ which approach the $1$-wave $\eta_1(x)$}
\Big\}\,,
\\
\noalign{\bigskip}
\mathcal{A}_{2,j}(x)
&\doteq
\Big\{
\text{total}\ \big[\text{strength}\big] \
\text{of $j$-waves in $u$ and in $v$ which approach the $2$-wave $\eta_2(x)$}
\Big\}\,,\quad j=1,\,2
\\
\noalign{\bigskip}
\mathcal{B}
&\doteq
\Big\{
\text{total \big[\text{strength}\big] of $u$ and of $v$} 
\Big\} +
\Big\{\text{\big[\text{wave interaction potential}\big] of $u$ and of $v$} 
\Big\}\,.
\end{aligned}
 \end{equation}

Here, $\kappa_{i\ca j}$, $i,\,j=1,\,2$ and $\kappa_\cg$ denote suitable positive constants depending on the system~\eqref{S1system} that obey Conditions ${\bf \Sigma}$ given in the proof of Proposition~\ref{PropCond}.
A precise definition of $W_1, W_2$ is given in Subsection~\ref{Ss:Lyapfunct}.
 
Observe that the weights $W_i$ are defined similarly to the expression of the weight given in~\cite{bly} for GNL and LD characteristic fields in the sense that here we have the exponential version of them. However, notice that the main novelty of our functional is encoded in the weight $W_1$
 and in particular in $\mathcal{A}_{1,1}$, whereas
$\mathcal{A}_{1,2}$, $\mathcal{A}_{2,1}$, $\mathcal{A}_{2,2}$ have almost the same expression given in~\cite{bly}.
In addition, another difference between the above definition of $\Phi$ and the one given in~\cite{bly} is the presence  of the whole Glimm functional
 of $u$ and $v$ in $\mathcal{B}$, instead of their interaction potential alone that appears as well on the exponent of e.  This is due to the fact that, since the first characteristic family is not GNL, we adopt as in~\cite{AS}
 a definition of wave interaction potential, suited to~\eqref{S1system},
 that is in general not decreasing in presence of interactions
 of 1-waves of different sign
 (1-shocks with 1-rarefaction waves). 
 Therefore, one needs to exploit the decrease of the total strength of waves due
 to cancellation in order to control the possible increase of the potential interaction
 occurring at such interactions. 

 The key ingredient in the definition of $\mathcal{A}_{1,1}$ is the appropriate formulation of {\it approaching wave}
 of the first family 
 for a given wave $\eta_1(x)$ in the jump $(u(x), v(x))$, which extends to our case the 
 definition given in~\cite{bly} for GNL characteristic fields. Observe that, 
 letting $\gamma\mapsto\mathbf {S}_1(\gamma;\, h_0,p_0)$
 be the Rankine-Hugoniot curve
 of right states of the first family issuing from a given state $(h_0,p_0)\in\Omega$, and denoting
 $\lambda_1(\gamma;\, h_0,p_0)$ the Rankine-Hugoniot speed of the jump connecting 
 $(h_0,p_0)$ with $\mathbf {S}_1(\gamma;\, h_0,p_0)$, 
 by the properties of system~\eqref{S1system} it follows that $\gamma\mapsto \lambda_1(\gamma;\, h_0,p_0)$ is strictly increasing 
 on $\{p>1\}$, strictly decreasing 
 on $\{0<p<1\}$, and constant along $\{p=1\}$. 
 Therefore, if the size $\eta_1(x)$ is positive, we shall regard as approaching all the $1$-waves present in $v$ 
 which either have left state in the region $\{p>1\}$ and are located on the left of $\eta_1(x)$,
or have left state in the region $\{0<p<1\}$ and are located on the right of~$\eta_1(x)$.
On the contrary, we regard as approaching to $\eta_1(x)>0$ all the $1$-waves present in $u$ 
 which either have left state in the region $\{p>1\}$ and are located on the right of $\eta_1(x)$,
 or have left state in the region $\{0<p<1\}$ and are located on the left of $\eta_1(x)$.
 Similar definition is given in the case where $\eta_1(x)<0$.
 
Observe that, in the definition of the Lyapunov functional given in~\cite{bly}, the weights $W_i$
are expressed only in terms of the strength of the approaching waves. Instead here the terms of  $\mathcal{A}_{1,1}$ 
related to the approaching waves of the first family have the form of the product of the strength of the waves $|\sR_\alpha|$ times the distance from 
$\{p=1\}$ of 
the left state of the waves $|p_\alpha-1|$. The presence of the factor $|p_\alpha-1|$ is crucial to 
guarantee the decreasing property~\eqref{gammaest1} at times of interactions involving a $1$-wave, say of strength $|\sR_\alpha|$,
and a $2$-wave crossing 
 $\{p=1\}$ (i.e. connecting two states lying on opposite sides of $\{p=1\}$), say of strength $|\sR_\beta|$.
 In fact, in this case the possible increase of  $\mathcal{A}_{1,1}$  turns out to be of order $|p_\beta-1||\sR_\alpha|
 \approx |\sR_\alpha\sR_\beta|$, and thus it can be controlled by the decrease of $\mathcal{B}$
determined by the corresponding decrease of the interaction potential.
Unfortunately, because of the presence of these quadratic terms in the weight $W_1$, we are forced to establish sharp
fourth order interaction estimates in order to carry on the analysis of the variation of $\Phi(u(t,\cdot), v(t,\cdot))$.
This is achieved deriving accurate Taylor expansions of the Hugoniot and rarefaction curves of each famiy, 
which rely on the specific geometric features of system~\eqref{S1system}. Namely, the characteristic fields
of~\eqref{S1system} are ``almost Temple class'' (the rarefaction and Hugoniot curves through the same point are ``almost'' straight lines
and have ``almost'' third order tangency at their issuing point)
near $\{p=1\}$ for the first family and near $\{h=0\}$
for the second family.
 
The estimate~\eqref{gammaest1} implies the convergence of front-traking approximate
solutions of the homogeneous system 
\be{S1system-hom}
\begin{aligned}
&h_t-(hp)_x=0,\\
&p_t+((p-1)h)_x=0,
\end{aligned}
\ee
to a unique limit, depending Lipschitz continuously on the initial data in the ${\bf L}^1$-norm, that
defines a semigroup solution operator $\cs_t$, $t\geq 0$, on domains $\mathcal{D}$ 
of the form\\
%
\begin{equation}
\label{Domain-def1}
\begin{aligned}
\mathcal{D}(M_0 , \delta_0,\delta_p )=cl\big\{(h,p)\in{\bf L}^1 (\mathbb{R};\mathbb{R}^2):\ \, &h,p \ \ \text{are piecewise constant, }
\\
& 0\leq h(x)\leq \delta_0 ,
\ \ |p(x)-1|<\delta_p\ \ \text{for a.e.}~x, 
\\
& \text{and}\ \ 
\mathrm{TotVar}\{(h,p)\} \le M_0,
\quad \lVert{h}\rVert_{{\bf L}^{1}}+\lVert{p-1}\rVert_{{\bf L}^{1}}\leq M_0 \big\},
 \end{aligned}
\end{equation}
%
where $cl$ denotes the ${\bf L}^1$-closure, 
$\mathrm{TotVar}\{(h,p)\} \doteq \mathrm{TotVar}\{h\}+\mathrm{TotVar}\{p\}$,
and $M_0, 
\delta_0, \delta_p$ are positive constants.
For any given initial data $\overline u \doteq (\,\overline h, \overline p\,)\in \mathcal{D}(M_0 , 
\delta_0, \delta_p)$, 
the map $u(t,x)\doteq \cs_t\overline u (x)$ provides an entropy weak solution of the Cauchy problem for~\eqref{S1system-hom} with initial condition
\be{S1system-data}
h(x,0)=\overline h(x)\,,\qquad p(x,0)=\overline p(x)
\qquad\quad \text{for a.e.} \ \ x~\in\R\,.
\ee
Relying on the estimate~\eqref{gammaest2}, we then show that 
approximate solutions of~\eqref{S1system} generated by a front-tracking algorithm
combined with an operator splitting scheme, in turn, converge to 
a map
that
defines a Lipschitz continuous semigroup operator $\ccp_t$, $t\geq 0$, 
on domains as~\eqref{Domain-def1}, 
with a Lipschitz constant 
that grows exponentially in time.
The trajectories of $\ccp$ are entropy weak solution of the Cauchy problem~\eqref{S1system},~\eqref{S1system-data}. 
 
The uniqueness of the limit of approximate solutions to~\eqref{S1system} 
and of the semigroup operator $\ccp$, 
is achieved as in~\cite{AG}
deriving the key estimate 
\begin{equation}
\label{uniq-semigr-est-1}
\left\|\ccp_\theta \overline u-\cs_\theta \overline u - \theta\cdot \!
\Bigg(
\!\begin{aligned}
&(\overline p-1) \overline h
\\
&\quad\ \ 0
\end{aligned}
\,\Bigg)
\right\|_{{\bf L}^1}
= {\mathcal{O}}(1)\cdot\theta^2\qquad\text{as}\qquad \theta\to 0\,,
\end{equation}
relating the solutions operators of the homogeneous and nonhomogeneous systems,
and invoking a general uniqueness result
for quasidifferential equations in metric spaces~\cite{Bre2}. 

We point out that
the results established in the present paper 
provide the first construction of a Lipschitz continuous semigroup 
of entropy weak solutions for nonlinear hyperbolic systems via a Lyapuonv type functional 
for:
\begin{itemize}
\item[-] systems with characteristic families that are neither GNL nor LD (nor of Temple class)
\item[-] initial data with arbitrary large total variation.
\end{itemize}
It remains an open problem to analyze whether the Lipschitz constant of the solution operator $\ccp$
is actually uniformly bounded.

We conclude this section observing that in~\cite{AS} it was investigated the slow erosion/deposition limit
of~\eqref{S1system} as the height of the moving layer tends to zero. The limiting behaviour of the slope of the standing layer provides an entropy weak solution of a scalar integro-differential conservation law. 
A semigroup of solutions to such a nonlocal conservation law, depending Liptschitz continuously on the initial
data, was constructed in~\cite{cgw} as limit of generalized front tracking approximations, and in~\cite{BS}
by a flux splitting method alternating backward Euler approximations with a nonlinear projection operator.
Granular models different from the one derived in~\cite{HK} can be found in~\cite{BdG,Du,SH}.
An analysis of steady state solutions for~\eqref{S1system} was carried out in~\cite{CC,CCCG}.

The paper is organized as follows. In Section~\ref{S2}, we study the properties of system~\eqref{S1system}, describe the construction of the approximate solutions also employed in~\cite{AS}, define the wave size in the two coordinate systems, Eulerian or Lagrangian, and their relation, introduce the Lyapunov functionals and conclude with the statement of the theorems for the semigroups associated with both the homogeneous and the non-homogeneous systems.
Lets us note here that the stability functional $\Phi$ that is equivalent to the ${\bf L}^1$ norm between two solutions $u$ and $v$ is denoted by $\Phi_0$ from here and on depending on the type of estimates explored.
In Section~\ref{S:basicest}, we present the interaction estimates for the approximate solutions and the variation of the wave size at time steps. 
The main work of our analysis is in Section~\ref{S4old}, which is divided into four subsections and the analysis establishes Theorem~\ref{stability-Phy}. More precisely, after defining the functional $\Phi$ that is equivalent to the ${\bf L}^1$ norm between two solutions $u$ and $v$, we estimate the change of $\Phi$ in the following three regions: In \S~\ref{Ss:interactionTimes} at interaction times, in~\S~\ref{Ss:nointeractiontimesN} between interaction times and in \S~\ref{Ss:timestep} at time steps. Then in~\S~\ref{S5.4}, we generalize our analysis to treat the functional $\Phi_z$ and conclude the proof of Theorem~\ref{stability-Phy}. Last, in Section~\ref{S6}, we establish the uniqueness of the limit and obtain a Lipschitz continuous evolution operator for the non-homogeneous system, proving Theorem~\ref{exist-nonhom-smgr-1}. There are many technical steps employed throughout our analysis and for the convenience of the reader, these can be found in the Appendices ~\ref{S:shockReduction}-~\ref{App:C}. They involve the reduction to shock curves in the stability analysis, standard analysis on the wave curves, delicate interaction-type estimates for each characteristic family up to fourth order and a convenient auxiliary lemma.

%
%
%
%


%
%
\section{Preliminaries and 
main results}\label{S2}

Let $F(h, p)= \big(hp, (p-1)h\big)$ be the flux
function
associated to~\eqref{S1system}. Then, the Jacobian matrix
\bes
\label{jacob}
DF(h,p)\doteq A(h,p)=\left[\begin{array}{cc}
-p & -h\\
p-1 & h\end{array}\right]
\ees
has eigenvalues
\be{S2 evalues}
\lambda_1(h,p)=\frac{h-p-\sqrt{(p-h)^2+4h}}{2},\qquad\lambda_2(h,p)=\frac{h-p+\sqrt{(p-h)^2+4h}}{2}
\ee
with associated right eigenvectors
\be{S2 evectors}
{\bf r}_1(h,p)=\left(\begin{array}{c}
1 \\ \noalign{\smallskip}
-\dfrac{\lambda_1+1}{\lambda_1}\end{array}
\right),\qquad {\bf r}_2(h,p)=\left(\begin{array}{c}
 -\dfrac{\lambda_2}{\lambda_2+1}\\\noalign{\smallskip}1\end{array}
\right)\;.
\ee
Note that system~\eqref{S1system} is strictly hyperbolic in the domain 
$$\Omega=\{(h,p):\,h\ge 0,\,p>0\},$$
since, for every $0<p_0<1$, one has
\be{strict-hyp}
\lambda_1(h,p)\leq -\frac{p_0}{2},\qquad\quad \lambda_2(h,p)\geq 0\qquad\forall~h\geq 0,\, p\geq p_0\,.
\ee
Moreover, for $p=1$, one has
$$\lambda_1(h,1)=-1,\quad\lambda_2(h,1)=h,\quad{\bf r}_1(h,1)=\left(\begin{array}{c}
1\\0\end{array}
\right),\qquad {\bf r}_2(h,1)=\left(\begin{array}{c}
 -\dfrac{h}{h+1}\\\noalign{\smallskip}1\end{array}
\right),
$$
while, for $h=0$, there holds

\begin{equation}
\label{lambda10p}
\lambda_1(0,p)=-p,\quad\lambda_2(0,p)=0,\quad{\bf r}_1(0,p)=\left(\begin{array}{c}
1\\ \noalign{\smallskip} \dfrac{1-p}{p}\end{array}
\right),\qquad {\bf r}_2(0,p)=\left(\begin{array}{c}
0\\1\end{array}
\right).
\end{equation}
Moreover, by direct computations, we find that
$$D\lambda_1\, {\bf r}_1=-\frac{2(\lambda_1+1)}{\lambda_2-\lambda_1}\approx\frac{2(p-1)}{p},\quad
D\lambda_2\, {\bf r}_2=-\frac{2\lambda_2}{\lambda_2-\lambda_1}\approx-\frac{2h}{p^2}.
$$
Therefore, the first characteristic field is genuinely nonlinear on each domain $\{p<1\}$, $\{p>1\}$,
and linearly degenerate along the semiline $p=1$, while the quantity $D\lambda_1\, {\bf r}_1$ changes sign across the
semiline
$p=1$. On the other hand, the second characteristic field is genuinely nonlinear for $h\neq 0$ and linearly degenerate along $h=0$ (see Figure~\ref{S2:fig1}).

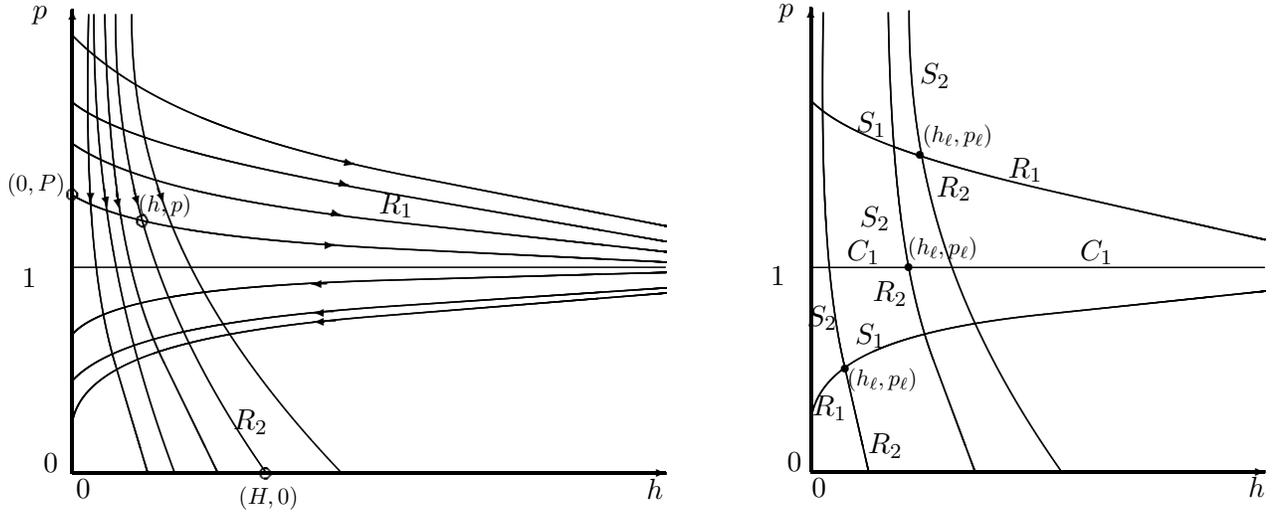
\begin{figure}[htbp]
{\centering \scalebox{1}{\ifx\JPicScale\undefined\def\JPicScale{1}\fi
\unitlength \JPicScale mm
\begin{picture}(160.85,67.07)(0,0)
\linethickness{0.4mm}
\put(100.49,5.33){\line(1,0){60.35}}
\put(160.84,5.33){\vector(1,0){0.12}}
\linethickness{0.4mm}
\put(100.49,5.33){\line(0,1){61.74}}
\put(100.49,67.07){\vector(0,1){0.12}}
\linethickness{0.2mm}
\put(100.5,32.5){\line(1,0){60.35}}
\put(159.75,3.28){\makebox(0,0)[cc]{$h$}}

\put(97.2,66.38){\makebox(0,0)[cc]{$p$}}

\put(129,45.5){\makebox(0,0)[cc]{$R_1$}}

\put(96.1,31.4){\makebox(0,0)[cc]{$1$}}

\put(98.3,6.71){\makebox(0,0)[cc]{$0$}}

\linethickness{0.2mm}
\qbezier(100.49,54.72)(103.56,51.52)(111.54,48.55)
\qbezier(111.54,48.55)(119.53,45.58)(133.68,42.38)
\qbezier(133.68,42.38)(147.86,39.15)(154.39,37.67)
\qbezier(154.39,37.67)(160.92,36.18)(160.84,36.2)
\linethickness{0.2mm}
\qbezier(100.49,12.19)(101.26,17.91)(108.72,21.29)
\qbezier(108.72,21.29)(116.18,24.68)(131.49,26.26)
\qbezier(131.49,26.26)(146.81,27.86)(153.87,28.61)
\qbezier(153.87,28.61)(160.93,29.35)(160.84,29.34)
\linethickness{0.2mm}
\qbezier(102.14,66.38)(101.84,49.6)(102.5,38.38)
\qbezier(102.5,38.38)(103.16,27.15)(104.88,19.74)
\qbezier(104.88,19.74)(106.6,12.22)(107.39,8.75)
\qbezier(107.39,8.75)(108.18,5.28)(108.17,5.33)
\linethickness{0.2mm}
\qbezier(110.78,66.38)(111.05,49.96)(112.5,38.82)
\qbezier(112.5,38.82)(113.96,27.68)(116.82,20.08)
\qbezier(116.82,20.08)(119.69,12.38)(121.01,8.83)
\qbezier(121.01,8.83)(122.33,5.28)(122.31,5.33)
\put(101.59,3.28){\makebox(0,0)[cc]{$0$}}

\put(120,50){\makebox(0,0)[cc]{\scalebox{0.8}{$(h_\ell,p_\ell)$}}}

\put(118,35){\makebox(0,0)[cc]{\scalebox{0.8}{$(h_\ell,p_\ell)$}}}

\put(107.35,33.11){\makebox(0,0)[cc]{}}

\put(110,17.5){\makebox(0,0)[cc]{\scalebox{0.8}{$(h_\ell,p_\ell)$}}}

\put(117,58){\makebox(0,0)[cc]{$S_2$}}

\put(119.5,43){\makebox(0,0)[cc]{$R_2$}}

\put(109,39){\makebox(0,0)[cc]{$S_2$}}

\put(111,29){\makebox(0,0)[cc]{$R_2$}}

\put(108.5,51.5){\makebox(0,0)[cc]{$S_1$}}

\put(138.5,34.5){\makebox(0,0)[cc]{$C_1$}}

\put(102,25.8){\makebox(0,0)[cc]{$S_2$}}

\put(110.4,8.8){\makebox(0,0)[cc]{$R_2$}}

\put(108.4,23.4){\makebox(0,0)[cc]{$S_1$}}

\put(103,13.8){\makebox(0,0)[cc]{$R_1$}}

\put(107.5,34.5){\makebox(0,0)[cc]{$C_1$}}

\put(103.51,21.8){\makebox(0,0)[cc]{}}

\linethickness{0.4mm}
\put(2.31,5.09){\line(1,0){78.9}}
\put(81.21,5.09){\vector(1,0){0.12}}
\linethickness{0.4mm}
\put(2.31,5.09){\line(0,1){61.72}}
\put(2.31,66.81){\vector(0,1){0.12}}
\linethickness{0.2mm}
\put(2.31,32.52){\line(1,0){78.9}}
\linethickness{0.2mm}
\qbezier(2.31,23.6)(5.19,26.82)(15.37,28.56)
\qbezier(15.37,28.56)(25.56,30.29)(44.63,30.81)
\qbezier(44.63,30.81)(63.72,31.35)(72.52,31.6)
\qbezier(72.52,31.6)(81.32,31.84)(81.21,31.84)
\linethickness{0.2mm}
\qbezier(2.31,17.43)(5.94,21.36)(16.3,23.76)
\qbezier(16.3,23.76)(26.66,26.15)(45.35,27.38)
\qbezier(45.35,27.38)(64.06,28.63)(72.69,29.21)
\qbezier(72.69,29.21)(81.32,29.79)(81.21,29.78)
\linethickness{0.2mm}
\qbezier(2.31,42.12)(7.07,39.27)(17.69,37.53)
\qbezier(17.69,37.53)(28.3,35.8)(46.42,34.92)
\qbezier(46.42,34.92)(64.58,34.03)(72.95,33.62)
\qbezier(72.95,33.62)(81.32,33.2)(81.21,33.21)
\linethickness{0.2mm}
\qbezier(2.31,48.98)(7.44,45.41)(18.14,42.85)
\qbezier(18.14,42.85)(28.84,40.3)(46.78,38.35)
\qbezier(46.78,38.35)(64.75,36.38)(73.03,35.48)
\qbezier(73.03,35.48)(81.32,34.57)(81.21,34.58)
\put(79.77,3.03){\makebox(0,0)[cc]{$h$}}

\put(-1.99,66.13){\makebox(0,0)[cc]{$p$}}

\put(45.35,40.75){\makebox(0,0)[cc]{$R_1$}}

\put(25.98,11.95){\makebox(0,0)[cc]{$R_2$}}

\put(-3.42,31.15){\makebox(0,0)[cc]{$1$}}

\put(-0.56,6.46){\makebox(0,0)[cc]{$0$}}

\linethickness{0.2mm}
\qbezier(2.31,54.47)(6.32,51.27)(16.76,48.3)
\qbezier(16.76,48.3)(27.2,45.33)(45.7,42.12)
\qbezier(45.7,42.12)(64.23,38.9)(72.78,37.42)
\qbezier(72.78,37.42)(81.32,35.93)(81.21,35.95)
\linethickness{0.2mm}
\qbezier(2.31,63.38)(8.57,56.96)(19.53,52.42)
\qbezier(19.53,52.42)(30.49,47.89)(47.86,44.52)
\qbezier(47.86,44.52)(65.26,41.12)(73.29,39.56)
\qbezier(73.29,39.56)(81.31,37.99)(81.21,38.01)
\linethickness{0.2mm}
\qbezier(2.31,11.95)(3.32,17.67)(13.07,21.05)
\qbezier(13.07,21.05)(22.82,24.43)(42.84,26.01)
\qbezier(42.84,26.01)(62.86,27.61)(72.1,28.36)
\qbezier(72.1,28.36)(81.33,29.1)(81.21,29.09)
\linethickness{0.2mm}
\multiput(38.17,46.65)(0.53,-0.14){3}{\line(1,0){0.53}}
\put(39.75,46.24){\vector(4,-1){0.12}}
\linethickness{0.2mm}
\multiput(37.46,43.63)(0.79,-0.14){2}{\line(1,0){0.79}}
\put(39.03,43.36){\vector(4,-1){0.12}}
\linethickness{0.2mm}
\multiput(36.6,39.65)(0.71,-0.13){2}{\line(1,0){0.71}}
\put(38.03,39.38){\vector(4,-1){0.12}}
\linethickness{0.2mm}
\multiput(35.59,35.54)(1.58,-0.14){1}{\line(1,0){1.58}}
\put(37.17,35.4){\vector(1,-0){0.12}}
\linethickness{0.2mm}
\multiput(34.16,30.33)(2.01,0.13){1}{\line(1,0){2.01}}
\put(34.16,30.33){\vector(-1,-0){0.12}}
\linethickness{0.2mm}
\multiput(34.44,26.49)(1.3,0.14){2}{\line(1,0){1.3}}
\put(34.44,26.49){\vector(-1,-0){0.12}}
\linethickness{0.2mm}
\multiput(34.44,25.25)(1.44,0.14){2}{\line(1,0){1.44}}
\put(34.44,25.25){\vector(-1,-0){0.12}}
\linethickness{0.2mm}
\qbezier(4.47,66.13)(4.08,49.35)(4.94,38.13)
\qbezier(4.94,38.13)(5.8,26.9)(8.05,19.49)
\qbezier(8.05,19.49)(10.29,11.98)(11.33,8.51)
\qbezier(11.33,8.51)(12.36,5.04)(12.35,5.09)
\linethickness{0.2mm}
\qbezier(5.18,66.13)(5.15,49.39)(7.74,34.71)
\qbezier(7.74,34.71)(10.32,20.02)(15.94,5.09)
\linethickness{0.2mm}
\qbezier(6.62,66.13)(6.97,49.71)(8.87,38.57)
\qbezier(8.87,38.57)(10.76,27.43)(14.5,19.84)
\qbezier(14.5,19.84)(18.25,12.14)(19.98,8.59)
\qbezier(19.98,8.59)(21.7,5.04)(21.68,5.09)
\linethickness{0.2mm}
\qbezier(8.05,66.13)(7.99,49.39)(12.82,34.71)
\qbezier(12.82,34.71)(17.65,20.02)(28.13,5.09)
\linethickness{0.2mm}
\qbezier(10.2,66.13)(10.11,49.76)(16.84,35.07)
\qbezier(16.84,35.07)(23.57,20.38)(38.17,5.09)
\linethickness{0.2mm}
\multiput(13.65,43.08)(0.12,-0.3){6}{\line(0,-1){0.3}}
\put(14.36,41.3){\vector(1,-3){0.12}}
\linethickness{0.2mm}
\multiput(10.49,42.95)(0.11,-0.38){5}{\line(0,-1){0.38}}
\put(11.06,41.03){\vector(1,-3){0.12}}
\linethickness{0.2mm}
\multiput(8.19,42.95)(0.15,-1.1){2}{\line(0,-1){1.1}}
\put(8.48,40.75){\vector(1,-4){0.12}}
\linethickness{0.2mm}
\multiput(6.62,42.95)(0.14,-1.03){2}{\line(0,-1){1.03}}
\put(6.9,40.89){\vector(1,-4){0.12}}
\linethickness{0.2mm}
\multiput(4.61,42.95)(0.14,-1.79){1}{\line(0,-1){1.79}}
\put(4.75,41.16){\vector(0,-1){0.12}}
\put(3.75,3.03){\makebox(0,0)[cc]{$0$}}

\linethickness{0.3mm}
\put(115,47.5){\circle*{1}}

\linethickness{0.3mm}
\put(113.5,32.5){\circle*{1}}

\linethickness{0.3mm}
\put(105,19){\circle*{1}}

\linethickness{0.2mm}
\qbezier(113.55,66.63)(113.49,49.89)(118.32,35.21)
\qbezier(118.32,35.21)(123.15,20.52)(133.63,5.59)
\put(14.6,40.7){\makebox(0,0)[cc]{\scalebox{0.8}{$(h,p)$}}}

\linethickness{0.3mm}
\put(2.95,41.91){\line(0,1){0.57}}
\multiput(2.57,41.51)(0.13,0.14){3}{\line(0,1){0.14}}
\put(2.03,41.51){\line(1,0){0.54}}
\multiput(1.65,41.91)(0.13,-0.14){3}{\line(0,-1){0.14}}
\put(1.65,41.91){\line(0,1){0.57}}
\multiput(1.65,42.49)(0.13,0.14){3}{\line(0,1){0.14}}
\put(2.03,42.89){\line(1,0){0.54}}
\multiput(2.57,42.89)(0.13,-0.14){3}{\line(0,-1){0.14}}

\linethickness{0.3mm}
\put(28.65,4.81){\line(0,1){0.57}}
\multiput(28.27,4.41)(0.13,0.14){3}{\line(0,1){0.14}}
\put(27.73,4.41){\line(1,0){0.54}}
\multiput(27.35,4.81)(0.13,-0.14){3}{\line(0,-1){0.14}}
\put(27.35,4.81){\line(0,1){0.57}}
\multiput(27.35,5.39)(0.13,0.14){3}{\line(0,1){0.14}}
\put(27.73,5.79){\line(1,0){0.54}}
\multiput(28.27,5.79)(0.13,-0.14){3}{\line(0,-1){0.14}}

\linethickness{0.3mm}
\put(12.28,38.49){\line(0,1){0.41}}
\multiput(12.09,38.13)(0.09,0.18){2}{\line(0,1){0.18}}
\multiput(11.78,37.93)(0.16,0.1){2}{\line(1,0){0.16}}
\put(11.42,37.93){\line(1,0){0.36}}
\multiput(11.11,38.13)(0.16,-0.1){2}{\line(1,0){0.16}}
\multiput(10.92,38.49)(0.09,-0.18){2}{\line(0,-1){0.18}}
\put(10.92,38.49){\line(0,1){0.41}}
\multiput(10.92,38.91)(0.09,0.18){2}{\line(0,1){0.18}}
\multiput(11.11,39.27)(0.16,0.1){2}{\line(1,0){0.16}}
\put(11.42,39.47){\line(1,0){0.36}}
\multiput(11.78,39.47)(0.16,-0.1){2}{\line(1,0){0.16}}
\multiput(12.09,39.27)(0.09,-0.18){2}{\line(0,-1){0.18}}

\put(-2.5,43.5){\makebox(0,0)[cc]{\scalebox{0.8}{$(0,P)$}}}

\put(28.4,1.8){\makebox(0,0)[cc]{\scalebox{0.8}{$(H,0)$}}}

\end{picture} } \par}
\caption{{\bf On the left:} The 
rarefaction curves 
of the two families, with the arrow pointing in the direction of increasing eigenvalues. {\bf On the right:} The curves 
of the right states 
that are connected to the left state $(h_{\ell},p_{\ell})$ by an entropy admissible
1-wave or 2-wave of the homogeneous system~\eqref{S1system-hom}. Here, $R_i, S_i, C_i$ denote rarefaction, Hugoniot and contact discontinuity curves of the $i$-th family,
respectively.
The three cases depending on the value of $p_\ell$ ($<\!1, =\!1, >\!1$) are indicated. \label{S2:fig1}}
\end{figure}

\subsection{Properties of Riemann solver and approximate solutions}
\label{Ss:Rsolv-appsol}
 
Let $\gamma\mapsto\mathbf {S}_k(\gamma;\, h^{\ell},p^{\ell})$
denote the Hugoniot curve of right states 
 of the $k^{\text{th}}$ family issuing from $(h^{\ell},p^{\ell})$, whose points $(h^{r},p^{r})\doteq \mathbf {S}_k(\gamma;\, h^{\ell},p^{\ell})$
satisfy the Rankine Hugoniot equations $$F((h^{r},p^{r}))-F((h^{\ell},p^{\ell}))=\lambda \cdot
\big((h^{r},p^{r})-(h^{\ell},p^{\ell})\big)$$ for 
$\lambda=\lambda_k\big((h^{\ell},p^{\ell}),\,(h^{r},p^{r})\big)$ where $\lambda_k\big((h^{\ell},p^{\ell}),\,(h^{r},p^{r})\big)$
denotes the $k$-th eigenvalue of the averaged matrix
\begin{equation}
A\big((h^{\ell},p^{\ell}),\,(h^{r},p^{r})\big)=
\int_0^1 A\big(s\, (h^{\ell},p^{\ell}) +(1-s) (h^{r},p^{r})\big) d s\,.
\end{equation}
We call $\lambda_k\big((h^{\ell},p^{\ell}),\,(h^{r},p^{r})\big)$ the 
Rankine Hugoniot speed associated to the left and right states 
$(h^{\ell},p^{\ell})$, $(h^{r},p^{r})$.
The analysis in~\cite{AS} shows that the Hugoniot curve 
 of the \emph{first} and \emph{second family} is given by
\begin{equation}\label{Suno}
\SC{1}{\sC}{ h^{\ell},p^{\ell}}=\bigg(h^{\ell}+\sC,\,p^{\ell}-\frac{(s_1+1)\,\sC}{s_1}\bigg)=
\bigg(h^{\ell}+\sC,\,p^{\ell}-\frac{(p^\ell-1)\,\sC}{h^\ell+\gamma-s_1}\bigg)
\qquad\gamma\geq - h^\ell\,,
\end{equation}
and
\begin{equation}\label{Sdue}
\SC{2}{\sC}{h^{\ell},p^{\ell}}=\bigg(h^{\ell}-\frac{s_2}{s_2+1}\sC,\,p^{\ell}+\sC\bigg)
=\bigg(h^\ell\bigg(1+\frac{\gamma}{\lambda_1(h^\ell,p^\ell+\gamma)-h^\ell}\bigg),
\,p^{\ell}+\sC\bigg)
\qquad\gamma\geq - p^\ell\,,
\end{equation}
respectively, where 
\begin{equation}
\label{rh-speed-def}
s_k=\lambda_k\big(\gamma;\,h^{\ell},p^{\ell}\big)
\doteq \lambda_k\bigg((h^{\ell},p^{\ell}),\, \SC{k}{\sC}{ h^{\ell},p^{\ell}}\bigg),\qquad k=1,2,
\end{equation}
are the corresponding Rankine Hugoniot speeds.
In fact, one finds that there holds
\begin{equation}
\label{rhspeed-eigenvalue}
s_1 =\lambda_1\big(h^{\ell}+\sC,p^{\ell}\big),\qquad\quad s_2=\lambda_2\big(h^{\ell},p^{\ell}+\sC\big)\,.
\end{equation}
The shock connecting the left state $(h_{\ell},p_{\ell})$ with the right state $ \mathbf {S}_1(\gamma;\, h^{\ell},p^{\ell})$
satisfies the Lax stability condition~\eqref{Lax-adm}, if $\gamma\cdot (p-p^\ell)\leq 0$,
while the shock with left state $(h^{\ell},p^{\ell})$ and right state $ \mathbf {S}_2(\gamma;\, h^{\ell},p^{\ell})$
is Lax admissible if $\gamma>0$.


We observe that the line $p=1$ separates the domain $\Omega$ into two invariant regions for solutions of the Riemann problem: the quarter $\{h\geq0,\ p>1\}$ and the half-strip $\{h\geq 0, \ 0<p<1\}$. Indeed, the rarefaction and Hugoniot curves of the first family through a point $(h^{\ell},p^{\ell})$, with $p^{\ell}\ne 1$, never meets the line $p=1$,
while the rarefaction and Hugoniot curves of the second family through a point $(h^{\ell},p^{\ell})$, with $h^{\ell}>0$,
never meets the line $h=0$. On the the other hand, the lines $p=1$ and $h=0$ are also invariant regions for solutions of the Riemann problem since they coincide with the rarefaction and Hugoniot curves of
the first and second family, respectively, passing through any of their points.
For convenience, 
we have drawn in Figure~\ref{S2:fig1} (on the right)
the elementary
curves of right states that are connected to a given left state $(h^{\ell}, p^{\ell})$ with entropy admissible 
waves of the first family (equivalently called {\bf${\bf 1}$-waves} or {\bf ${\bf h}$-waves})
and of the second family (equivalently called {\bf ${\bf 2}$-waves} or {\bf ${\bf p}$-waves})
 of the homogeneous system~\eqref{S1system-hom}.
Notice that, although the characteristic field of the first family does not satisfy the
classical GNL assumption, no composite waves are present in the solution 
of a Riemann problem for~\eqref{S1system-hom}
since in each invariant region $\{p>1\}$, $\{p<1\}$ the field is GNL.
In fact, the general solution of a Riemann problem for~\eqref{S1system-hom} consists of at most one simple wave
for each family which can be either a rarefaction or a compressive shock or a contact discontinuity. 

Global existence of 
entropy weak solutions to~\eqref{S1system} has been established by Amadori and Shen~\cite{AS}
using a front tracking algorithm in conjunction with the operator splitting. Their results can be summarized as follows:

\begin{theorem}[\cite{AS}]
\label{T:AS}
For any given $M_{0}, p_{0} > 0$, there exists $\delta_{0} > 0$ small enough 
 such that, if
\be{initial-data-bounds}
\mathrm{TotVar}\ \bar h+\mathrm{TotVar}\ \bar p\leq M_{0}, 
\quad \lVert\bar h\rVert_{{\bf L}^{1}}+\lVert\bar p-1\rVert_{{\bf L}^{1}}\leq M_{0},
\quad\lVert \bar h\Vert_{{\bf L}^{\infty}}\leq \delta_{0} ,\quad \bar p\geq p_{0},
\ee
hold, then the Cauchy problem~\eqref{S1system},~\eqref{S1system-data} has an entropy weak solution
$(h(x,t),\, p(x,t))$ defined for all $t \geq 0$, satisfying
%
\be{unifbvbound}
\mathrm{TotVar}\{h(\,\cdot\,,t)\} + \mathrm{TotVar}\{p(\,\cdot\,,t )\}\le M^*_0\,,
\qquad
\big\|h(\,\cdot\,,t)\big\|_{{\bf L}^{1}}+\big\|p(\,\cdot\,,t )-1\big\|_{{\bf L}^{1}}\le M^*_0\,,
\ee
and with values in a compact set
\be{E:reeeeeeeterete}
K=[0,\delta^*_0] \times [p^*_{0}, p^*_1]\,,
\ee
for some constants $M^*_0, \delta^*_0, p^*_0, p^*_1>0$.
\end{theorem}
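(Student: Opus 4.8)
The plan is to construct the solution by a front-tracking algorithm for the homogeneous conservation law \eqref{S1system-hom} coupled with an operator-splitting (explicit Euler) treatment of the source, and then to pass to the limit using uniform $BV$ bounds. First I would approximate the data: choose piecewise constant $(\bar h^\nu,\bar p^\nu)$ with finitely many jumps, converging to $(\bar h,\bar p)$ in ${\bf L}^1$, whose total variation and ${\bf L}^1$/${\bf L}^\infty$ norms satisfy \eqref{initial-data-bounds} with constants arbitrarily close to the given ones. Fixing a time step $\Delta t_\nu\to 0$ and a front-tracking parameter $\varepsilon_\nu\to 0$, on each slab $[t_k,t_{k+1})$, $t_k=k\Delta t_\nu$, I build an $\varepsilon_\nu$-front-tracking solution of \eqref{S1system-hom}: at $t=0$ and at each interaction solve the Riemann problem using the curves \eqref{Suno}--\eqref{Sdue}, approximating rarefactions by fans of small rarefaction fronts and using accurate/non-physical fronts in the standard way (on each side of $\{p=1\}$ the first field is GNL, so no composite waves occur). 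At each $t_{k+1}$ perform the ODE update $h\mapsto h\,(1+(p-1)\Delta t_\nu)$, $p\mapsto p$, which introduces finitely many new small jumps in $h$. One must check that the number of fronts and interactions stays finite on bounded time intervals, so the construction is well defined.

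\textbf{A priori bounds and invariant regions.} The core of the argument is a uniform (in $\nu$ and in $t$) estimate on a Glimm-type functional $\Upsilon(t)=V(t)+C_0\,Q(t)$, where $V$ is the total strength of fronts and $Q$ a wave-interaction potential. Since the first family is GNL only on each side of $\{p=1\}$ and changes type across it, $1$-shocks and $1$-rarefactions of opposite sign may interact, and after interacting with a $2$-wave a $1$-wave can turn from shock to rarefaction or vice versa, so the naive Glimm potential is not monotone; following the device suited to \eqref{S1system} as in \cite{AS}, $Q$ is designed so that its possible increase at such interactions is dominated by the strict decrease of $V$ due to cancellation, so that $\Upsilon$ does not increase across interactions once $C_0$ is large. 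This rests on sharp interaction estimates for the Riemann solver — the new wave strengths differ from the naive sums by $\mathcal{O}(1)$ times the product of approaching strengths, with suitable weights near $\{p=1\}$ and near $\{h=0\}$ — obtained from Taylor expansions of the curves \eqref{Suno}--\eqref{Sdue} exploiting their ``almost Temple'' geometry. At a time step the ODE update changes $h$ by $\mathcal{O}(\Delta t_\nu)$ times $|p-1|\,h$, hence increases $V$ and $Q$ by at most $\mathcal{O}(1)\,\Delta t_\nu$ times a quantity controlled by $\Upsilon(t)$ and by $\|h\|_{{\bf L}^\infty}\le\delta_0$; summing over the $\mathcal{O}(T/\Delta t_\nu)$ steps, a discrete Gronwall inequality together with the fact that the source acts only on $h$ — kept small by $\|\bar h\|_{{\bf L}^\infty}\le\delta_0$, with the growth caused by erosion ($p>1$) controlled by the smallness of $\delta_0$ — closes the estimate and yields \eqref{unifbvbound}, \emph{provided $\delta_0$ is chosen small enough}; the ${\bf L}^1$ bounds follow similarly, using that the $p$-equation is conservative and that the $h$-source is correspondingly controlled. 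In parallel I would keep the approximate solutions valued in a fixed compact $K=[0,\delta_0^*]\times[p_0^*,p_1^*]$: the quarter-plane $\{h\ge0,\,p>1\}$, the strip $\{h\ge0,\,0<p<1\}$ and the lines $\{p=1\}$, $\{h=0\}$ are invariant for the Riemann solver and the ODE step leaves $p$ and $\mathrm{sgn}\,h$ unchanged, so $p$ stays on its initial side of $\{p=1\}$ and, combined with the control of $\mathrm{TotVar}\{p\}$ and $\|p-1\|_{{\bf L}^1}$, is pinned into $[p_0^*,p_1^*]$ with $p_0^*>0$; and $h$, increased only where $p>1$ by an amount controlled by the $BV$ bound and $\delta_0$, stays in $[0,\delta_0^*]$ for suitable $\delta_0^*$ once $\delta_0$ is small.

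\textbf{Passing to the limit.} With $V(t)$, $\|h(\cdot,t)\|_{{\bf L}^1}$, $\|p(\cdot,t)-1\|_{{\bf L}^1}$ uniformly bounded and values in $K$, the approximate solutions are uniformly $BV$ in $x$ and Lipschitz in $t$ with respect to ${\bf L}^1$ up to the $\mathcal{O}(\Delta t_\nu)$ jumps at time steps; by Helly's theorem and a diagonal argument a subsequence converges in ${\bf L}^1_{loc}$ to some $(h,p)$ satisfying \eqref{unifbvbound} and valued in $K$. One then checks that the residual in the weak formulation of \eqref{S1system} — coming from non-entropic rarefaction fronts, non-physical fronts and the operator-splitting error — vanishes as $\nu\to\infty$, so $(h,p)$ is an entropy weak solution of \eqref{S1system}, \eqref{S1system-data}, and that every discontinuity in the limit satisfies the Lax condition \eqref{Lax-adm}, since each front used is an admissible shock or contact, a rarefaction front of vanishing size, or a non-physical front of vanishing total strength.

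\textbf{Main obstacle.} The genuinely new difficulty, compared with the classical program for GNL/LD systems, is the failure of genuine nonlinearity of the first family across $\{p=1\}$: $1$-waves reverse sign and change type after interacting with $2$-waves, so no monotone interaction potential is available off the shelf. Designing $Q$ — together with the accompanying sharp interaction estimates near $\{p=1\}$ and $\{h=0\}$ — so that its increase is always beaten by genuine wave cancellation, while simultaneously keeping the source-driven growth at each time step summable to a bound \emph{uniform in $t$} (which is what forces $\delta_0$ to be taken small), is the crux of the argument; everything else is a careful but routine adaptation of the front-tracking/operator-splitting machinery.
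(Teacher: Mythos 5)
This statement is cited from Amadori and Shen~\cite{AS} (note the label ``[\cite{AS}]'' on the theorem) and the present paper does not reprove it; it only recalls the construction and the needed estimates in \S~\ref{Ss:Rsolv-appsol}--\ref{Ss:glimmfunct} and Section~\ref{S:basicest}. Your sketch correctly reproduces the strategy of that reference, which is also what the paper summarizes: front tracking for the homogeneous system~\eqref{S1system-hom} coupled with an operator-splitting Euler update~\eqref{updated} at time steps, a modified Glimm functional $\cg=V+\cq$ with a weighted potential $\cq_{hh}$ carrying factors $\min\{|P_\alpha^\ell-1|,|P_\beta^\ell-1|\}$ as in~\eqref{E:omega}, the observation that cancellation (decrease of $V$) at $1$-shock/$1$-rarefaction interactions dominates the possible increase of $\cq$, discrete Gronwall over the $\mathcal{O}(T/\Delta t)$ time steps with the smallness of $\|\bar h\|_{{\bf L}^\infty}\le\delta_0$ closing the loop, and Helly plus a consistency check to pass to the limit. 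So this is essentially the same approach.

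Two small inaccuracies are worth flagging. First, you invoke non-physical fronts ``in the standard way''; the paper explicitly states it uses the simplified $2\times 2$ front-tracking scheme of~\cite{BD}, which avoids non-physical fronts altogether, so that step of your error budget is vacuous here. Second, the assertion that ``the ODE step leaves $p$ unchanged, so $p$ stays on its initial side of $\{p=1\}$'' is too strong: the two half-planes $\{p>1\}$ and $\{p<1\}$ are each invariant for the Riemann solver only when both initial states lie in the same region, and a front-tracking solution typically contains $2$-waves that cross $\{p=1\}$ (this is precisely what makes the definition of approaching $1$-waves in~\eqref{S4A1} and the case analysis of \S~\ref{Sss:Interaction21sameRegion} delicate). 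What is actually controlled is the range $[p_0^*,p_1^*]$ of $p$, via $\bar p\ge p_0$, $\mathrm{TotVar}\{\bar p\}$ and $\|\bar p-1\|_{{\bf L}^1}$, not a one-sided invariance. Neither point affects the overall validity of the scheme you describe, but they should be stated correctly.
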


%
In this article, we treat solutions $(h(x,t),\, p(x,t))$ to~\eqref{S1system},~\eqref{S1system-data} as established in~\cite{AS} stated inTheorem~\ref{T:AS}that have the $p$-component of the initial data close to $1$, i.e.
\be{unifbvboundLinftyp}\big\|\bar p-1\big\|_{{\bf L}^{\infty}}\le \delta_p\ee
for some constant $\delta_p>0$ sufficiently small. Hence, the solution $(h(x,t),\, p(x,t))$ satisfies in addition to~\eqref{unifbvbound}--\eqref{E:reeeeeeeterete} the bound
\be{unifbvboundLinftypsol}\big\|\bar p(\,\cdot\,,t )-1\big\|_{{\bf L}^{\infty}}\le \delta_p^*, \qquad \forall t>0\,.\ee


We shall consider here approximate solutions converging to an entropy weak solutions to~\eqref{S1system}--\eqref{S1system-data} constructed as in~\cite{AS}
by the following
operator splitting scheme. 
Fix a {\bf time step $\bf s=\Delta t>0$} and a {\bf parameter $\varepsilon>0$} that shall control:
\vspace{-5pt}
\begin{itemize}
\item[-] the size of the rarefaction fronts; 
\vspace{-5pt}
\item[-] the errors in speeds of shocks (or contact discontinuities) and rarefaction fronts;
\vspace{-5pt}
\item[-] the ${\bf L^1}$-distance between the piecewise constant initial data of the front-tracking approximation and the initial data $(\bar h,\bar p)$ in~\eqref{S1system-data}.
\end{itemize}
Let $t_k\doteq k \Delta t= k\, s$, 
$k=0,1,2,\dots$. Then,
an $s$-$\varepsilon${\bf -approximate solution} $(h^{s,\varepsilon},p^{s,\varepsilon})$ of~\eqref{S1system} is obtained as follows: on each time interval $[t_{k-1},t_k)$ the function $(h^{s,\varepsilon},p^{s,\varepsilon})$ is an $\varepsilon${\it -front tracking approximate solution} to the homogeneous system of conservation laws
~\eqref{S1system-hom}.
We recall that front-tracking solutions of a system of conservation laws are piecewise constant functions with
discontinuities occurring along finitely many lines in the $(t,x)$-plane, with interactions involving exactly two incoming
fronts. In general, the jumps can be of three types: shocks (or contact discontinuities),
rarefaction fronts and non-physical waves travelling at a constant speed faster than all characteristic speeds
(see~~\cite{Bre} for systems with GNL or LD characteristic fields and~\cite{AM3} for general systems).
In particular, we shall adopt here the simplified version of front tracking algorithm developed in~\cite{BD} 
for $2\times 2$ systems
as~\eqref{S1system-hom},
which does not require the introduction of non physical fronts.

Next, at time $t_k$ the function $(h^{s,\varepsilon},p^{s,\varepsilon})$ is updated as follows
\be{updated}
\left\{\begin{aligned}
& h^{s,\varepsilon}(t_k)=h^{s,\varepsilon}(t_k-)+\Delta t[p^{s,\varepsilon}(t_k-)-1]h^{s,\varepsilon}(t_k-)\\
&p^{s,\varepsilon}(t_k)=p^{s,\varepsilon}(t_k-).
\end{aligned}\right.
\ee
to account for the presence of source terms. 
In this way we construct an approximate solution of~\eqref{S1system}
defined for all times. 
It is shown in~\cite{AS} that, given $M_{0}, p_{0} > 0$, one can choose $\delta_{0} > 0$
sufficiently small so that the following holds. Consider a sequence of piecewise constant initial data 
$({\overline h}^{\varepsilon_m},{\overline p}^{\varepsilon_m})$
satisfying~\eqref{initial-data-bounds}, with $\varepsilon_m\to 0$ as $m \to \infty$,
that converges in ${\bf L}^1$ to the initial data~\eqref{S1system-data} as $m \to \infty$,
and let $\{s_m\}_m$ be a sequence of positive numbers converging to zero as $m \to \infty$.
Then, the above scheme provides a sequence of approximate solutions 
$(h^{s_m,\varepsilon_m},p^{s_m,\varepsilon_m})$ 
taking values in a compact set $K$ as in~\eqref{E:reeeeeeeterete},
and satisfying the a-priori bounds~\eqref{unifbvbound}.
A subsequence of $(h^{s_m,\varepsilon_m},p^{s_m,\varepsilon_m})$
converges, as $m \to \infty$, in~${\bf L}^1_{loc}$ to an 
entropy weak solution of the Cauchy problem~\eqref{S1system},~\eqref{S1system-data},
defined for all times $t>0$. 

The idea of adopting a time-splitting scheme to handle the effect of source terms was first introduced by Dafermos and Hsiao~\cite{DH}, 
in combination with the Glimm scheme (see also~\cite{C5, daf16} and references therein).
Subsequently, this scheme was implemented in conjunction with the front tracking method~\cite{AG,CP} and with the vanishing viscosity method~\cite{C1,C2}. 
Alternative methods to generate solutions of balance laws present in the literature are based
on a generalisation of the Glimm scheme~\cite{TPL2}
or of the front tracking algorithm~\cite{AGG}.

All these techniques provide local existence of solutions in presence of general source terms,
whereas global existence
is achieved either within the class of dissipative source terms~\cite{C5}, or for non-resonant systems
(having characteristic speeds bounded away from zero)
with source terms sufficiently small in~${\bf L}^1$~\cite{AGG,TPL2}. 
 However, although the system of balance laws~\eqref{S1system} does not belong to any of these 
 classes, an a-priori bound on the total variation of its weak solutions valid for all times is established in~\cite{AS},
 exploiting the particular geometric features of~\eqref{S1system}. 
This yields the existence of global solutions provided by Theorem~\ref{T:AS}.

\subsection{Wave size notation}
\label{Ss:strengthnotation}

The sizes of wave fronts of approximate solutions of~\eqref{S1system} are defined as the jumps between the left and right states which can be
 measured either with the original thickness and slope variables $(h,p)$, or with 
the corresponding Riemann coordinates $(H,P)$ associated to the $2\times 2$ system~\eqref{S1system}.
Such coordinates are defined as follows~\cite[Definition 1]{AS}.
Given any point $(h,p)\in\Omega$, let $(H,0)$ be the point on the $h-$axis connected with $(h,p)$ by a rarefaction curve 
of the second family and, similarly, let $(0,P)$ be the point on the $p-$axis connected to $(h,p)$ by a rarefaction curve
of the first family. Then the functions $(H,P)$ form a coordinate system of Riemann invariants
associated to~\eqref{S1system}.

So given a wave front with left and right states $(h^{\ell},p^{\ell})$ and $(h^{r},p^{r})$, respectively, let $(H^{\ell},P^{\ell})$ and $(H^{r},P^{r})$ be the corresponding Riemann coordinates. For simplicity, we drop from here and on the dependence of the approximate solution on $(s,\varepsilon)$.
%
Then, the wave size of the jump $\big((h^{\ell},p^{\ell}),\,(h^{r},p^{r})\big)$ can be defined 
either in the original or in the Riemann 
coordinate systems as follows:
\begin{itemize}
\item the size of a 1-wave (h-wave) is measured by 
\bes
\sR_{h} = H^{r} - H^{\ell}
\qquad
\text{or }
\qquad
\sC_{h} = h^{r} - h^{\ell}
\ees
in Riemann or 
original coordinates, respectively.
\item the size of a 2-wave (p-wave) is measured by
\bes
\sR_{p} = P^{r} -P^{\ell}
\qquad
\text{or }
\qquad
\sC_{p} = p^{r} - p^{\ell}
\ees
in Riemann or original
coordinates, respectively.
\end{itemize}
Notice that 1-rarefaction waves have positive size in the region $p>1$ and negative size in the region $p<1$,
whereas 1 admissible shocks have negative size in the region $p>1$ and positive size in the region $p<1$.
Instead 2-rarefaction waves have always negative sizes whereas 2 admissible shocks have always positive sizes (see Figure~\ref{S2:fig1}).
In particular, recalling~\eqref{Suno}--\eqref{Sdue}, when
the right state $(h^r,p^r)$ is connected with the left state $(h^{\ell},p^{\ell})$ via a shock wave of size $\sC$
in original coordinates, then one has:
\begin{equation}
(h^r,p^r)\doteq\mathbf{S}_1\big(\sC;h^{\ell}, p^{\ell}\big)
,\qquad \sC=\sC_h,\qquad (p-1)\cdot\gamma\leq 0\,,
\end{equation}
if $\big((h^{\ell},p^{\ell}),\,(h^{r},p^{r})\big)$ is an admissible shock of the first family, and
\begin{equation}
(h^r,p^r)\doteq\mathbf{S}_2\big(\sC;h^{\ell}, p^{\ell}\big)
,\qquad \sC=\sC_p,\qquad\gamma\geq 0\,,
\end{equation}
if $\big((h^{\ell},p^{\ell}),\,(h^{r},p^{r})\big)$ is an admissible shock of the second family.
\\
With the above notations, the size $\rho$ of the shock $\big((h^{\ell},p^{\ell}),\,(h^{r},p^{r})\big)$ expressed in Riemann coordinates is given by:
\bas
\rho=\sR_{h}=H\big(\mathbf{S}_1(\sC_{h};h^{\ell}, p^{\ell})\big)
-H(h^{\ell}, p^{\ell})\,,
&&
\rho=\sR_{p}=P\big(\mathbf{S}_2(\sC_{p};h^{\ell}, p^{\ell})
\big)-P(h^{\ell}, p^{\ell})\,.
\eas
\begin{remark}
\label{Riemann coordinates}
Since the change of variable $(h,p) \to (H,P)$ is a smooth map, the two ways of measuring the sizes
of the wave fronts are equivalent, provided $(h,p)$ lie within the compact set $K$ of Theorem~\ref{T:AS}.
Namely, in view of the analysis of~\cite[Section~3]{AS}, one can obtain the following relations
 among the sizes and the $p$-component in the two different coordinate systems
\ba\label{E:equivalncestrengths}
&\frac{|\sC_{h}|}{\mu}\leq |\sR_{h}|\leq \mu |\sC_{h}|,
&&\frac{|\sC_{p}|}{\mu}\leq |\sR_{p}|\leq \mu|\sC_{p} |,
&&\frac{|p^\ell-1|}{\mu}\leq |P(h^\ell,p^\ell)-1|\leq \mu|p^\ell-1|,
\ea
that hold for some $\mu>1$ and for all $(h^\ell,p^\ell), (h^r,p^r)\in K$.
\end{remark}


\subsection{Glimm functionals}
\label{Ss:glimmfunct}

Let $u=u(x,t)\doteq (h^{s,\varepsilon},p^{s,\varepsilon})(x,t)$ be a piecewise constant
$s$-$\varepsilon$-approximate solution of~\eqref{S1system}
constructed by the procedure described in Subsection~\ref{Ss:Rsolv-appsol}.
As customary, a-priori bounds on the total variation of $u(t)\doteq u(\cdot,t)$ outside the time steps are 
obtained in~\cite{AS} by analyzing suitable wave strength and wave interaction potential defines as follows. 
At any time $t>0$ where no interaction occurs and different from the time steps 
$t_k$
where $u$ is updated taking into account the source term, let $\cj_{i}\big(u(t)\big)$
denote a set of indexes $\alpha$ associated to the jumps of the $i$-th family of $u(t)$.
Assume that the jump of index $\alpha$ is located at $x_\alpha$, that
its strength 
measured in Riemann coordinates is $|\sR_\alpha|$, and let $P_\alpha^\ell\doteq P(u(x_\alpha-,t))$
denote the left limit of the \linebreak P-Riemann coordinate of $u(t)$ at $x_\alpha$.
Set $\cj\big(u(t)\big)\doteq \cj_{1}\big(u(t)\big)\bigcup \cj_{2}\big(u(t)\big)$ 
to denote the collection of indexes associated to all jumps of $u(t)$.
Denote by $k_{\alpha}\in\{1,2\}$ the characteristic family of the jump $\alpha\in \cj\big(u(t)\big)$, so that, in particular, one has
$\alpha\in \cj_{k_{\alpha}}\big(u(t)\big)$.
Then, we define the {\it total strength} of waves in $u(t)$ as:
\be{E:Vdef}
V_i\big(u(t)\big)\doteq\!\!\sum_{\alpha\in\cj_i((u(t))}|\sR_{\alpha}|,\qquad i=1,2,\qquad\quad V\big(u(t)\big)=V_1\big(u(t)\big)+V_2\big(u(t)\big)\doteq\sum_{\alpha\in\cj(u(t))}|\sR_{\alpha}|\,,
\ee
and the {\it interaction potential} as:
\be{E:Q}
\cq\big(u(t)\big)\doteq\cq_{hh}+\cq_{hp}+\cq_{pp}\;.
\ee
Here $\cq_{hh}$ is the modified interaction potential of waves of the first family (h-waves)
introduced in~\cite{AS}, defined as 
\be{E:Qhh}
\cq_{hh}\doteq\sum_{\stackrel{k_{\alpha}=k_\beta=1}{x_{\alpha}<x_\beta}}\omega_{\alpha,\beta}|\sR_{\alpha}||\sR_\beta|\,,
\ee
with the weights $\omega_{\alpha,\beta}$ given by
\be{E:omega}
\omega_{\alpha,\beta}\doteq
\begin{cases}
\overline\delta\cdot\min\{|P_{\alpha}^{\ell}-1|,|P_{\beta}^{\ell}-1|\}
\quad&\text{if}\quad 
\sR_{\alpha},\sR_\beta \ \ \ \text{are 1-shocks lying on the same side of} \ \ \ $p=1$,
\\
\noalign{\medskip}
\qquad\qquad 0\qquad &\text{otherwise,}
\end{cases}
\ee
%
for a suitable constant $\overline\delta>0$ sufficiently small
(depending on the bound $M_0$ on the total variation of the initial data in Theorem~\ref{T:AS}).
Instead, the interaction potential of waves of both families and of the second family (p-waves)
 are defined in the standard way as
\be{E:Qhp}
\cq_{hp}\doteq\sum_{\stackrel{k_{\alpha}=2,k_\beta=1}{ x_{\alpha}<x_\beta}}|\sR_{\alpha}\sR_\beta|
\ee
\be{E:Qpp}
\cq_{pp}\doteq\sum_{(\alpha,\beta)\in Appr_2}|\sR_{\alpha}\sR_\beta|
\ee
where $Appr_2$ denotes the collection of pairs of indexes of approaching p-waves.
We recall that two waves of the same family, located at $x_\alpha, x_\beta$
 with $x_{\alpha}<x_\beta$, are defined as {\it approaching} if at least one of them is a shock.
Notice that the interaction potentials defined for non GNL systems available in the literature (e.g.~the one in~\cite{AM2}) 
are suited only for solutions with sufficiently small total variation, whereas the functional~\eqref{E:Q} introduced in~\cite{AS}
allows to establish the existence of solutions with arbitrarily large total variation.
In fact, relying on the interaction estimates established in~\cite{AS} and collected here in Section~\ref{S:basicest},
one can show that 
 the {\em{Glimm functional}} 
\be{S3G}
\cg\big(u(t)\big)\doteq V\big(u(t)\big)+\cq\big(u(t)\big)
\ee
is nonincreasing in any time interval
$]t_k, t_{k+1}[$ between two consecutive time steps.
Instead, when the solution is updated with the source term,
we will exploit other estimates on the variation of the strength of waves 
which were derived in~\cite{AS} and are collected here in Section~\ref{S:basicest}.
Thanks to such estimates, we deduce that 
at any time step $t_k=k\Delta t = k\, s$
there holds
\begin{equation}
\label{G-timestep-est1}
\cg\big(u(t_k+)\big)\leq \big(1+\co(1)\Delta t\big)\cdot\cg^{-}\big(u(t_k-)\big)\,.
\end{equation}
Notice that, by definition~\eqref{E:Vdef}, oner clearly has 
\begin{equation}
\label{V-totvar-est}
 V\big(u(t)\big) \leq \co(1) \mathrm{TotVar}\{u(\,\cdot\,,t)\}\qquad\forall~t>0\,.
\end{equation}
\subsection{Lyapunov functional}
\label{Ss:Lyapfunct}

Let $u$ and $v:\R\times\R_+\to\R^2$ be two
$s$-$\varepsilon$-approximate solution of~\eqref{S1system},
with values in a compact set $K$ as in~\eqref{E:reeeeeeeterete},
and constructed as in Subsection~\ref{Ss:Rsolv-appsol}.
In order to:
\vspace{-5pt}
\begin{itemize}
\item[(I)] 
provide an a-priori bound on the ${\bf L^1}$-distance between $u$ and $v$
in the spirit of~\cite[\S~8]{Bre};
\item[(II)]
derive as in~\cite{AG} an estimate of the type~\eqref{uniq-semigr-est-1}
between approximate solutions of the non-homogeneous system~\eqref{S1system}
and of the homogeneous system~\eqref{S1system-hom};
\end{itemize}
we introduce here
a Lyapunov-like functional~$\Phi$ 
with the properties (i)-(ii) stated in the introduction.
Consider a piecewice constant function 
$\er:\R\to\R^2$
taking values in a given compact set $K'$
related to $K$. 
Assume also that $\er\in {\bf L^1}$
and 
\begin{equation}
\label{z-bound-1}
\mathrm{TotVar}\{\er_i\}\le \sigma\quad\ i=1,2\,,
\qquad
\forall~t>0\,,
\end{equation}
for some constant $\sigma>0$.
Notice that $\er$ is an arbitrary function with the aforementioned properties, not related to the system~\eqref{S1system},
which is introduced to accomplish~(II). Next, for every $(x,t)\in\R\times\R_+$,
we connect $u(x,t)$ with $w(x,t)\doteq v(x,t)+\er(x)$
through the Hugoniot curves of the first
and second family. In this way we define implicitly two scalar functions $\eta_{i}:\R\times\R_+\to\R$, $i=1,2$,
by the relation
\be{E:vu} 
w(x,t)=\mathbf {S}_2\big(\eta_{2}(x,t);\,\cdot\,\big)\circ\mathbf {S}_1\big(\eta_{1}(x,t);\,u(x,t)\big)\,.
\ee
According to this definition, the parameter $\eta_{i}$ can be regarded as the size of the i-shock wave
in the jump $\big(u(x,t),\, w(x,t)\big)$, where this size is measured in the original coordinates
(cfr.~Subsection~\ref{Ss:strengthnotation}).
Then, we clearly have
\be{S4:etaabs}
\frac{1}{C_0} \big|u(x,t)-w(x,t)\big|\le \sum_i \big|\eta_i(x,t)\big|\le C_0 \big|u(x,t)-w(x,t)\big|
\qquad\forall~x,t\,,
\ee
for some constant $C_0>0$.
Setting $u(t)\doteq u(\,\cdot\,,t)$, $v(t)\doteq v(\,\cdot\,,t)$, we now define the functional
\be{UpsilonNew}
\Phi_\er(u(t),v(t))\doteq
\left(\sum_{i=1}^2\int_{-\infty}^\infty|\eta_{i}(x,t)|W_i(x,t)\,dx\right)
 \cdot e^{\kappa_{\cg} \big[\cg(u(t))+\cg(v(t))\big]}
\ee
with weights $W_i$ of the form
\begin{subequations}
\label{S4Wi}
\ba
\label{S4WiN} 
W_1(x,t)&\doteq\exp\left(\kappa_{1\ca 1}\ca_{1,1}(x,t)+\kappa_{1\ca 2}\ca_{1,2}(x,t) \right), \\
\label{S4WiN2} 
W_2(x,t)&\doteq\exp\left(\kappa_{2\ca 1}\ca_{2,1}(x,t)+ \kappa_{2\ca 2}\ca_{2,2}(x,t) \right), 
\ea
\end{subequations}
for suitable positive constants $1<\kappa_{i\ca j}<\kappa_\cg$, $i,j=1,2$, to be specified later.
Here $\cg$ is the Glimm functional defined in~\eqref{E:Vdef}--\eqref{S3G},
while $\ca_{i,j}(x,t)$ measures the total amount of $j$-waves in $u(t)$ and $v(t)$ which approach the $i$-wave $\eta_{i}$ located at $x$.
Since the second characteristic family is GNL for $h>0$ and LD along $h=0$,
the expression of $\ca_{2,1}$ and $ \ca_{2,2}$ is the same as the 
one given in~\cite{bly} for GNL and LD characteristic fields.
Instead, 
because of the properties of the non GNL first characteristic family, the definition of 1-waves approaching $\eta_1$ varies if the left state of such waves 
lies on the left or on the right of $\{p=1\}$ (see Figure~\ref{figappr0}). 
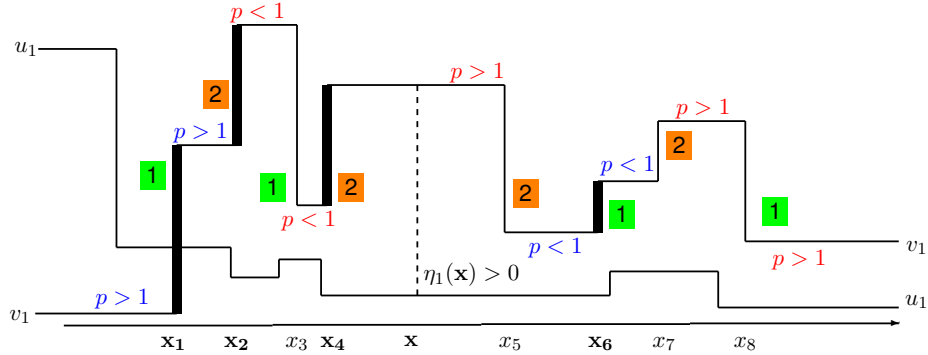
\begin{figure}[htbp]
\label{figappr0}
{\centering \scalebox{.8}{\ifx\JPicScale\undefined\def\JPicScale{1}\fi
\unitlength \JPicScale mm
\begin{picture}(143,52)(0,0)
\linethickness{0.3mm}
\multiput(1.25,0)(34.69,0.12){4}{\line(1,0){34.69}}
\put(140,0.5){\vector(1,0){0.12}}
\linethickness{1.5mm}
\put(20,2){\line(0,1){28}}
\linethickness{0.3mm}
\put(20,30){\line(1,0){10}}
\put(5,3.5){\makebox(0,0)[cc]{}}

\linethickness{0.3mm}
\put(-3.5,2){\line(1,0){23.5}}
\linethickness{1.5mm}
\put(30,30){\line(0,1){20}}
\linethickness{0.3mm}
\put(40,20){\line(1,0){5}}
\linethickness{1.5mm}
\put(45,20){\line(0,1){20}}
\linethickness{0.3mm}
\put(45,40){\line(1,0){29.5}}
\linethickness{0.3mm}
\put(74.5,15.5){\line(0,1){24.5}}
\linethickness{0.3mm}
\put(74.5,15.5){\line(1,0){15.5}}
\linethickness{1.5mm}
\put(90,15.5){\line(0,1){8.5}}
\linethickness{0.3mm}
\put(90,24){\line(1,0){10}}
\linethickness{0.3mm}
\multiput(60,5)(0,2){18}{\line(0,1){1}}
\put(59,-2.5){\makebox(0,0)[cc]{${\bf x}$}}

\put(69,8.5){\makebox(0,0)[cc]{$\eta_1({\bf x})>0$}}

\put(70,42){\makebox(0,0)[cc]{${\color{red} p>1}$}}

\put(83,13){\makebox(0,0)[cc]{${\color{blue} p<1}$}}

\put(11,4.5){\makebox(0,0)[cc]{${\color{blue} p>1}$}}

\put(33.5,52){\makebox(0,0)[cc]{${\color{red} p<1}$}}

\put(42,17.5){\makebox(0,0)[cc]{${\color{red} p<1}$}}

\put(-6,1.5){\makebox(0,0)[cc]{$v_1$}}

\put(19.5,-3){\makebox(0,0)[cc]{${\bf x_1}$}}

\put(30,-3){\makebox(0,0)[cc]{${\bf x_2}$}}

\put(40,-3){\makebox(0,0)[cc]{$x_3$}}

\put(75.5,-3){\makebox(0,0)[cc]{$x_5$}}

\put(90.5,-3){\makebox(0,0)[cc]{${\bf x_6}$}}

\linethickness{0.3mm}
\put(30,50){\line(1,0){10}}
\linethickness{0.3mm}
\put(40,20){\line(0,1){30}}
\put(24,32){\makebox(0,0)[cc]{${\color{blue} p>1}$}}

\put(16,25){\makebox(0,0)[cc]{$\colorbox{green}{1}$}}

\put(26.5,38.5){\makebox(0,0)[cc]{$\colorbox{orange}{2}$}}

\put(49,23){\makebox(0,0)[cc]{$\colorbox{orange}{2}$}}

\put(17.5,20){\makebox(0,0)[cc]{}}

\put(36,23){\makebox(0,0)[cc]{$\colorbox{green}{1}$}}

\put(78,22){\makebox(0,0)[cc]{$\colorbox{orange}{2}$}}

\put(95,26.5){\makebox(0,0)[cc]{${\color{blue} p<1}$}}

\put(94,18.5){\makebox(0,0)[cc]{$\colorbox{green}{1}$}}

\linethickness{0.3mm}
\put(100,24){\line(0,1){10}}
\linethickness{0.3mm}
\put(100,34){\line(1,0){14.5}}
\put(103.5,30){\makebox(0,0)[cc]{$\colorbox{orange}{2}$}}

\put(107.5,36){\makebox(0,0)[cc]{${\color{red} p>1}$}}

\linethickness{0.3mm}
\put(114.5,14){\line(0,1){20}}
\linethickness{0.3mm}
\put(114.5,14){\line(1,0){25.5}}
\put(123.5,11){\makebox(0,0)[cc]{${\color{red} p>1}$}}

\put(119.5,19){\makebox(0,0)[cc]{$\colorbox{green}{1}$}}

\put(143,13){\makebox(0,0)[cc]{$v_1$}}

\put(46,-3){\makebox(0,0)[cc]{${\bf x_4}$}}

\put(101,-3){\makebox(0,0)[cc]{$x_7$}}

\put(114.5,-3){\makebox(0,0)[cc]{$x_8$}}

\linethickness{0.3mm}
\put(44,5){\line(1,0){48}}
\linethickness{0.3mm}
\put(44,5){\line(0,1){6}}
\linethickness{0.3mm}
\put(37,11){\line(1,0){7}}
\linethickness{0.3mm}
\put(37,8){\line(0,1){3}}
\linethickness{0.3mm}
\put(29,8){\line(1,0){8}}
\linethickness{0.3mm}
\put(29,8){\line(0,1){5}}
\linethickness{0.3mm}
\put(10,13){\line(1,0){19}}
\linethickness{0.3mm}
\put(10,13){\line(0,1){33}}
\linethickness{0.3mm}
\put(-3,46){\line(1,0){13}}
\put(-5,46){\makebox(0,0)[cc]{$u_1$}}

\linethickness{0.3mm}
\put(92,5){\line(0,1){4}}
\linethickness{0.3mm}
\put(92,9){\line(1,0){18}}
\linethickness{0.3mm}
\put(110,3){\line(0,1){6}}
\linethickness{0.3mm}
\put(110,3){\line(1,0){30}}
\put(143,4){\makebox(0,0)[cc]{$u_1$}}

\end{picture} } \par}
\caption{\small
\emph{Approaching waves} in $v$ towards $ \eta_1({\bf x})>0$ are indicated by the jumps marked with bolded lines.
Also, regions $p < 1$, $p > 1$ can only be connected by $2-$waves crossing the line $p=1$. The selected $1-$waves that are located at ${\bf x_\alpha}$ with ${\bf x_\alpha}<{\bf x}$ correspond to $\gamma\to \lambda_1(\gamma;\cdot)$ strictly increasing, i.e. $\{p>1\}$. On the other hand, the selected $1-$waves that are located at ${\bf x_\alpha}$ with ${\bf x_\alpha}>{\bf x}$ correspond to $\gamma\to \lambda_1(\gamma;\cdot)$ strictly decreasing, i.e. $\{p<1\}$.\label{figappr0}
}
\end{figure}

For this reason, it will be necessary to assign a weight to the strength of the
1-waves approaching $\eta_1$ which depends on the distance of their left state from $\{p=1\}$,
so to control the possible increase of $\Phi_\er$
at times of interactions (of $u$ or of $v$) involving a 1-wave and a 2-wave crossing $\{p=1\}$.
More precisely, the definitions of $\ca_{i,j}$, $i,j=1,2$, are the followings:
As in Subsection~\ref{Ss:glimmfunct}, let $\cj_{i}\big(u(t)\big)$
denote a set of indexes $\alpha$ associated to the jumps of the $i$-th family of $u(t)$
located at $x_\alpha$ and let $\cj_{i}\big(v(t)\big)$ denote a similar set of indexes for
the jumps of the $i$-th family of $v(t)$.
Denote by $p_\alpha^\ell$ 
the p-component (in original coordinates) of the left state of the jump located at $x_{\alpha}$, and by 
$\sR_{\alpha}$ 
the corresponding size of the jump
measured in Riemann coordinates. 
Then define
\be{S4A1}
\begin{aligned}
\ca_{1,1}&(x,t)\doteq\left\{
\!\!
\begin{array}{ll}
\displaystyle\Big[\sum_{\stackrel{\alpha\in\cj(u(t)),k_{\alpha}=1}{p_\alpha^\ell
>1,\,x_{\alpha}<x}}
+\sum_{\stackrel{\alpha\in\cj(u(t)),k_{\alpha}=1}{p_\alpha^\ell
<1,\,x_{\alpha}>x}}
+\sum_{\stackrel{\alpha\in\cj(v(t)),k_{\alpha}=1}{p_\alpha^\ell
>1,\,x_{\alpha}>x}}
+\sum_{\stackrel{\alpha\in\cj(v(t)),k_{\alpha}=1}{p_\alpha^\ell
<1,\,x_{\alpha}<x}}
\Big]\PHI{p}|\sR_{\alpha}| \qquad &\text{if }\quad\eta_{1}(x,t)<0,\\
\noalign{\medskip}
\displaystyle\Big[\sum_{\stackrel{\alpha\in\cj(v(t)),k_{\alpha}=1}{p_\alpha^\ell
>1,\,x_{\alpha}<x}}
+\sum_{\stackrel{\alpha\in\cj(v(t)),k_{\alpha}=1}{p_\alpha^\ell
<1,\,x_{\alpha}>x}}
+\sum_{\stackrel{\alpha\in\cj(u(t)),k_{\alpha}=1}{p_\alpha^\ell
>1,\,x_{\alpha}>x}}
+\sum_{\stackrel{\alpha\in\cj(u(t)),k_{\alpha}=1}{p_\alpha^\ell
<1,\,x_{\alpha}<x}}
\Big]\PHI{p}|\sR_{\alpha}| \qquad& \text{if }\quad\eta_{1}(x,t)>0,
\end{array}
\right.\\
\noalign{\medskip}
\ca_{1,2}&(x,t)\doteq\sum_{\stackrel{\alpha\in\cj(u(t))\cup\cj(v(t))}{k_{\alpha}=2,\,x_{\alpha}<x}}|\sR_{\alpha}|\,,
\end{aligned}
\ee
and
\begin{subequations}
\label{S4A2}
\begin{align}
\ca_{2,1}&(x,t)\doteq\sum_{\stackrel{\alpha\in\cj(u(t))\cup\cj(v(t))}{k_{\alpha}=1,\,x_{\alpha}>x}}|\sR_{\alpha}|\,,
\\
\noalign{\medskip}
\ca_{2,2}&(x,t)\doteq\left\{
\begin{array}{ll}
\displaystyle\Big[\sum_{\stackrel{\alpha\in\cj(u(t)),k_{\alpha}=2}{x_{\alpha}>x}}
+\sum_{\stackrel{\alpha\in\cj(v(t)),k_{\alpha}=2}{x_{\alpha}<x}}
\Big]|\sR_{\alpha}| \qquad &\text{if }\quad\eta_{2}(x,t)<0,\\
\noalign{\medskip}
\displaystyle\Big[\sum_{\stackrel{\alpha\in\cj(v(t)),k_{\alpha}=2}{x_{\alpha}>x}}
+\sum_{\stackrel{\alpha\in\cj(u(t)),k_{\alpha}=2}{x_{\alpha}<x}}
\Big]|\sR_{\alpha}| \qquad&\text{if }\quad\eta_{2}(x,t)>0.
\end{array}
\right.
\end{align}
\end{subequations}
Notice that, differently from the functional introduced in~\cite{bly}, 
here the sum of the whole Glimm functional
 of $u$ and $v$ instead of the sum of their interaction potential $\cq$ is present as a weight in the time variable.
This is due to the fact that one needs to exploit the decrease of the functional $V$ in~\eqref{E:Vdef}
at interactions of 1-shock with 1-rarefaction since, by the definitions~\eqref{E:Q}--\eqref{E:Qhh},
the interaction potential is not decreasing when such interactions occur (cfr.~\eqref{E:decreaseGlimm1s1r} and the analysis 
in Subsection~\ref{Sss:Interaction1nC}). 

We point out that:
\vspace{-5pt}
\begin{itemize}
\item[-] the definition of the functional $\Phi_\er$ at~\eqref{UpsilonNew}--\eqref{S4Wi} is given in
terms of waves
connecting $u(t)$ with $w(t)=v(t)+z$,
and of waves in $u(t), v(t)$,
which are measured with respect to different coordinate systems.
Namely, the size $\eta_i(t)$ of the i-waves connecting $u(t)$ with $w(t)$ is measured in original coordinates.
Instead, the size $\sR_\alpha$ of the waves in $u(t)$ or in $v(t)$ is measured in Riemann coordinates.
 Of course, one can express also $\eta_i(t)$ in Riemann coordinates because of~\eqref{E:equivalncestrengths}, but we choose to 
 keep them in original coordinates for technical reasons since this choice simplifies the computations
carried out in Appendix~\ref{S:finerInteractions} and applied in Section~\ref{Ss:nointeractiontimesN}.
\item[-] The function $z$ affects directly the definition of the waves $\eta_i$
connecting $u$ with $w\doteq u+z$ while enters only indirectly in the definition of the weights $W_i$
which are expressed in terms of waves of $u$ and $v$ which depend on the sign of $\eta_i$.
\end{itemize} 
We observe also that, thanks to the a-priori BV and ${\bf L}^{\infty}$-bounds established in~\cite{AS},
there will be constants $M^*, P^*>0$ such that, for all $t>0$, there hold
\begin{equation}
\label{mstar-bound}
\cg(u(t))\le M^*\,,\qquad\quad \cg(v(t))\le M^*\,,
\end{equation}
and
\begin{equation}
\label{pstar-bound}
\big\|P(u(t))-1\big\|_{\bf L^\infty}\le P^*\,,
\qquad\quad
\big\|P(v(t))-1\big\|_{\bf L^\infty}\le P^*\,.
\end{equation}
Hence, the functional $W_i$ in~\eqref{S4Wi} is uniformly bounded by
\begin{subequations}
\label{W-unif-boundN}
\ba
&\ca_{1,1}(x,t) 
\leq M^* \delta_p^* \,,
&& 1 \leq W_1(x,t) \leq W_1^*\doteq e^{\kappa_{1\ca1}\cdot M^*\delta_p^* + \kappa_{1\ca2}\cdot M^* }
\qquad\quad \forall~x,t 
\\
 &&&1 \leq W_2(x,t) \leq W_2^*\doteq e^{ \kappa_{2\ca1}\cdot M^*+ \kappa_{2\ca2}\cdot M^*}
\qquad\quad \forall~x,t \,.
\ea
\end{subequations}
Therefore, relying on~\eqref{S4:etaabs},~\eqref{W-unif-boundN}, 
we deduce that the functional $\Phi_\er$ is equivalent to the ${\bf L^1}$ distance
between $u(t)$ and $w(t)=v(t)+z$:
\be{S4-PhiL1}
\frac{1}{C_{0}}\big\| u(t)-w(t)\big\|_{L^1}
\leq\Phi_\er(u(t),v(t))
\leq C_{0}\cdot W^* \cdot \big\| u(t)-w(t)\big\|_{L^1}
\qquad\quad \forall~t>0\,.
\ee
For $z=0$, we automatically have that $\Phi_0(u,v)$ satisfies the corresponding relation between $u$ and $v$.

\subsection{Main results}
\label{Ss:mainres}

In view of~\eqref{S4-PhiL1}, ${\bf L}^1$ stability estimates for approximate solutions $u, v$ of~\eqref{S1system}
and~\eqref{S1system-hom}
can be established in terms of the functional $\Phi_z$ as stated in the following
\begin{theorem}
\label{stability-Phy}
Given $M_0>0$, there exist constants $\delta_0, \delta_p, \delta_0^*, \delta_p^*, M^*_0>0$, 
and $C_1, C_2>0$,
so that, letting $\Phi_z$ be the functional
defined in~\eqref{UpsilonNew}--\eqref{S4A2}, 
for suitable $\kappa_{i\ca j}>0$,\,$i,\ j=1,\ 2$ and $\kappa_\cg$, the following hold.
\begin{itemize}
\item[(i)] (Homogeneous case).
Let $u$ and $v:\R\times\R_+\to\R^2$ be two
$\varepsilon${\it -front tracking approximate solution} of the homogeneous system~\eqref{S1system-hom},
constructed as in Subsection~\ref{Ss:Rsolv-appsol},
with initial data $u(\,\cdot,0), v(\,\cdot,0)$ 
satisfying~\eqref{initial-data-bounds} and~\eqref{unifbvboundLinftyp} ,
and taking values in $[0,\delta^*_0] \times [p^*_{0}, p^*_1]$.
Let $\er : \R \to\R^2$ be a piecewise constant function, 
that takes values in the compact set
\be{E:im-z}
K'=[(p^*_{0}-1)\cdot\delta_0^*,\,p_1^*\cdot\delta^*_0] \times [p^*_{0}, p^*_1]\,,
\ee
and satisfies~\eqref{z-bound-1} for 
\begin{equation}
\label{sigma-def}
\sigma\doteq (\delta_0^*+p_1^*)\cdot M_0^*,
\end{equation}
where $p_0^*:=1-\delta_p^*>0$, $p_1^*:=1+\delta_p^*>1$.
Then, there holds
\begin{equation}
\label{Phi0est1}
\Phi_{\er}\big(u(\tau_2), v(\tau_2)\big)\leq
\Phi_{\er}\big(u(\tau_1), v(\tau_1)\big)+
C_1\cdot \big(\varepsilon + \sigma\big)
(\tau_2-\tau_1)\qquad\forall~ \tau_2>\tau_1>0\,.
\end{equation}

\item[(ii)] (Non-homogeneous case).
Let $u$ and $v:\R\times\R_+\to\R^2$ be two
$s$-$\varepsilon$-approximate solution of the non-homogeneous system~\eqref{S1system} constructed as in Subsection~\ref{Ss:Rsolv-appsol},
with initial data $u(\,\cdot,0), v(\,\cdot,0)$ satisfying~\eqref{initial-data-bounds} and~\eqref{unifbvboundLinftyp},
and taking values in $[0,\delta^*_0] \times [p^*_{0}, p^*_1]$.
Then, letting $t_k\doteq k \Delta t = k\, s$, \, $k \in \mathbb{N}$ be the time steps,
there holds
\begin{equation}
\label{Phi0est2}
\Phi_0\big(u(\tau_2), v(\tau_2)\big)\leq
\Phi_0\big(u(\tau_1), v(\tau_1)\big)+
C_1\cdot \varepsilon
(\tau_2-\tau_1)\qquad\forall~t_k<\tau_1<\tau_2<t_{k+1}\,,
\end{equation}
and
\begin{equation}
\label{Phi0est3}
\begin{aligned}
\Phi_0\big(u(t_k+), v(t_k+)\big)
&\leq
\Phi_0\big(u(t_h+), v(t_h+)\big)
\big(1+C_2\cdot\Delta t\big)^{(k-h)}+
\\
\noalign{\smallskip}
&\qquad + C_1\cdot \varepsilon\,\Delta t \sum_{i=1}^{k-h} \big(1+C_2\cdot \Delta t\big)^i
\qquad\quad\forall~0\leq h<k,
\end{aligned}
\end{equation}
for all $k,\  h\in\mathbb{N}$.
\end{itemize}
\end{theorem}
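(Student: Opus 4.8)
## Proof strategy for Theorem~\ref{stability-Phy}

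The plan is to prove the three estimates by a careful accounting of the variation in time of the functional $\Phi_z$ along pairs of approximate solutions, distinguishing four regimes: (a) between interaction times and away from time steps, where only the errors in wave speeds act; (b) at interaction times within $u$ or within $v$; (c) at time steps $t_k$ where the source term is applied; and (d) the final bookkeeping that converts the pointwise-in-time bounds into the global estimates~\eqref{Phi0est1}--\eqref{Phi0est3}. Throughout, one first carries out the analysis for $z=0$ and then in~\S\ref{S5.4} incorporates the perturbation $z$, using that $z$ enters only through the $\eta_i$ and is controlled in total variation by $\sigma$, so that its contribution to the right-hand side is $\co(1)\sigma(\tau_2-\tau_1)$.

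First I would treat regime (a). Between interaction/time-step times, the weights $W_i$ and $\cg(u),\cg(v)$ are constant, and each $\eta_i(x,t)$ is transported along a front; the only source of variation in $\int|\eta_i|W_i\,dx$ is the mismatch between the true Rankine--Hugoniot speed of the $\eta_i$-jump $(u(x,t),w(x,t))$ and the approximate front speeds in $u$ and $v$. Since the front speeds are correct up to $\varepsilon$ and there are $\co(1)$-many fronts (uniform BV bound), one gets $\frac{d}{dt}\Phi_0\le \co(1)\varepsilon$, which integrates to the linear term in~\eqref{Phi0est1} and~\eqref{Phi0est2}. This is the easy part and mirrors~\cite{bly}; the nonstandard point is that the weight $W_1$ carries the factor $\PHI{p}$ on approaching $1$-waves, so one must check that a $1$-rarefaction of $u$ or $v$ whose left state sits near $\{p=1\}$ contributes a weight change of order $\varepsilon\cdot|p_\alpha-1|$, which is absorbed.

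The heart of the matter is regime (b), the interaction times, and this is where I expect the main obstacle. At an interaction inside $u$ (symmetrically $v$), one must show that the jump in $\Phi_0$ across the interaction is nonpositive (up to $\co(1)\varepsilon\,\Delta t$), exploiting: the decrease of $\cg$ in the exponential prefactor $e^{\kappa_\cg[\cg(u)+\cg(v)]}$; the decrease of the interaction potential $\cq$ inside the relevant $\mathcal{A}_{i,j}$; and, crucially, the decrease of the \emph{total strength} $V$ when a $1$-shock meets a $1$-rarefaction (since, as the authors emphasize, $\cq_{hh}$ need not decrease there). The genuinely new and delicate case is an interaction of a $1$-wave with a $2$-wave crossing $\{p=1\}$: here $\mathcal{A}_{1,1}$ may increase by an amount of order $|p_\beta-1|\,|\sR_\alpha|\approx|\sR_\alpha\sR_\beta|$, which one must dominate by the corresponding drop in $\cq$ (with the small constant $\overline\delta$ in the weights $\omega_{\alpha,\beta}$ chosen appropriately). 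Making this quantitative requires the sharp fourth-order interaction and wave-curve Taylor estimates of Appendices~\ref{S:shockReduction}--\ref{App:C}, together with the reduction to shock curves, and it is here that the choice of the constants $\kappa_{i\ca j}<\kappa_\cg$ must be pinned down so that Conditions~${\bf\Sigma}$ of Proposition~\ref{PropCond} hold; getting all the signs and orders of magnitude to line up simultaneously is the real work.

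Finally, regime (c) and (d). At a time step $t_k$, the source update~\eqref{updated} changes $u$ and $v$ by $\Delta t(p-1)h$ in the $h$-component only; using the interaction-type estimates at time steps collected in Section~\ref{S:basicest} together with~\eqref{G-timestep-est1}, one shows $\Phi_0(u(t_k+),v(t_k+))\le(1+\co(1)\Delta t)\,\Phi_0(u(t_k-),v(t_k-))+\co(1)\varepsilon\,\Delta t$, the multiplicative factor coming from the $\co(1)\Delta t$ growth of $\cg$ in the exponential weights and of the $\mathcal{A}_{i,j}$, and the additive $\co(1)\varepsilon\,\Delta t$ from the speed errors introduced when the updated Riemann problems are re-solved by front tracking. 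Combining the almost-monotonicity from (a)--(b) on each open interval $(t_k,t_{k+1})$ with this one-step bound and iterating over $h<k$ yields~\eqref{Phi0est3} by the elementary discrete Gronwall summation; setting $C_2=\co(1)$ and $C_1=\co(1)$ and reading off the constants $\delta_0,\delta_p,\dots,M_0^*$ from Theorem~\ref{T:AS} completes the proof of part~(ii), while part~(i) is the $\Delta t\to$ (no time step), $z\neq0$ specialization of regimes (a)--(b) with the extra $\co(1)\sigma(\tau_2-\tau_1)$ term from the $z$-dependence of the $\eta_i$.
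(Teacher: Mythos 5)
Your four-regime architecture matches the paper's (interactions, between interactions, time steps, discrete Gronwall), and your descriptions of the interaction-time and time-step regimes are broadly in line with what the paper does. However, there is a genuine gap in your treatment of regime (a), and it is not a minor one: you have misplaced the central difficulty of the whole proof. Between interaction times the variation of $\Phi_0$ is \emph{not} reducible to the $\co(1)\varepsilon$ speed errors of the fronts. Writing $\tfrac{d}{dt}\Phi_0$ as the sum over fronts of the terms $E_{\alpha,i}= W_{i}^{\alpha,r}|\eta_{i}^{\alpha,r}|(\lambda_{i}^{\alpha,r}-\dot x_{\alpha})- W_{i}^{\alpha,\ell}|\eta_{i}^{\alpha,\ell}|(\lambda_{i}^{\alpha,\ell}-\dot x_{\alpha})$, each summand contains leading contributions of size $|(v_2^{\alpha,\ell}-1)\,\eta_1^{\alpha,\ell}\eta_1^{\alpha,r}\sC_\alpha|$ and $|\eta_2^{\alpha,\ell}\sC_\alpha|$ — quadratic and cubic in wave strengths, with no factor of $\varepsilon$. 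These do not vanish for exact front speeds; they must be made nonpositive by the jumps of the weights $W_i$ across the front, i.e.\ by the negative terms $(1-e^{-\mathfrak D})$ and $(1-e^{\kappa_{2\ca1}|\sR_\alpha|})$ dominating the positive commutators. This is precisely where the factor $\PHI{p}$ in $\ca_{1,1}$, Conditions $(\Sigma_1)$–$(\Sigma_4)$ of Proposition~\ref{PropCond}, and the fourth-order wave-curve estimates of Appendices~\ref{AppB}–\ref{S:finerInteractions} are deployed; the $\varepsilon$ on the right-hand side of~\eqref{E:mainest} survives only from the rarefaction-shock cases where $|\sC_\alpha|\le\varepsilon$. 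Your claim that "the only source of variation is the mismatch between the true Rankine–Hugoniot speed and the approximate front speeds" would, if true, make the entire weight construction unnecessary; as stated, the bound $\tfrac{d}{dt}\Phi_0\le\co(1)\varepsilon$ is unjustified.

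Correspondingly, you attribute the fourth-order estimates and the choice of the $\kappa_{i\ca j}$ to the interaction-time analysis. In the paper that regime is comparatively soft: one only needs $\Delta\ca_{i,j}\le a|\Delta\cg|$ case by case (using the second/third-order interaction estimates of Lemma~\ref{L:stimeinter} and the decrease of $V$ at $1$-shock/$1$-rarefaction cancellations), and then the single condition $\kappa_\cg>2a\max\kappa_{i\ca j}$. Your identification of the $2$–$1$ interaction crossing $\{p=1\}$, where $\ca_{1,1}$ may grow by $|p_\beta-1||\sR_\alpha|\approx\mu|\sR_\alpha\sR_\beta|$ and is absorbed by the drop of $\cq$, is correct and is indeed the one genuinely new interaction case. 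To repair the proposal you would need to redo regime (a) along the lines of \S\ref{Ss:nointeractiontimesN}: the reduction to shock curves, the separate estimates of $E_{\alpha,1}$ and $E_{\alpha,2}$ for $1$-waves and $2$-waves, and the sign conditions $I_1,I_2,J_1,J_2<0$ that fix the constants.
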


The estimate~\eqref{Phi0est3} is precisely the estimate~\eqref{gammaest2} stated in Section~\ref{intro},
with $\Phi_0$ in place of $\Phi$.
A proof of Theorem~\ref{stability-Phy} will be established in Section~\ref{S4old}.
Relying on Theorem~\ref{stability-Phy}-(i), one can easily derive the existence of a Lipschitz continuous 
semigroup of solutions of the homogeneous system~\eqref{S1system-hom}.
\begin{theorem}
\label{exist-hom-smgr}
Given $M_0>0$, there exist 
$\delta_0, \delta_p, \delta^*_0, \delta_p^*, M^*_0,  L>0$ and a unique (up to the domain) semigroup map
\begin{equation}
\label{E:sem}
\mathcal{S}: [0,+\infty)\times \mathcal{D}_{0}\to \mathcal{D}^*_{0},
\qquad\qquad
(\tau,\overline u)\mapsto \mathcal{S}_\tau \overline u\,,
\end{equation}
with $\mathcal{D}_{0}\doteq \mathcal{D}(M_0 , \delta_0 , \delta_p)$, 
$\mathcal{D}^*_{0}\doteq \mathcal{D}(M^*_0,  \delta^*_0, \delta_p^*)$
domains defined as in~\eqref{Domain-def1}, which enjoys the following properties:
\vspace{-5pt}
\begin{itemize}
\item[(i)] $\cs_{\tau_2}\big(\cs_{\tau_1}\, \overline u\big)\in \mathcal{D}^*_0\qquad\forall~
\overline u\in\mathcal{D}_{0},\ \
\forall~\tau_1, \tau_2\geq 0;$
\item[(ii)] $\cs_{0} \,\overline u= \overline u,\quad \cs_{\tau_1+\tau_2}\, u=\cs_{\tau_2}\big(\cs_{\tau_1}\, \overline u\big)\qquad
\forall~\overline u\in\mathcal{D}_{0},\ \ \forall~\tau_1, \tau_2\geq 0;$
\item[(iii)] $\big\Vert \cs_{\tau_2} \overline u - \cs_{\tau_1} \overline v \big\Vert_{{\bf L}^{1}}\leq L\cdot (|\tau_1-\tau_2|+\lVert \overline u-\overline v\rVert_{{\bf L}^{1}})\qquad
\forall~\overline u,\,\overline v \in\mathcal{D}_{0},\ \ \forall~\tau_1, \tau_2\geq 0;$
\item[(iv)] For any $\overline u\doteq (\overline h, \overline p)\in\mathcal{D}_{0}$, the map $\big(h(x,\tau),p(x,\tau)\big)\doteq 
 \mathcal{S}_\tau\, \overline u(x)$ provides an entropy weak solution of the Cauchy problem~\eqref{S1system-hom},
~\eqref{S1system-data}. Moreover, $\mathcal{S}_\tau\, \overline u(x)$ coincides with the unique limit of front tracking approximations.
 \item[(v)] If $\overline u\in \mathcal{D}_{0}$ is piecewise constant, then for $\tau$ sufficiently small $u(\,\cdot\,,\tau)\doteq 
 \cs_\tau\,\overline u$ coincides with the solution of the Cauchy problem~\eqref{S1system-hom},~\eqref{S1system-data} obtained
 by piecing together the entropy solutions of the Riemann problems determined by the jumps of~$\overline u$. 
\end{itemize}
\end{theorem}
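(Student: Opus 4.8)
The plan is to derive Theorem~\ref{exist-hom-smgr} as a by-product of the stability estimate in Theorem~\ref{stability-Phy}-(i) together with the a-priori $BV$ bounds from Theorem~\ref{T:AS}, following the scheme of~\cite{Bre}. First I would fix $M_0>0$ and let $\delta_0,\delta_p,\delta^*_0,\delta^*_p,M^*_0$ be the constants furnished by Theorem~\ref{T:AS} and Theorem~\ref{stability-Phy}, so that the front-tracking construction of Subsection~\ref{Ss:Rsolv-appsol} produces, for every piecewise constant $\overline u\in\mathcal{D}_0\doteq\mathcal{D}(M_0,\delta_0,\delta_p)$ and every $\varepsilon>0$, an $\varepsilon$-approximate solution $u^\varepsilon(\,\cdot\,,t)$ of~\eqref{S1system-hom} which stays in $\mathcal{D}^*_0\doteq\mathcal{D}(M^*_0,\delta^*_0,\delta^*_p)$ and takes values in $[0,\delta^*_0]\times[p^*_0,p^*_1]$ for all $t\geq0$. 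Applying Theorem~\ref{stability-Phy}-(i) with $\er\equiv0$ to the pair $(u^\varepsilon, u^{\varepsilon'})$ of two such approximate solutions with the same piecewise constant initial datum $\overline u$, and using the equivalence~\eqref{S4-PhiL1} of $\Phi_0$ with the ${\bf L}^1$-distance, I get
\begin{equation*}
\big\|u^\varepsilon(\,\cdot\,,t)-u^{\varepsilon'}(\,\cdot\,,t)\big\|_{{\bf L}^1}
\leq C_0\cdot W^*\cdot\Big(\Phi_0\big(u^\varepsilon(\,\cdot\,,0),u^{\varepsilon'}(\,\cdot\,,0)\big)+C_1(\varepsilon+\varepsilon')\,t\Big),
\end{equation*}
which tends to $0$ as $\varepsilon,\varepsilon'\to0$ since the initial data coincide. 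Hence $\{u^\varepsilon(\,\cdot\,,t)\}$ is Cauchy in ${\bf L}^1_{loc}$, uniformly on compact time intervals, and the limit $\cs_t\overline u\doteq\lim_{\varepsilon\to0}u^\varepsilon(\,\cdot\,,t)$ is well defined and independent of the sequence of approximations; by the a-priori bounds it lies in $\mathcal{D}^*_0$, giving (i). The limit is an entropy weak solution of~\eqref{S1system-hom},~\eqref{S1system-data} by the standard front-tracking convergence argument already invoked after Theorem~\ref{T:AS}, which is part of (iv).

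Next I would establish the Lipschitz continuity (iii) on the dense set of piecewise constant data. Given two piecewise constant $\overline u,\overline v\in\mathcal{D}_0$ and two front-tracking approximations $u^\varepsilon,v^\varepsilon$ with $\|u^\varepsilon(\,\cdot\,,0)-\overline u\|_{{\bf L}^1},\|v^\varepsilon(\,\cdot\,,0)-\overline v\|_{{\bf L}^1}\leq\varepsilon$, Theorem~\ref{stability-Phy}-(i) with $\er\equiv0$ and~\eqref{S4-PhiL1} yield, for every $\tau\geq0$,
\begin{equation*}
\big\|u^\varepsilon(\,\cdot\,,\tau)-v^\varepsilon(\,\cdot\,,\tau)\big\|_{{\bf L}^1}
\leq C_0 W^*\big(C_0 W^*\|u^\varepsilon(\,\cdot\,,0)-v^\varepsilon(\,\cdot\,,0)\|_{{\bf L}^1}+C_1(2\varepsilon)\tau\big),
\end{equation*}
and letting $\varepsilon\to0$ gives $\|\cs_\tau\overline u-\cs_\tau\overline v\|_{{\bf L}^1}\leq(C_0W^*)^2\|\overline u-\overline v\|_{{\bf L}^1}$. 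Combined with the uniform finite propagation speed and the ${\bf L}^\infty$ bound on the wave speeds (from~\eqref{strict-hyp} and the compactness of $K$), which control $\|\cs_{\tau_2}\overline u-\cs_{\tau_1}\overline u\|_{{\bf L}^1}\leq \co(1)\,\mathrm{TotVar}\{\overline u\}\,|\tau_2-\tau_1|\leq \co(1)M^*_0|\tau_2-\tau_1|$, this establishes (iii) with $L\doteq(C_0W^*)^2+\co(1)M^*_0$ on piecewise constant data. Since such data are dense in $\mathcal{D}_0$ and the estimate in (iii) is uniform, the map $(\tau,\overline u)\mapsto\cs_\tau\overline u$ extends uniquely by continuity to all of $[0,+\infty)\times\mathcal{D}_0$, with values in the ${\bf L}^1$-closure $\mathcal{D}^*_0$, retaining (i) and (iii). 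The semigroup identity (ii) holds on piecewise constant data because the front-tracking construction restarted at time $\tau_1$ from the (piecewise constant) profile $u^\varepsilon(\,\cdot\,,\tau_1)$ differs from the original continuation only by a quantity vanishing as $\varepsilon\to0$ — this uses the uniqueness of the limit just proved — and then extends to all of $\mathcal{D}_0$ by continuity of $\cs$; property (v) is immediate from the construction, since for $\tau$ small no interactions between waves issuing from distinct jumps of $\overline u$ have occurred, so the front-tracking solution is exactly the juxtaposition of the Riemann solvers and the limit as $\varepsilon\to0$ recovers the exact Riemann solutions (which are piecewise Lipschitz here, as no composite waves arise).

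Finally, uniqueness of the semigroup ``up to the domain'' follows from property (v) together with the Lipschitz continuity (iii): any two semigroups satisfying (i)--(v) must agree on piecewise constant data for short times by (v), hence for all times by the semigroup property (ii), and hence everywhere on $\mathcal{D}_0$ by density and (iii); this is the standard argument that a Lipschitz semigroup is determined by its local action on piecewise constant data (see~\cite[\S~8]{Bre}). I expect the only genuinely delicate point to be verifying the semigroup property (ii) at the level of approximations, i.e.\ controlling the ${\bf L}^1$-error between the approximate solution on $[\tau_1,\tau_1+\tau_2]$ obtained by continuing $u^\varepsilon$ and the one obtained by re-running the front-tracking algorithm from $u^\varepsilon(\,\cdot\,,\tau_1)$: this requires that the perturbation introduced at time $\tau_1$ (re-discretization of rarefaction fans and re-setting of wave speeds within the tolerance $\varepsilon$) be absorbed by the error term $C_1\varepsilon(\tau_2-\tau_1)$ in~\eqref{Phi0est1}. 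Everything else is a routine completion/density argument once Theorem~\ref{stability-Phy}-(i) is in hand.
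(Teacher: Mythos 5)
Your proposal is correct and follows essentially the same route as the paper: both rest on applying Theorem~\ref{stability-Phy}-(i) with $\er\equiv 0$ together with the equivalence~\eqref{S4-PhiL1} to show that front-tracking approximations form a Cauchy sequence in ${\bf L}^1$, define $\cs_\tau$ as the unique limit, read off the Lipschitz dependence from the same estimate, and delegate the semigroup identity, consistency with Riemann solutions, and uniqueness to the standard arguments of~\cite{Bre2,BB} (the paper works directly with approximations whose initial data converge to a general $\overline u\in\mathcal{D}_0$, whereas you first treat piecewise constant data and then extend by density, which is an equivalent organization). No gaps.
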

\begin{proof}
The proof is entirely similar to~\cite[Proof of Theorem~2]{bly}. 
For sake of clarity, we provide it here. 
Let $\delta_0, \delta_p, \delta^*_0, \delta_p^*, M^*_0, p^*_0, p_1^*$ be the constants provided by Theorem~\ref{stability-Phy}.
Given $\overline u\in \mathcal{D}_{0}$, consider a sequence $\{u_m\}_m$ of $\varepsilon_m$-front tracking approximate solutions to~\eqref{S1system-hom} with values in $[0,\delta^*_0] \times [p^*_{0}, p^*_1]$,
with initial data $u_m(0)\in \mathcal{D}_{0}$, and such that
\begin{equation}
\label{indata-eps-conv-1}
\lim_{m\to\infty} \big\|u_m(0)-\overline u\big\|_{\bf L^1}=0\,.
\end{equation}
Then, assuming that $\{\varepsilon_m\}_m$ is decreasing to zero,
relying on~\eqref{S4-PhiL1} with $u=u_m$, $v=u_n$, $\er=0$, 
and applying~\eqref{Phi0est1} with $\er=0$, $\sigma=0$, we find 
\begin{equation}
\label{lip-est-appr-hom-1}
\begin{aligned}
\big\|u_m(\tau)-u_n(\tau)\big\|_{\bf L^1} &\leq C_0 \cdot 
\Phi_0\big(u_m(\tau), u_n(\tau)\big)
\\
&\leq C_0 \cdot \Phi_0\big(u_m(0), u_n(0)\big)+
C_0\,C_1\cdot \tau\cdot\varepsilon_m
\\
&\leq C_0^2\, W^* \cdot \big\|u_m(0)-u_n(0)\big\|_{\bf L^1}+
C_0\,C_1\cdot \tau\cdot\varepsilon_m\,,
\end{aligned}
\end{equation}
for $m\leq n$ and for all $\tau>0$.
Thus, it follows from~\eqref{indata-eps-conv-1}--\eqref{lip-est-appr-hom-1} that
$\{u_m(t)\}_m$ is a Cauchy sequence in~${\bf L^1}$ for all $t>0$, and hence it converges to a unique
limit
\begin{equation}
\label{hom-sem-lim-def}
\cs_\tau\, \overline u \doteq \lim_{m\to\infty} u_m(\tau)\,.
\end{equation}
With the same arguments of the analysis in~\cite{AS},
and by the
uniqueness of the limit~\eqref{hom-sem-lim-def}, one then deduces that $\cs_\tau\, \overline u\in
\mathcal{D}^*_0$ for all $\tau>0$ (cfr.~Theorem~\ref{T:AS}), and that properties (i)-(ii), (iv) are verified.
Next, the Lipschitz continuity property of $\cs_\tau$ is obtained as in~\cite{bly}.
Namely, given $\overline u, \overline v\in \mathcal{D}_{0}$, consider two sequences $\{u_m\}_m$,
$\{v_m\}_m$ of $\varepsilon_m$-front tracking approximate solutions to~\eqref{S1system-hom} with values in $[0,\delta^*_0] \times [p^*_{0}, p^*_1]$,
with initial data $u_m(0), v_m(0)\in \mathcal{D}_{0}$, and such that
\begin{equation}
\label{indata-eps-conv-2}
\lim_{m\to\infty} \varepsilon_m= 0\,, \qquad\quad
\lim_{m\to\infty} \big\|u_m(0)-\overline u\big\|_{\bf L^1}=
\lim_{m\to\infty} \big\|v_m(0)-\overline v\big\|_{\bf L^1}=0\,.
\end{equation}
Again, relying on~\eqref{S4-PhiL1} with $u=u_m$, $v=v_m$, $\er=0$, 
and applying~\eqref{Phi0est1} with $\er=0$, $\sigma=0$, we derive
\begin{equation}
\label{lip-est-appr-hom-2}
\begin{aligned}
\big\|u_m(\tau)-v_m(\tau)\big\|_{\bf L^1} &\leq C_0 \cdot 
\Phi_0\big(u_m(\tau), v_m(\tau)\big)
%
\\
&\leq C_0^2\, W^* \cdot \big\|u_m(0)-u_n(0)\big\|_{\bf L^1}+
C_0\,C_1\cdot \tau\cdot\varepsilon_m\,.
\end{aligned}
\end{equation}
Taking the limit as $m\to\infty$ in~\eqref{lip-est-appr-hom-2},
and relying on~\eqref{hom-sem-lim-def}--\eqref{indata-eps-conv-2}, we thus obtain
\begin{equation}
\label{lip-est-appr-hom-3}
\big\|\cs_\tau\, \overline u-\cs_\tau\, \overline v\big\|_{\bf L^1} \leq C_0^2\, W^* \cdot \big\|\overline u-\overline v\big\|_{\bf L^1}\,.
\end{equation}
This yields (iii) since the Lipschitz continuity with respect to time 
is a standard property enjoyed by limits of front tracking solutions 
 with finite speed of propagation and uniformly bounded
total variation (cfr~\cite[Section 7.4]{Bre2}). 
Finally, the consistency with the solutions of the Riemann problem
and with limits of front tracking approximations (v) as well as the uniqueness of the semigroup map 
can be established by
standard arguments in~\cite{Bre2}, \cite{BB} that 
remain valid for solutions with large total variation. This concludes the proof.
\end{proof}
\begin{remark}
Notice that the image of the map $\cs_t$ in~\eqref{E:sem} is the same for every $t>0$,
but the domain~$\mathcal{D}_{0}$ is not positively invariant under the action of $\cs$.
This is due to the fact that, although one can establish ${\bf L}^\infty$, ${\bf L}^1$ 
and BV bounds on the solutions of~\eqref{S1system-hom} which are uniform in time, it turns out that the
${\bf L}^\infty$, ${\bf L}^1$- norms as well as the total variation of the solution 
(that appear in the definition of the domain~\eqref{Domain-def1}) may well increase in presence of interactions
(see the analysis in~\cite[Section 5]{AS}).
\end{remark}
Employing Theorem~\ref{stability-Phy}-(ii) and Theorem~\ref{exist-hom-smgr}, 
we can now construct an approximate solution operator for the non homogeneous system~\eqref{S1system}
that depends Lipschitz continuously on the initial data, with a Lipschitz constant 
that grows exponentially in time.
\begin{theorem}\label{ThmPS5.4}
Given $M_0>0$, there exist $\delta_0, \delta_p, \delta^*_0, \delta^*_p, M^*_0>0$
so that the conclusions of Theorem~\ref{exist-hom-smgr} hold 
together with
the following. For all $s=\Delta t>0$ sufficiently small, setting $t_k\doteq k \Delta t = k\, s$, \, $k \in \mathbb{N}$, \
$g((h,p))=\big(0,(p-1)h\big)$, 
and letting $\mathcal{D}_{0}\doteq \mathcal{D}(M_0 ,\delta_0, \delta_p )$, 
$\mathcal{D}^*_{0}\doteq \mathcal{D}(M^*_0, \delta^*_0, \delta^*_p)$
 be domains as in~\eqref{Domain-def1},
the map $(\tau,\overline u)\mapsto \mathcal{P}^s_\tau \,\overline u$ given by
%
\be{app-nonhom-flow-def}
\begin{aligned}
\mathcal{P}^s_{0} \,\overline u &= \overline u\qquad \overline u\in\mathcal{D}_{0}\,,
\\
\noalign{\smallskip}
\mathcal{P}^s_\tau\,\overline u &= \cs_{\tau-t_k}\cp^s_{t_k}\,\overline u\qquad\forall~\tau\in (t_k, t_{k+1}),
\quad k\in\mathbb{N}\,,\quad \overline u\in\mathcal{D}_{0}\,,
\\
\noalign{\smallskip}
\mathcal{P}^s_{t_k}\,\overline u &= \mathcal{P}^s_{t_k-}\,\overline u + s\cdot g\big( \mathcal{P}^s_{t_k-}\,\overline u\big)
 \quad \text{with} \quad \displaystyle{\mathcal{P}^s_{t_k-}\,\overline u\doteq \lim_{\ \tau\to t_k-} \!\!\mathcal{P}^s_{\tau}\,\overline u=\cs_s \cp^s_{t_{k-1}}\,\overline u},
\quad k\in\mathbb{N}\,,\quad \overline u\in\mathcal{D}_{0}\,,
\end{aligned}
\ee
is well defined for all $\tau>0$
and takes values in $\mathcal{D}^*_{0}$. Moreover, 
there exist $L',C_3,C_4>0$ so that
the following properties hold.
%
\begin{itemize}
\item[(i)] $\mathcal{P}^s_{\tau_2}\big(\mathcal{P}^s_{\tau_1} \,\overline u\big)\in \mathcal{D}^*_0\qquad\forall~
\overline u\in\mathcal{D}_{0},\ \
\forall~\tau_1, \tau_2\geq 0\,;$
\item[(ii)]
$\big\|\cp_{\tau_1}^{s} \cp_{\tau_2}^{s} \,\overline u-\cp_{\tau_1+\tau_2}^{s} \,\overline u\big\|_{L^1}\le C_3\cdot s\qquad\forall~
\overline u\in\mathcal{D}_{0},\ \
\forall~\tau_1, \tau_2\geq 0\,;$
\item[(iii)]
$\big\|\cp_{\tau_1}^s \,\overline u-\cp_{\tau_2}^s \,\overline u\big\|_{L^1} \le
 L'\cdot |\tau_2-\tau_1|+C_3\cdot s\qquad\forall~
\overline u \in\mathcal{D}_{0},\ \
\forall~\tau_1, \tau_2\geq 0\,;$
\item[(iv)]
$\big\|\cp_{\tau}^s \,\overline u-\cp_{\tau}^s \,\overline v\big\|_{L^1} \le
 L'\cdot e^{C_4\cdot \tau}
 \cdot\|\overline u - \overline v\|_{\bf L^1}
 \qquad\forall~\overline u, \overline v\in\mathcal{D}_{0},\ \
\forall~\tau>0\,.$
\end{itemize}
%
\end{theorem}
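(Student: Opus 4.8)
The plan is to realize $\mathcal{P}^s$ as the $\varepsilon\to0$ limit of the $s$-$\varepsilon$-approximate solutions of Subsection~\ref{Ss:Rsolv-appsol} and to transfer to it the estimates of Theorem~\ref{stability-Phy}(ii). First I would fix $M_0$ small enough, relative to the constants $M_0^*,\delta_0^*,\delta_p^*$ of Theorem~\ref{stability-Phy}, so that the a-priori bounds of~\cite{AS} (Theorem~\ref{T:AS}) apply: every $s$-$\varepsilon$-approximate solution issuing from data in $\mathcal{D}_0$ stays for all times --- across the source updates~\eqref{updated} as well --- inside the compact set $K$ of~\eqref{E:reeeeeeeterete} and obeys the uniform bounds~\eqref{unifbvbound},~\eqref{unifbvboundLinftypsol}, hence lies in $\mathcal{D}^*_0$. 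Since on each interval $(t_k,t_{k+1})$ the $\varepsilon$-front tracking approximants of~\eqref{S1system-hom} converge in $L^1$ to $\cs$ (Theorem~\ref{exist-hom-smgr}), while the update $w\mapsto w+s\,g(w)$ is $L^1$-Lipschitz on bounded sets, the recursion~\eqref{app-nonhom-flow-def} is consistent with this limit; applying $\cs$ on the enlarged initial domain $\mathcal{D}(M_0^*,\delta_0^*,\delta_p^*)$ so that $\cs_{\tau-t_k}$ may act on $\mathcal{P}^s_{t_k}\overline u\in\mathcal{D}^*_0$, one gets that $\mathcal{P}^s_\tau$ is well defined on $\mathcal{D}_0$, takes values in $\mathcal{D}^*_0$, and equals $\lim_{\varepsilon\to0}$ of the approximate solutions. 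This proves~(i).

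For~(iv) I would apply~\eqref{Phi0est3} to two $s$-$\varepsilon$-approximate solutions with initial data converging to $\overline u,\overline v$, combine it with the equivalence~\eqref{S4-PhiL1} of $\Phi_0$ and the $L^1$-distance, and let $\varepsilon\to0$ (the $\co(1)\varepsilon\,\Delta t$ term vanishes), obtaining $\|\mathcal{P}^s_{t_k}\overline u-\mathcal{P}^s_{t_k}\overline v\|_{L^1}\le C_0^2 W^*(1+C_2 s)^{k}\|\overline u-\overline v\|_{L^1}\le C_0^2 W^*e^{C_2 t_k}\|\overline u-\overline v\|_{L^1}$; for $\tau\in(t_k,t_{k+1})$ one writes $\mathcal{P}^s_\tau=\cs_{\tau-t_k}\circ\mathcal{P}^s_{t_k}$ and uses the time-uniform Lipschitz constant of $\cs$ from Theorem~\ref{exist-hom-smgr}(iii), which gives~(iv) with suitable $L',C_4$. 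For~(iii), on each cell $(t_k,t_{k+1})$ the map $\tau\mapsto\mathcal{P}^s_\tau\overline u$ is a limit of front-tracking solutions of~\eqref{S1system-hom} with finite propagation speed and total variation $\le M_0^*$, hence Lipschitz in $L^1$ with a constant depending only on the system and $M_0^*$, while at each $t_k$ the jump is $\|s\,g(\mathcal{P}^s_{t_k-}\overline u)\|_{L^1}=\co(1)\,s$ by the $L^\infty$ and $L^1$ bounds defining $\mathcal{D}^*_0$; since at most $|\tau_2-\tau_1|/s+1$ time steps lie in $(\tau_1,\tau_2)$, summing the cell contributions and the jumps gives~(iii).

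The heart of the proof is~(ii). The key observation is that $\mathcal{P}^s$ is an \emph{exact} semigroup along the grid, $\mathcal{P}^s_{t_j+\sigma}\overline u=\mathcal{P}^s_\sigma(\mathcal{P}^s_{t_j}\overline u)$, because restarting at $t_j$ regenerates the same schedule of source updates. Writing $\tau_2=t_j+r$ with $0\le r<s$ and $v\doteq\mathcal{P}^s_{t_j}\overline u$, property~(ii) reduces to bounding $\|\mathcal{P}^s_{\tau_1}(\cs_r v)-\mathcal{P}^s_{\tau_1+r}(v)\|_{L^1}$ by $C_3 s$; here the two sides are operator-splitting flows issuing from the same state $v$, which agree on $[0,s)$ and whose splitting grids thereafter differ only by the offset $r<s$ (and by at most one extra source update near the final time). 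Since each source increment has $L^1$-norm $\co(1)\,s$, the homogeneous segments propagate at finite speed with uniformly bounded total variation, and both curves remain in the compact invariant set $K$, one controls the $L^1$-discrepancy by following the effect of displacing the splitting grid cell by cell, in the spirit of the operator-splitting analysis of~\cite{DH,AG}. I expect this comparison --- showing that shifting the splitting grid by less than one time step perturbs the solution by only $\co(1)\,s$ in $L^1$ --- to be the delicate step: a crude iteration of~(iv) would introduce an exponential-in-time factor, so one has to exploit the finite speed of propagation together with the uniform BV bounds of~\cite{AS}. Once~(i)--(iv) are in place the statement follows.
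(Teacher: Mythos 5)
Your proposal is correct and follows essentially the same route as the paper: well-definedness and (i) come from the a-priori bounds of~\cite{AS}, (iv) from combining~\eqref{Phi0est2}--\eqref{Phi0est3} with the equivalence~\eqref{S4-PhiL1} and letting $\varepsilon_m\to 0$, and (ii)--(iii) from the operator-splitting comparison with displaced grids — exactly the step the paper disposes of by citing \cite{AG} (Propositions~3.2 and~4.1) together with property~(iv) and Theorem~\ref{exist-hom-smgr}. The one caveat concerns your aside on (ii): the paper's (i.e.\ \cite{AG}'s) argument propagates the $\co(1)s^2$ per-cell commutator errors precisely by means of the Lipschitz bound (iv), so the constant is uniform only on bounded time intervals; finite speed of propagation does not remove that amplification, but nothing more is needed for the later use of (ii) in Section~\ref{S6}.
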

\begin{proof} 
Given $M_0>0$, let $\delta_0, \delta_p \delta^*_0, \delta_p^*,M^*_0, p^*_0, p_1^*>0$ be constants 
so that the conclusions of Theorem~\ref{T:AS}, Theorem~\ref{stability-Phy} and Theorem~\ref{exist-hom-smgr}
are verified. By the analysis in~\cite{AS} it follows that, taking the time step $s$ sufficiently small, the approximate operator 
$\cp^s$ in~\eqref{app-nonhom-flow-def} is well defined for all $\tau>0$, $\overline u\in \mathcal{D}_0$,
and satisfies property (i). 
Moreover, consider a sequence $\{u^{s,\varepsilon_m}\}_m$ of $s$-$\varepsilon_m$-approximate solutions 
of~\eqref{S1system}
constructed as in subsection~\ref{Ss:Rsolv-appsol},
with values in $[0,\delta^*_0] \times [p^*_{0}, p^*_1]$,
with initial data $u^{s,\varepsilon_m}(0)\in \mathcal{D}_{0}$, and such that
\begin{equation}
\label{indata-eps-conv-3}
\lim_{m\to\infty} \varepsilon_m= 0\,, \qquad\quad
\lim_{m\to\infty} \big\|u^{s,\varepsilon_m}(0)-\overline u\big\|_{\bf L^1}=0\,.
\end{equation}
Relying on Theorem~\ref{exist-hom-smgr} and on the Lipshitz continuity of the source term $g((h,p))$ 
by the definition~\eqref{app-nonhom-flow-def} it follows that
\begin{equation}
\label{nonhom-sem-lim}
\cp^s_\tau\, \overline u = \lim_{m\to\infty} u^{s,\varepsilon_m}(\tau)
\quad\forall~\tau>0\,.
\end{equation}
Given $\overline u, \overline v\in \mathcal{D}_0$, consider now
two sequences $\{u^{s,\varepsilon_m}\}_m$,
$\{v^{s,\varepsilon_m}\}_m$ of $s$-$\varepsilon_m$-approximate solutions 
of~\eqref{S1system}
 with values in $[0,\delta^*_0] \times [p^*_{0}, p^*_1]$,
with initial data $u^{s,\varepsilon_m}(0), v^{s,\varepsilon_m}(0)\in \mathcal{D}_{0}$, and such that
\begin{equation}
\label{indata-eps-conv-4}
\lim_{m\to\infty} \varepsilon_m= 0\,, \qquad\quad
\lim_{m\to\infty} \big\|u^{s,\varepsilon_m}(0)-\overline u\big\|_{\bf L^1}=
\lim_{m\to\infty} \big\|v^{s,\varepsilon_m}(0)-\overline v\big\|_{\bf L^1}=0\,.
\end{equation}
Then, relying on~\eqref{S4-PhiL1} with $u=u^{s,\varepsilon_m}$, $v=v^{s,\varepsilon_m}$, $\er=0$, 
and applying~\eqref{Phi0est2}--\eqref{Phi0est3}, we derive
\begin{equation}
\label{lip-est-appr-nonhom-1}
\begin{aligned}
\big\|u^{s,\varepsilon_m}(\tau)-v^{s,\varepsilon_m}(\tau)\big\|_{\bf L^1} &\leq C_0 \cdot 
\Phi_0\big(u^{s,\varepsilon_m}(\tau), v^{s,\varepsilon_m}(\tau)\big)
\\
&\leq C_0 \cdot e^{C_2\cdot\tau}\cdot \Phi_0\big(u^{s,\varepsilon_m}((0), v^{s,\varepsilon_m}((0)\big)+
C_0\,C_1\cdot e^{C_2\cdot\tau}\cdot \tau\cdot\varepsilon_m
\\
&\leq C_0^2\cdot W^* \cdot e^{C_2\cdot\tau}\cdot \big\|u^{s,\varepsilon_m}(0)-u^{s,\varepsilon_m}(0)\big\|_{\bf L^1}+
\frac{C_0\,C_1}{C_2}\cdot (e^{C_2\cdot\tau}-1) \cdot\varepsilon_m\,.
\end{aligned}
\end{equation}
Taking the limit as $m\to\infty$ in~\eqref{lip-est-appr-nonhom-1},
and relying on~\eqref{nonhom-sem-lim}--\eqref{indata-eps-conv-4}, we thus obtain
\begin{equation}
\label{llip-est-appr-nonhom-2}
\big\|\cp^s_\tau\, \overline u-\cp^s_\tau\, \overline v\big\|_{\bf L^1} \leq C_0^2\cdot W^* \cdot e^{C_2\cdot\tau}\cdot \big\|\overline u-\overline v\big\|_{\bf L^1}\,,
\end{equation}
which proves property (iv).
To conclude the proof, observe that properties (ii)-(iii) can be derived with entirely similar arguments to
the proofs of~\cite[Proposition 3.2, Proposition 4.1]{AG}, relying on property (iv) and on Theorem~\ref{exist-hom-smgr}.
\end{proof}
Observe that, given $\overline u\in\mathcal{D}_{0}$, if we consider a sequence $\{s_n\}_n$
decreasing to zero, the limit function $\lim_{n\to\infty} \cp^{s_n}_\tau\,\overline u$
may be not well defined.
In fact, the estimates provided by Theorem~\ref{ThmPS5.4} do not guarantee the uniqueness
of such a limit. 
However, one can show that it is possible to extract a subsequence $\{s_{n_k}\}_k$ 
so that $\{\cp^{s_{n_k}}_\tau\,\overline u(x)\}_k$ converges, for all $\tau>0$ and a.e. $x\in\R$, to a function $u(x,\tau)$
which is an entropy weak solution of~\eqref{S1system},
~\eqref{S1system-data}.
 Next, relying on 
 Theorem~\ref{stability-Phy}-(i), and
 applying a uniqueness result on quasi differential equations in metric spaces,
 we derive as in~\cite{AG} the uniqueness of solutions to the Cauchy problem~\eqref{S1system},~\eqref{S1system-data},
 In turn, this implies the convergence of the complete sequence $\{\cp^{s_n}_\tau\,\overline u\}_n$.
 and thus defines a solution operator $\cp_\tau$ for~\eqref{S1system} as stated in the next theorem
 whose proof is given in Section~\ref{S6}.
\begin{theorem}
\label{exist-nonhom-smgr-1}
Given $M_0,>0$, 
 there exist $\delta_0, \delta_p, \delta^*_0, \delta_p^*, M^*_0, L>0$
so that the conclusions of Theorem~\ref{exist-hom-smgr} hold together with
the following.
There exist a map
\begin{equation}
\label{E:semP}
\mathcal{P}: [0,+\infty)\times \mathcal{D}_{0}\to \mathcal{D}^*_0,
\qquad\qquad
(\tau,\overline u)\mapsto \mathcal{P}_\tau \overline u\,,
\end{equation}
%
(with $\mathcal{D}_{0}, \mathcal{D}^*_{0}$ domains as in~\eqref{Domain-def1}),
which enjoys the properties:
\vspace{-5pt}
\begin{itemize}
\item[(i)] $\mathcal{P}_{\tau_1}\big(\mathcal{P}_{\tau_2} \,\overline u\big)\in \mathcal{D}^*_0\qquad\forall~
\overline u\in\mathcal{D}_{0},\ \
\forall~\tau_1,\tau_2\geq 0;$
\item[(ii)] $\mathcal{P}_{0} \overline u= \overline u,\quad \mathcal{P}_{\tau_1+\tau_2} u=\mathcal{P}_{\tau_2}\big(\mathcal{P}_{\tau_1} \overline u\big)\qquad\forall~
\overline u\in\mathcal{D}_{0},\ \
\forall \tau_1,\tau_2\geq 0;$
\item[(iii)] $\big\Vert \mathcal{P}_{\tau_1} \overline u - \mathcal{P}_{\tau_2} \overline v \big\Vert_{{\bf L}^{1}}\leq L'
 \big(e^{C_4\cdot \tau_2}\cdot\lVert \overline u-\overline v\rVert_{{\bf L}^{1}}+(\tau_2-\tau_1)\big)
\qquad
\forall~\overline u,\,\overline v \in\mathcal{D}_{0},\ \ \forall~\tau_2>\tau_1>0\,,$\\
($L', C_4>0$ being the constants provided by Theorem~\ref{ThmPS5.4});
\item[(iv)] For any $\overline u\doteq (\overline h, \overline p)\in\mathcal{D}_{0}$, the map $\big(h(x,\tau),p(x,\tau)\big)\doteq 
 \mathcal{P}_\tau \overline u(x)$ provides an entropy weak solution of the Cauchy problem~\eqref{S1system},
~\eqref{S1system-data}.
\end{itemize}
\end{theorem}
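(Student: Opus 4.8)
The plan is to obtain $\mathcal{P}_\tau$ as the limit, as the time step $s\to0$, of the approximate operators $\mathcal{P}^s_\tau$ provided by Theorem~\ref{ThmPS5.4}, and to pin this limit down uniquely by combining a compactness argument with the abstract uniqueness result for quasidifferential equations in metric spaces of~\cite{Bre2}, in the spirit of~\cite{AG}. First I would fix $\overline u\in\mathcal{D}_0$ and a sequence $s_n\downarrow 0$: by Theorem~\ref{ThmPS5.4}(i) the curves $\tau\mapsto\mathcal{P}^{s_n}_\tau\overline u$ take values in $\mathcal{D}^*_0$, so they have total variation bounded by $M^*_0$ uniformly in $n$ and $\tau$, and by Theorem~\ref{ThmPS5.4}(iii) they are uniformly Lipschitz in time up to a vanishing error as $s_n\to0$. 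A standard application of Helly's theorem together with a diagonal extraction over a countable dense set of times then produces a subsequence along which $\mathcal{P}^{s_{n_k}}_\tau\overline u(x)\to u(x,\tau)$ for every $\tau>0$ and a.e.\ $x$, with $u(\cdot,\tau)\in\mathcal{D}^*_0$ (using that $\mathcal{D}^*_0$ is ${\bf L}^1$-closed, which already gives property~(i)); passing to the limit in the approximate equations, combined with a further diagonal argument in the front-tracking parameter $\varepsilon$, exactly as in~\cite{AS}, shows that $\big(h(x,\tau),p(x,\tau)\big)\doteq u(x,\tau)$ is an entropy weak solution of~\eqref{S1system},~\eqref{S1system-data}, which will yield property~(iv).

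The crucial step is to show that \emph{every} such subsequential limit $u$ satisfies the tangency estimate~\eqref{uniq-semigr-est-1}, with an arbitrary base time $\tau$ in place of $0$ and a constant uniform for $\tau$ in bounded intervals; that is,
\[
\Big\|u(\cdot,\tau+\theta)-\cs_\theta\,u(\cdot,\tau)-\theta\cdot g\big(u(\cdot,\tau)\big)\Big\|_{{\bf L}^1}=\co(1)\cdot\theta^2\qquad\text{as }\theta\to0^+,
\]
with $g$ the source increment of Theorem~\ref{ThmPS5.4}. To get this I would first prove the analogous bound at the level of $\mathcal{P}^s$: by the definition~\eqref{app-nonhom-flow-def}, $\mathcal{P}^s$ evolves by $\cs$ between consecutive time steps and is corrected by the Euler increment $s\,g(\cdot)$ at each $t_k$, so, telescoping the insertions of $(\mathrm{Id}+s\,g)$ over the at most $\theta/s+1$ time steps inside $(\tau,\tau+\theta]$ and controlling how $\cs$ transports each of the resulting small perturbations by means of the stability estimate~\eqref{Phi0est1} for $\Phi_z$ with a \emph{nonzero} auxiliary function $z$ — this is exactly the purpose of item~(II) of Subsection~\ref{Ss:Lyapfunct} — one collects the $\approx\theta/s$ increments of order $s$ into the term $\theta\,g\big(u(\cdot,\tau)\big)$ plus an error of order $\theta^2+\theta\,s$; here the Lipschitz dependence of $\cs$ on time and data (Theorem~\ref{exist-hom-smgr}(iii)) and of $g$ on the compact range are also used. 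Letting $s=s_{n_k}\to0$ removes the $\theta\,s$ term and gives the estimate for $u$.

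With this in hand, any subsequential limit $\tau\mapsto u(\cdot,\tau)$ is a curve in ${\bf L}^1$ that is Lipschitz in time, starts at $\overline u$, and is tangent at every time to the flow generated by the Lipschitz semigroup $\cs$ (Theorem~\ref{exist-hom-smgr}) perturbed by the Lipschitz vector field $g$; the uniqueness theorem for quasidifferential equations in metric spaces~\cite{Bre2}, together with Theorem~\ref{stability-Phy}(i), then forces such a curve to be uniquely determined by $\overline u$. Hence all subsequential limits coincide, the whole sequence $\{\mathcal{P}^{s_n}_\tau\overline u\}_n$ converges to a limit independent of the chosen $s_n\downarrow0$, and I would \emph{define} $\mathcal{P}_\tau\overline u$ to be this common limit, obtaining the map~\eqref{E:semP}. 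Properties~(i) and~(iv) come out of the construction; the semigroup identity~(ii) follows from the same uniqueness, since both $\tau\mapsto\mathcal{P}_{\tau_1+\tau}\overline u$ and $\tau\mapsto\mathcal{P}_\tau\big(\mathcal{P}_{\tau_1}\overline u\big)$ are curves tangent to the same quasidifferential equation with identical datum at $\tau=0$; and~(iii) follows by passing to the limit $m\to\infty$ and then $s\to0$ in Theorem~\ref{ThmPS5.4}(iv), which yields the exponential factor $e^{C_4\tau_2}$ and the constant $L'$, together with the usual Lipschitz-in-time control for limits of front-tracking approximations with finite propagation speed and uniformly bounded total variation.

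The hard part will be establishing the tangency estimate~\eqref{uniq-semigr-est-1}: upgrading the crude order-$\theta$ bound on $\mathcal{P}^s_{\tau+\theta}\overline u-\cs_\theta\,\mathcal{P}^s_\tau\overline u$ to a genuine first-order expansion with remainder of order $\theta^2$, uniformly down to arbitrarily small $\theta$. This rests precisely on the sharp stability estimate~\eqref{Phi0est1} for $\Phi_z$ with a nonzero perturbation $z$ — the very reason that functional was built — to quantify how the homogeneous flow $\cs$ displaces the accumulated source increments; once this is established, the abstract machinery of~\cite{Bre2} delivers the uniqueness of the limit, and hence the semigroup property, essentially for free.
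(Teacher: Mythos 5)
Your proposal follows essentially the same route as the paper: compactness of the approximate flows $\cp^{s}_\tau\overline u$ to extract a subsequential limit, the key tangency estimate $\|\cp_\theta^s u-\cs_\theta u-\theta g(u)\|_{{\bf L}^1}\le \co(1)\theta^2$ obtained by telescoping the Euler source increments over the time steps and controlling their transport with the nonzero-$z$ stability estimate~\eqref{Phi0est1} for $\Phi_z$, followed by the quasidifferential-equation uniqueness theorem of~\cite{Bre2} to identify all limits, yielding the semigroup property and the Lipschitz bound (iii) as limits of Theorem~\ref{ThmPS5.4}(ii)--(iv). This matches the structure of Section~\ref{S6} (Proposition~\ref{S6Prop1}, Theorem~\ref{S6ThmBre}, Theorem~\ref{S6Thm1}), so the proposal is correct and takes the same approach.
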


\bigskip

%
\begin{remark}
Notice that, although the source term of system~\eqref{S1system} is not dissipative,
relying on the global existence result established in~\cite{AS}, we construct an evolution operator
whose image $ \mathcal{D}^*_0$ is the same for every time $\tau>0$.
\end{remark}
%

%
%
\section{Basic interaction estimates}
\label{S:basicest}

We collect in the next lemma the interaction estimates on the change of strength of the wave fronts
of an approximate solution constructed as in Section~\ref{Ss:Rsolv-appsol} whenever an interaction between two fronts takes place
outside a time step. These estimates were established in~\cite[Lemma~3]{AS} and are sharper than
the classical ones for $2\times2$ systems of conservation or balance laws. We present here also a slight refinement
of the estimate in~\cite[Lemma~3]{AS} for the case of interactions between two fronts of the second characteristic family.

%
%

\begin{lemma}[Interaction Estimates]
\label{L:stimeinter}
Consider two interacting wavefronts, with left, middle and right states $(h^{\ell},p^{\ell})$, $(h^{m},p^{m})$,
$(h^{r},p^{r})$ before the interaction. Then, assuming that the sizes of the incoming fronts and of the two outgoing waves produced by this interaction are measured in Riemann coordinates, the followings hold true:
\begin{itemize}
\item[1-1] If the incoming fronts are 
two h-waves of sizes $\sR_{h}$, $\widetilde{\sR}_{h}$, then the sizes $\widehat\sR_{h}$ and $\widehat\sR_{p}$ of the outgoing h-wave and p-wave satisfy
\be{E:11interest}
\big|\widehat\sR_{h}-\sR_{h}-\widetilde{\sR}_{h}\big|+\big|\widehat\sR_{p}\big|\leq \co(1) 
\min\big\{|p^{\ell}-1\big|,\,|p^m-1\big|\big\}
\big(|\sR_{h}|+|\widetilde{\sR}_{h}|\big)\big|\sR_{h}\widetilde{\sR}_{h}\big|\;.
\ee

\item[1-2] If the incoming fronts are an h-wave and a p-wave of sizes $ \sR_{h}, \sR_{p}$, respectively, then the sizes $\widehat\sR_{h}$ and $\widehat\sR_{p}$ of the outgoing h-wave and p-wave satisfy 
\be{E:21interest}
\big|\widehat{\sR}_{h} -\sR_{h}\big|+\big|\widehat{\sR}_{p} -\sR_{p}\big|\leq \co(1) h_{\rm max}\cdot \big|\sR_{h}\sR_{p}\big|\;,
\ee
where $ h_{\text{max}}\doteq \max\{h^l,h^m,h^r\}$.
\item[2-2] If the incoming fronts are 
two p-waves of 
sizes $\sR_{p}$, $\widetilde\sR_{p}$, then the sizes $\widehat\sR_{h}$ and $\widehat\sR_{p}$ of the outgoing h-wave and p-wave satisfy
\be{E:22interest}
\big|\widehat\sR_{h}\big|+\big|\widehat\sR_{p}-\sR_{p}-\widetilde\sR_{p}\big|\leq \co(1) h^\ell\cdot
\big|\sR_{p}\widetilde\sR_{p}\big| \big(|\sR_{p}|+|\widetilde\sR_{p}|\big) \;.
\ee

\end{itemize}
\end{lemma}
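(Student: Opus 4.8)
The plan is to derive each of the three interaction estimates by a direct comparison of wave curves, exploiting the explicit formulas~\eqref{Suno}--\eqref{Sdue} for the Hugoniot curves and~\eqref{rhspeed-eigenvalue} for the Rankine--Hugoniot speeds. The basic mechanism is always the same: the outgoing Riemann problem is solved by composing two wave curves issuing from the left state $(h^\ell,p^\ell)$, and one measures the discrepancy between the incoming wave sizes and the outgoing ones by a Taylor expansion of the map that sends the middle and right states to the parameters along the outgoing curves. The point of Lemma~\ref{L:stimeinter} is that, because of the ``almost Temple'' geometry discussed in the introduction (rarefaction and Hugoniot curves through a point are ``almost'' straight lines near $\{p=1\}$ for the first family and near $\{h=0\}$ for the second), the leading error terms that a generic $2\times2$ system would produce actually vanish, and one gains an extra factor $\min\{|p^\ell-1|,|p^m-1|\}$ in case 1--1, an extra factor $h_{\max}$ in case 1--2, and an extra factor $h^\ell$ in case 2--2.

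First I would record the elementary structure of the wave curves needed: by~\eqref{Suno} the $1$-Hugoniot curve in $(h,p)$ is a straight line $h\mapsto(h^\ell+\gamma,\,p^\ell-(p^\ell-1)\gamma/(h^\ell+\gamma-s_1))$ whose slope in the $p$-direction is $O(|p^\ell-1|)$, so near $\{p=1\}$ it is nearly horizontal; by~\eqref{Sdue} the $2$-Hugoniot curve is likewise nearly vertical near $\{h=0\}$ with $h$-slope $O(h^\ell)$. Combined with the speed identities $s_1=\lambda_1(h^\ell+\gamma,p^\ell)$, $s_2=\lambda_2(h^\ell,p^\ell+\gamma)$, these give that along a $1$-wave the $p$-Riemann coordinate changes by $O(|p^\ell-1|)\cdot|\sR_h|$ and along a $2$-wave the $H$-Riemann coordinate changes by $O(h_{\max})\cdot|\sR_p|$. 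This is the quantitative expression of ``weak linear degeneracy'' and it is exactly what feeds the gained factors.

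Then I would treat the three cases in turn. For \textbf{1--1}: both incoming fronts lie on the same side of $\{p=1\}$ (the line $\{p=1\}$ is invariant and cannot be crossed by $1$-waves), so the composite $1$-curve stays on that side; one expands the composition of two $1$-Hugoniot maps and uses that, on that side, the curves are genuinely nonlinear but with curvature controlled by $|p-1|$, so the secondary $2$-wave $\widehat\sR_p$ and the defect in $\widehat\sR_h$ are both $O(|p^\ell-1|)\cdot(|\sR_h|+|\widetilde\sR_h|)\,|\sR_h\widetilde\sR_h|$ — this requires a \emph{fourth-order} expansion, as flagged in the introduction, since the naive third-order term already carries one power of $|p-1|$ and we need the full cubic-in-strengths structure. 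For \textbf{1--2}: here the $p$-wave may cross $\{p=1\}$, but the only place an $h$-factor can appear is through the interaction of the two curves, and using that the $1$-curve moves $p$ by $O(h)$ per the eigenvector formulas for $\mathbf r_1$ (which has $p$-component $O(|p-1|/p)$, in turn vanishing when $h=0$ by~\eqref{lambda10p}), one gets the clean bound $O(h_{\max})|\sR_h\sR_p|$. For \textbf{2--2}: both are $p$-waves, the $h=0$ line is invariant, and the $2$-Hugoniot curve is nearly vertical with $h$-variation $O(h^\ell)$, so a third/fourth-order expansion about $h^\ell=0$ yields that the outgoing $h$-wave and the defect in the $p$-wave are $O(h^\ell)\,|\sR_p\widetilde\sR_p|(|\sR_p|+|\widetilde\sR_p|)$; this is the ``slight refinement'' of~\cite[Lemma~3]{AS} and is where I would push the expansion one order further than the reference to expose the $(|\sR_p|+|\widetilde\sR_p|)$ factor.

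\textbf{The hard part} will be the fourth-order Taylor expansions in cases 1--1 and 2--2: one must expand compositions of the curves $\mathbf S_1$, $\mathbf S_2$ and the rarefaction curves to fourth order in the wave strengths, keeping track of which coefficients vanish because of the special algebraic form of~\eqref{Suno}--\eqref{Sdue}, and show that every surviving term carries the advertised extra smallness factor ($|p-1|$, resp.\ $h$). Organizing this bookkeeping cleanly — presumably via auxiliary expansions of $\gamma\mapsto\mathbf S_k(\gamma;\cdot)$ and of the speeds $s_k$ around the degeneracy manifolds, which is precisely what the Appendices of the paper are set up to provide — is the technical crux; once those expansions are in hand, assembling the three estimates is routine. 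I would therefore defer the detailed curve expansions to Appendices~\ref{S:shockReduction}--\ref{App:C} and present here only the reduction of each interaction to a comparison of composite wave curves together with the identification of the leading-order cancellations.
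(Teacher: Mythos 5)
Your overall strategy---express the interaction defect as a smooth function of the data, identify the hyperplanes on which it (and suitable derivatives) vanish, and convert those vanishing conditions into product bounds by Taylor expansion---is exactly the mechanism the paper uses. Note, though, that the paper does not reprove~\eqref{E:11interest} or~\eqref{E:21interest}: it cites~\cite{AS} for those and only proves the refinement~\eqref{E:22interest}, by introducing $\Psi(h^\ell,p^\ell,\sR_p,\widetilde\sR_p)=(\widehat\sR_h,\widehat\sR_p-\sR_p-\widetilde\sR_p)$, observing that $\Psi$ vanishes at $h^\ell=0$, at $\sR_p=0$ and at $\widetilde\sR_p=0$, and---this is the step your sketch leaves implicit---that $\partial^2\Psi/\partial\sR_p\partial\widetilde\sR_p$ vanishes at $\sR_p=\widetilde\sR_p=0$ (the classical second-order tangency for same-family interactions). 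It is this last cancellation, fed into Bressan's Lemma~2.5 applied to $\partial\Psi/\partial h^\ell$, that upgrades the naive bound to the cubic factor $|\sR_p\widetilde\sR_p|(|\sR_p|+|\widetilde\sR_p|)$; ``pushing the expansion one order further'' is the right instinct, but you must name and verify this specific vanishing condition for the argument to close.

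One concrete error in your 1--2 discussion: you justify the factor $h_{\rm max}$ by claiming the $p$-component of $\mathbf r_1$ vanishes at $h=0$, but by~\eqref{lambda10p} one has $\mathbf r_1(0,p)=(1,(1-p)/p)^{T}$, whose second component is nonzero away from $p=1$; you have conflated the two degeneracy manifolds. The factor $h_{\rm max}$ in~\eqref{E:21interest} comes instead from the second family (the $h$-component of $\mathbf r_2$ is $-\lambda_2/(\lambda_2+1)=\co(1)h$, so the Riemann invariant $H$ varies by $\co(1)h|\sR_p|$ across a $2$-wave), and indeed the paper's remark after the lemma points out that this factor disappears if sizes are measured in original coordinates. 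Your plan survives once the vanishing condition at $h=0$ is attributed to the correct family, but as written the stated mechanism for case 1--2 is false.
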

\begin{proof} 
The proofs of~\eqref{E:11interest},~\eqref{E:21interest} can be found in~\cite{AS}.
We provide here only a proof of~\eqref{E:22interest} which is a slight refinement of the corresponding
estimate established in~\cite[Lemma 3]{AS}.
Consider the functional 
\bes
\Psi(h^\ell,p^\ell,\sR_{p},\widetilde\sR_{p}):=(\widehat\sR_h, \ \widehat\sR_p-\sR_p-\widetilde\sR_p),
\ees
which is smooth in $(h^\ell,p^\ell)$ and twice continuously differentiable w.r.t. $\sR_{p},\widetilde\sR_{p}$,
with Lipschitz continuous second derivatives.
Observe that
\begin{equation}
\label{psi-est-1-1-a}
\Psi(0,p^{\ell},\sR_{p},\widetilde\sR_{p})=
\Psi(h^{\ell},p^{\ell},0,\widetilde\sR_{p})=\Psi(h^{\ell},p^{\ell},\sR_{p},0)=0\qquad\forall~h^\ell\geq 0\,,
\end{equation}
which implies
\begin{equation}
\label{psi-est-1-1-a2}
\frac{\partial\Psi}{\partial h^{\ell}}(h^{\ell},p^{\ell},0,\widetilde\sR_{p})=
\frac{\partial\Psi}{\partial h^{\ell}}(h^{\ell},p^{\ell},\sR_{p},0)=0\,.
\end{equation}
Moreover, with the same arguments of~\cite[Lemma~3]{AS} one can show that
\begin{equation}
\nonumber
\frac{\partial^2\Psi}{\partial\sR_p\partial{\tilde{\sR}}_p}(h^{\ell},p^{\ell},\sR_p=0,{\widetilde{\sR}}_p=0)=(0,0)
\qquad\forall~h^{\ell}\geq 0\,,
\end{equation}
which in turn implies
\begin{equation}
\label{psi-est-1-1-b}
\frac{\partial^3\Psi}{\partial\sR_p\partial{\tilde{\sR}}_p\partial h^{\ell}}(h^{\ell},p^{\ell},\sR_p=0,{\widetilde{\sR}}_p=0)=(0,0)
\qquad\forall~h^{\ell}\geq 0\,.
\end{equation}
Hence, using~\eqref{psi-est-1-1-a} we find
\be{psi-est-1-1-h}
\Psi(h^\ell,p^\ell,\sR_{p},\widetilde\sR_{p})
=\int_0^{h_\ell} \frac{\partial \Psi}{\partial h} (h,p^\ell,\sR_{p},\widetilde\sR_{p})\,dh\,.
\ee
On the other hand, relying on~\eqref{psi-est-1-1-a2},~\eqref{psi-est-1-1-b},
and invoking~\cite[Lemma~2.5]{Bre}, we derive 
\be{psi-est-1-1-hh}
\frac{\partial\Psi}{\partial h^{\ell}}(h^{\ell},p^{\ell},\sR_{p},\widetilde\sR_{p})
\leq \co(1) \big|\sR_{p}\widetilde\sR_{p}\big| \big(|\sR_{p}|+|\widetilde\sR_{p}|\big)
\qquad\forall~h^{\ell}\geq 0\,.
\ee
Combining together~\eqref{psi-est-1-1-h},~\eqref{psi-est-1-1-hh}, we recover the estimate~\eqref{E:22interest}.
\end{proof}
\begin{remark}
Notice that, thanks to the relations~\eqref{E:equivalncestrengths}, the interaction estimates provided by the above lemma relative to 1-1 and 2-2 interactions 
remain valid if we measure the size of incoming fronts and outgoing waves in the original coordinates 
instead that in the Riemann ones . 
Instead, for the 1-2 interaction, the $h_{max}$ factor would be missing in the right hand side of~\eqref{E:21interest} if the size of waves is measured in the original 
coordinates.
\end{remark}

Observe that, thanks to the a-priori ${\bf L}^{\infty}$-bounds established in~\cite{AS},
given $M_0,>0$ and any $\delta_0^*>0$ and $\delta_p^*\in(0,1)$, there exists $\delta_0>0$ and $\delta_p\in (0,1)$ such that
for an approximate solution $u=(h^{s,\varepsilon},p^{s,\varepsilon})$ constructed as in Subsection~\ref{Ss:Rsolv-appsol},
with initial data that satisfy~\eqref{initial-data-bounds}, one has 
\be{linf-h-bound}
\|h^{s,\varepsilon}(t)\|_{{\bf L}^{\infty}}\le \delta_0^*\qquad
\forall\,\, t>0\,.
\ee
\be{linf-pnew-bound}
\|p^{s,\varepsilon}(t)-1\|_{{\bf L}^{\infty}}\le \delta_p^*\qquad
\forall\,\, t>0\,.
\ee
Hence, relying on the estimates stated in Lemma~\ref{L:stimeinter}, it is 
 shown in~\cite{AS} that
 one can 
choose $\overline\delta>0$ in~\eqref{E:omega} 
 and $\delta_0^*>0$ in~\eqref{linf-h-bound} sufficiently small
so that
 the Glimm functional defined in~\eqref{S3G} 
 is strictly decreasing at any interaction
 occurring between time steps. Namely, 
 at any time $t>0$ where an interaction takes places, the variation $\Delta\cg(t)\doteq \cg\big(u(t+)\big)-\cg\big(u(t-)\big)$
 of the functional $\cg$ satisfies the following bounds.
\begin{enumerate}
\item[(i)] Consider an interaction between two 1-shocks 
 with sizes $\sR_{\alpha}, \sR_\beta$.
 Notice that, by the properties of the rarefaction and Hugoniot curves of system~\eqref{S1system}
 recalled in subsection~\ref{Ss:Rsolv-appsol},
 such shocks 
 must have the same sign and lie on the same side with respect to $p=1$.
 Then, we have
\be{E:decreaseGlimm1s1s}
\Delta\cg(t)\le-\frac{\omega_{\alpha,\beta}}{4}\cdot
|\sR_{\alpha}\sR_\beta|
\ee
if we assume that $\delta_0^*$ and $\dfrac{\delta_0^*}{\overline\delta}$ are sufficiently small.
\item[(ii)] At interactions between a $1$-shock of size $\sR_{\alpha}$ with a $1$-rarefaction
of size $\sR_\beta$, we have a cancellation in the waves and the functional $V$ is 
strictly decreasing. 
Then, we have
\be{E:decreaseGlimm1s1r}
\Delta\cg(t)\le-\min\big\{|\sR_{\alpha}|,\,|\sR_{\beta}|\big\}\,,
\ee
if we assume that $\delta_0^*$ is sufficiently small.
\item[(iii)] At interactions between fronts of different families or between two $2$-fronts, we have
\be{E:decreaseGlimm2r}
\Delta\cg\le-
\frac{1}{4}\cdot|\sR_{\alpha}\sR_\beta|
\ee
if we assume that $\delta_0^*$ and ${\overline\delta}$ are sufficiently small.
\end{enumerate}
Instead, the bound~\eqref{G-timestep-est1} on the variation of the functional 
$\cg$ occurring at time steps is based on the following lemma 
established in~\cite[Lemma 1]{AS}.
%
%

\begin{lemma}[Time Step Estimates]
Consider a wavefront located at a point $x$ at a time step $t_k$, with left state $(h^{\ell},p^{\ell})$ and right state $(h^r,p^r)$ before the time step. After updating the approximate solution at time~$t_k$
according with~\eqref{updated}, the solution of the Riemann problem determined by the jump at $(x,t_k)$ will consist of two waves
of the first and second characteristic families, say of sizes $\sR_{h}^+$ and $\sR_{p}^+$ respectively, measured in Riemann coordinates. Then, the followings hold true:
\begin{itemize}
\item[1]
If the front connecting $(h^{\ell},p^{\ell})$ to $(h^r,p^r)$ is of the first family with size $\sR_{h}$, 
then we have
\be{E:timestepest1}
\sR_{h}^+=\sR_{h}+\co(1)\Delta t\cdot |p^{\ell}-1|\cdot|\sR_{h}|\,,
\ee
\be{E:timestepest2}
\sR_{p}^+=\co(1)\Delta t\cdot|p^{\ell}-1|\cdot|\sR_{h}|\,.
\ee
\item[2]
If the front connecting $(h^{\ell},p^{\ell})$ to $(h^r,p^r)$ is of the second family with size $\sR_{p}$, then we have
\be{E:timestepest3}
\sR_{h}^+=\co(1)\Delta t \cdot h^{\ell} \cdot|\sR_{p}|\,,
\ee
\be{E:timestepest4}
\sR_{p}^+=\sR_{p}+\co(1)\Delta t \cdot h^{\ell}\cdot|\sR_{p}|\,.
\ee
\end{itemize}
\end{lemma}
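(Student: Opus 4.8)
The plan is to track how the time-step update propagates through the Riemann solver. Write the update map of~\eqref{updated} as $T_s(h,p)\doteq\big(h+s(p-1)h,\,p\big)=(h,p)+s\,g(h,p)$, where $g(h,p)\doteq\big((p-1)h,\,0\big)=(p-1)(h,0)=h(p-1,0)$. Two features are decisive: first, $T_s$ is \emph{affine} in $s$, it preserves the $p$-coordinate, and for $s$ small it maps the compact set $K$ of Theorem~\ref{T:AS} into a slightly larger compact subset of the \emph{same} invariant component $\{p>1\}$ or $\{p<1\}$ (since $h+s(p-1)h=h(1+s(p-1))\ge 0$ there); second, $g\equiv 0$ on $\{p=1\}$ and on $\{h=0\}$, so $T_s$ fixes each of these lines pointwise. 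Let $\mathcal{E}(U^\ell,U^r)=(\sR_h^+,\sR_p^+)$ denote the Riemann-solver map assigning to a Riemann datum the sizes in Riemann coordinates of the two outgoing waves; on the compact working region it is $C^2$ (in particular $D\mathcal{E}$ is locally Lipschitz), it satisfies $\mathcal{E}(U,U)=(0,0)$ — so that differentiating along the diagonal gives $D_{U^\ell}\mathcal{E}(U,U)=-D_{U^r}\mathcal{E}(U,U)$ — and it returns $(\sR_h,0)$ on a pure $1$-wave datum and $(0,\sR_p)$ on a pure $2$-wave datum. Conceptually, the outgoing-wave \emph{error} vanishes under three transversal conditions — no update when $s=0$; no new wave when the datum is a single wave of the relevant family; and $T_s$ trivial on the degeneracy line touched by that family — and the estimates to prove merely quantify the product of these three vanishings.

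Concretely, in Case~1 let $(h^r,p^r)$ be the right state reached from $(h^\ell,p^\ell)$ by the incoming $1$-wave of size $\sR_h$. Since $1$-wave curves never cross $\{p=1\}$, both endpoints lie on the same side, and from the explicit formula~\eqref{Suno} (and its rarefaction analogue in~\cite{AS}), together with~\eqref{E:equivalncestrengths}, one reads off
\[
|p^r-1|\le\co(1)\,|p^\ell-1|,\qquad |p^r-p^\ell|\le\co(1)\,|p^\ell-1|\,|\sR_h|,\qquad |h^r-h^\ell|\le\co(1)\,|\sR_h|\,.
\]
Setting $E(s)\doteq\mathcal{E}\big(T_sU^\ell,T_sU^r\big)-\mathcal{E}\big(U^\ell,U^r\big)=(\sR_h^+-\sR_h,\ \sR_p^+)$, one has $E(0)=0$ and, since $T_s$ is affine in $s$,
\[
E(\Delta t)=\int_0^{\Delta t}\Big[D_{U^\ell}\mathcal{E}\cdot g(T_sU^\ell)+D_{U^r}\mathcal{E}\cdot g(T_sU^r)\Big]\,ds\,,
\]
with $D\mathcal{E}$ evaluated at $(T_sU^\ell,T_sU^r)$. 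I would bound the integrand by replacing $D\mathcal{E}$ at $(T_sU^\ell,T_sU^r)$ by $D\mathcal{E}$ at the diagonal point $(T_sU^\ell,T_sU^\ell)$ up to an $O\big(|T_sU^r-T_sU^\ell|\big)=O(|\sR_h|)$ error (Lipschitzness of $D\mathcal{E}$), then using $D_{U^\ell}\mathcal{E}=-D_{U^r}\mathcal{E}$ on the diagonal to collapse the main terms into $D_{U^r}\mathcal{E}(V,V)\big[g(T_sU^r)-g(T_sU^\ell)\big]$. Finally, because $g(h,p)=(p-1)(h,0)$, each occurrence of $g$ at an endpoint carries a factor $p-1$ there, so the displayed bounds give $|g(T_sU^\bullet)|\le\co(1)|p^\ell-1|$ and $|g(T_sU^r)-g(T_sU^\ell)|\le\co(1)|p^\ell-1|\,|\sR_h|$; hence the integrand is $\le\co(1)|p^\ell-1|\,|\sR_h|$ and $|E(\Delta t)|\le\co(1)\,\Delta t\,|p^\ell-1|\,|\sR_h|$, which is exactly~\eqref{E:timestepest1}--\eqref{E:timestepest2}.

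Case~2 is entirely parallel. The incoming $2$-wave of size $\sR_p$ reaches $(h^r,p^r)$ from $(h^\ell,p^\ell)$; since $2$-wave curves issuing from a point with $h^\ell>0$ never meet $\{h=0\}$ (and for $h^\ell=0$ both sides of the claimed bounds are trivially zero), the explicit formula~\eqref{Sdue} (and its rarefaction analogue) yields $|h^r|\le\co(1)h^\ell$, $|h^r-h^\ell|\le\co(1)h^\ell|\sR_p|$ and $|p^r-p^\ell|\le\co(1)|\sR_p|$. Now one exploits the other factorisation $g(h,p)=h(p-1,0)$, so that $g$ at an endpoint carries a factor $h$ there: $|g(T_sU^\bullet)|\le\co(1)h^\ell$ and $|g(T_sU^r)-g(T_sU^\ell)|\le\co(1)h^\ell|\sR_p|$. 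The same integral representation then gives $|E(\Delta t)|\le\co(1)\,\Delta t\,h^\ell\,|\sR_p|$, i.e.~\eqref{E:timestepest3}--\eqref{E:timestepest4}.

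The routine ingredients are the elementary estimates on the elementary curves near the two degeneracy lines (immediate from~\eqref{Suno}--\eqref{Sdue} and~\eqref{E:equivalncestrengths}) and the $C^2$-regularity and boundedness of $\mathcal{E}$ on the compact working region. The one genuinely delicate point is extracting the \emph{product} of the three small quantities $\Delta t$, $|\sR_\bullet|$ and $|p^\ell-1|$ (respectively $h^\ell$) at once: in isolation each of the three vanishing facts is trivial, and combining them is what forces one to use simultaneously the factorised structure of the source direction $g$ and the specific geometry of the wave curves. A conceptually cleaner route would be to observe that $E$, regarded as a function of the parameters $(s,\sR_\bullet,h^\ell,p^\ell)$, vanishes on the three transversal hypersurfaces $\{s=0\}$, $\{\sR_\bullet=0\}$ and $\{p^\ell=1\}$ (resp.~$\{h^\ell=0\}$) and then apply Hadamard's lemma three times; I would nonetheless prefer the one-variable Taylor expansion in $s$ above, since it only uses that $\mathcal{E}$ is $C^{1,1}$ and the wave curves $C^1$, whereas the threefold Hadamard argument would require $C^3$ regularity, which the composite $1$-wave curve does not have at its base point (it has there only ``almost'' third-order contact).
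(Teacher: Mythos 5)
Your argument is correct, but note that the paper itself offers no proof of this lemma: it is quoted verbatim from \cite[Lemma~1]{AS}, so there is nothing in-text to compare against line by line. What you propose is a self-contained derivation, and it is sound: the representation $E(\Delta t)=\int_0^{\Delta t}\frac{d}{ds}\mathcal{E}(T_sU^\ell,T_sU^r)\,ds$, the symmetrization via $D_{U^\ell}\mathcal{E}(V,V)=-D_{U^r}\mathcal{E}(V,V)$ (from differentiating $\mathcal{E}(U,U)=0$ along the diagonal), and the two factorizations $g=(p-1)(h,0)=h\,(p-1,0)$ combined with the curve estimates from \eqref{Suno}--\eqref{Sdue} do produce exactly the triple products $\Delta t\,|p^\ell-1|\,|\sR_h|$ and $\Delta t\,h^\ell\,|\sR_p|$. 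One small inaccuracy: since $T_s U=U+s\,g(U)$ is the Euler step (affine in $s$), the integrand is $D_{U^\ell}\mathcal{E}\cdot g(U^\ell)+D_{U^r}\mathcal{E}\cdot g(U^r)$, not $g(T_sU^{\ell/r})$; as $g(T_sU)=(1+s(p-1))\,g(U)$ the discrepancy is a harmless relative $\mathcal{O}(\Delta t)$ factor, but you should state the derivative of the affine map correctly. It is also worth saying that the route you dismiss at the end — vanishing on the three hypersurfaces $\{\Delta t=0\}$, $\{\sR_\bullet=0\}$, $\{p^\ell=1\}$ (resp.\ $\{h^\ell=0\}$) followed by the iterated Hadamard lemma — is precisely the technique this paper uses for every estimate of this type (it is Lemma~\ref{E:rewgwgeq} of Appendix~\ref{App:C}, applied e.g.\ in Lemma~\ref{S:ts-lemma} and in the proof of \eqref{E:22interest}), and is presumably how \cite{AS} argues. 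Your regularity objection to it is overly cautious: obtaining the product of three first-order vanishings only requires the mixed derivative $\partial_{\Delta t}\partial_{\sR_\bullet}\partial_{p^\ell}$ (one derivative in each variable), not $\partial^3_{\sR_\bullet}$, so the $C^{2,1}$ composite wave curve is not an obstruction there either. Your one-variable Taylor expansion buys a proof that visibly isolates where each small factor comes from; the Hadamard route buys uniformity with the rest of the paper's machinery. Either is acceptable.
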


%
%
\section{\texorpdfstring{${\bf L^{1}}$}{L1}-stability estimates - Proof of Theorem~\ref{stability-Phy}}
\label{S4old}

Consider two 
$s$-$\varepsilon$-approximate solutions $u, v$ of the non-homogeneous system~\eqref{S1system} constructed as in Subsection~\ref{Ss:Rsolv-appsol},
with
initial data $(\overline h_u,\overline p_u)\doteq u(\,\cdot,0), (\overline h_v,\overline p_v)\doteq v(\,\cdot,0)$ satisfying~\eqref{initial-data-bounds}.
The heart of the matter to establish Theorem~\ref{stability-Phy} is to control the change in time of the functional~$\Phi_z$ defined in~\eqref{UpsilonNew}--\eqref{S4A2}, evaluated along $(u(t), v(t))$.
This is accomplished in the following subsections by first
analyzing the variation of $\Phi_z(u(t),v(t))$ when $\er\equiv 0$.
Namely, assuming that $\delta_0^*$ in~\eqref{linf-h-bound}
is sufficiently small,
we will analyze the change of $\Phi_0(u(t),v(t))$ at three different classes of times:
\begin{itemize}
\item[\S~\ref{Ss:interactionTimes}:] at times where two fronts of $u$ or $v$ interact, we show that $t\mapsto\Phi_0(u(t),v(t))$ does not increase;
\item[\S~\ref{Ss:nointeractiontimesN}:] 
at times between interactions, the function $t\mapsto\Phi_0(u(t),v(t))$ is Lipschitz continuous and we prove that
there holds
\begin{equation}
\label{est-phi0-decr}
\ddn{t}{}\Phi_0(u(t),v(t))\leq C_1\eps \;,
\end{equation}
where $C_1>0$ constant independent of $s$-$\varepsilon$.
\item[\S~\ref{Ss:timestep}:] at time steps $t_{k}$, we prove that
\bas
\Phi_0(u(t_k+),v(t_k+))&\le  (1+C_2\, \Delta t) \,\Phi_0(u(t_k-),v(t_k-))\;.
\eas
where $C_2>0$ constant independent again of $s$-$\varepsilon$.
\end{itemize}
The analysis in each class of times is performed in the Subsections \S~\ref{Ss:interactionTimes}--\S~\ref{Ss:timestep}. 
 Integrating~\eqref{est-phi0-decr} between two interaction times, and combining these three results of~\S~\ref{Ss:interactionTimes}--\S~\ref{Ss:timestep}, we 
 thus establish Theorem~\ref{stability-Phy}-(ii).
Next, we shall consider two $\varepsilon$-approximate solutions $u$ and $v$ of the homogeneous system~\eqref{S1system-hom} with
initial data satisfying~\eqref{initial-data-bounds},
 and we will analyze the variation of $\Phi_\er(u(t),v(t))$
 when $z\ne 0$ and~\eqref{z-bound-1} holds:
\begin{itemize}
\item[\S~\ref{S5.4}:] performing a similar analysis as in~\S~\ref{Ss:interactionTimes}-\S~\ref{Ss:nointeractiontimesN},
we show that
\bas
\Phi_\er(u(t_2),v(t_2))-\Phi_\er(u(t_1),v(t_1))&\le   C_1 \cdot (t_2-t_1) [\eps+\sigma]\;,
\eas
\end{itemize}
for all $t_2\ge t_1\ge 0$, where $C_1>0$ constant independent of $s$-$\varepsilon$, and this establishes Theorem~\ref{stability-Phy}-(i).

%
%
\subsection{Analysis at interaction times}
\label{Ss:interactionTimes}
In this section, we consider an interaction between waves of the approximate solution $v$ or $u$
occurring at time $t=\tau$ and show that the functional $\Phi_0(u(t),v(t))$, given in~\eqref{UpsilonNew}, does not increase across interaction for appropriate constants in the weights $W_i$, i.e. we prove that
\begin{equation}
\label{phiest0}
\Phi_0(u(\tau{+}),v(\tau{+}))\le \Phi_0(u(\tau{-}),v(\tau{-})).
\end{equation}
In the following lemma, we provide a condition under which the constants in the weights $W_i$ need to be controlled by the coefficient $\kappa_{\cg}$ of the Glimm functionals of $u$ and $v$. In preparation for this, we need the notation of the change $\Delta$ across an interaction accuring at $t=\tau$.
\begin{align*}
&\Delta \cg(\tau) :=\cg(u(\tau{+}))-\cg(u(\tau{-}))+\cg(v(\tau{+}))-\cg(v(\tau{-})) \ ,\\
&\Delta W_i(\tau):=W_{i}(\tau{+},x)- W_{i}( \tau{-}, x)\ ,\\
&\Delta \ca_{i,j}(\tau):= \ca_{i,j}(\tau{+},x)-  \ca_{i,j}( \tau{-}, x) \ ,\qquad   i,\,j=1,\,2\ .
\end{align*}
The next lemma states a sufficient condition that implies~\eqref{phiest0}

\begin{lemma}\label{lemma4.1}
Let $t=\tau$ be an interaction time for either $v$ or $u$. If
\be{E:constraint_interNew}
 \kappa_{\cg}\ge
 \kappa_{i\ca1} \frac{ \Delta \ca_{i,1}}{|\Delta\cg|}+\kappa_{i\ca2}\frac{\Delta \ca_{i,2}}{|\Delta\cg|}
 \qquad\text{for $i=1,2$}
 \ee
then~\eqref{phiest0} holds true. 
\end{lemma}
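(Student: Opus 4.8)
The plan is to use that an interaction of $u$ or $v$ at a time $\tau$ between two time steps leaves the pointwise values of $u$ and $v$ unchanged for a.e.\ $x$ — only the wave pattern of the interacting solution near the collision point is rearranged — and hence leaves the shock sizes $\eta_i(x,\cdot)$ of the jump $\big(u(x),v(x)\big)$, defined through~\eqref{E:vu} (here $\er\equiv0$, so $w=v$), unchanged as well. Thus the only effect of the interaction on $\Phi_0$ enters through the weights $W_i$, via the approaching terms $\ca_{i,j}$, and through the Glimm exponential $e^{\kappa_\cg[\cg(u)+\cg(v)]}$, via the Glimm functional of the interacting solution. Assuming without loss of generality that the interaction takes place between two fronts of $v$ (the interaction in $u$ being treated in exactly the same way), I would first record that
\[
\eta_i(x,\tau+)=\eta_i(x,\tau-)\,,\qquad \sgn\big(\eta_i(x,\tau+)\big)=\sgn\big(\eta_i(x,\tau-)\big)\,,\qquad i=1,2\,,
\]
for a.e.\ $x\in\R$; in particular the same branch of the case distinctions in~\eqref{S4A1}--\eqref{S4A2} is selected at $\tau-$ and at $\tau+$.

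Next I would write down the two elementary multiplicative relations that encode the change of the remaining factors. From~\eqref{S4WiN}--\eqref{S4WiN2}, writing $\Delta\ca_{i,j}(\tau)=\ca_{i,j}(x,\tau+)-\ca_{i,j}(x,\tau-)$, one has $W_i(x,\tau+)=W_i(x,\tau-)\,e^{\kappa_{i\ca1}\Delta\ca_{i,1}(\tau)+\kappa_{i\ca2}\Delta\ca_{i,2}(\tau)}$; and, since the interacting fronts belong to $v$, $\cg(u(\tau+))=\cg(u(\tau-))$, whence $\Delta\cg(\tau)=\cg(v(\tau+))-\cg(v(\tau-))$ and $e^{\kappa_\cg[\cg(u(\tau+))+\cg(v(\tau+))]}=e^{\kappa_\cg[\cg(u(\tau-))+\cg(v(\tau-))]}\,e^{\kappa_\cg\Delta\cg(\tau)}$. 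Substituting these into the definition~\eqref{UpsilonNew} of $\Phi_0$ and using the previous paragraph, one obtains
\[
\Phi_0\big(u(\tau+),v(\tau+)\big)=\sum_{i=1}^2\int_{-\infty}^{\infty}\big|\eta_i(x,\tau-)\big|\,W_i(x,\tau-)\,e^{\kappa_\cg[\cg(u(\tau-))+\cg(v(\tau-))]}\,e^{E_i(\tau,x)}\,dx\,,
\]
with $E_i(\tau,x)\doteq\kappa_{i\ca1}\Delta\ca_{i,1}(\tau)+\kappa_{i\ca2}\Delta\ca_{i,2}(\tau)+\kappa_\cg\Delta\cg(\tau)$. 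Finally, the interaction estimates~\eqref{E:decreaseGlimm1s1s}--\eqref{E:decreaseGlimm2r} give $\Delta\cg(\tau)<0$, so $\Delta\cg(\tau)=-|\Delta\cg(\tau)|$ with $|\Delta\cg(\tau)|>0$ and $E_i(\tau,x)=\kappa_{i\ca1}\Delta\ca_{i,1}(\tau)+\kappa_{i\ca2}\Delta\ca_{i,2}(\tau)-\kappa_\cg|\Delta\cg(\tau)|$; multiplying the hypothesis~\eqref{E:constraint_interNew} by $|\Delta\cg(\tau)|$ shows that $E_i(\tau,x)\le 0$ for $i=1,2$, hence $e^{E_i(\tau,x)}\le 1$, and comparison with~\eqref{UpsilonNew} evaluated at $\tau-$ yields $\Phi_0\big(u(\tau+),v(\tau+)\big)\le\Phi_0\big(u(\tau-),v(\tau-)\big)$, i.e.~\eqref{phiest0}.

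The lemma itself is therefore only a short algebraic step; the genuine work, deferred to the remainder of this subsection, is to verify that the constraint~\eqref{E:constraint_interNew} can actually be met by one fixed admissible choice of the constants $\kappa_{i\ca j}$ and $\kappa_\cg$, simultaneously for \emph{every} type of interaction (of $u$ or of $v$, and for both families $i=1,2$). I expect the main obstacle to be the interaction of a $1$-wave of size $\sR_\alpha$ with a $2$-wave crossing $\{p=1\}$ of size $\sR_\beta$: there $\Delta\ca_{1,1}(\tau)$ may \emph{increase} by a quantity of order $|p_\beta-1|\,|\sR_\alpha|\approx|\sR_\alpha\sR_\beta|$, produced by the distance-to-$\{p=1\}$ factors in the definition~\eqref{S4A1} of $\ca_{1,1}$, and this increase has to be absorbed by the strict decrease of $\cg$ coming from the corresponding drop of the interaction potential (quantified in~\eqref{E:decreaseGlimm2r} and, more sharply, through the refined estimates of Lemma~\ref{L:stimeinter}) — which is precisely the balance demanded by~\eqref{E:constraint_interNew}.
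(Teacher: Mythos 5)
Your proof is correct and follows essentially the same route as the paper's: both reduce to the observation that $\eta_i(x,\cdot)$ is unchanged across the interaction, factor the change of the integrand as $W_i(\tau-)\,e^{\kappa_\cg[\cg(u(\tau-))+\cg(v(\tau-))]}$ times $e^{\kappa_{i\ca1}\Delta\ca_{i,1}+\kappa_{i\ca2}\Delta\ca_{i,2}-\kappa_\cg|\Delta\cg|}$, and check that the exponent is $\le 0$ under~\eqref{E:constraint_interNew}. Your additional remark that the sign of $\eta_i$ is preserved (so the same branch of~\eqref{S4A1}--\eqref{S4A2} is selected on both sides of $\tau$) is a point the paper leaves implicit, and your closing paragraph correctly identifies where the real work of the subsection lies.
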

\begin{proof} First, we note that across an interaction the Glimm functional is not increasing, hence
$$|\Delta\cg(\tau) |=\cg(u(\tau-))+\cg(v(\tau-))-\cg(u(\tau+))-\cg(v(\tau+))>0.$$
Then, using the $\Delta$ notation, we have the identity 
\begin{align}
W_i(\tau+,x) \cdot & e^{\kappa_{\cg} \big[\cg(u(\tau+))+\cg(v(\tau+))\big]}
- W_i(\tau-,x) \cdot e^{\kappa_{\cg} \big[\cg(u(\tau-))+\cg(v(\tau-))\big]}\nonumber\\
=& \left(\Delta W_i (\tau)e^{-\kappa_{\cg} |\Delta\cg(\tau)|}+W_i(\tau-,x) (e^{-\kappa_{\cg} |\Delta\cg(\tau)|}-1) \right)\cdot e^{\kappa_{\cg} \big[\cg(u(\tau-))+\cg(v(\tau-))\big]}.
\end{align}
Using that 
\[
\Delta W_i(\tau)=\left( e^{ \kappa_{i\ca1}\Delta \ca_{i,1}+\kappa_{i\ca2}\Delta \ca_{i,2}}-1\right) W_i(\tau-)
\]
we get
\begin{align}
W_i(\tau+,x) \cdot & e^{\kappa_{\cg} \big[\cg(u(\tau+))+\cg(v(\tau+))\big]}
- W_i(\tau-,x) \cdot e^{\kappa_{\cg} \big[\cg(u(\tau-))+\cg(v(\tau-))\big]}
\nonumber\\
\le & \left( e^{-\kappa_{\cg}|\Delta\cg|} e^{ \kappa_{i\ca1}\Delta \ca_{i,1}+\kappa_{i\ca2}\Delta \ca_{i,2}}-1 \right) \cdot  W_i(\tau-,x) e^{\kappa_{\cg} \big[\cg(u(\tau-))+\cg(v(\tau-))\big]}\le0
\end{align}
under  condition~\eqref{E:constraint_interNew}.%
This implies~\eqref{phiest0} immediately, because the map 
\bes
\R^{+}\ni t\mapsto |\eta_{i}(\cdot,t)|\in L^{1}(\R)
\ees
is continuous. The proof is complete.
\end{proof}

The aim is to show that there exists $\kappa_{\cg}>0$ large enough, for sufficiently small $\delta_0>0$, such that~\eqref{E:constraint_interNew} holds true at all interaction times $\tau$. Our strategy is to examine all cases of interactions and prove that in each case 
\ba\label{E:rel}
\Delta \ca_{i,j} \le a |\Delta\cg|
 \qquad
 \text{for $i,j=1,2$}
 \ea
 where the factor $a>0$ depends on $\delta_0^*$, $M_0^*$, $\bar\delta$ and the coefficients $\kappa_{i\ca j}$, for $i,j=1,2$. From this, the conclusion from the analysis in the following subsections is that~\eqref{E:constraint_interNew} holds true as long as
\begin{equation}\label{S4.1conditionkappa}\kappa_{\cg}> 2a\max_{i,j=1,2} \kappa_{i\ca j} \ .
\end{equation}

From here and on, we consider an interaction between waves of the approximate solution $v$ and omit the analysis relative to interactions of waves of $u$ because it is entirely similar. We devote the rest of the section to the proof of~\eqref{E:constraint_interNew} and hence~\eqref{phiest0}.
Therefore, we 
analyze separately in each subsection the different type of interactions occurring between two wave-fronts of $v$ using that $\|h\|_\infty$ and $\delta_0^*$ are sufficiently small according to Theorem~\ref{T:AS}. We recall that by~\eqref{linf-h-bound}
 the ${\bf L^\infty}$-bound on $h$-component of the approximate solutions 
takes value in $[0,\delta_0^*]$, and that the $p$-component of the approximate solutions takes values in the interval 
$[p_0^*, p_1^*]$.
Throughout the section, we denote by $\sR_{\alpha}$ and $\sR_{\beta}$ the strengths
(in Riemann coordinates) of the incoming waves of $v$ before their interaction takes place at $t=\tau$, 
with $\sR_{\alpha}$ located on the left of $\sR_{\beta}$. 
We also let $p_{\alpha}^{\ell}$, $p_{\beta}^{\ell}$ denote the $p$-components (in the original coordinates) of their left states.

For the convenience of the reader, we note that in the following subsection we use that
\begin{enumerate}
\item[$\circ$] for $1$-waves, it holds: $k_\alpha=1$, $|\sR_{\alpha}|<\mu\, \delta_0^*< M_0^* $,\ $0<h_\alpha<\delta_0^*$\ and $p_0^*<1-\delta_p^*<p_\alpha<1+\delta_p^*<p_1^*$
\item[$\circ$] for $2$-waves, it holds:  $k_\alpha=2$, $|\sR_{\alpha}|< M_0^* $, $0<h_\alpha<\delta_0^*$ and $p_0^*<1-\delta_p^*<p_\alpha<1+\delta_p^*<p_1^*$\;.
\end{enumerate}

%
 \subsubsection{Case of $1-1$ interaction without cancellation}
\label{Sss:Interaction1nC}

Assume that $\sR_{\alpha}$, $\sR_{\beta}$ are the sizes of two interacting $1$-waves of $v$ that have the same sign.
Thus $\sR_{\alpha}$, $\sR_{\beta}$ are two shocks of the first family
lying on the same side of $\{p=1\}$.
We denote by $\sR_{h}'$, $\sR_{p}'$, the sizes (in Riemann coordinates) of the outgoing $1$-wave and $2$-wave, respectively, after the interaction. Notice that the left states of $\sR_{h}'$ and $\sR_{\alpha}$ are the same.
Therefore, if we denote by ${p'}^{\ell}$ the $p$-component (in original coordinates) of
the left state of $\sR_{h}'$, one has ${p'}^{\ell}=p_{\alpha}^{\ell}$.

By~\eqref{E:decreaseGlimm1s1s} the Glimm functional is decreasing across this type of interaction
 with the bound
$$
\cg(v(\tau{+}))\leq \cg(v(\tau{-}))-\frac{\omega_{\alpha,\beta}}{4}|\sR_{\alpha}\sR_\beta|, \qquad \cg(u(\tau{+}))=\cg(u(\tau{-})),
$$
hence,
\be{E:orerggrgeege}
\Delta\cg=-\frac{\omega_{\alpha,\beta}}{4}|\sR_{\alpha}\sR_\beta|<0\;.
\ee

Next, we note that at points $x$ where neither $u(\tau)$ nor $v(\tau)$ admits a jump,
one has $\eta_i(x,\tau-)=\eta_i(x,\tau+)$, $i=1,2$.
Then, by the definition of $\ca_{1,1}$ in~\eqref{S4A1} and relying on the interaction estimate~\eqref{E:11interest}, we get that at such points there holds
\begin{align*}
\ca_{1,1}(x,\tau{+})- \ca_{1,1}(x,\tau{-}) & \leq \Big|\PHI{{p'}^{\ell}}|\sR_{h}'| -\left( \PHI{p_{\alpha}^{\ell}}|\sR_{\alpha}| +\PHI{p_{\beta}^{\ell}}|\sR_\beta| \right)\Big|
\end{align*}
Considering that in this case ${p'}^{\ell}=p_{\alpha}^{\ell}$ the right hand side can be estimated by
\[
\Big|\PHI{{p'}^{\ell}}|\sR_{h}'| - \PHI{p_{\alpha}^{\ell}}|\sR_{\alpha}| -\PHI{p_{\beta}^{\ell}}|\sR_\beta| \Big|
 \leq \PHI{p_{\alpha}^{\ell}}\big|\sR_{h}'-\sR_{\alpha}-\sR_\beta\big| +\left| p_{\beta}^{\ell} -  p_{\alpha}^{\ell}\right||\sR_\beta| \ . 
\]
By~\eqref{E:equivalncestrengths} and recalling definition~\eqref{E:omega} we observe that
\be{E:omega-equivalence}
\min\big\{|p^{\ell}_\alpha-1\big|,\,|p^\ell_\beta-1\big|\big\}\leq\frac{\mu}{\overline\delta}\cdot\omega_{\alpha,\beta}
\ee
so that applying the interaction estimate~\eqref{E:11interest} we arrive at
\begin{align}
\label{E:orerggrgeege223prec}
\ca_{1,1}(x,\tau{+})- \ca_{1,1}(x,\tau{-}) \le \co(1)p_1^*\frac{ \,\mu}{\overline\delta}\cdot
\omega_{\alpha,\beta} \delta_0^*|\sR_{\alpha}\sR_{\beta}|+|p_{\alpha}^{\ell}-p_{\beta}^{\ell}| |\sR_{\beta}|.
\end{align}
Next, observe that $p_\beta^\ell=p_\alpha^r=\SC{1}{\sC_\alpha}{(h_\alpha^\ell,p_\alpha^\ell)}$ with $|\sC_\alpha|\leq \mu |\sR_\alpha|$  by~\eqref{E:equivalncestrengths}. To estimate the last term in~\eqref{E:orerggrgeege223prec}, we employ the explicit expression of $\SC{1}{\cdot}{}$ given in~\eqref{E:p1shock} and get
\be{pab-est1}
\big|p_{\alpha}^{\ell}-p_{\alpha}^{r}\big|=\big|p_{\alpha}^{\ell}-p_{\beta}^{\ell}\big|=\co(1)\frac{\mu}{\overline\delta}\cdot\omega_{\alpha,\beta}|\sR_{\alpha}|\,.
\ee
since $|p_{\alpha}^{\ell}-1|<\delta_p^*$ and $h-s_{1}(h,p_{\ell})>\frac{1}{2}$.

Substituting into~\eqref{E:orerggrgeege223prec}, we get the estimate
\begin{align}\label{E:orerggrgeege223a}
\ca_{1,1}(x,\tau{+})- \ca_{1,1}(x,\tau{-}) &\leq
\co(1) \cdot\omega_{\alpha,\beta}|\sR_{\alpha}\sR_{\beta}|\,.
\end{align}
Now, the change of $\ca_{1,2}$ given in~\eqref{S4A1} across such an interaction at $\tau$ is estimated
\begin{align}
\ca_{1,2}(x,\tau{+})- \ca_{1,2}(x,\tau{-}) & \leq |\rho'_{p}| 
\label{E:orerggrgeege223b}
\leq\co(1)\frac{\,\mu}{\overline\delta}\cdot
\omega_{\alpha,\beta}\delta_0^*|\sR_{\alpha}\sR_{\beta}| .
\end{align}
using again~\eqref{E:11interest}. 
By the definition of $\ca_{2,1}$ and of $\ca_{2,2}$ in~\eqref{S4A2} and estimates~\eqref{E:11interest},~\eqref{E:omega-equivalence}, we also bound the change
\ba
\label{E:irehgreugrg1}
\left|\ca_{2,1}(x,\tau{+})- \ca_{2,1}(x,\tau{-})\right|+ & \left|\ca_{2,2}(x,\tau{+})- \ca_{2,2}(x,\tau{-})\right|
\leq   |\sR_{p}'|+|\sR_{h}'-\sR_{\alpha}-\sR_{\beta}| 
\notag
\\
&\stackrel{\eqref{E:11interest}}{\leq} 
 \co(1)\cdot\frac{\,\mu}{\overline\delta}\cdot
\omega_{\alpha,\beta}\delta_0^*|\sR_{\alpha}\sR_{\beta}|\,.
\ea
Estimates~\eqref{E:orerggrgeege},~\eqref{E:orerggrgeege223a},~\eqref{E:orerggrgeege223b} and~\eqref{E:irehgreugrg1} with~\eqref{S4Wi} directly prove that \eqref{E:rel} holds with 
\[a\ge\co(1) (1+\frac{\mu}{\overline\delta} \delta_0^*)\;.\]

%
%
\subsubsection{Case of $1-1$ interaction with cancellation}
\label{Sss:Interaction1C}
Here, we consider an interaction between two incoming $1$-waves of the solution $v$ at time $t=\tau$ but with strengths $\sR_{\alpha}$, $\sR_{\beta}$ of opposite sign. By~\eqref{E:decreaseGlimm1s1r} the Glimm functional is decreasing across this type of interaction
 with the bound
\[
\cg(v(\tau{+}))\leq \cg(v(\tau{-}))-\min\{|\sR_{\alpha}|,|\sR_{\beta}|\}, \qquad \cg(u(\tau{+}))=\cg(u(\tau{-})).
\]
Assuming that the strengths satisfy $|\sR_{\alpha}|<|\sR_{\beta}|$, we have
\be{E:orerggrgeege30394}
\Delta\cg(\tau)\leq-|\sR_{\alpha}|\;.
\ee

We proceed following the same arguments and notations of \S~\ref{Sss:Interaction1nC}. Firstly, by~\eqref{S4A1} and the interaction estimate~\eqref{E:11interest}, we deduce that:
\begin{align}
 \left|\ca_{1,1}(x,\tau{+})\right.&\left.- \ca_{1,1}(x,\tau{-})\right|+ |\ca_{1,2}(x,\tau{+})- \ca_{1,2}(x,\tau{-})|
\notag\\
\notag&\leq |\rho'_{p}|+ \Big|\PHI{{p'}^{\ell}}|\sR_{h}'| - \PHI{p_{\alpha}^{\ell}}|\sR_{\alpha}| -\PHI{p_{\beta}^{\ell}}|\sR_\beta| \Big|
\notag
\\
&=\co(1) (p_1^*)^2\,\delta_0^*\cdot |\sR_{\alpha}\sR_{\beta}|\notag\\
&=\co(1) (p_1^*)^2\,(\delta_0^*)^2\cdot |\sR_{\alpha}|\,.
\label{E:orerggrgeege0293-2}
\end{align}
Secondly, by~\eqref{S4A2} we deduce that:
\begin{align}
\notag
 \left|\ca_{2,1}(x,\tau{+})-\right.&\left. \ca_{2,1}(x,\tau{-})\right|+ \left|\ca_{2,2}(x,\tau{+})- \ca_{2,2}(x,\tau{-})\right|
\\
\notag&\leq |\sR'_{p}|+|\sR_{h}'| -|\sR_{\alpha}| -|\sR_\beta|  \notag
\\
& \leq |\sR'_{p}|+|\sR_{h}' -\sR_{\alpha} -\sR_\beta| \notag
\\
&=\co(1) p_1^*\delta_0^*\cdot |\sR_{\alpha}\sR_{\beta}|\notag\\
&=\co(1)   p_1^*(\delta_0^*)^2\cdot |\sR_{\alpha}|\,.
\label{E:orerggrgeege2029}
\end{align}
Estimates~\eqref{E:orerggrgeege30394},~\eqref{E:orerggrgeege0293-2} and~\eqref{E:orerggrgeege2029} directly prove that \eqref{E:rel} holds, actually with $a$ small satisfying
with 
\[a\ge\co(1)  p_1^*(\delta_0^*)^2\;.\]

%
%
\subsubsection{Case of 2-2 interaction}
\label{Sss:Interaction22}
Now, we assume that $\sR_{\alpha}$, $\sR_{\beta}$ are the strengths of two interacting $2$-waves of $v$.
 By~\eqref{E:decreaseGlimm2r}, the Glimm functional is decreasing across such an interaction
 with the bound:
\[
\cg(v(\tau{+}))\leq \cg(v(\tau{-}))-\frac{1}{4}|\sR_{\alpha}\sR_\beta|, \qquad \cg(u(\tau{+}))=\cg(u(\tau{-})),
\]
hence, 
\be{E:orerggrgeege2029-1}
\Delta\cg(\tau)\leq -\frac{1}{4}|\sR_{\alpha}\sR_\beta|\;.
\ee
By the definition of the functionals $\ca_{i,j}$, given at~\eqref{S4A1}--\eqref{S4A2} and relying on the interaction estimates~\eqref{E:22interest}, it holds
\begin{align}\label{E:orerggrgeege2029-2a}
 \left|\ca_{1,1}(x,\tau{+})- \ca_{1,1}(x,\tau{-})\right| &\le p_1^*\cdot |\sR'_{h}|\notag\\
 &\le \co(1)\delta_0^* M_0^*\cdot |\sR_{\alpha}\sR_{\beta}|
 \end{align}
 and
 \begin{align}
 \left| \ca_{1,2}(x,\tau{+})- \ca_{1,2}(x,\tau{-}) \right| &\le 
|\sR_{p}' -\sR_{\alpha} -\sR_\beta|
\notag
\\
& \le \co(1)\delta_0^*\,M_0^*\cdot |\sR_{\alpha}\sR_{\beta}|
\label{E:orerggrgeege2029-2}
\end{align}
while
\begin{align}
\left|\ca_{2,1}(x,\tau{+})- \ca_{2,1}(x,\tau{-})\right|+& \left|\ca_{2,2}(x,\tau{+})- \ca_{2,2}(x,\tau{-})\right|\notag\\
&\leq  \big( |\sR'_{h}|+|\sR_{p}' -\sR_{\alpha} -\sR_\beta|\big)\qquad\qquad
\notag
\\
& \leq\co(1)\delta_0^*\,M_0^*\cdot |\sR_{\alpha}\sR_{\beta}|\,.
\label{E:orerggrgeege2029-2b}
\end{align}
Estimates~\eqref{E:orerggrgeege2029-1} and~\eqref{E:orerggrgeege2029-2a}--~\eqref{E:orerggrgeege2029-2b} directly prove that \eqref{E:rel} holds, actually with $a$ small being
\[
a\ge\co(1)\delta_0^* M_0^* (1+p_1^*)\;.
\]

%
%
\subsubsection{Case of 2-1 interaction}
\label{Sss:Interaction21sameRegion}
Here, we consider the case that two incoming waves of different families interact. The strength $\sR_{\alpha}$ corresponds to the 2-wave located at $x_\alpha$ and the strength $\sR_{\beta}$ is the 1-wave located at $x_\beta$. We observe that the left state of the outgoing 1-wave $\rho'_h$ is the same with the left state of the incoming 2-wave $\sR_\alpha$. Hence, if we denote by ${p'}^{\ell}$
the $p$-component of
the left state of $\sR_{h}'$, then ${p'}^{\ell}=p_{\alpha}^{\ell}$. Since the $p$-component of the solution $v$ attains values in $[p_0^*,p_1^*]$, with $p_0^*<1<p_1^*$, there are the following cases: (a) The $p$-components of the left states $p_{\alpha}^{\ell}$, $p_{\beta}^{\ell}$ of the incoming waves belong to the same interval 
$[p_0^*,1]$ or $[1, p_1^*]$, i.e. they both lie in the same region $\{p>1\}$ or $\{p< 1\}$ of the $h-p$ plane. (b) The $p$-components of the left states $p_{\alpha}^{\ell}$, $p_{\beta}^{\ell}$ of the incoming waves belong to different regions. 

$\bullet$ Case (a) with states not crossing $\{p=1\}$. In this case, we first note that the decrease of the Glimm functional~\eqref{E:decreaseGlimm2r},
is the same as of the $2-2$ interaction in the previous subsection, see~\eqref{E:orerggrgeege2029-1}. Next, relying on the interaction estimates~\eqref{E:21interest}, we obtain again similar estimates to~\eqref{E:orerggrgeege2029-2}--\eqref{E:orerggrgeege2029-2b} on the variation of $\ca_{i,j}$ around $\tau$ and show that
\[
\Delta  \ca_{i,j}
\leq
 \co(1)\delta_0^*\, \cdot |\sR_{\alpha}\sR_{\beta}|
\;.
\]
Hence, the same conclusion~\eqref{E:rel} holds here as well, actually with $a$ small, i.e. $a=\co(1)\delta_0^*$.


$\bullet$ Case (b) with states crossing $\{p=1\}$. In this case, the left states of the incoming waves do not lie in the same region $\{p<1\}$ or $\{p>1\}$. In other words, we assume that the left state of $\sR_\beta$ lies on $\{p<1\}$ and the left state of $\rho'_h$ lies on on $\{p>1\}$ or viceversa.
Then, one can deal with the variation of the functionals $\ca_{1,2}(x,t)$, $\ca_{2,1}(x,t)$, $\ca_{2,2}(x,t)$, $\cg(u(t))$ and $\cg(v(t))$ across the interaction time $t=\tau$ precisely as in \S~\ref{Sss:Interaction22}, so that there holds~\eqref{E:orerggrgeege2029-1} and
\be{E:orerggrgeege2029-4b}
\begin{split}
&\ca_{1,2}(x,\tau{+})- \ca_{1,2}(x,\tau{-})\leq \co(1)\delta_0^*\, \cdot |\sR_{\alpha}\sR_{\beta}|
\\
& \left|\ca_{2,1}(x,\tau{+})- \ca_{2,1}(x,\tau{-})\right|+ \left|\ca_{2,2}(x,\tau{+})- \ca_{2,2}(x,\tau{-})\right|
\leq \co(1) \delta_0^*\, \cdot |\sR_{\alpha}\sR_{\beta}|\,.
\end{split}
\ee
Instead, the variation of the functional $\ca_{1,1}(x,t)$ needs a different treatment.
Due to the change of region with respect to $\{p=1\}$
of the left state of the incoming and outgoing 1-wave, either the 1-wave located at $x_\beta$ is moving towards 
a 1-wave $\eta_1(x,\tau-)$, $x\neq x_\beta$, before the interaction and it is moving away from $\eta_1(x,\tau+)$ after the interaction, or viceversa.
This behaviour is precisely determined by the 
fact that the first characteristic family is not genuinely nonlinear
since we have $D\lambda_1\, {\bf r}_1<0$ on $\{p<1\}$ and $D\lambda_1\, {\bf r}_1>0$ on $\{p>1\}$.
Hence, to estimate the variation of the functional $\ca_{1,1}$, we proceed by studying two subcases: 
To fix the ideas, let a point $x<x_\beta$ where $\eta_1(x,\tau-)=\eta_1(x,\tau+)>0$.
If the 1-wave at $x_\beta$ is approaching $\eta_1(x,\tau\pm)$ before the interaction and it is moving away from~$\eta_1(x,\tau\pm)$ after
the interaction, 
i.e. if $p_{\beta}^{\ell}< 1< p_\alpha^{\ell}$,
then one has
\be{S4.A11<0}
\ca_{1,1}(x,\tau{+})-\ca_{1,1}(x,\tau{-})=-\PHI{p_{\beta}^{\ell}}|\sR_{\beta}| <0
\ee
and the functional $\ca_{1,1}$ decreases.
Instead, if the 1-wave at $x_{\beta}$ approaches $\eta_1(x,\tau\pm)$ after the interaction, but it was not approaching $\eta_1(x,\tau\pm)$ before the interaction, i.e. if $p_{\alpha}^{\ell}< 1< p_\beta^{\ell}$,
then, relying on~\eqref{E:21interest}, one has 
\ba\label{E:orerggrgeege2029-4f}
\ca_{1,1}(x,\tau{+})-\ca_{1,1}(x,\tau{-})&= |p_{\beta}^{\ell}-1||\sR_{h}'|\notag\\
&= |p_{\beta}^{\ell}-1|\left(|\sR_\beta|+\co(1)\delta_0^*\cdot |\sR_{\alpha}\sR_{\beta}|\right).
\ea
Since 
 $|p_{\beta}^{\ell}-1|\leq |p_{\beta}^{\ell}-p_{\alpha}^{\ell}|$, and because
$$
(h_\beta^\ell,p_\beta^\ell)=\mathbf{S}_2\big(\sR_\alpha; (h_\alpha^\ell,p_\alpha^\ell)\big),
$$
recalling~\eqref{Sdue},~\eqref{E:equivalncestrengths}, we get $|p_{\beta}^{\ell}-1|\leq\mu|\sR_{\alpha}|$. We thus conclude 
from~\eqref{E:orerggrgeege2029-4f} that 
\be{E:orerggrgeege2029-4X}
\ca_{1,1}(x,\tau{+})- \ca_{1,1}(x,\tau{-})\leq \mu\big(1+\co(1)\delta_0^*\big)|\sR_{\alpha}\sR_{\beta}|\;.
\ee

\begin{figure}[htbp]
{\centering \scalebox{1}{\ifx\JPicScale\undefined\def\JPicScale{1}\fi
\unitlength \JPicScale mm
\begin{picture}(160.85,67.07)(0,0)
\linethickness{0.4mm}
\put(100.49,5.33){\line(1,0){60.35}}
\put(160.84,5.33){\vector(1,0){0.12}}
\linethickness{0.4mm}
\put(100.49,5.33){\line(0,1){61.74}}
\put(100.49,67.07){\vector(0,1){0.12}}
\linethickness{0.2mm}
\put(100.5,32.5){\line(1,0){60.35}}
\put(159.75,3.28){\makebox(0,0)[cc]{$h$}}

\put(97.2,66.38){\makebox(0,0)[cc]{$p$}}

\put(96.1,31.4){\makebox(0,0)[cc]{$1$}}

\put(98.3,6.71){\makebox(0,0)[cc]{$0$}}

\put(101.59,3.28){\makebox(0,0)[cc]{$0$}}

\put(112,51){\makebox(0,0)[cc]{\scalebox{1}{$v^\ell=(h_\beta^\ell,p_\beta^\ell)$}}}

\put(123,21){\makebox(0,0)[cc]{\scalebox{1}{$v^{m}=({h_\alpha}^\ell,{p_\alpha}^\ell)$}}}

\put(107.35,33.11){\makebox(0,0)[cc]{}}

\put(103.51,21.8){\makebox(0,0)[cc]{}}

\linethickness{0.4mm}
\multiput(14.31,5.09)(57.69,-0.09){1}{\line(1,0){57.69}}
\put(72,5){\vector(1,-0){0.12}}
\linethickness{0.4mm}
\put(14.31,5.09){\line(0,1){61.72}}
\put(14.31,66.81){\vector(0,1){0.12}}
\linethickness{0.2mm}
\put(14.31,32.52){\line(1,0){45.69}}
\put(70,3){\makebox(0,0)[cc]{$h$}}

\put(10.01,66.13){\makebox(0,0)[cc]{$p$}}

\put(8.58,31.15){\makebox(0,0)[cc]{$1$}}

\put(11.44,6.46){\makebox(0,0)[cc]{$0$}}

\linethickness{0.2mm}
\multiput(30.67,52.65)(0.37,-0.13){5}{\line(1,0){0.37}}
\put(32.5,52){\vector(3,-1){0.12}}
\linethickness{0.2mm}
\multiput(35.96,36.63)(0.79,-0.14){2}{\line(1,0){0.79}}
\put(37.53,36.36){\vector(4,-1){0.12}}
\linethickness{0.2mm}
\multiput(24,51)(0.12,-0.62){4}{\line(0,-1){0.62}}
\put(24.5,48.5){\vector(1,-4){0.12}}
\linethickness{0.2mm}
\multiput(44.19,43.45)(0.12,-0.35){7}{\line(0,-1){0.35}}
\put(45,41){\vector(1,-3){0.12}}
\put(15.75,3.03){\makebox(0,0)[cc]{$0$}}

\linethickness{0.3mm}
\put(111.5,48.5){\circle*{1}}

\linethickness{0.3mm}
\put(115.5,23.5){\circle*{1}}

\linethickness{0.3mm}
\put(145,27){\circle*{1}}

\linethickness{0.3mm}
\put(23.5,56.5){\circle*{1}}

\linethickness{0.3mm}
\put(26.5,38){\circle*{1}}

\linethickness{0.3mm}
\put(47.5,35.5){\circle*{1}}

\linethickness{0.3mm}
\put(42,49){\circle*{1}}

\put(22.5,45){\makebox(0,0)[cc]{${\color{red}\rho_\beta}$}}

\put(37,39){\makebox(0,0)[cc]{${\color{red}\rho_\alpha}$}}

\linethickness{0.3mm}
\put(140,41){\circle*{1}}

\put(110,36){\makebox(0,0)[cc]{${\color{red}\rho_\beta}$}}

\put(131.5,24.5){\makebox(0,0)[cc]{${\color{red}\rho_\alpha}$}}

\put(128,46){\makebox(0,0)[cc]{${\color{blue}\rho_h'}$}}

\put(146,35){\makebox(0,0)[cc]{${\color{blue}\rho_p'}$}}

\put(147,25.5){\makebox(0,0)[cc]{\scalebox{1}{$v^r$}}}

\put(144,42.5){\makebox(0,0)[cc]{\scalebox{1}{${v^m}'$}}}

\put(26,59){\makebox(0,0)[cc]{\scalebox{1}{$v^\ell=(h_\beta^\ell,p_\beta^\ell)$}}}

\put(38,53){\makebox(0,0)[cc]{${\color{blue}\rho_h'}$}}

\put(50,42){\makebox(0,0)[cc]{${\color{blue}\rho_p'}$}}

\put(28,35){\makebox(0,0)[cc]{\scalebox{1}{$v^m=({h_\alpha}^\ell,{p_\alpha}^\ell)$}}}

\put(50,36){\makebox(0,0)[cc]{\scalebox{1}{$v^r$}}}

\put(46,50){\makebox(0,0)[cc]{\scalebox{1}{${v^m}'$}}}

\linethickness{0.2mm}
\qbezier(111.5,48.5)(111.49,48.1)(111.73,43.66)
\qbezier(111.73,43.66)(111.96,39.23)(112.5,35.5)
\qbezier(112.5,35.5)(113.14,31.99)(114.26,27.92)
\qbezier(114.26,27.92)(115.38,23.86)(115.5,23.5)
\linethickness{0.2mm}
\qbezier(115.5,23.5)(115.93,23.6)(120.86,24.52)
\qbezier(120.86,24.52)(125.79,25.45)(130,26)
\qbezier(130,26)(134.32,26.47)(139.43,26.73)
\qbezier(139.43,26.73)(144.55,27)(145,27)
\linethickness{0.2mm}
\qbezier(140,41)(140.05,40.67)(140.95,37.05)
\qbezier(140.95,37.05)(141.85,33.43)(143,30.5)
\qbezier(143,30.5)(143.48,29.44)(144.2,28.27)
\qbezier(144.2,28.27)(144.93,27.1)(145,27)
\linethickness{0.2mm}
\qbezier(111.5,48.5)(112.04,48.23)(118.27,45.8)
\qbezier(118.27,45.8)(124.51,43.37)(130,42)
\qbezier(130,42)(132.85,41.44)(136.27,41.22)
\qbezier(136.27,41.22)(139.69,40.99)(140,41)
\linethickness{0.2mm}
\qbezier(23.5,56.5)(23.72,56.36)(26.24,54.93)
\qbezier(26.24,54.93)(28.76,53.5)(31,52.5)
\qbezier(31,52.5)(34.1,51.27)(37.88,50.17)
\qbezier(37.88,50.17)(41.66,49.07)(42,49)
\linethickness{0.2mm}
\qbezier(27,38)(27.27,37.94)(30.33,37.4)
\qbezier(30.33,37.4)(33.39,36.85)(36,36.5)
\qbezier(36,36.5)(39.31,36.11)(43.23,35.81)
\qbezier(43.23,35.81)(47.15,35.52)(47.5,35.5)
\linethickness{0.2mm}
\qbezier(42,49)(42.1,48.73)(43.27,45.65)
\qbezier(43.27,45.65)(44.44,42.58)(45.5,40)
\qbezier(45.5,40)(46.05,38.69)(46.74,37.16)
\qbezier(46.74,37.16)(47.44,35.63)(47.5,35.5)
\linethickness{0.2mm}
\qbezier(23.5,56.5)(23.52,56.24)(23.83,53.34)
\qbezier(23.83,53.34)(24.13,50.45)(24.5,48)
\qbezier(24.5,48)(24.99,45.09)(25.71,41.7)
\qbezier(25.71,41.7)(26.43,38.3)(26.5,38)
\linethickness{0.2mm}
\multiput(123,44)(0.38,-0.12){4}{\line(1,0){0.38}}
\put(124.5,43.5){\vector(3,-1){0.12}}
\linethickness{0.2mm}
\multiput(111.94,41.97)(0.06,-2.47){1}{\line(0,-1){2.47}}
\put(112,39.5){\vector(0,-1){0.12}}
\linethickness{0.2mm}
\multiput(142,33.5)(0.12,-0.38){8}{\line(0,-1){0.38}}
\put(143,30.5){\vector(1,-3){0.12}}
\linethickness{0.2mm}
\put(135.5,26.5){\line(1,0){1}}
\put(135.5,26.5){\vector(-1,0){0.12}}
\end{picture} } \par}
\caption{{\bf On the left:} The $2-1$ interaction in \S~\ref{Sss:Interaction21sameRegion} in Case (a). {\bf On the right:} The $2-1$ interaction in \S~\ref{Sss:Interaction21sameRegion}\label{S4:fig1} in Case (b).}
\end{figure}
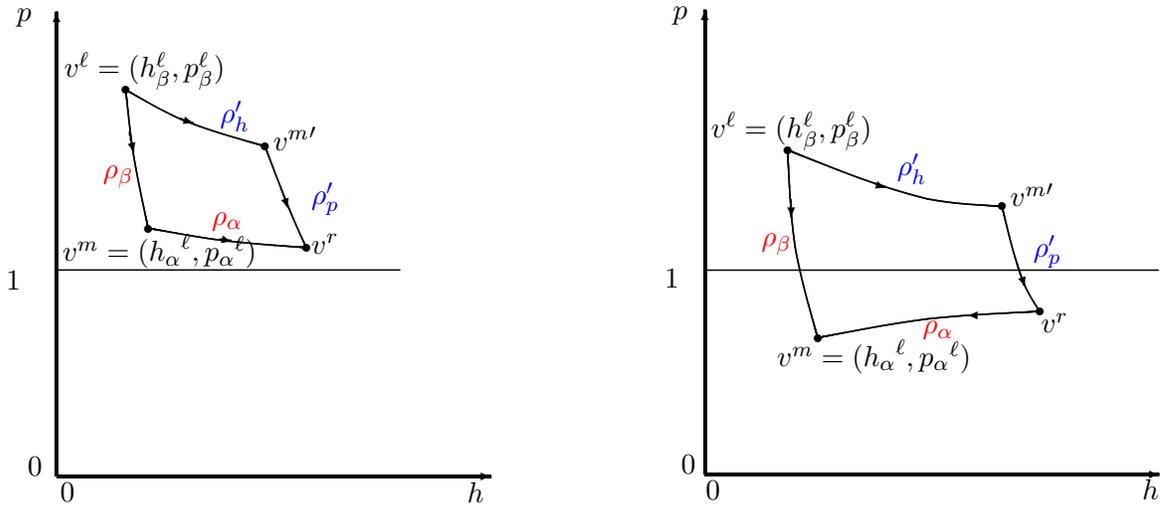
Estimates~\eqref{E:orerggrgeege2029-1},~\eqref{E:orerggrgeege2029-4b},~\eqref{S4.A11<0} and~\eqref{E:orerggrgeege2029-4X} directly prove that \eqref{E:rel} holds with
\[
a\ge\co(1)\left[(\delta_0^*+1)\mu+\delta_0^*\right]
\]

Combining all results of \S~\ref{Sss:Interaction1nC}--\S~\ref{Sss:Interaction21sameRegion}, estimate~\eqref{E:rel} holds for
\[
a=\co(1)\max\left\{ (1+\frac{\mu}{\overline\delta} \delta_0^*) ,  p_1^*(\delta_0^*)^2, \delta_0^* M_0^* (1+p_1^*),\left[(\delta_0^*+1)\mu+\delta_0^*\right]\right\}\;.
\]
By Lemma~\ref{lemma4.1}, this yields immediately that under the restriction~\eqref{S4.1conditionkappa} on the size of $\kappa_{\cg}$, the functional $\Phi_0$ is non-increasing at interaction times.

%


\subsection{Analysis at times between interactions}
\label{Ss:nointeractiontimesN}

When there is no interaction at time $t$, a short direct computation 
yields
\begin{equation}
\label{E:der-phi0-00N}
\ddn{t}{}\Phi_0(u(t),v(t))= \left[\sum_{\alpha\in \cj} \sum_{i=1}^{2} \big\{ \left|\eta_{i}\left(x_{\alpha}{-},t\right)\right| W_{i}(x_{\alpha}{-},t)-\left|\eta_{i}\left(x_{\alpha}{+},t\right)\right| W_{i}(x_{\alpha}{+},t)\big\} \dot x_{\alpha}\right]
 \cdot e^{\kappa_{\cg} \big[\cg(u(t))+\cg(v(t))\big]}
 \;,
\end{equation}
where $\cj$ 
denotes the set of indexes $\alpha$ associated to the jumps in $u(t)$ and $v(t)$.
As in \S~\ref{S2}, we denote $x_\alpha$ the location of the
the $\alpha$-jump, and let $\sC_\alpha$
denote its size, measured in original coordinate
(cfr.~\S~\ref{Ss:strengthnotation}).
Since $u(t), v(t)\in L^{1}(\R)$, we may assume that the piecewise constant maps $u(x,t), v(x,t)$
vanish when $x\to \pm \infty$, which implies that also $\eta_i(x,t)=0$ when $x\to \pm \infty$.
Hence, following~\cite[(8.18)-(8.19)]{Bre} 
we rewrite~\eqref{E:der-phi0-00N} in the equivalent form
\begin{equation}
\label{der-phi-sum}
\ddn{t}{}\Phi_0(u(t),v(t))= \left[\sum_{\alpha\in \cj} \sum_{i=1}^{2} E_{\alpha,i}\right]
 \cdot e^{\kappa_{\cg} \big[\cg(u(t))+\cg(v(t))\big]}
\end{equation}
where
\be{E:error}
E_{\alpha,i} \doteq W_{i}^{\alpha,r}|\eta_{i}^{\alpha,r}|(\lambda_{i}^{\alpha,r}-\dot x_{\alpha})- W_{i}^{\alpha,\ell}|\eta_{i}^{\alpha,\ell}|(\lambda_{i}^{\alpha,\ell}-\dot x_{\alpha})
\qquad
\alpha\in \cj
\ .
\ee
Here, and throughout the following, we adopt the notation 
\begin{subequations}
\label{GE:definitionslralpha}
\begin{equation}
\label{W-eta-lambda-rl-def}
\begin{gathered}
W_{i}^{\alpha,\ell}\doteq W_i(x_{\alpha}{-},t),\qquad
W_{i}^{\alpha,r}\doteq W_i(x_{\alpha}{+},t),\qquad
\eta_{i}^{\alpha,\ell}\doteq \eta_i(x_{\alpha}{-},t),\qquad
\eta_{i}^{\alpha,r}\doteq \eta_i(x_{\alpha}{+},t).
\end{gathered}
\end{equation}
Similarly, we set
\be{S5.2.1 intro}
u^\alpha\doteq (u^\alpha_{1},u^\alpha_{2})\doteq u(x_{\alpha},t)\ ,\qquad
v^{\alpha,\ell}\doteq (v^{\alpha,\ell}_{1},v^{\alpha,\ell}_{2})\doteq v(x_{\alpha}-,t) \ ,
\qquad
v^{\alpha,r}\doteq (v^{\alpha,r}_{1},v^{\alpha,r}_{2})\doteq v(x_{\alpha}+,t) \ .
\ee
Then, recalling~\eqref{E:vu} with $\er=0$, one has
\begin{equation}
\label{S5.2.1 intro-a}
\omega^{\alpha,\ell/r}\doteq \mathbf{S}_1\big(\eta_1^{\alpha,\ell / r}; u^\alpha\big),\qquad
v^{\alpha,\ell/r}= \mathbf{S}_2\big(\eta_2^{\alpha,\ell / r}; \omega^{\alpha,\ell/r}\big),
\end{equation}
and we set
\begin{equation}
\label{S5.2.1_introd}
 \lambda_{1}^{\alpha,\ell / r}\doteq \lambda_{1}\big(u^\alpha,\omega^{\alpha,\ell / r}\big),
 \qquad
 \lambda_{2}^{\alpha,\ell / r}\doteq\lambda_{2}\big(\omega^{\alpha,\ell / r}, v^{\alpha,\ell / r}\big),
\end{equation}
\end{subequations}
where $\lambda_i(u,w)$ denotes the speed of the $i$-wave $\eta_i$ connecting the left state $u$ with the right state $w$.
This means that $\lambda_1(u^\alpha,\omega^{\alpha,\ell / r})$
is the 
Rankine-Hugoniot speed of the $1$-shock connecting the left state $u^\alpha$ with the right state $\omega^{\alpha,\ell/r}$,
while $\lambda_2(\omega^{\alpha,\ell / r}, v^{\alpha,\ell/r})$
is the Rankine-Hugoniot speed of the $2$-shock
connecting the left state $\omega^{\alpha,\ell/r}$ with the right state $v^{\alpha,\ell / r}$
(see \S~\ref{Ss:Rsolv-appsol}). 

The goal of this section is to prove that, choosing the coefficeints $\kappa_{i\ca j}$, $i,\,j=1,2$,  in~\eqref{S4Wi} appropriately, together with the sizes $\delta_0^*$ and $\delta_p^*$ of the domain,
for every $\alpha\in \cj$, there holds
\be{E:mainest}
E_{\alpha,1}+E_{\alpha,2} \leq \co(1) \varepsilon |\sC_{\alpha}|\,,\quad\text{for every }\alpha\in \cj\,.
\ee
Actually, the selection of these parameters is performed to obey \emph{Conditions $(\Sigma)$} stated in the proof of Propostion~\ref{PropCond}. Next, summing up~\eqref{E:mainest} over all jumps $\alpha\in \cj$, we derive the general estimate~\eqref{est-phi0-decr} from~\eqref{der-phi-sum} and \eqref{E:mainest} relying on
~\eqref{unifbvbound},~\eqref{V-totvar-est}, since $\cg(u(t))<M^*$ and $\cg(v(t))<M^*$ for all times.

We will establish the basic estimate~\eqref{E:mainest} assuming that the term $E_{\alpha,i}$ in~\eqref{E:error} 
always refers to a
 jump in $v(t)$ at $x_\alpha$ that connects
two states along an Hugoniot curve. 
This means that, when the jump in $v(t)$ is actually a rarefaction front, we shall replace it
with a {\it rarefaction shock} (cfr.~\cite[\S~5.2]{Bre}) of the same size, connecting two states along an Hugoniot curve,
and travelling with the corresponding Hugoniot speed.
Following a similar argument as in~\cite[\S~8.2]{Bre}, 
one can show that, because of the second order tangency of 
Hugoniot and rarefaction curves, this reduction 
produces an error of size $\co(1) \varepsilon |\sC_{\alpha}|$. 
We shall discuss this reduction in Appendix~\ref{S:shockReduction}.
The estimate~\eqref{E:mainest} in the case when $E_{\alpha,i}$ refers to a jump of $u(t)$ rather than of $v(t)$ is entirely analogous.

The structure of this section is the following:
\begin{itemize}
\item[\S~\ref{Ss:estimPhys1wBis}] We derive~\eqref{E:mainest} when the wave of $v(t)$ at $x_{\alpha}$ is a 
(compressive or rarefaction) shock of the first famiy.
\item[\S~\ref{Ss:estimPhys2wNN}] We derive~\eqref{E:mainest} when the wave of $v(t)$ at $x_{\alpha}$ is a 
(compressive or rarefaction) shock of the second family.
\end{itemize}
The analysis of sections \S~\ref{Ss:estimPhys1wBis}-\S~\ref{Ss:estimPhys2wNN}
relies on refined interaction-type estimates that are obtained in Appendix~\ref{S:finerInteractions}: they involve the waves $\eta_i^{\alpha,\ell / r}$ connecting $u(t)$ with $v(t)$ at $x_\alpha$, the wave speeds  $\lambda_{2}^{\alpha,\ell / r}$ given at~\eqref{S5.2.1_introd} and the speed $\dot x_{\alpha}$ of the wave in $v$.

\subsubsection{Waves of the first family}
\label{Ss:estimPhys1wBis}

In this section, we derive the estimate~\eqref{E:mainest} on the sum of the errors $E_{\alpha,1}+E_{\alpha,2}$ 
defined in~\eqref{E:error} when the wave of $v(t)$ present at $x_{\alpha}$ belongs to the first family, i.e.~$k_{\alpha}=1$. 
To this end, we shall first provide an estimate of $E_{\alpha,1}$ and $E_{\alpha,2}$ separately, and 
then we will combine them to derive~\eqref{E:mainest}.
We shall adopt the notation given in~\eqref{GE:definitionslralpha},
dropping the superscript $\alpha$, and we will let $\sC_\alpha, \sR_\alpha$ denote the size of the
wave located at $x_\alpha$, measured in the original and Riemann coordinates, respectively (see 
\S~\ref{Ss:strengthnotation}).
Recall that, by~\eqref{E:reeeeeeeterete} in Theorem~\ref{T:AS}
the solutions~$u$, $v$, and the intermediate value $\omega$ 
defined in~\eqref{S5.2.1 intro-a}, take values in 
the compact set 
\begin{subequations}
\label{bound-eta1-gamma-eta1+gamma-pNN}
\ba
&\text{$K={[0,\delta_0^*]\times [p_0^*,p_1^*]}$}
\ea
with the parameters satisfying
\ba\label{:setKbd-1bound}
0<\delta_0^*<1-\delta_p^*< p_0^*<1<p_1^*<1+\delta_p^*
\ea
for $\delta_0^*>0$ and $\delta_p^*>0$ sufficiently small.
For convenience, we assume that both $\delta_0^*$ and $\delta_p^*$ are less than $\frac{1}{2}$. Having these restrictions on $K$, we obtain the following conditions on the variables
\ba\label{:cvetagamma-1bound}
v_1^{\alpha,\ell}, \omega_1^{\alpha,\ell}, |\eta_1^{\alpha,\ell}|, |\sC_\alpha|, |\eta_1^{\alpha,\ell}+\sC_\alpha| \le { \delta_0^*},\quad 
|v_2^{\alpha,\ell}-1|,|v_2^{\alpha,\ell}-1-\eta_2^{\alpha,\ell}|, 
|p_\alpha-1|\le \delta_p^* \,,\quad |\eta_2^{\alpha,\ell}|\le 2\delta_p^*.
\ea
Furthermore, we also require
\ba\label{:constrNewCoeff0}
\mathfrak K\doteq\kappa_{1\ca1}\mu\delta_0^*\delta_p^*<\tfrac{1}{4} \, .
\ea
and
  \ba\label{E:constrNewCoeff}
  &\left(1-e^{\mathfrak K}\frac{\kappa_{1\ca1}}{\mu^2}\delta_0^*\delta_p^* \right)  
>
\frac{1}{2}
 \ea
These additional conditions~\eqref{:constrNewCoeff0}--\eqref{E:constrNewCoeff} are imposed in order for the decay of the waves of the first family to dominate the possible increase of the waves of the second family and also to control error terms that arise while establishing~\eqref{E:mainest}.
\end{subequations}

\begin{figure}
\centering
\hfill
\includegraphics[width=.30\linewidth, height=.30\linewidth]{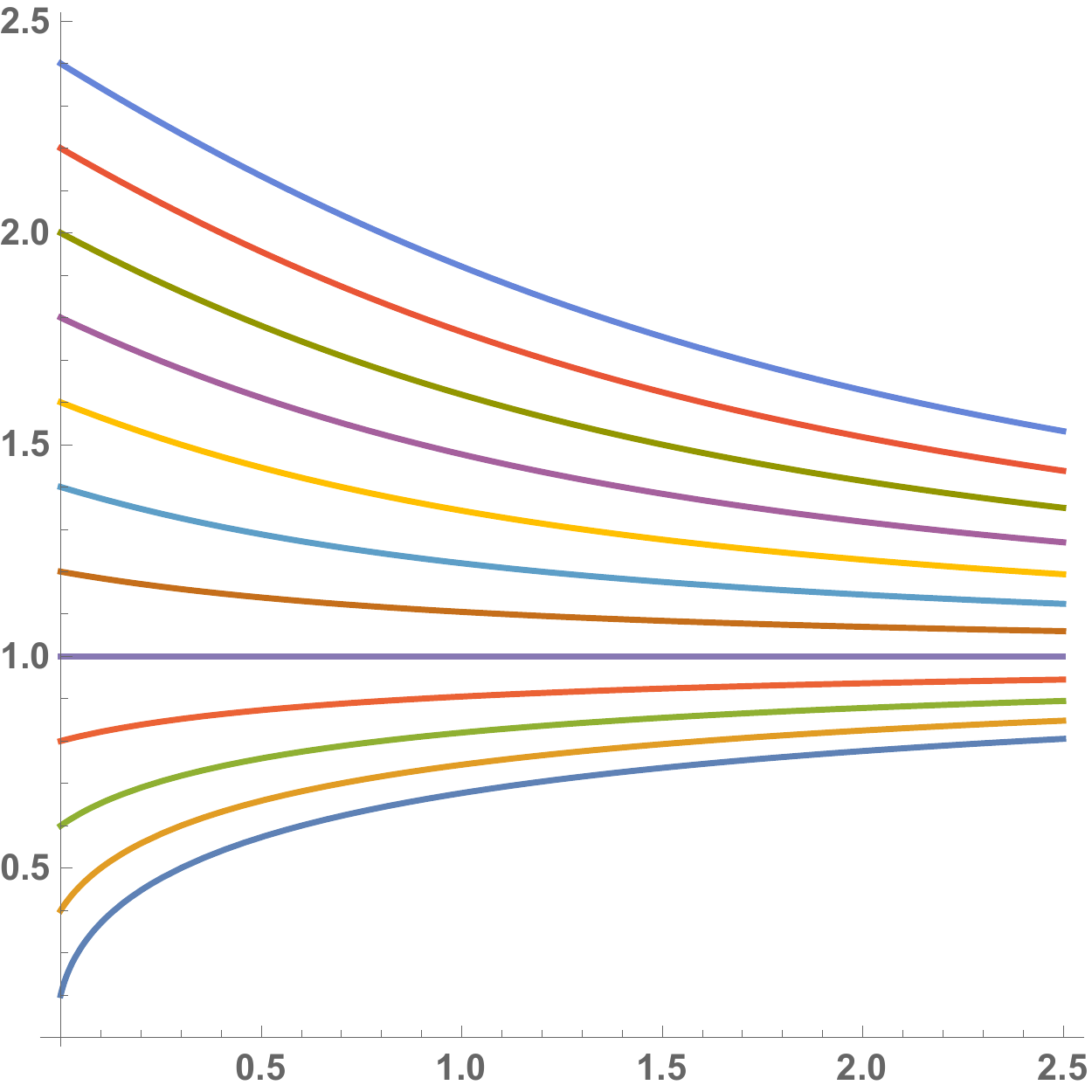}
\hfill
\includegraphics[width=.30\linewidth, height=.30\linewidth]{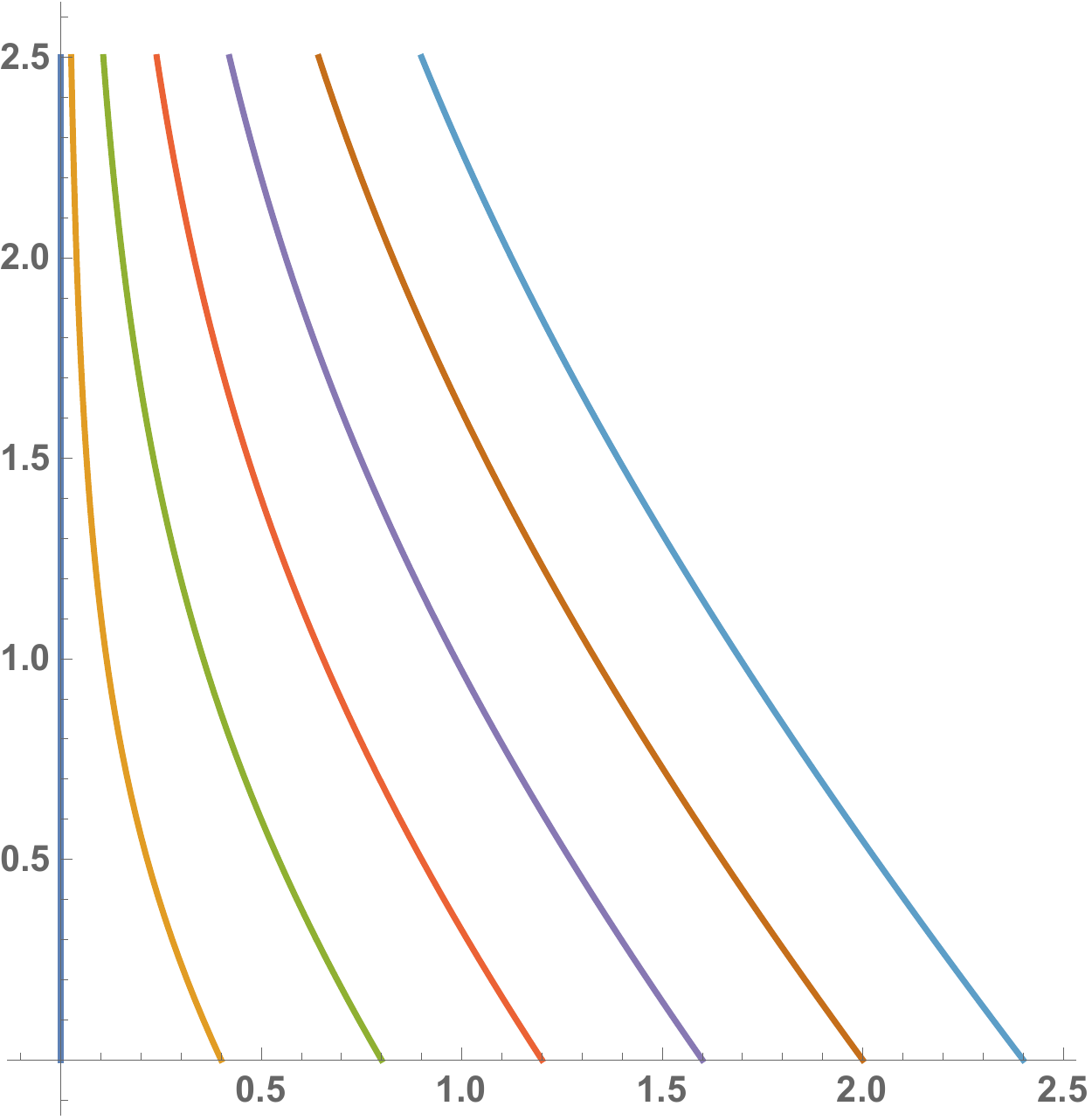}
\hspace{1in}
\caption{{\bf Left}: Hugoniot curves of the first family through $(0,p)$ \textemdash left to right are entropy admissible. {\bf Right}: Hugoniot curves of the second family throught $(h,0)$ \textemdash up to down are entropy admissible.}
\label{fig:shockCurves}
\end{figure}

\paragraph{Estimate of $E_{\alpha,1}$ for waves of the first family} \label{SubEalpha1-1}
The analysis is divided into three cases according to the signs of $\eta_{1}^{\alpha,r}$, $\eta_{1}^{\alpha,\ell}$ and to the type of the wave at $x_\alpha$: a) treating the case when $\eta_{1}^{\alpha,r}$ and $\eta_{1}^{\alpha,\ell}$ have the same sign, b) treating the case when $\eta_{1}^{\alpha,r}$ and $\eta_{1}^{\alpha,\ell}$ have opposite sign and the wave at $x_{\alpha}$ is an entropy admissible $1$-shock, c) treating the case when $\eta_{1}^{\alpha,r}$ and $\eta_{1}^{\alpha,\ell}$ have opposite sign and the wave at $x_{\alpha}$ is $1$-rarefaction shock.

\noindent\textbf{Case a)} Suppose that $\eta_{1}^{\alpha,r}$ and $\eta_{1}^{\alpha,\ell}$ have the same sign. 
Observe that by definitions~\eqref{S4Wi}--\eqref{S4A1} one has 
$$W_{1}^{\alpha,r}=W_{1}^{\alpha,\ell} e^{\mathfrak D}$$
where $ \mathfrak D$ is given by
\ba\label{E:rguiregregregrergeNN}
 \mathfrak D=\sgn\left( (v_{2}^{\alpha,\ell}-1)\eta_{1}^{\alpha,\ell}\right)\cdot \kappa_{1\ca1}\PHI{{v_{2}^{\alpha,\ell}}}|\sR_{\alpha}|\;.
\ea
By bounds~\eqref{bound-eta1-gamma-eta1+gamma-pNN},\,\eqref{E:equivalncestrengths}, it holds
\ba
 \left|\mathfrak D\right|\leq \mathfrak K<\tfrac14 \label{E:boundD}\;.
\ea
We thus estimate $E_{\alpha,1} $ in~\eqref{E:error} as follows:
\begin{align*}
E_{\alpha,1} 
&=
(W_{1}^{\alpha,r}-W_{1}^{\alpha,\ell})|\eta_{1}^{\alpha,\ell}|(\lambda_{1}^{\alpha,\ell}-\dot x_{\alpha})
+W_{1}^{\alpha,r}\left[|\eta_{1}^{\alpha,r}|(\lambda_{1}^{\alpha,r}-\dot x_{\alpha})-|\eta_{1}^{\alpha,\ell}|(\lambda_{1}^{\alpha,\ell}-\dot x_{\alpha})\right]
\notag \\
&\stackrel{\eqref{E:rguiregregregrergeNN}}{=}
W_{1}^{\alpha,r}\left(1-e^{-\mathfrak D}\right)(\lambda_{1}^{\alpha,\ell}-\dot x_{\alpha}) |\eta_{1}^{\alpha,\ell} |+W_{1}^{\alpha,r}\sgn{(\eta_{1}^{\alpha,\ell})}\left[\eta_{1}^{\alpha,r}(\lambda_{1}^{\alpha,r}-\dot x_{\alpha})-\eta_{1}^{\alpha,\ell}(\lambda_{1}^{\alpha,\ell}-\dot x_{\alpha})\right] \ .
\end{align*}
Using~\eqref{E:stimaetarbasefull1}, we approximate $\eta_{1}^{\alpha,\ell}+\sC_\alpha  $ by $\eta_{1}^{\alpha,r}  $ having some error terms
\[
\eta_{1}^{\alpha,\ell}+\sC_\alpha =\eta_{1}^{\alpha,r} (1+\co(1)\delta_0^*)+\co(1)\delta_0^*|\eta_2^{\alpha,\ell}|
\]
and then, we combine with~\eqref{E:stima11lfull} to estimate the range of the difference in speeds to get
\[
 \frac{ (v_{2}^{\alpha,\ell}-1) \eta_{1}^{\alpha,r}  }{ v_{2}^{\alpha,\ell}} \left(1-\co(1)  \delta_0^*\right)-\co(1)|\eta_2^{\alpha,\ell}| \leq
\dot x_{\alpha}- \lambda_{1}^{\alpha,\ell}
 \leq \frac{ (v_{2}^{\alpha,\ell}-1) \eta_{1}^{\alpha,r} }{ v_{2}^{\alpha,\ell}} \left(1+\co(1)  \delta_0^*\right)+ \co(1)|\eta_2^{\alpha,\ell}| \,.
\]
Next, we recall~\eqref{E:rguiregregregrergeNN} and observe that
\[
\left(1-e^{-\mathfrak D}\right) (v_{2}^{\alpha,\ell}-1) \eta_{1}^{\alpha,r}>0\;,
\]
since $\eta_{1}^{\alpha,\ell}\eta_1^{\alpha,r}>0$. Combining these estimates together with
~\eqref{E:fvgabfafvNewfull} and bounds~\eqref{bound-eta1-gamma-eta1+gamma-pNN}, we arrive at
\ba
E_{\alpha,1} 
\leq W_{1}^{\alpha,r}\cdot&\left(\frac{-1+\co(1) \delta_0^*}{p_1^*}\cdot
\left|\left(1-e^{-\mathfrak D}\right)(v_{2}^{\alpha,\ell}-1)\eta_{1}^{\alpha,\ell}\eta_{1}^{\alpha,r}\right|
 + \co(1) \left|\left(1-e^{-\mathfrak D}\right)\eta_{1}^{\alpha,\ell}\eta_2^{\alpha,\ell} \right|\notag
 \right.
\\
&\qquad\qquad\qquad\qquad\left. +\co(1)   \delta_0^* \left\lvert(v_{2}^{\alpha,\ell}-1)^2   \eta_{1}^{\alpha,\ell}\eta_{1}^{\alpha,r}\sC_{ \alpha} \right\rvert+\co(1)|\eta_2^{\alpha,\ell} \sC_{{{\alpha}}}| \right) \ \label{E:stimaintermedia}
 \ea
 using that $|v_1^{\alpha,\ell}+\sC_{{{\alpha}}}|\le \co(1)\delta_0^*$.
Next, we use the Maclaurin expansion to estimate $\left|e^{-\mathfrak D}-1\right|$ 
 \bas
0\leq e^{-\mathfrak D}-1+\sgn\left( (v_{2}^{\alpha,\ell}-1)\eta_{1}^{\alpha,\ell}\right)\cdot \kappa_{1\ca1}\PHI{{v_{2}^{\alpha,\ell}}}|\sR_{\alpha}| 
 \leq
 e^{\mathfrak K}\kappa_{1\ca1}^2 \PHI{{v_{2}^{\alpha,\ell}}}^2 \sR^2_{\alpha} \ ,
\eas
and control the error terms in~\eqref{E:stimaintermedia}. Indeed, by estimates~\eqref{E:equivalncestrengths}\,,~\eqref{E:boundD} we get the upper bound
  \ba\label{E:upperestE}
   \left|\left(1-e^{-\mathfrak D}\right)\eta_1^{\alpha,\ell}\right|\leq \left(1+e^{\mathfrak K}\mathfrak K\right) 
   \mathfrak K
   \left|\sC_\alpha\right|
   \leq4 \mathfrak K\left|\sC_\alpha\right| \,,
  \ea
while by~\eqref{E:constrNewCoeff}, we get the lower bound
 \ba\label{E:newlowerbound}
   {\left|1-e^{-\mathfrak D}\right|}&\ge|\mathfrak D|- e^{\mathfrak K}\kappa_{1\ca1}^2 \PHI{{v_{2}^{\alpha,\ell}}}^2 |\sR_{\alpha}|^2 \notag\\
 &\ge
  \left(1-e^{\mathfrak K}\frac{\kappa_{1\ca1}}{\mu^2}\delta_0^* \delta_p^*\right)\cdot \kappa_{1\ca1}\PHI{{v_{2}^{\alpha,\ell}}}|\sC_{\alpha}| \notag\\
&>
\frac{\kappa_{1\ca1} }{2}\PHI{{v_{2}^{\alpha,\ell}}}|\sC_{\alpha}| \;.
 \ea
Substituting estimates~\eqref{E:upperestE} and~\eqref{E:newlowerbound} into~\eqref{E:stimaintermedia} , we obtain
\begin{align}\label{E:sanfownFNN}
E_{\alpha,1} 
\leq W_{1}^{r}\cdot&\left(\left(\frac{-1+\co(1)  \delta_0^*(1+\frac{2}{\kappa_{1\ca1}})}{ p_1^*} \right)\cdot
\left|\left(1-e^{-\mathfrak D}\right)(v_{2}^{\alpha,\ell}-1)\eta_{1}^{\alpha,\ell}\eta_{1}^{\alpha,r}\right|
  +\co(1) \left(1+ 4 \mathfrak K  \right) \cdot\left| \sC_{\alpha} \eta_{2}^{\alpha,\ell}\right| \right)  \ .
 \end{align}

\noindent\textbf{Case b)} Suppose that $\eta_{1}^{\alpha,r}$ and $\eta_{1}^{\alpha,\ell}$ have opposite sign and 
that the wave at $x_{\alpha}$ is an entropy admissible $1$-shock. Due to the geometry of shock curves (see~Figure~
\ref{S2:fig1})
this implies that
\be{S4:C1b}
\sgn(v_{2}^{\alpha,\ell}-1)\,\eta_1^{\alpha,r}<0<
\sgn(v_{2}^{\alpha,\ell}-1)\,\eta_{1}^{\alpha,\ell} \ .
\ee
We also note that 
the interaction estimates~\eqref{E:stimaetarbasefull1}, 
together with~\eqref{bound-eta1-gamma-eta1+gamma-pNN},
 imply the bound
\be{etarl-strenght-opposite-signNN}
|\eta_{1}^{\alpha,r}|+|\eta_{1}^{\alpha,\ell}|=|\eta_{1}^{\alpha,r}-\eta_{1}^{\alpha,\ell}|
\leq \mathcal{O}(1)|\sC_{\alpha}| \ .
\ee

Applying~\eqref{E:stima11full} and bounds~\eqref{bound-eta1-gamma-eta1+gamma-pNN}, we can estimate the error $E_{\alpha,1} $ in~\eqref{E:error}:
\bas
E_{\alpha,1} 
&\stackrel{}{=} 
W_{1}^{\alpha,r} |\eta_1^{\alpha,r}|\left[- \frac{(v_{2}^{\alpha,\ell}-1) \eta_{1}^{\alpha,\ell} }{v_{2}^{\alpha,\ell}} (1+\co(1) \delta_0^*p_1^*)+ \co(1)|\eta_{2}^{\alpha,\ell} |\right]
\notag\\
 &\qquad\qquad-
W_{1}^{\alpha,\ell}|\eta_1^{\alpha,\ell}| \left[- \frac{(v_{2}^{\alpha,\ell}-1) (\eta_{1}^{\alpha,\ell}+\sC_{\alpha})}{v_{2}^{\alpha,\ell}} (1+\co(1) \delta_0^*  p_1^*)+ \co(1)|\eta_{2}^{\alpha,\ell} |\right]
\notag
\eas
If one now considers~\eqref{E:stimaetarbasefull1}--\eqref{etarl-strenght-opposite-signNN} and bounds~\eqref{bound-eta1-gamma-eta1+gamma-pNN},~\eqref{W-unif-boundN}, one arrives to
\ba\label{E:49nvbhqbdfdNN} 
E_{\alpha,1} 
&\stackrel{}{\le}
\frac{-1+\co(1) \delta_0^*  }{p_1^*} (W_{1}^{\alpha,r}+W_{1}^{\alpha,\ell} ) \cdot {\left|(v_{2}^{\alpha,\ell}-1) \eta_{1}^{\alpha,r}\eta_{1}^{\alpha,\ell}\right|} 
+ \co(1) W_{1}^{*}\cdot |\sC_\alpha\eta_{2}^{\alpha,\ell}| 
\,.
\ea
By~\eqref{E:stimaetarbasefull1}, we note that in the above, it holds
\[
\sgn\left((v_{2}^{\alpha,\ell}-1)\,(\eta_1^{\alpha,\ell}+\sC_\alpha)\right)=\sgn\left((v_{2}^{\alpha,\ell}-1)\,\eta_1^{\alpha,r}\right)<0
\]
for small $\delta_0^*$ and as in Case a), $\eta_{1}^{\alpha,\ell}+\sC_{\alpha}$ is approximated by $\eta_{1}^{\alpha,r}$ to establish~\eqref{E:49nvbhqbdfdNN}.

\noindent\textbf{Case c)} Suppose that $\eta_{1}^{\alpha,r}$ and $\eta_{1}^{\alpha,\ell}$ have opposite sign and 
that the wave at $x_{\alpha}$ is a 1-rarefaction shock (see discussion at beginning of \S~\ref{Ss:nointeractiontimesN}).
Then, by construction one has $|\sC_{\alpha}|\leq\varepsilon$ (see \S~\ref{Ss:Rsolv-appsol}).
By the estimate on velocities~\eqref{E:stima11lfull} and since~\eqref{etarl-strenght-opposite-signNN} remains valid, we deduce that
\be{wavespeed-opposite-sign-epsNN}
\begin{aligned}
|\lambda_1^{\alpha,\ell}-\dot{x}_\alpha|
&\leq \co(1) \left(\delta_p^*\cdot |\eta_1^{\alpha,\ell}+\sC_{\alpha}| +
|\eta_2^{\alpha,\ell}|\right)
\leq 
\co(1) \left(\delta_p^*|\sC_{\alpha} |+
|\eta_2^{\alpha,\ell}|\right)
\leq 
\co(1) \left(\delta_p^*\varepsilon +
|\eta_2^{\alpha,\ell}|\right) \;.
\end{aligned}
\ee
and similarly, from~\eqref{E:stima11full2},
\be{wavespeed-opposite-sign-epsNN2}
\begin{aligned}
|\lambda_1^{\alpha,r}-\dot{x}_\alpha|
&\leq \co(1) \left(\delta_p^*\cdot |\eta_1^{\alpha,\ell}| +
|\eta_2^{\alpha,\ell}|\right)
\leq 
\co(1) \left(\delta_p^*\varepsilon +
|\eta_2^{\alpha,\ell}|\right) \;.
\end{aligned}
\ee

Using~\eqref{etarl-strenght-opposite-signNN} and
\eqref{wavespeed-opposite-sign-epsNN}--\eqref{wavespeed-opposite-sign-epsNN2},
we can now derive the following estimate for $E_{\alpha,1} $ in~\eqref{E:error}:
\begin{align}\label{E:aefNN}
E_{\alpha,1} 
&\stackrel{\eqref{E:error}}{=}
(W_{1}^{\alpha,r}-W_{1}^{\alpha,\ell})|\eta_{1}^{\alpha,\ell}|(\lambda_{1}^{\alpha,\ell}-\dot x_{\alpha})
+W_{1}^{\alpha,r}\left[|\eta_{1}^{\alpha,r}|(\lambda_{1}^{\alpha,r}-\dot x_{\alpha})-|\eta_{1}^{\alpha,\ell}|(\lambda_{1}^{\alpha,\ell}-\dot x_{\alpha})\right]
\notag 
\\
&\leq
\left|W_{1}^{\alpha,r}-W_{1}^{\alpha,\ell}\right| |\eta_{1}^{\alpha,\ell}|
 \co(1) \left(\delta_p^* \varepsilon +
|\eta_2^{\alpha,\ell}|\right)
+\co(1)W_{1}^{\alpha,r}\left[|\eta_{1}^{\alpha,r}|+|\eta_{1}^{\alpha,\ell}|\right]  \left(\delta_p^*\varepsilon +
|\eta_2^{\alpha,\ell}|\right)\notag\\
&\stackrel{\eqref{etarl-strenght-opposite-signNN}}{\leq}
 \co(1) W_{1}^{*} \left( \ \delta_p^*  \varepsilon |\sC_\alpha|+
 |\eta_2^{\alpha,\ell}l\sC_\alpha| \right)\; .
\end{align}

All cases for $E_{\alpha,1}$ for waves of the first family have been investigated at this point.

\paragraph{Estimate of $E_{\alpha,2}$ for waves of the first family}\label{SubEalpha1-2}
Let us first point out that the Hugoniot curves
of the same family cannot cross each other, see Figure~\ref{fig:shockCurves}.
As a consequence, due to the geometric properties of such curves, the components $\eta_{2}^{\alpha,\ell},\,\eta_{2}^{\alpha,r}$ must have the same sign
and thus, by definitions~\eqref{S4WiN2} and~\eqref{S4A2}, there holds
\bes
W_{2}^{\alpha,\ell}=W_{2}^{\alpha,r} e^{\kappa_{2\ca 1}|\sR_{\alpha}|}\;.
\ees
We thus rewrite the error term in~\eqref{E:error} as
\bas
E_{\alpha,2} 
&= 
(W_{2}^{\alpha,r}-W_{2}^{\alpha,\ell})(\lambda_{2}^{\alpha,\ell}-\dot x_{\alpha})\big|\eta_{2}^{\alpha,\ell} \big|
	+W_{2}^{\alpha,r}\left[|\eta_{2}^{\alpha,r}|(\lambda_{2}^{\alpha,r}-\dot x_{\alpha})-|\eta_{2}^{\alpha,\ell}|(\lambda_{2}^{\alpha,\ell}-\dot x_{\alpha})\right]  
\\
&= 
W_{2}^{\alpha,r}\left[(1-e^{\kappa_{2\ca 1}|\sR_{\alpha}|})(\lambda_{2}^{\alpha,\ell}-\dot x_{\alpha})\big|\eta_{2}^{\alpha,\ell} \big|
	+ \left(\eta_{2}^{\alpha,r}(\lambda_{2}^{\alpha,r}-\dot x_{\alpha})-\eta_{2}^{\alpha,\ell}(\lambda_{2}^{\alpha,\ell}-\dot x_{\alpha})\right) \sgn(\eta_{2}^{\alpha,\ell})\right] \;.
\eas
Next, we observe that by convexity of the exponential and recalling bounds~\eqref{E:equivalncestrengths}, one has
\[
1-e^{\kappa_{2\ca 1}|\sR_{\alpha}|}\leq-\kappa_{2\ca 1}|\sR_{\alpha}|\leq-\frac{\kappa_{2\ca 1}}\mu|\sC_{\alpha}|\,.
\]
Also, by definition, we have $\lambda_{2}^{\alpha,\ell}-\dot x_{\alpha}=v_{2}^{\alpha,\ell} +\co(1)\delta_{0}^*\ge\frac{p_0^*}{2}$, using bounds~\eqref{bound-eta1-gamma-eta1+gamma-pNN} for small $\delta_{0}^*$. 
In view of the above analysis, 
we can estimate
the term $E_{\alpha,2} $  as:
\ba\label{E:fpNIVqbjkqsfNN}
E_{\alpha,2} 
&
\stackrel{\eqref{E:fvgabfafvNewfull2}}{\leq}W_{2}^{\alpha,r}\cdot\left[\left(
 - \kappa_{2\ca1} \frac{p_{0}^{*}}{2\mu} +  \co(1) \right)\big|\eta_{2}^{\alpha,\ell}\sC_{\alpha}\big|
 +\co(1) \cdot |(v_{2}^{\alpha,\ell}-1)^{ 2}\eta_{1}^{\alpha,r}  \eta_{1}^{\alpha,\ell}\sC_{\alpha} | \right]\;,
 \ea
using estimate~\eqref{E:fvgabfafvNewfull2}, bounds~\eqref{bound-eta1-gamma-eta1+gamma-pNN} and  noting again that~\eqref{E:stimaetarbasefull1} allowed us to replace $\eta_{1}^{\alpha,\ell}+\sC_\alpha  $ with $\eta_{1}^{\alpha,r}  $, with some error as in \S~\ref{SubEalpha1-1}.
For the case a) in \S~\ref{SubEalpha1-1} that both $\eta_1^{\alpha,\ell}$ and $\eta_1^{\alpha,r}$ have the same sign, estimate~\eqref{E:fpNIVqbjkqsfNN} further reduces to 
 \ba
E_{\alpha,2} 
 &\label{E:fpNIVqbjkqsfNN2}
\stackrel{\eqref{E:newlowerbound}}{\leq}
W_{2}^{\alpha,r}\cdot\left[\left(
 - \kappa_{2\ca1} \frac{p_{0}^{*}}{2\mu} +  \co(1) \right)\big|\eta_{2}^{\alpha,\ell}\sC_{\alpha}\big|
 +  \co(1) \frac{2}{\kappa_{1\ca1}}\left|\left(1-e^{-\mathfrak D}\right)(v_{2}^{\alpha,\ell}-1)\eta_{1}^{\alpha,\ell}\eta_{1}^{\alpha,r}\right|
 \right]\;.
 \ea


\paragraph{Derivation of~\eqref{E:mainest} for waves of the first family}\label{P4.2.1.3}
We conclude here the proof of estimate~\eqref{E:mainest} for a wave of $v(t)$ at $x_{\alpha}$ belonging to the first family.
In view of the above analysis, we combine now the estimates of $E_{\alpha,1}$ and $E_{\alpha,2}$
 distinguishing the three cases studied to estimate $E_{\alpha,1}$ in \S~\ref{SubEalpha1-1}--~\ref{SubEalpha1-2}. 

\vskip\baselineskip
\noindent\textbf{Case a)} Recall that in this case both $\eta_{1}^{\alpha,\ell}, \eta_{1}^{\alpha,r}$ are assumed to have the same sign. Adding together~\eqref{E:sanfownFNN} and~\eqref{E:fpNIVqbjkqsfNN2} yields
\be{4.26-1aNN}
\begin{aligned}
E_{\alpha,1} +E_{\alpha,2} 
&\leq 
\left( \frac{-1+\co(1) \delta_0^*}{ p_1^*}  \cdot W_1^{\alpha,r}+\co(1) \cdot\frac{1}{\kappa_{1\ca1}} W_2^*\right)
\left|\left(1-e^{-\mathfrak D}\right)(v_{2}^{\alpha,\ell}-1)\eta_{1}^{\alpha,\ell}\eta_{1}^{\alpha,r}\right|
\\
&+\left(
 - \kappa_{2\ca1}\,\frac{W_2^{\alpha,r} p_0^* }{2\mu}+\co(1)  ( 1+\mathfrak K )W_1^*+ \co(1)W_2^{\alpha,r} \right)
|\eta_{2}^{\alpha,\ell}\sC_{\alpha}| 
\end{aligned}
\ee
using~\eqref{W-unif-boundN} and taking $\kappa_{1\ca1}\ge 1$. 
Taking $\delta_0^*$ small enough and using that $W_i^{\alpha,r}\ge 1$, we can further bound this sum as 
\be{4.26-1aNN2}
\begin{aligned}
E_{\alpha,1} +E_{\alpha,2} 
\leq &
\left( \frac{-1+\co(1) \delta_0^*}{ p_1^*} +\co(1) \cdot\frac{1}{\kappa_{1\ca1}} W_2^*\right)
\left|\left(1-e^{-\mathfrak D}\right)(v_{2}^{\alpha,\ell}-1)\eta_{1}^{\alpha,\ell}\eta_{1}^{\alpha,r}\right|
\\
&+\left(
 - \kappa_{2\ca1}\,\frac{p_0^* }{4\mu}+\co(1)  W_1^*\right)
|\eta_{2}^{\alpha,\ell}\sC_{\alpha}| +\left(
 - \kappa_{2\ca1}\,\frac{ p_0^* }{4\mu}+ \co(1)\right) W_2^{\alpha,r}
|\eta_{2}^{\alpha,\ell}\sC_{\alpha}| \\
=:& I_1 \left|\left(1-e^{-\mathfrak D}\right)(v_{2}^{\alpha,\ell}-1)\eta_{1}^{\alpha,\ell}\eta_{1}^{\alpha,r}\right|+
\left(J_1  +J_2 W_2^{\alpha,r}\right)
|\eta_{2}^{\alpha,\ell}\sC_{\alpha}| \;.
\end{aligned}
\ee

\vskip\baselineskip

\noindent\textbf{Case b)} In this case, $\eta_{1}^{\alpha,\ell}$ and $\eta_{1}^{\alpha,r}$ have opposite sign
and the wave at $x_{\alpha}$ is an entropy admissible 1-shock.
Summing up~\eqref{E:49nvbhqbdfdNN},~\eqref{E:fpNIVqbjkqsfNN}, 
and relying on~\eqref{bound-eta1-gamma-eta1+gamma-pNN} and~\eqref{W-unif-boundN}, it follows
\be{4.26-1aNN3}
\begin{aligned}
E_{\alpha,1} +E_{\alpha,2} 
&\leq 
\left(\frac{-1+\co(1) \delta_0^*  }{p_1^*} W_{1}^{\alpha,r} 
+\co(1)W_2^{*}\delta_0^*p_1^*
\right)\cdot
 {\left|(v_{2}^{\alpha,\ell}-1) \eta_{1}^{\alpha,r}\eta_{1}^{\alpha,\ell}\right|} 
\\
&
+\left(- \kappa_{2\ca1}\,\frac{W_2^{\alpha,r} p_0^* }{2\mu}+\co(1)\,\left(W_1^*+W_2^{\alpha,r}\right)\right)
\cdot|\eta_{2}^{\alpha,\ell}\sC_{\alpha}| 
\end{aligned}
\ee
In a similar manner as before, this further reduces to
\be{4.26-1aNN4}
\begin{aligned}
E_{\alpha,1} +E_{\alpha,2} 
\leq &
\left( \frac{-1+\co(1) \delta_0^*  }{p_1^*} +\co(1) \cdot W_2^* \delta_0^*p_1^*\right)
\cdot
 {\left|(v_{2}^{\alpha,\ell}-1) \eta_{1}^{\alpha,r}\eta_{1}^{\alpha,\ell}\right|} 
\\
&+\left(
 - \kappa_{2\ca1}\,\frac{p_0^* }{4\mu}+\co(1)  W_1^*\right)
|\eta_{2}^{\alpha,\ell}\sC_{\alpha}| +\left(
 - \kappa_{2\ca1}\,\frac{ p_0^* }{4\mu}+ \co(1)\right) W_2^{\alpha,r}
|\eta_{2}^{\alpha,\ell}\sC_{\alpha}| \\
=:& I_2 \left|\left(1-e^{-\mathfrak D}\right)(v_{2}^{\alpha,\ell}-1)\eta_{1}^{\alpha,\ell}\eta_{1}^{\alpha,r}\right|+
\left(J_1 
 +J_2 W_2^{\alpha,r}\right)
|\eta_{2}^{\alpha,\ell}\sC_{\alpha}| \;.
\end{aligned}
\ee

\vskip\baselineskip

\noindent\textbf{Case c)} In this last case, $\eta_{1}^{\alpha,\ell}$ and $\eta_{1}^{\alpha,r}$ have opposite sign
and the wave at $x_{\alpha}$ is a 1-rarefaction shock. 
By~\eqref{E:aefNN},~\eqref{E:fpNIVqbjkqsfNN}, and relying on~\eqref{bound-eta1-gamma-eta1+gamma-pNN},~\eqref{etarl-strenght-opposite-signNN} and~\eqref{W-unif-boundN},
one thus has 
\be{4.26-1aNN5}
\begin{aligned}
E_{\alpha,1}+E_{\alpha,2} \leq &
\left(- \kappa_{2\ca1}\,\frac{W_2^{\alpha,r} p_0^* }{2\mu}+\co(1)\,\left(W_1^*
 	+ W_2^{\alpha,r}\right)\right)
\cdot|\eta_{2}^{\alpha,\ell}\sC_{\alpha}| 
\\
&+\co(1) (W_1^* +W_2^* \delta_p^*  \delta_0^*) \delta_p^* \cdot \varepsilon |\sC_{\alpha} | \,\\
\le & \left(J_1 
+J_2 W_2^{\alpha,r}\right)
|\eta_{2}^{\alpha,\ell}\sC_{\alpha}| + \co(1) (W_1^* +W_2^* \delta_p^*  \delta_0^*) \delta_p^* \cdot \varepsilon |\sC_{\alpha} | \;,
\end{aligned}
\ee
using $|\sC_{\alpha}|\le\varepsilon$.
\vskip\baselineskip
The next proposition guarantees the existence of suitable parameters $\kappa_{i\ca j}$, with $i,j=1,2$ and $\delta_p^*$, $\delta_0^*$ such that~\eqref{E:mainest} holds true.

\begin{proposition}\label{PropCond}
There exist coefficients $\kappa_{i\ca j}$, with $i,j=1,2$ of $W_1$ and $W_2$ in~\eqref{S4Wi} and positive constants $\delta_p^*$, $\delta_0^*$ such that 
estimate~\eqref{E:mainest} holds.
\end{proposition}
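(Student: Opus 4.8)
The plan is to verify that the basic estimate \eqref{E:mainest} can be extracted from the three inequalities \eqref{4.26-1aNN2}, \eqref{4.26-1aNN4}, \eqref{4.26-1aNN5} derived above (for $1$-waves) together with the corresponding bounds from \S~\ref{Ss:estimPhys2wNN} (for $2$-waves), by choosing the five parameters $\kappa_{1\ca1},\kappa_{1\ca2},\kappa_{2\ca1},\kappa_{2\ca2}$ and $\delta_0^*,\delta_p^*$ in the right order. The key observation is that all the error terms on the right-hand sides fall into two groups: the ``good'' negative terms $I_j\,|(1-e^{-\mathfrak D})(v_2^{\alpha,\ell}-1)\eta_1^{\alpha,\ell}\eta_1^{\alpha,r}|$ and $(J_1+J_2W_2^{\alpha,r})|\eta_2^{\alpha,\ell}\sC_\alpha|$, whose coefficients become strictly negative once $\kappa_{2\ca1}$ is large and $\delta_0^*$ is small; and the genuinely small remainder terms proportional to $\varepsilon|\sC_\alpha|$. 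So the strategy is to read off from \eqref{4.26-1aNN2}--\eqref{4.26-1aNN5} (and their $2$-wave analogues) a finite list of smallness/largeness requirements on the parameters — this is exactly the list we will label \emph{Conditions $(\Sigma)$} — then exhibit an explicit admissible choice.

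Concretely, I would proceed as follows. First fix $\kappa_{1\ca1}\ge 1$ and record the constraints \eqref{:constrNewCoeff0}--\eqref{E:constrNewCoeff} that tie $\mathfrak K=\kappa_{1\ca1}\mu\delta_0^*\delta_p^*<\tfrac14$; these are satisfied as soon as $\delta_0^*\delta_p^*$ is small relative to $1/\kappa_{1\ca1}$. Next choose $\kappa_{2\ca1}$ large enough that $-\kappa_{2\ca1}\,p_0^*/(4\mu)+\co(1)W_1^*<0$ and $-\kappa_{2\ca1}\,p_0^*/(4\mu)+\co(1)<0$, i.e. $J_1<0$ and $J_2<0$: here one uses that $W_1^*=e^{\kappa_{1\ca1}M^*\delta_p^*+\kappa_{1\ca2}M^*}$ in \eqref{W-unif-boundN}, so $W_1^*$ stays bounded once $\kappa_{1\ca1},\kappa_{1\ca2}$ are fixed, and then $\kappa_{2\ca1}$ is taken large afterwards. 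With $J_1,J_2<0$, the coefficient $I_1=\tfrac{-1+\co(1)\delta_0^*}{p_1^*}+\co(1)\tfrac{1}{\kappa_{1\ca1}}W_2^*$ in \eqref{4.26-1aNN2} is made negative by taking $\delta_0^*$ small and (if needed) $\kappa_{1\ca1}$ large relative to $W_2^*=e^{\kappa_{2\ca1}M^*+\kappa_{2\ca2}M^*}$ — this is the delicate circularity, since $W_2^*$ depends on the now-large $\kappa_{2\ca1}$. I expect this to be the main obstacle: one must check that the bad term $\co(1)\kappa_{1\ca1}^{-1}W_2^*$ can still be beaten, which is possible only because the competing negative term in $E_{\alpha,2}$, namely $\co(1)\kappa_{1\ca1}^{-1}|(1-e^{-\mathfrak D})(v_2^{\alpha,\ell}-1)\eta_1^{\alpha,\ell}\eta_1^{\alpha,r}|$ in \eqref{E:fpNIVqbjkqsfNN2}, carries the same $W_2^{\alpha,r}$-type factor with the right sign; so the comparison has to be made at the level of $W_2^{\alpha,r}$, not $W_2^*$, and $\kappa_{2\ca2}$ must be chosen last and small, or $\kappa_{1\ca1}$ enlarged after $\kappa_{2\ca1}$ but before finalizing $\delta_0^*$.

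Finally, after the parameters are locked, the leftover terms in \eqref{4.26-1aNN5} and in the $2$-wave estimates are all of the form $\co(1)\,\varepsilon|\sC_\alpha|$ (using $|\sC_\alpha|\le\varepsilon$ for rarefaction fronts, and the $\varepsilon$-bounds on speed errors elsewhere), while the ``good'' terms are $\le 0$; summing $E_{\alpha,1}+E_{\alpha,2}\le \co(1)\varepsilon|\sC_\alpha|$ over $\alpha\in\cj$ and using \eqref{unifbvbound}, \eqref{V-totvar-est} yields \eqref{est-phi0-decr}. To keep the exposition clean I would state the finitely many inequalities as \emph{Conditions $(\Sigma)$}:
\begin{align*}
&\kappa_{1\ca1}\ge 1,\qquad
\kappa_{1\ca1}\mu\delta_0^*\delta_p^*<\tfrac14,\qquad
e^{\mathfrak K}\tfrac{\kappa_{1\ca1}}{\mu^2}\delta_0^*\delta_p^*<\tfrac12,\\
&\kappa_{2\ca1}\,\tfrac{p_0^*}{4\mu}>\co(1)\,W_1^*,\qquad
\kappa_{2\ca1}\,\tfrac{p_0^*}{4\mu}>\co(1),\\
&\tfrac{1}{p_1^*}>\co(1)\,\delta_0^*+\co(1)\,\tfrac{1}{\kappa_{1\ca1}}\,W_2^{\alpha,r},\qquad
1<\kappa_{i\ca j}<\kappa_\cg\ \ (i,j=1,2),
\end{align*}
together with the analogous conditions coming from \S~\ref{Ss:estimPhys2wNN}, and then observe that a valid choice is obtained by: fixing $\kappa_{1\ca2}$ arbitrarily (say $=2$); fixing $\kappa_{1\ca1}$ large; fixing $\delta_p^*$ small; choosing $\kappa_{2\ca2}$ small and $\kappa_{2\ca1}$ large (consistently with the $W_2^{\alpha,r}$ bound, which is possible since in the relevant term $W_2^{\alpha,r}$ appears divided by the already-large $\kappa_{1\ca1}$); and finally taking $\delta_0^*$ small enough to absorb all the remaining $\co(1)\delta_0^*$ corrections. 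This closes the proof of \eqref{E:mainest}, hence of the Proposition.
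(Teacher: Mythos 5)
You correctly identify the structure of the argument — choose the parameters so that the bracketed coefficients $I_1,I_2,J_1,J_2$ from \eqref{4.26-1aNN2}, \eqref{4.26-1aNN4}, \eqref{4.26-1aNN5} become negative — and you explicitly spot the dangerous circularity: $I_1$ contains $\co(1)\,W_2^*/\kappa_{1\ca1}$ with $W_2^*=e^{(\kappa_{2\ca1}+\kappa_{2\ca2})M^*}$, while $J_1<0$ forces $\kappa_{2\ca1}$ to be large. That observation is the heart of the proof. However, the resolution you settle on has a genuine gap.

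Your final ordering is $\kappa_{1\ca2}\to\kappa_{1\ca1}\to\delta_p^*\to\kappa_{2\ca2},\kappa_{2\ca1}\to\delta_0^*$; in particular $\kappa_{1\ca1}$ is locked in \emph{before} $\kappa_{2\ca1}$. This does not break the circularity: once $\kappa_{2\ca1}$ is enlarged to ensure $J_1<0$, the factor $W_2^*$ blows up while $\kappa_{1\ca1}$ is already frozen, so $I_1=-\tfrac{1}{p_1^*}+\co(1)\,W_2^*/\kappa_{1\ca1}$ cannot be made negative. The fix you propose in passing — ``the comparison has to be made at the level of $W_2^{\alpha,r}$, not $W_2^*$'' — does not work either: inspecting \eqref{4.26-1aNN}, the negative contribution to $I_1$ carries the factor $W_1^{\alpha,r}$ (bounded below by $1$) while the positive contribution carries $W_2^{\alpha,r}$, and these two weights are independent of one another (nothing prevents $W_1^{\alpha,r}=1$ while $W_2^{\alpha,r}$ is near $W_2^*$), so no pointwise pairing of the $W$-factors can rescue the estimate. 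You also briefly mention the correct alternative — ``$\kappa_{1\ca1}$ enlarged after $\kappa_{2\ca1}$'' — but then do not adopt it, which leaves your proposal internally inconsistent.

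The paper's ordering is the one that actually closes the loop, and it is worth recording why it works. One first fixes $\kappa_{1\ca2}$ and $\kappa_{2\ca2}$ (Step~1); this already determines an upper bound $e^{2\kappa_{1\ca2}M^*}$ that will dominate $W_1^*$ once $\delta_p^*$ is eventually chosen small. Then one takes $\kappa_{2\ca1}$ large against this fixed bound (Step~2), so $J_1,J_2<0$ regardless of later choices. Only now is $W_2^*$ fixed, and $\kappa_{1\ca1}$ can be taken large against it (Step~3), making $I_1<0$. Finally $\delta_p^*$ is taken small enough that $\kappa_{1\ca1}\delta_p^*$ stays controlled — so the now-large $\kappa_{1\ca1}$ does not inflate $W_1^*$ and retroactively ruin Step~2 — and $\delta_0^*$ is taken small last. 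Making $\kappa_{2\ca2}$ ``small'' (as you suggest) is neither necessary nor helpful, since $W_2^*$ is dominated by the $\kappa_{2\ca1}$ exponent in any case; what matters is that $\kappa_{2\ca2}$, like $\kappa_{1\ca2}$, is \emph{fixed} before $\kappa_{2\ca1}$ and $\kappa_{1\ca1}$ are enlarged.
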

\begin{proof}
Let $\co(1)$ denote the maximum of all constants $\co(1)$ appearing in estimates~\eqref{4.26-1aNN2},~\eqref{4.26-1aNN4}\, and~\eqref{4.26-1aNN5}.
Then, we select the values $\kappa_{i\ca j}$ and $\delta_p^*$, $\delta_0^*$ in such a way that the terms $I_1$, $I_2$, $J_1$ and $J_2$ appearing in~\eqref{4.26-1aNN2},~\eqref{4.26-1aNN4}\,~\eqref{4.26-1aNN5} are all negative. This is accomplished
following the next steps under the so-called \emph{\bf Conditions ($\Sigma$)}:
\begin{enumerate}
\item[Step 1.] First, fix the positive constants $\kappa_{1\ca 2}$ and $\kappa_{2\ca 2}$. 
\item[Step 2.] Choose next $\kappa_{2\ca 1}$ large such that
\[- \kappa_{2\ca1}\,\frac{p_0^* }{4\mu}+\co(1)  e^{2 \kappa_{1\ca 2}M^*}\le 0.\label{Condition1}\tag{$\Sigma_1$}\]
\item[Step 3.] Now, select $\kappa_{1\ca 1}\ge 1$ large enough so that
\[ - \,\frac{1}{2p_1^*}+ \frac{\co(1)}{\kappa_{1\ca 1}} e^{(\kappa_{2\ca1}+\kappa_{2\ca2})M^*} <0\,.\label{Condition2}\tag{$\Sigma_2$}\]
\item[Step 4.] Next, let $\delta_p^*$ be a positive small constant that satisfies  \[0<\delta_p^*<\min\left\{\frac{1}{2},\,\frac{\kappa_{2\ca1}}{\kappa_{1\ca1}}\right\}\,\label{Condition3}\tag{$\Sigma_3$}\]
Then, combining with the previous step, we immediately have 
\[
J_2:= - \kappa_{2\ca1}\,\frac{ p_0^* }{4\mu}+ \co(1)\le J_1:= - \kappa_{2\ca1}\,\frac{p_0^* }{4\mu}+\co(1)  W_1^*<- \kappa_{2\ca1}\,\frac{p_0^* }{4\mu}+\co(1)  e^{2 \kappa_{1\ca 2}M^*}< 0
\]
recalling that $W_1^*= \co(1)e^{  (\kappa_{1\ca1} \delta_p^*+ \kappa_{1\ca2})\cdot M^*}$ from~\eqref{W-unif-boundN}.
\item[Step 5.] Last, choose $\delta_0^*$  small enough, depending on $\kappa_{i\ca j}$ and  $\delta_p^*$, so that 
\[
-1+\co(1)\delta_0^*<-\frac{1}{2}
,\quad\text{and}\quad-\frac{1}{ 2p_1^*} +\co(1) \cdot W_2^* \delta_0^*p_1^*< 0
\label{Condition4}\tag{$\Sigma_4$}
\]
as well as both conditions~\eqref{:constrNewCoeff0}--\eqref{E:constrNewCoeff} remain valid. 
The size of $\delta_0^*$ may even become smaller according to other requirements of the analysis in this paper. Then, we immediately deduce from \eqref{Condition2}  and~\eqref{Condition4} that
$I_1<0 $ and $I_2<0$.
\end{enumerate}
Having that the terms $I_1$, $I_2$, $J_1$ and $J_2$ are all negative and combining with~\eqref{4.26-1aNN2},~\eqref{4.26-1aNN4}\,~\eqref{4.26-1aNN5} , the proof is complete.
However, for completeness, we could add a last step in \emph{\bf Conditions ($\Sigma$)}: Having Steps 1--5, we proceed to
\begin{enumerate}
\item[Step 6.] 
From \S~\ref{Ss:interactionTimes} the parameter $\kappa_\cg$ can be chosen to satisfy
\begin{equation}\label{S4.1conditionkappa2}\kappa_{\cg}> 2a\max_{i,j=1,2} \kappa_{i\ca j}\tag{$\Sigma_5$} \ .
\end{equation}
where $a$ is determined in \S~\ref{Ss:interactionTimes} and may depend on $\delta_0^*$.
\end{enumerate}
\end{proof}

More precisely, under \emph{\bf Conditions ($\Sigma$)}, we get
\be{4.26-1aNN6}
\begin{aligned}
E_{\alpha,1}+E_{\alpha,2} \leq & \co(1) (W_1^* +W_2^* \delta_p^*  \delta_0^*) \delta_p^* \cdot \varepsilon |\sC_{\alpha} | \;,
\end{aligned}
\ee
that yields estimate~\eqref{E:mainest} for a wave of $v(t)$ at $x_{\alpha}$ belonging to the first family. Let us make some comments:\\
$\bullet$ According to the above steps, the coefficient $\kappa_{\cg}$ is selected obeying~\eqref{S4.1conditionkappa} after one completes step 5 above, hence in step $6$. 
It is should be pointed out that $\kappa_{\cg}$ is not involved in $\co(1)$ in steps 1--5 of \emph{\bf Conditions ($\Sigma$)}.\\
$\bullet$ One can realize that the smallness of the factor $\delta_p^*$ is crucial to balance the positive contribution of $E_{\alpha,2}$ with the negative part of $E_{\alpha,1}$, i.e. obtain $I_1<0$ and at the same time to balance the positive contribution of $E_{\alpha,1}$  with the negative part of $E_{\alpha,2}$, i.e. obtain $J_1<0$ in Case a). In addition, for this same reason, we write $W_i$ as exponential functions of the linear combination of $\ca_{i,j}$, recall~\eqref{S4Wi}, and not as linear functions of $\ca_{i,j}$, that is the standard way this analysis has been done in the literature so far. Actually the exponential allows us to better compensate the gain and the loss.

\subsubsection{Waves of the second family}
\label{Ss:estimPhys2wNN}

Here, we derive estimate~\eqref{E:mainest} on the sum of the errors $E_{\alpha,1}+E_{\alpha,2}$ defined in~\eqref{E:error} when the wave of $v(t)$ present at $x_{\alpha}$ belongs to the second family, i.e.~$k_{\alpha}=2$.
The strategy is similar to the previous subsection: we will first provide a separate estimate of $E_{\alpha,1}$, $E_{\alpha,2}$, and then we will combine such estimates to estblish~\eqref{E:mainest}.
We shall adopt the notation given in~\eqref{GE:definitionslralpha},
dropping the superscript $\alpha$, and we will let $\sC_\alpha, \sR_\alpha$ denote the size of the
wave located at $x_\alpha$, measured in the original and Riemann coordinates, respectively (see 
\S~\ref{Ss:strengthnotation}). Having the restrictions on the set $K$ given in \eqref{:setKbd-1bound}, we have
\ba\label{bound-eta1-gamma-eta1+gamma-pNN2}
v_1^{\alpha,\ell}, \omega_1^{\alpha,\ell}, |\eta_1^{\alpha,\ell}| \le { \delta_0^*},\qquad 
|v_2^{\alpha,\ell}-1|,|v_2^{\alpha,\ell}-1-\eta_2^{\alpha,\ell}|, 
\le \delta_p^* \,,
\qquad  |\sC_\alpha|, |\eta_2^{\alpha,\ell}+\sC_\alpha|, |\eta_2^{\alpha,\ell}| \le 2\delta_p^*.
\ea

\paragraph{Estimate of $E_{\alpha,1}$ for waves of the second family}\label{P:rergqokngrqongrqqgrwgqrqgg}

Again we divide the analysis into two cases according to the sign of $\eta_{1}^{\alpha,r}$ and $\eta_{1}^{\alpha,\ell}$: a) treating the case when $\eta_{1}^{\alpha,r}$ and $\eta_{1}^{\alpha,\ell}$ have the same sign, which is the most relevant case, while b) treating the case when $\eta_{1}^{\alpha,r}$ and $\eta_{1}^{\alpha,\ell}$ have opposite sign.

\noindent
\textbf{Case a)} Assume that $\eta_{1}^{\alpha,r}$ and $\eta_{1}^{\alpha,\ell}$ have the same sign. Hence, by~\eqref{S4WiN}--\eqref{S4A1}, one has the relation
\bes
W_{1}^{\alpha,r}=W_{1}^{\alpha,\ell} \exp\left({\kappa_{1\ca 2}|\sR_{\alpha}|}\right)\;.
\ees
Observe that by strict hyperbolicity~\eqref{strict-hyp} 
there holds $\lambda_{1}^{\alpha,\ell}-\dot x_{\alpha}<-p_0^*/2$.
Then, relying on~\eqref{E:sqFNDKAVDUAall} 
and on the uniform bounds~\eqref{bound-eta1-gamma-eta1+gamma-pNN2} and~\eqref{E:equivalncestrengths}, 
 it follows
\ba
E_{\alpha,1} &= W_{1}^{\alpha,r}\left[(1-e^{- \kappa_{1\ca 2}|\sR_{\alpha}|})\big(\lambda_{1}^{\alpha,\ell}-\dot x_{\alpha}\big)
\big|\eta_{1}^{\alpha,\ell}\big|+\left(\eta_{1}^{\alpha,r}(\lambda_{1}^{\alpha,r}-\dot x_{\alpha})-\eta_{1}^{\alpha,\ell}(\lambda_{1}^{\alpha,\ell}-\dot x_{\alpha})\right)\sgn(\eta_1^\ell) \right] \notag
\\
& { \leq} 
W_{1}^{\alpha,r}\left[- (1-e^{- \kappa_{1\ca 2}|\sR_{\alpha}|})\frac{p_0^*}{2} \big|\eta_{1}^{\alpha,\ell}\big|
+\left|\eta_{1}^{\alpha,r}(\lambda_{1}^{\alpha,r}-\dot x_{\alpha})-\eta_{1}^{\alpha,\ell}(\lambda_{1}^{\alpha,\ell}-\dot x_{\alpha})\right|
\right]\notag
\\
& { \leq} 
W_{1}^{\alpha,r}\left[- \frac{\kappa_{1\ca 2}\,p_0^*}{2\mu} \cdot e^{-2\kappa_{1\ca 2}\mu \delta_p^* }\cdot
\big|\eta_{1}^{\alpha,\ell}\sC_{\alpha}\big|
 +\co(1)\left| (\eta_{2}^{\alpha,\ell}+\sC_{{\alpha}})\eta_{2}^{\alpha,\ell}\sC_{{\alpha}} \right|  (v_{1}^{\alpha,\ell})^{2}  \right]\notag
\ea
Now, from~\eqref{E:primsfwewgrqgerqpall2}, we have the relation%
\be{etaell+g=r}
\eta_{2}^{\alpha,\ell}+\sC_{{\alpha}}=\eta_{2}^{\alpha,r}(1\pm\co(1)\delta_0^2)
\ee
and employing this, we arrive at the estimate
\ba
E_{\alpha,1} &{ \leq} 
W_{1}^{\alpha,r}\left[- \frac{\kappa_{1\ca 2}\,p_0^*}{2\mu} \cdot e^{-2\kappa_{1\ca 2}\mu \delta_p^* }\cdot
\big|\eta_{1}^{\alpha,\ell}\sC_{\alpha}\big|
 +\co(1)\left| \eta_{2}^{\alpha,r}\eta_{2}^{\alpha,\ell}\sC_{{\alpha}} \right|  (v_{1}^{\alpha,\ell})^{2}  \right].
 \label{E:efavadvNn}
 \ea

\noindent
\textbf{Case b)} Assume that $\eta_{1}^{\alpha,r}$ and $\eta_{1}^{\alpha,\ell}$ have opposite signs. Then using the interaction-type estimate~\eqref{E:primsfwewgrqgerqpall}, we have
\be{E:funfenfenfeNn}
|\eta_{1}^{\alpha,r}|+|\eta_{1}^{\alpha,\ell}| = |\eta_{1}^{\alpha,r}-\eta_{1}^{\alpha,\ell}|
\leq 
 \co(1) \left| \left(\eta_{2}^{\alpha,\ell}+\sC_{{\alpha}}\right)\eta_{2}^{\alpha,\ell}\sC_{{\alpha}} \right|  (v_{1}^{\alpha,\ell})^{2}   \ .
\ee
Since the speeds $\lambda_{1}^{\alpha,\ell}$, $\lambda_{1}^{\alpha,r}$, $\dot x_{\alpha}$ 
are uniformly bounded, expression~\eqref{E:error} of $E_{\alpha,1}$ can be estimated as follows:
\be{E:boundq1changesignNN}
\begin{split}
E_{\alpha,1}\leq& \co(1)W_1^*\cdot(|\eta_{1}^{\alpha,r}|+|\eta_{1}^{\alpha,\ell}| ) 
\\
\leq &  \co(1) W_1^*\left| \left(\eta_{2}^{\alpha,\ell}+\sC_{{\alpha}}\right)\eta_{2}^{\alpha,\ell}\sC_{{\alpha}} \right|  (v_{1}^{\alpha,\ell})^{2} \\
\leq& \co(1) W_1^*\cdot\left|\eta_{2}^{\alpha,r}\eta_{2}^{\alpha,\ell}\sC_{{\alpha}} \right|  (v_{1}^{\alpha,\ell})^{2} \ .
\end{split}
\ee
using again bounds~\eqref{W-unif-boundN} and~\eqref{etaell+g=r}.

We thus conclude that in both cases
\be{E:boundq1changesignNN2}
\begin{split}
E_{\alpha,1}
\leq& \co(1) W_1^*\cdot\left|\eta_{2}^{\alpha,r}\eta_{2}^{\alpha,\ell}\sC_{{\alpha}} \right|  (v_{1}^{\alpha,\ell})^{2} \ .
\end{split}
\ee

\paragraph{Estimate of $E_{\alpha,2}$ for waves of the second family}\label{per:rggqrgrqgrqgrqgrq}
In this subsection, we estimate the error term $E_{\alpha,2}$ for waves of the second family dividing the analysis into three cases again according to the signs of $\eta_{2}^{\alpha,r}$, $\eta_{2}^{\alpha,\ell}$ and to the type of the wave at $x_\alpha$ as follows: a) treating the case when $\eta_{2}^{\alpha,r}$ and $\eta_{2}^{\alpha,\ell}$ have the same sign, b) treating the case when $\eta_{2}^{\alpha,r}$ and $\eta_{2}^{\alpha,\ell}$ have opposite signs and the wave at $x_{\alpha}$ is an entropy admissible $2$-shock, c) treating the case when $\eta_{2}^{\alpha,r}$ and $\eta_{2}^{\alpha,\ell}$ have opposite signs and the wave at $x_{\alpha}$ is $2$-rarefaction shock.

\noindent\textbf{Case 2a)} We consider the case that $\eta_{2}^{\alpha,r}$ and $\eta_{2}^{\alpha,\ell}$ have the same sign. Then, by~\eqref{S4WiN} one has the identity
\be{E:rguiregregregrerge2}
W_{2}^{\alpha,r}
=W_{2}^{\alpha,\ell} e^{- \kappa_{2\ca2}|\sR_{\alpha}| \sgn(\eta_{2}^{\alpha,\ell})}\;,
\ee
since $\ca_{2,1}^{\alpha,\ell}=\ca_{2,1}^{\alpha,r}$ and $\ca_{2,2}^{\alpha,\ell}=\ca_{2,2}^{\alpha,r}+|\sR_{\alpha}| \sgn(\eta_{2}^{\alpha,\ell})$.

Thanks to estimates in Lemma~\ref{L:stimegenerali2}, by the uniform boundedness of $W_2^{\alpha,r}$, with the compact domain in bounds~\eqref{bound-eta1-gamma-eta1+gamma-pNN} from Theorem~\ref{T:AS} and the equivalence relation~\eqref{E:equivalncestrengths}, we estimate
\begin{align}
E_{\alpha,2}= &
(W_{2}^{\alpha,r}-W_{2}^{\alpha,\ell})|\eta_{2}^{\alpha,\ell}|(\lambda_{2}^{\alpha,\ell}-\dot x_{\alpha})
+W_{2}^{\alpha,r}\left[|\eta_{2}^{\alpha,r}|(\lambda_{2}^{\alpha,r}-\dot x_{\alpha})-|\eta_{2}^{\alpha,\ell}|(\lambda_{2}^{\alpha,\ell}-\dot x_{\alpha})\right]
\notag \\
 \stackrel{\eqref{E:rguiregregregrerge2}}{=} & 
 W_{2}^{\alpha,r}\left[
 (1-e^{\kappa_{2\ca2} \sgn(\eta_{2}^{\alpha,\ell})\left|\sR_{\alpha}\right|})| \eta_{2}^{\alpha,\ell}|(\lambda_{2}^{\alpha,\ell}-\dot x_{\alpha})+
 	\sgn(\eta_2^{\alpha,\ell})\left(\eta_{2}^{\alpha,r}(\lambda_{2}^{\alpha,r}-\dot x_{\alpha})-\eta_{2}^{\alpha,\ell}(\lambda_{2}^{\alpha,\ell}-\dot x_{\alpha})\right)\right]\notag\,.
	\end{align}
Next, using~\eqref{bound-eta1-gamma-eta1+gamma-pNN2} and~\eqref{E:8.50all}, we write
$$
\lambda_{2}^{\alpha,\ell}-\dot x_{\alpha}=
\frac{v_{1}^{\alpha,\ell} (\eta_2^{\alpha,\ell}+\sC_\alpha) }{(v_2^{\alpha,\ell}-\eta_2^{\alpha,\ell})(v_2^{\alpha,\ell}+\sC_\alpha)}(1+\co(1)|v_1^\ell |) 
$$
since $0<(1-\delta_p^*)^2<(v_2^{\alpha,\ell}-\eta_2^{\alpha,\ell})(v_2^{\alpha,\ell}+\sC_\alpha)\le(1+\delta_p^*)^2<4$ for $\delta_p^*<\frac{1}{2}$ by~\eqref{:cvetagamma-1bound} and also using that $v_2^{\alpha,\ell}+\gamma_\alpha=v_2^{\alpha,r}\in(1-\delta_p^*,1+\delta_p^*)$
Hence, combining with~\eqref{E:sqFNDKAVDUAall}, we deduce
\begin{align}	
E_{\alpha,2} \le
 	&W_{2}^{\alpha,r}\Big[  (1-e^{\kappa_{2\ca2} \sgn(\eta_{2}^{\alpha,\ell})\left|\sR_{\alpha}\right|}) |\eta_{2}^{\alpha,\ell}|  \frac{v_{1}^{\alpha,\ell} (\eta_2^{\alpha,\ell}+\sC_\alpha) }{(v_2^{\alpha,\ell}-\eta_2^{\alpha,\ell})(v_2^{\alpha,\ell}+\sC_\alpha)}(1+\co(1)\delta_0^* )  \notag
	\\ 
		&\qquad\qquad\qquad\qquad\qquad\qquad
+\co(1) \left| \left(\eta_{2}^{\alpha,\ell}+\sC_{{\alpha}}\right)\eta_{2}^{\alpha,\ell}\sC_{{\alpha}} \right|  (v_{1}^{\alpha,\ell})^{2} \Big]
\notag 
\end{align}
Now, from~\eqref{etaell+g=r}, we note that 
$$\sgn(\eta_{2}^{\alpha,\ell}+\sC_{{\alpha}})=\sgn(\eta_{2}^{\alpha,r})$$
and hence by~\eqref{E:equivalncestrengths} and~\eqref{bound-eta1-gamma-eta1+gamma-pNN2}, we get the estimate
$$
(1-e^{\kappa_{2\ca2} \sgn(\eta_{2}^{\alpha,\ell})\left|\sR_{\alpha}\right|}) |\eta_{2}^{\alpha,\ell}|  (\eta_2^{\alpha,\ell}+\sC_\alpha)\le  -\kappa_{2\ca2} e^{- 2\kappa_{2\ca 2}\mu \delta_p^* } \left|\sR_{\alpha}\right|  \eta_{2}^{\alpha,\ell}\cdot  (\eta_2^{\alpha,\ell}+\sC_\alpha)
$$
thus  from~\eqref{etaell+g=r}, we deduce
\begin{align}\label{E:case2.2.aNew}
E_{\alpha,2}
 {\le} 
 	&W_{2}^{\alpha,r}\left[ -\kappa_{2\ca2} e^{- 2\kappa_{2\ca 2}\mu \delta_p^* } \left|\sR_{\alpha}\right| \frac{v_{1}^{\alpha,\ell}  \eta_{2}^{\alpha,\ell}\eta_2^{\alpha,r}  }{(v_2^{\alpha,\ell}-\eta_2^{\alpha,\ell})(v_2^{\alpha,\ell}+\sC_\alpha)}(1+\co(1)\delta_0^* )
  +\co(1) \left| \eta_{2}^{\alpha,r}\eta_{2}^{\alpha,\ell}\sC_{{\alpha}} \right|  (v_{1}^{\alpha,\ell})^{2} \right]
\notag \\
 {\le} &W_{2}^{\alpha,r}\kappa_{2\ca2} e^{-2\kappa_{2\ca 2}\mu \delta_p^*}\frac{ (-1+\co(1)\delta_0^* ) }{ ({p_{1}^*)}^2\mu} 
 \left| \eta_{2}^{\alpha,r}\eta_{2}^{\alpha,\ell}\sC_{{\alpha}} \right|  v_{1}^{\alpha,\ell} +\co(1) W_2^*\delta_0^*\left| \eta_{2}^{\alpha,r}\eta_{2}^{\alpha,\ell}\sC_{{\alpha}} \right|  v_{1}^{\alpha,\ell}
 \ .\nonumber\\
  {\le} & 1\cdot \kappa_{2\ca2} e^{-2\kappa_{2\ca 2}\mu \delta_p^*}\frac{ (-\frac{1}{2}) }{ ({p_{1}^*)}^2\mu} 
 \left| \eta_{2}^{\alpha,r}\eta_{2}^{\alpha,\ell}\sC_{{\alpha}} \right|  v_{1}^{\alpha,\ell} +\co(1) W_2^*\delta_0^*\left| \eta_{2}^{\alpha,r}\eta_{2}^{\alpha,\ell}\sC_{{\alpha}} \right|  v_{1}^{\alpha,\ell}
 \ .
 \end{align}
Here, we also employed bounds $0\le (p_0^*)^2\le(v_2^{\alpha,\ell}-\eta_2^{\alpha,\ell})(v_2^{\alpha,\ell}+\sC_\alpha)\le (p_{1}^*)^2$, $W_{2}^{\alpha,r}\ge 1$ and take $\delta_0^*$ small enough so that $-1+\co(1)\delta_0^* <-\frac{1}{2}$.

\textbf{Case 2b)} 
In this case, we assume that $\eta_{2}^{\alpha,r}$ and $\eta_{2}^{\alpha,\ell}$ have opposite signs and the $\alpha$-wave is an admissible $2-$shock. This is equivalent to 
$\eta_{2}^{\alpha,\ell}<0<\eta_{2}^{\alpha,r}$. By interaction-type estimate~\eqref{E:primsfwewgrqgerqpall2}, we have
\bes
|\eta_{2}^{\alpha,r}|+|\eta_{2}^{\alpha,\ell}| = |\eta_{2}^{\alpha,r}-\eta_{2}^{\alpha,\ell}| \in \left(\frac{1}{2}|\sC_{\alpha}|, 2|\sC_{\alpha}|\right)\;.
\ees
using the smallness of the component $v_1^{\alpha,\ell}$; $0<v_1^{\alpha,\ell}<\delta_0^*$. Now, by estimates~\eqref{E:sstimaetarbase2all}, the error term $E_{\alpha,2}$ defined in~\eqref{E:error} can be estimated as 
\begin{align}
E_{\alpha,2}
{\le} 
&
W_{2}^{\alpha,r} |\eta_{2}^{\alpha,r}|\left[ \frac{v_1^{\alpha,\ell}\eta_2^{\alpha,\ell}}{ (v_2^{\alpha,\ell}-\eta_2^{\alpha,\ell})(v_2^{\alpha,\ell}+\sC_\alpha)} +\co(1) |\eta_2^{\alpha,\ell} |(v_1^{\alpha,\ell})^2 
\right] \notag\\
&-W_{2}^{\alpha,\ell} |\eta_{2}^{\alpha,\ell} | \left[ \frac{v_1^{\alpha,\ell}(\eta_2^{\alpha,\ell}+\sC_\alpha)}{ (v_2^{\alpha,\ell}-\eta_2^{\alpha,\ell})(v_2^{\alpha,\ell}+\sC_\alpha)} 
-\co(1) |\eta_2^{\alpha,\ell}+\sC_\alpha |(v_1^{\alpha,\ell})^2 
\right]\notag\;.
\end{align}
Next, recalling that $$0<(p_0^*)^2\le(v_2^{\alpha,\ell}-\eta_2^{\alpha,\ell})(v_2^{\alpha,\ell}+\sC_\alpha)\le (p_{1}^*)^2$$ given by the compact domain~\eqref{bound-eta1-gamma-eta1+gamma-pNN}, expression~\eqref{etaell+g=r} and the sign relations
$$
\sgn(\eta_2^{\alpha,\ell}+\sC_\alpha)=\sgn(\eta_2^{\alpha,r})=+1,\qquad \eta_{2}^{\alpha,\ell}<0<\eta_{2}^{\alpha,r}\;,
$$
we arrive at
\begin{align}\label{E:case22b}
E_{\alpha,2}
{\le} &(W_{2}^{\alpha,r}+W_{2}^{\alpha,\ell}) 
	\cdot \left(-\frac{1}{(p_1^*)^2}+\co(1)\delta_0^*\right)\cdot  \left| \eta_{2}^{\alpha,r}\eta_{2}^{\alpha,\ell} \right|  v_{1}^{\alpha,\ell}
\notag \\
{\le} &  2\left(-\frac{1}{(p_1^*)^2}+\co(1)\delta_0^*\right)\cdot  \left| \eta_{2}^{\alpha,r}\eta_{2}^{\alpha,\ell} \right|  v_{1}^{\alpha,\ell}\;,
\end{align}
choosinf $\delta^*_0$ small enough so that $-\frac{1}{(p_1^*)^2}+\co(1)\delta_0^* <0$ and using $W_i^{\alpha,r/\ell}\ge 1$, by definition.

\textbf{Case 2c)}
 In the last case, $\eta_{2}^{\alpha,r}$ and $ \eta_{2}^{\alpha,\ell}$ have opposite signs and the $\alpha$-wave is a $2-$rarefaction,
 which is replaced with a jump of the same size connecting states along a Hugoniot curve. This means that $\eta_{2}^{\alpha,r}\leq0\leq \eta_{2}^{\alpha,\ell}$ and $0<\sC_{\alpha}\leq\varepsilon$. As before in the previous case, by interaction-kind estimate~\eqref{E:primsfwewgrqgerqpall2} and for small enough $\delta_0^*$, 
 we have
\be{E:gregggfgg}
|\eta_{2}^{\alpha,r}|+|\eta_{2}^{\alpha,\ell}| = |\eta_{2}^{\alpha,r}-\eta_{2}^{\alpha,\ell}| 
\leq 2|\sC_{\alpha}|\leq 2\varepsilon \ .
\ee
Substituting this into~\eqref{E:sstimaetarbase2all}, it yields
\[
|\lambda_{2}^{\alpha,r}-\lambda_{2}^{\alpha,\ell}|\leq|\dot x_{\alpha}-\lambda_{2}^{\alpha,\ell}|+
|\dot x_{\alpha}-\lambda_{2}^{\alpha,r}|
\leq
\co(1) (|\eta_{2}^{\alpha,\ell}|+|\eta_{2}^{\alpha,\ell}+\sC_\alpha| )( v_{1}^{\alpha,\ell})^2 \leq
\co(1) \varepsilon (v_{1}^{\alpha,\ell} )^2\;.
\]
In view of the above, the error $E_{\alpha,2}$ defined in~\eqref{E:error} can be estimated as
\begin{align}\label{E:case2.2.c}
E_{\alpha,2}&\le W_2^{\alpha,r} |\eta_2^{\alpha,r}-\eta_2^{\alpha,\ell}| |\lambda_2^{\alpha,r}-\dot{x}_\alpha| 
+(W_2^{\alpha,r}+W_2^{\alpha,\ell}) |\eta_2^{\alpha,\ell}| |\lambda_2^{\alpha,r}-\dot{x}_\alpha| + W_2^{\alpha,\ell} |\eta_2^{\alpha,\ell}| |\lambda_2^{\alpha,r}-\lambda_2^{\alpha,\ell}| \notag\\
&\le \co(1) W_2^{*} \; \varepsilon (v_{1}^{\alpha,\ell})^2 |\sC_{\alpha}|\;,
\end{align}
recalling~\eqref{W-unif-boundN}.

\paragraph{Derivation of~\eqref{E:mainest} for 2-waves}

We combine now estimate~\eqref{E:boundq1changesignNN2} for $E_{\alpha,1}$ with the three cases presented in Paragraph~\ref{per:rggqrgrqgrqgrqgrq} for $E_{\alpha,2}$ to establish~\eqref{E:mainest} when the wave at $x_{\alpha}$ is a 2-wave.

\noindent
 {\bf(Case 2a)} In this case, we have $\eta_{2}^{\alpha,r}\,\cdot\eta_{2}^{\alpha,\ell}\ge 0$. Adding~\eqref{E:boundq1changesignNN2} and~\eqref{E:case2.2.aNew}, we get
\be{E:case2.2.anew}
E_{\alpha,1} +E_{\alpha,2} 
\leq
\left(-\kappa_{2\ca2}e^{-{\color{blue} 2}\kappa_{2\ca 2}\mu \delta_p^*}\frac{ 1}{2 ({p_{1}^*)}^2\mu} +(W_1^{*}+W_{2}^{*})\co(1)\delta_0^*\right)
 \left| \eta_{2}^{\alpha,r}\eta_{2}^{\alpha,\ell}\sC_{{\alpha}} \right|  v_{1}^{\alpha,\ell}  \ .
\ee
since $W_{2}^{\alpha,r}\ge 1$.

\noindent
{\bf(Case 2b)} Here, we recall that $\eta_{2}^{\alpha,\ell}<0<\eta_{2}^{\alpha,r}$. Summing up~\eqref{E:boundq1changesignNN2} and~\eqref{E:case22b}
\be{E:case2.2.bnew}
E_{\alpha,1} +E_{\alpha,2} 
\leq \left(-\frac{2}{(p_1^*)^2}+\co(1)\delta_0^*(W_1^*+1)\right)\cdot  \left| \eta_{2}^{\alpha,r}\eta_{2}^{\alpha,\ell} \right|  v_{1}^{\alpha,\ell}\;,
 \ee
%
%

\noindent
{\bf(Case 2c)} In the last case, we deal with $\eta_{2}^{\alpha,r}<0<\eta_{2}^{\alpha,\ell}$. Combining estimates~\eqref{E:boundq1changesignNN2} and~\eqref{E:case2.2.c} and taking into account~\eqref{E:gregggfgg}, we arrive at
\begin{align}\label{E:case2.2.cnew}
E_{\alpha,1}+E_{\alpha,2} &\leq
\co(1) (   W_1^{*}+W_2^{*}) \; \varepsilon (v_{1}^{\alpha,\ell})^2 |\sC_{\alpha}|\ .
\end{align}
In view of the above estimates, we thus deduce that~\eqref{E:mainest} holds by the smallness of $\delta_0^*$. Indeed, in Step $5$ in Conditions $(\Sigma$), (see Proposition~\ref{PropCond} in Paragraph~\ref{P4.2.1.3}), one can further shrink, if needed, $\delta_0^*$ in~\eqref{Condition4} depending on  $\delta_p^*$ and $\kappa_{2\ca 2}$ -- that are already fixed from the previous steps -- so that estimates ~\eqref{E:case2.2.anew}--\eqref{E:case2.2.bnew} are all negative.

\subsection{Analysis at time steps}
\label{Ss:timestep}
The aim in this section is to estimate the change of the function $\Phi_0(u(t),v(t)) $ across a time step $t=t_k$, when $u$ and $v$ are approximate solutions to~\eqref{S1system} constructed via the front-tracking algorithm in conjunction with the operator splitting method. Throughout this section, we denote by $u$ and $v$ the piecewise constant approximate solutions to the non-homogeneous system~\eqref{S1system} as described in Section~\ref{Ss:Rsolv-appsol}, with the update of the states given at~\eqref{updated} and the source denoted by $g(\theta)=((\theta_2-1)\theta_1,0)$ where $\theta$ stands for a generic state $\theta=(\theta_1,\theta_2) $.

Fix an index $k$ and set $t_k=k\Delta t$. For our convenience, we use the following notation
\[
u^+(x)\doteq u(x,t_k+) , \ u^-(x)\doteq u(x,t_k-) , \ \eta_i^+(x)\doteq\eta_i(x,t_k+) , \ \eta_i^-(x)\doteq\eta_i(x,t_k-)
\]
for the values before and after the update at the time step $t_k$. Similarly, we use 
\[
v^{\pm}(x) ,\ W_i^\pm(x) , \ \ca_i^\pm(x) , \ Q(u)^{\pm} , \ Q(v)^{\pm} , \ \cg^{\pm}(u) , \ \cg^{\pm}(v) \text{ etc.}
\]
for the corresponding values as $t\to t_k\pm$. Using this notation, we have
\begin{equation}
\label{S:etapm-def}
v^{\pm}(x)=\SC{2}{\eta_2^{\pm}}{ \SC{1}{\eta_1^{\pm}}{u^{\pm}(x)}}
\end{equation}
and
\begin{equation}
\label{S:vupm-def}
v^{+}(x)=v^{-}(x)+\Delta t g(v^-(x)),\qquad u^+(x)=u^{-}(x)+\Delta t g(u^-(x)) \;.
\end{equation}

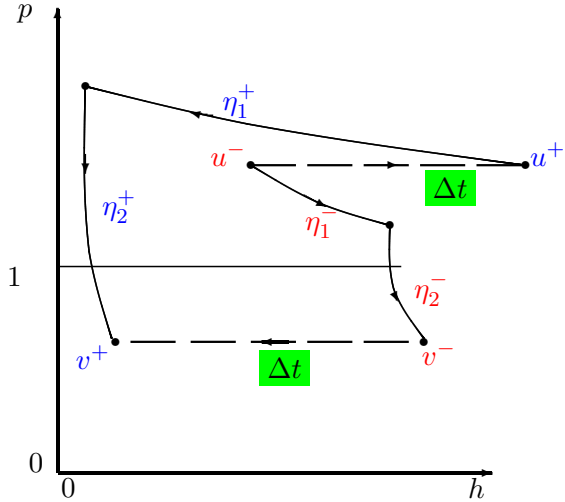
\begin{figure}[htbp]
{\centering \scalebox{1}{\ifx\JPicScale\undefined\def\JPicScale{1}\fi
\unitlength \JPicScale mm
\begin{picture}(79.5,66.81)(0,0)
\put(64.85,28.11){\makebox(0,0)[cc]{}}

\linethickness{0.4mm}
\multiput(14.31,5.09)(57.69,-0.09){1}{\line(1,0){57.69}}
\put(72,5){\vector(1,-0){0.12}}
\linethickness{0.4mm}
\put(14.31,5.09){\line(0,1){61.72}}
\put(14.31,66.81){\vector(0,1){0.12}}
\linethickness{0.2mm}
\put(14.31,32.52){\line(1,0){45.69}}
\put(70,3){\makebox(0,0)[cc]{$h$}}

\put(10.01,66.13){\makebox(0,0)[cc]{$p$}}

\put(8.58,31.15){\makebox(0,0)[cc]{$1$}}

\put(11.44,6.46){\makebox(0,0)[cc]{$0$}}

\linethickness{0.2mm}
\put(18,45){\line(0,1){2.5}}
\put(18,45){\vector(0,-1){0.12}}
\linethickness{0.2mm}
\multiput(47,42)(0.23,-0.12){13}{\line(1,0){0.23}}
\put(50,40.5){\vector(2,-1){0.12}}
\put(15.75,3.03){\makebox(0,0)[cc]{$0$}}

\linethickness{0.3mm}
\put(76.5,46){\circle*{1}}

\linethickness{0.3mm}
\put(63,22.5){\circle*{1}}

\linethickness{0.3mm}
\put(40,46){\circle*{1}}

\linethickness{0.3mm}
\put(18,56.5){\circle*{1}}

\linethickness{0.3mm}
\put(22,22.5){\circle*{1}}

\linethickness{0.3mm}
\put(58.5,38){\circle*{1}}

\put(19,20.5){\makebox(0,0)[cc]{${\color{blue}v^+}$}}

\put(37,47.5){\makebox(0,0)[cc]{$\color{red}{u^-}$}}

\put(49.5,38.5){\makebox(0,0)[cc]{${\color{red}\eta_1^-}$}}

\put(64,29.5){\makebox(0,0)[cc]{$\color{red}{\eta_2^-}$}}

\put(65,21){\makebox(0,0)[cc]{\color{red}{$v^-$}}}

\linethickness{0.3mm}
\put(40,46){\line(1,0){6}}
\put(22,25.5){\makebox(0,0)[cc]{}}

\put(22.5,25){\makebox(0,0)[cc]{}}

\put(24,23){\makebox(0,0)[cc]{}}

\linethickness{0.3mm}
\put(47.5,46){\line(1,0){6}}
\linethickness{0.3mm}
\put(55,46){\line(1,0){6}}
\linethickness{0.3mm}
\put(62.5,46){\line(1,0){6}}
\linethickness{0.3mm}
\put(70,46){\line(1,0){6}}
\linethickness{0.3mm}
\put(55,22.5){\line(1,0){6}}
\linethickness{0.3mm}
\put(47.5,22.5){\line(1,0){6}}
\linethickness{0.3mm}
\put(40,22.5){\line(1,0){6}}
\linethickness{0.3mm}
\put(32,22.5){\line(1,0){6}}
\linethickness{0.3mm}
\put(24,22.5){\line(1,0){6}}
\linethickness{0.2mm}
\qbezier(76.5,46)(75.38,46.12)(62.75,47.85)
\qbezier(62.75,47.85)(50.13,49.57)(39.5,51.5)
\qbezier(39.5,51.5)(33.25,52.75)(25.94,54.53)
\qbezier(25.94,54.53)(18.64,56.32)(18,56.5)
\linethickness{0.2mm}
\qbezier(58.5,38)(58.45,37.73)(58.42,34.8)
\qbezier(58.42,34.8)(58.39,31.86)(59,29.5)
\qbezier(59,29.5)(59.77,27.45)(61.3,25.31)
\qbezier(61.3,25.31)(62.82,23.17)(63,23)
\linethickness{0.2mm}
\qbezier(18,56.5)(17.93,55.83)(17.86,48.32)
\qbezier(17.86,48.32)(17.78,40.81)(18.5,34.5)
\qbezier(18.5,34.5)(19.06,31.12)(20.21,27.23)
\qbezier(20.21,27.23)(21.37,23.34)(21.5,23)
\put(79.5,47.5){\makebox(0,0)[cc]{${\color{blue}u^+}$}}

\put(38.5,54.5){\makebox(0,0)[cc]{${\color{blue} \eta_1^+}$}}

\put(22.5,40.5){\makebox(0,0)[cc]{${\color{blue}\eta_2^+}$}}

\linethickness{0.2mm}
\multiput(32,53)(0.75,-0.12){4}{\line(1,0){0.75}}
\put(32,53){\vector(-4,1){0.12}}
\linethickness{0.2mm}
\multiput(59,29)(0.12,-0.25){4}{\line(0,-1){0.25}}
\put(59.5,28){\vector(1,-2){0.12}}
\linethickness{0.4mm}
\put(41.5,22.5){\line(1,0){3.5}}
\put(41.5,22.5){\vector(-1,0){0.12}}
\linethickness{0.2mm}
\put(55.5,46){\line(1,0){4}}
\put(59.5,46){\vector(1,0){0.12}}
\linethickness{0.2mm}
\qbezier(40,46)(40.26,45.82)(43.28,44.03)
\qbezier(43.28,44.03)(46.3,42.23)(49,41)
\qbezier(49,41)(51.66,39.92)(54.93,38.99)
\qbezier(54.93,38.99)(58.2,38.06)(58.5,38)
\put(49,18.5){\makebox(0,0)[cc]{}}

\put(44.5,19){\makebox(0,0)[cc]{\colorbox{green}{$\Delta t$}}}

\put(66.5,43){\makebox(0,0)[cc]{\colorbox{green}{$\Delta t$}}}

\end{picture} } \par}
\caption{The shock curves connecting the states of $u$ with $v$ before and after a time step of size $\Delta t$. \label{S4.5 :fig1}}
\end{figure}
The following lemma provides an estimate in the change of the strengths $\eta=(\eta_1,\eta_2)$ across the time step $t_k$.

\begin{lemma}\label{S:ts-lemma}
Let $u(t,x)$ and $v(t,x)$ be two approximate solutions to~\eqref{S1system} in $[0,\infty)\times\mathbb{R}$. Then there exists $s=\Delta t>0$ such that 
\be{S:ts-eta12}
|\eta_1^+-\eta_1^-|+|\eta_2^+-\eta_2^-|\le \co(1)\,\Delta t \left( |\eta_1^-|+|\eta_2^-| \right)
\ee
and 
\be{S-tsGbd}
\cg^{+}(u)\le (1+\co(1)\Delta t)\cg^{-}(u),\qquad
\cg^{+}(v)\le (1+\co(1)\Delta t)\cg^{-}(v)\;,
\ee
at every time step $t_k=k\Delta t$.
\end{lemma}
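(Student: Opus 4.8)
The plan is to prove the two estimates~\eqref{S:ts-eta12} and~\eqref{S-tsGbd} separately, reducing everything to the Time Step Estimates lemma recalled just above (estimates~\eqref{E:timestepest1}--\eqref{E:timestepest4}) together with the structure of the update~\eqref{updated}. The second estimate~\eqref{S-tsGbd} is precisely~\eqref{G-timestep-est1} applied to each of $u$ and $v$, which the excerpt already attributes to the analysis in~\cite{AS} built on the Time Step Estimates; so I would simply invoke it. The real content is~\eqref{S:ts-eta12}, concerning the shock parameters $\eta_i$ connecting $u^{\pm}$ with $v^{\pm}$ via the Hugoniot curves, as in~\eqref{S:etapm-def}.

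For~\eqref{S:ts-eta12}, first I would fix $x$ and note that by~\eqref{S:vupm-def} the post-step states are $u^{+}(x)=u^{-}(x)+\Delta t\, g(u^{-}(x))$ and $v^{+}(x)=v^{-}(x)+\Delta t\, g(v^{-}(x))$, with $g((\theta_1,\theta_2))=\big((\theta_2-1)\theta_1,\,0\big)$, and $|g|$ is Lipschitz and of size $\co(1)\delta_0^*$ on the compact set $K$. Thus $|u^{+}(x)-u^{-}(x)|+|v^{+}(x)-v^{-}(x)| = \co(1)\Delta t\,\big(|u^{-}(x)-u_\infty|+|v^{-}(x)-v_\infty|\big)$ where the reference state is $(0,1)$, and since the map from $(u,w)$ to the shock parameters $(\eta_1,\eta_2)$ is a $C^2$ diffeomorphism with bounded derivatives on $K\times K$ (cf.~\eqref{S4:etaabs}), a first-order Taylor expansion of this map around the pre-step configuration gives $|\eta_1^{+}-\eta_1^{-}|+|\eta_2^{+}-\eta_2^{-}| \le \co(1)\,\big(|u^{+}(x)-u^{-}(x)|+|v^{+}(x)-v^{-}(x)|\big) = \co(1)\Delta t\,\big(|u^{-}(x)-u_\infty|+|v^{-}(x)-v_\infty|\big)$. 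This already yields an inequality of the desired shape but with $|u^{-}-u_\infty|+|v^{-}-v_\infty|$ on the right rather than $|\eta_1^{-}|+|\eta_2^{-}|$, so the remaining point is to replace the former by the latter.

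That replacement, however, is not true pointwise in $x$: at a point where $u^{-}(x)=v^{-}(x)$ is far from $(0,1)$, one has $\eta_i^{-}(x)=0$ but $g(u^{-}(x))\ne g(v^{-}(x))$ in general — except that here $g(u)-g(v)$ also vanishes when $u=v$, since $g$ is a fixed function of the state. So I would redo the expansion more carefully: write $u^{+}-v^{+} = (u^{-}-v^{-}) + \Delta t\big(g(u^{-})-g(v^{-})\big)$ and use $|g(u^{-})-g(v^{-})|\le \mathrm{Lip}(g)\,|u^{-}-v^{-}| = \co(1)|u^{-}-v^{-}|$, whence $|u^{+}-v^{+}| \le (1+\co(1)\Delta t)\,|u^{-}-v^{-}|$ and likewise $|u^{-}-v^{-}|\le(1+\co(1)\Delta t)|u^{+}-v^{+}|$. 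Combining this with the two-sided equivalence $\tfrac{1}{C_0}|u-w|\le\sum_i|\eta_i|\le C_0|u-w|$ from~\eqref{S4:etaabs} applied at both $t_k-$ and $t_k+$ gives $\sum_i|\eta_i^{+}| \le C_0^2(1+\co(1)\Delta t)\sum_i|\eta_i^{-}|$, and subtracting yields $\big|\sum_i|\eta_i^{+}| - \sum_i|\eta_i^{-}|\big| = \co(1)\Delta t\,\sum_i|\eta_i^{-}|$; but this controls the variation of the \emph{sum} of absolute values, not of each $\eta_i$ individually, which is what~\eqref{S:ts-eta12} asks for. To get the stronger componentwise statement I would instead differentiate the shock-parameter map: set $\Theta(u,w)=(\eta_1(u,w),\eta_2(u,w))$, note $\Theta(u,u)=(0,0)$, so $\partial_u\Theta(u,u)=-\partial_w\Theta(u,u)$, and Taylor-expand $\Theta(u^{+},v^{+})-\Theta(u^{-},v^{-})$ to first order in $\Delta t$; the leading term is $\Delta t\big(\partial_u\Theta\cdot g(u^{-}) + \partial_w\Theta\cdot g(v^{-})\big)$, and using $\partial_u\Theta = -\partial_w\Theta + \co(1)\sum_i|\eta_i^{-}|$ (by smoothness, since the two partials agree on the diagonal $\{u=w\}$), together with $|g(u^{-})-g(v^{-})| = \co(1)|u^{-}-v^{-}| = \co(1)\sum_i|\eta_i^{-}|$, the leading term is $\co(1)\Delta t\sum_i|\eta_i^{-}|$, and the quadratic-in-$\Delta t$ remainder is absorbed by taking $\Delta t$ small. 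I expect this last bookkeeping — keeping track of which error terms are genuinely $O(\sum_i|\eta_i^{-}|)$ rather than merely $O(|u^{-}-u_\infty|+|v^{-}-v_\infty|)$, using crucially that $g$ depends only on the state and vanishes to first order at coincidence in the relevant sense — to be the main (though not deep) obstacle; once it is done, integrating the pointwise bound in $x$ over $\mathbb{R}$ is immediate since all constants are uniform on $K$, and~\eqref{S-tsGbd} is quoted directly from~\eqref{G-timestep-est1}.
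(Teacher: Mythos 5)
Your argument for~\eqref{S:ts-eta12} is essentially the paper's. The paper defines $\Psi^{*}(u_1^-,u_2^-,\eta_1^-,\eta_2^-,\Delta t)\doteq(\eta_1^+-\eta_1^-,\eta_2^+-\eta_2^-)$, observes that it vanishes both at $\Delta t=0$ and at $(\eta_1^-,\eta_2^-)=(0,0)$, and invokes the auxiliary Lemma~\ref{E:rewgwgeq} of Appendix~\ref{App:C}; that is exactly the Taylor expansion you carry out by hand via $\Theta(u,u)=0$ and $\partial_u\Theta(u,u)=-\partial_w\Theta(u,u)$, so your second, ``componentwise'' argument is sound (provided you use the integral form of the remainder, so that the factor $|u^--v^-|=\co(1)\sum_i|\eta_i^-|$ is extracted from \emph{every} term; a remainder that were genuinely only $O(\Delta t^2)$ could not be ``absorbed by taking $\Delta t$ small'', since $\sum_i|\eta_i^-|$ may be far smaller than $\Delta t$). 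One incidental error in your discarded first attempt: from $\sum_i|\eta_i^+|\le C_0^2(1+\co(1)\Delta t)\sum_i|\eta_i^-|$ one cannot ``subtract'' to get $\big|\sum_i|\eta_i^+|-\sum_i|\eta_i^-|\big|=\co(1)\Delta t\sum_i|\eta_i^-|$, because $C_0^2-1$ is not $O(\Delta t)$; but since you abandon that route anyway, nothing is lost.

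The genuine gap is in~\eqref{S-tsGbd}. You propose to ``simply invoke'' \eqref{G-timestep-est1}, but \eqref{G-timestep-est1} \emph{is} the estimate~\eqref{S-tsGbd}: it was only announced in Subsection~\ref{Ss:glimmfunct} (``we deduce that\dots''), and the place where the paper actually substantiates it is precisely the proof of this lemma. Citing it here is circular. The paper's proof reduces \eqref{S-tsGbd} to the time-step estimates \eqref{E:timestepest1}--\eqref{E:timestepest4} via the argument of Lemmas~2.1--2.2 in~\cite{AG}, and the one piece that is genuinely new relative to~\cite{AG} — and which your proposal does not address — is the modified potential $\cq_{hh}$ in \eqref{E:Qhh}, whose summands carry the weights $\omega_{\alpha,\beta}$ of \eqref{E:omega}. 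Since $\omega_{\alpha,\beta}$ depends on the left states $P_\alpha^\ell, P_\beta^\ell$, which are themselves altered by the update \eqref{updated}, one must check separately that $\omega_{\alpha,\beta}^+-\omega_{\alpha,\beta}^-=\co(1)\Delta t$ and hence $\cq_{hh}^+-\cq_{hh}^-=\co(1)\Delta t\,(V^-)^2$ before the $(1+\co(1)\Delta t)$ bound on the full Glimm functional $\cg=V+\cq$ follows. Without this verification the second half of the lemma is not proved.
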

\begin{proof}
Given the state $u^-(x)=(u_1^-,u_2^-)$, the time step $\Delta t$ and the strengths $\eta^-=(\eta_1^-,\eta_2^-)$, one can determine $\eta^+=(\eta_1^+,\eta_2^+)$ relying on~\eqref{S:etapm-def}--\eqref{S:vupm-def}. Hence, by considering the independent variables $(u_1^-,u_2^-,\eta_1^-,\eta_2^-,\Delta t)$, we define the functional
\bes
\Psi^{*}(h_u^-,p_u^-,\eta_1^-,\eta_2^-,\Delta t)\doteq (\eta_1^+-\eta_1^-,\eta_2^+-\eta_2^-)\;.
\ees
It is easy to verify that
\bes
\Psi^{*}(u_1^-,u_2^-,\eta_1^-,\eta_2^-,0)=\Psi^{*}(u_1^-,u_2^-,0,0,\Delta t)=(0,0)
\ees
Hence, by Lemma~\ref{E:rewgwgeq}, 
we arrive immediately at~\eqref{S:ts-eta12}.
Relying on estimate~\eqref{S:ts-eta12} and a proof similar to the one of Lemmas $2.1$ and $2.2$ in~\cite{AG}, we obtain~\eqref{S-tsGbd}. Let us point out that the derivation of~\eqref{S-tsGbd} can be established following the aforementioned work in~\cite{AG} since it relies on the smallness of $\Delta t$ and the lack of genuine nonlinearity or linear degeneracy or the size of the initial data are not relevant at this point. One should only check the terms in $\cq$ with the weights $\omega_{\alpha,\beta}$ that are not present in~\cite{AG}. Indeed, one can follow the proof of estimate (2.14) in~\cite{AG} to show
$$
\omega_{\alpha,\beta}^+-\omega_{\alpha,\beta}^-=\co(1)\Delta t,\qquad {\mathcal{Q}_{hh}}^+-{\mathcal{Q}_{hh}}^-= \co(1) \Delta t {(V^-)}^2
$$
for each approximate solution $u$ and $v$. Combining the definition~\eqref{S3G} of the Glimm functional $\mathcal{G}$ and Lemma $2.2$ in~\cite{AG}, the proof of~\eqref{S-tsGbd} is complete.
\end{proof}

 The aim now is to estimate the change of the functional $\Phi_0(u,v)$ across $t=t_k$.

\begin{lemma} Let $u$ and $v$ be two approximate solutions to~\eqref{S1system}. Then there exists $s=\Delta t>0$ such that
\ba\label{S:ts-Phi2}
\Phi_0(u(t_k+),v(t_k+))-\Phi_0(u(t_k-),v(t_k-))\le\co(1) \Delta t \Phi_0(u(t_k-),v(t_k-))\;,
\ea
for every $k=0,1,2,\dots$.
\end{lemma}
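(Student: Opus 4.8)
The goal is to bound the change of $\Phi_0$ across a time step $t_k$ by a factor $(1+\co(1)\Delta t)$. The plan is to write $\Phi_0(u(t_k\pm),v(t_k\pm))$ using the defining formula~\eqref{UpsilonNew} and estimate separately the three sources of variation: (a) the change in the wave sizes $\eta_i$ connecting $u$ with $v$, controlled by Lemma~\ref{S:ts-lemma}; (b) the change in the weights $W_i$, i.e.\ in the quantities $\ca_{i,j}$ built from waves of $u$ and $v$, which by the Time Step Estimates (Lemma, equations~\eqref{E:timestepest1}--\eqref{E:timestepest4}) vary by $\co(1)\Delta t$ times the total strength of the relevant waves; and (c) the change in the Glimm-functional exponent $e^{\kappa_\cg[\cg(u)+\cg(v)]}$, controlled by~\eqref{S-tsGbd}.

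First I would use~\eqref{S:ts-eta12} to write
\[
\sum_i\int |\eta_i^+(x)|\,dx \le (1+\co(1)\Delta t)\sum_i\int|\eta_i^-(x)|\,dx,
\]
so that the $\eta$-part contributes a factor $(1+\co(1)\Delta t)$. Next, for the weights, I would observe that by the Time Step Estimates each $1$-wave of size $\sR_\alpha$ produces, after the update, a $1$-wave of size $\sR_\alpha + \co(1)\Delta t|p_\alpha^\ell-1||\sR_\alpha|$ and a spurious $2$-wave of size $\co(1)\Delta t|p_\alpha^\ell-1||\sR_\alpha|$, and symmetrically for $2$-waves with an $h^\ell$ factor; hence each $\ca_{i,j}(x,t_k+)$ differs from $\ca_{i,j}(x,t_k-)$ by at most $\co(1)\Delta t\cdot\big(V(u(t_k-))+V(v(t_k-))\big)\le \co(1)\Delta t\, M^*$. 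Since $W_i = \exp(\kappa_{i\ca1}\ca_{i,1}+\kappa_{i\ca2}\ca_{i,2})$, this gives $W_i(x,t_k+)\le e^{\co(1)\Delta t} W_i(x,t_k-)\le (1+\co(1)\Delta t)W_i(x,t_k-)$ pointwise, using the uniform bounds~\eqref{W-unif-boundN}. The one subtlety here, parallel to the non-GNL phenomenon exploited in \S\ref{Sss:Interaction21sameRegion}, is that the update never moves a wave's left state across $\{p=1\}$ (the $p$-component is unchanged by~\eqref{updated}), so the partition of approaching $1$-waves entering $\ca_{1,1}$ does not jump discontinuously at the time step; only the strengths and the newly created small waves contribute, and those are $\co(1)\Delta t$-small. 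Finally, the Glimm exponent contributes $e^{\kappa_\cg\co(1)\Delta t[\cg^-(u)+\cg^-(v)]}\le (1+\co(1)\Delta t)$ by~\eqref{S-tsGbd} and~\eqref{mstar-bound}.

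Multiplying the three factors, each of the form $(1+\co(1)\Delta t)$, and absorbing cross terms (which are $\co(1)(\Delta t)^2$, hence $\le\co(1)\Delta t$ for $\Delta t$ small) yields
\[
\Phi_0(u(t_k+),v(t_k+))\le (1+\co(1)\Delta t)\,\Phi_0(u(t_k-),v(t_k-)),
\]
which is exactly~\eqref{S:ts-Phi2}. I expect the main obstacle to be the bookkeeping in step (b): one must check that the newly created opposite-family fronts at the time step (the $\sR_p^+$ terms in~\eqref{E:timestepest2} and the $\sR_h^+$ terms in~\eqref{E:timestepest3}) are correctly accounted for in \emph{every} $\ca_{i,j}$ — in particular that they enter $\ca_{1,2}$ and $\ca_{2,1}$ with the right $\co(1)\Delta t|p^\ell-1||\sR|$ or $\co(1)\Delta t\, h^\ell|\sR|$ weight and do not spoil the summability — and that the reshuffling of which wave is "approaching" $\eta_i(x)$ caused by the sign change of $\eta_i$ itself across the step (governed by~\eqref{S:ts-eta12}) only affects a set of $x$ whose contribution is already controlled by the $\eta$-estimate. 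Both are routine given Lemma~\ref{S:ts-lemma} and the Time Step Estimates, but they are where the care is needed.
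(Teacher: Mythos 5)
Your proposal is correct and follows essentially the same route as the paper's proof: the same splitting into the $\eta$-variation (via Lemma~\ref{S:ts-lemma}), the weight variation (via the Time Step Estimates, noting that the update~\eqref{updated} leaves the $p$-component — hence the side of $\{p=1\}$ and the $|p^\ell-1|$ factors in $\ca_{1,1}$ — essentially unchanged, so $\Delta\ca_{i,j}\le\co(1)\Delta t$), and the Glimm exponent (via~\eqref{S-tsGbd}), together with the same two-case analysis according to whether $\eta_i$ changes sign across the step. The paper organizes this additively on the integrand rather than as a product of $(1+\co(1)\Delta t)$ factors, but the final absorption step — summing over $i$ and using $W_i\ge1$ to compare $\int(|\eta_1^-|+|\eta_2^-|)\,dx$ with $\Phi_0^-$ — is identical.
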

\begin{proof}
The aim now is to estimate the change of the functional $\Phi_0(u,v)$ across $t=t_k$. By definition,
\be{S:ts-Phinew}
\Phi_0(u^+,v^+)-\Phi_0(u^-,v^-)=\sum_{i=1}^2\int_{-\infty}^\infty\left[|\eta_i^+(x)|W_i^+  \cdot e^{\kappa_{\cg} \cg^+} - |\eta_i^-(x)|W_i^-  \cdot e^{\kappa_{\cg} \cg^-}
\right] dx
\ee
where $ \cg^{\pm}:=\cg^{\pm}(u)+\cg^{\pm}(v)$
and we expand the above integrand as
\ba\label{S4.3term}
|\eta_i^+(x)|W_i^+(x) \cdot e^{\kappa_{\cg} \cg^+}- |\eta_i^-(x)|W_i^-(x) \cdot e^{\kappa_{\cg} \cg^-}
&
=e_i^-(x) \left[\eta_i^+(x) e^{\delta_i}-\eta_i^-(x)\right]\nonumber\\
&
=e_i^-(x) \left[\eta_i^-(x) (e^{\delta_i}-1)+(\eta_i^+(x)  -\eta_i^-(x)  ) e^{\delta_i}\right]
\ea
where 
$$
e_i^-(x):= \exp\left(\kappa_{i\ca 1}\ca_{i,1}^-(x)+\kappa_{i\ca 2}\ca_{i,2}^-(x) +\kappa_{\cg} \cg^-\right)
$$
$$
e^{\delta_i}:=\exp\left(\delta_i\right),\qquad \delta_i(x):=\exp\left(\kappa_{i\ca 1}\Delta\ca_{i,1}(x)+\kappa_{i\ca 2}\Delta\ca_{i,2}(x) +\kappa_{\cg} \Delta\cg\right)
$$
for $i=1,2$ and as usual $\Delta\cdot$ denotes the change across $t_k$, i.e. $\Delta\ca_{i,1}(x)=\ca_{i,1}^+(x)-\ca_{i,1}^-(x)$.

Now the aim is to estimate  term~\eqref{S4.3term}. Using that
the functional $\mathcal{G}$ is uniformly bounded for all times for both solutions $u,\,v\in\mathcal{D}_0^*$ by a positive constant, we first observe that
\be{S:ts-Abdnew}
e_i^-(x)\le\co(1),\qquad \ca_{i,j}^{\pm}(x)\le \co(1)\cg^\pm= \co(1)(\cg^{\pm}(u)+\cg^{\pm}(v))\le \co(1)\qquad i,\,j=1,2 \;,
\ee
and then we examine two possible cases: Either (i) $\eta_i(x)^- \eta_i(x)^+\le 0$ or (ii) $\eta_i(x)^- \eta_i(x)^+> 0$ for each family $i=1,2$.

\noindent
{\bf Case (i)} If $\eta_i(x)^- \eta_i(x)^+\le 0$, then by Lemma~\ref{S:ts-lemma} we get
\bes
|\eta_i^-(x)|+|\eta_i^+(x)|=|\eta_i^-(x)-\eta_i^+(x)|\le\co(1) \Delta t (|\eta_1^-(x)|+|\eta_2^-(x)|).
\ees
Combining~\eqref{S:ts-Abdnew} with the above, we arrive at
\ba\label{S:ts-I1new}
|\eta_i^+(x)|W_i^+(x) \cdot e^{\kappa_{\cg} \cg^+}- |\eta_i^-(x)|W_i^-(x)\cdot e^{\kappa_{\cg} \cg^-}&\le \co(1) 
([|\eta_i^+(x)| +|\eta_i^-(x)|)\nonumber\\
&\le \co(1) \Delta t(|\eta_1^-(x)|+|\eta_2^-(x)|)\;.
\ea
\noindent
{\bf Case (ii)} If $\eta_i(x)^- \eta_i(x)^+ > 0$, then we claim that
\be{S:ts-Abd2new}
\Delta \ca_{i,j}(x)=\ca_{i,j}^{+}(x)-\ca_{i,j}^{-}(x)\le \co(1)\Delta t ,\qquad j=1,\,2\;.
\ee
The proof of the above claim follows by a simpler argument of the one establishing (4.37) in~\cite[Section 4]{AG} and taking into account the presence of the factors $|p-1|$ in $\ca_{1,1}$. The argument here is slightly simpler since non-physical fronts do not appear in our approximate scheme. For the convenience of the reader, we present the argument. Let $\sR_{\alpha} ^{-}$ be a wave of either $u$ or $v$ at the point $x_{\alpha}$ before the time step $t_k$ and $\sR_{\alpha,j}^{+}$ a wave of the $j$-family at the same location $x_{\alpha}$ after the time step $t_k$. We call a newly generated wave $\sR_{\alpha,j}^{+}$ of the $j$- family at the point $x_{\alpha}$ if $\sR_{\alpha} ^{-}$ is not present in $\ca_{i,j}^{-}$, but $\sR_{\alpha,j}^{+}$ is present in $\ca_{i,j}^{+}$. Now let $\sR_{\alpha,j}^{+}$ be a wave front present in $\ca_{i,j}^+(x)$, then there are two possibilities:
\begin{enumerate}
\item[(a)] $\sR_{\alpha,j}^{+}$ is not a newly generated wave. Hence $\sR_{\alpha} ^{-}$ is present in $\ca_{i,j}^-(x)$ and therefore, 
\bes
|\sR_{\alpha,j}^{+}-\sR_{\alpha}^-|\le \co(1)\Delta t|\sR_{\alpha}^-|
\ees
by~\eqref{E:timestepest1},~\eqref{E:timestepest4}. Observe that both $\sR_{\alpha,j}^{+}$ and $\sR_{\alpha} ^{-}$ are waves of the same family. Furthermore, if $j=1$ then the $p$ components of the left state of both fronts, $\sR_{\alpha,1}^{+}$ and $\sR_{\alpha} ^{-}$, are equal using the update~\eqref{updated} at $t=t_k$. Hence,
\bes
|p_{\alpha,1}^{l,+}-1| |\sR_{\alpha,1}^{+}|-|p_{\alpha}^{l,-}-1| | \sR_{\alpha}^-|= |p_{\alpha}^{l,-}-1| ( |\sR_{\alpha,1}^{+}|-| \sR_{\alpha}^-| ) \le \co(1)\Delta t|\sR_{\alpha}^-|\;.
\ees
These are terms that may appear in $\Delta\ca_{1,1}$.
\item[(b)] $\sR_{\alpha,j}^{+}$ is a newly generated wave. So now,
\bes
|\sR_{\alpha,j}^{+}|=|\sR_{\alpha,j}^{+}-0|\le \co(1)\Delta t|\sR_{\alpha}^-|
\ees
by~\eqref{E:timestepest2},~\eqref{E:timestepest3}. If $j=1$, we also have $|p^{l,+}_\alpha-1| |\sR_{\alpha,1}^{+}|\le \co(1)\Delta t|\sR_{\alpha}^-|$ since the factor $|p-1|$ is uniformly bounded.
\end{enumerate}
Now bound~\eqref{S:ts-Abd2new} follows immediately by Cases (a)-(b) for both $i=1,2$ and it implies 
$e^{\delta_i}-1\le\delta_i \le\co(1)\Delta t$. Having~\eqref{S:ts-Abdnew},~\eqref{S:ts-Abd2new} and~\eqref{S:ts-eta12}, we get
\ba\label{S:ts-I2new}
|\eta_i^+(x)|W_i^+(x) \cdot e^{\kappa_{\cg} \cg^+}- |\eta_i^-(x)|W_i^-(x)\cdot e^{\kappa_{\cg} \cg^-} &\le\co(1)\left[|\eta_i^-(x)| {\delta_i}+|\eta_i^+(x)  -\eta_i^-(x)  | e^{\delta_i}\right]\nonumber\\
&\le\co(1) \,\Delta t (|\eta_1^-(x)|+|\eta_2^-(x)|)\;.
\ea
Combining~\eqref{S4.3term},\,\eqref{S:ts-I1new},~\eqref{S:ts-I2new} with~\eqref{S:ts-Phinew} , we arrive at
\ba
\Phi_0(u(t_k+),v(t_k+))-\Phi_0(u(t_k-),v(t_k-))&\le\co(1) \Delta t \int_{-\infty}^\infty (|\eta_1^-(x)|+|\eta_2^-(x)|) dx\notag\\
&\le\co(1) \Delta t \Phi_0(u(t_k-),v(t_k-))\;,
\ea
for sufficiently small $s=\Delta t$, where we use that $W_i\ge 1$. The proof is complete.
\end{proof}

%
%
\subsection{Stability of the functional $\Phi_z$ - Proof of Theorem~\ref{stability-Phy}-(i)}
\label{S5.4}

In order to complete the proof of Theorem~\ref{stability-Phy} it remains to establish the statement (i).
Let $u$, $v$ be two approximate solutions of the homogeneous system~\eqref{S1system-hom}
constructed as described in subsection~\ref{Ss:Rsolv-appsol}.
We shall extend here the estimate~\eqref{Phi0est2} 
on $\Phi_0(u,v)$
 to the general functional $\Phi_z(u,v)$, when $z\neq 0$ is
 an arbitrary piecewise constant function,
 that takes values in the compact set~\eqref{E:im-z},
 and satisfies~\eqref{z-bound-1} with $\sigma$ as in~\eqref{sigma-def}.
 Recall that the function $z$ affects the definition of $\Phi_z(u,v)$
 in~\eqref{E:vu},\,\eqref{UpsilonNew}, both through the value of the waves 
 $\eta_{i}$, $i=1,2$, that connect $u$ and $v+z$ via~\eqref{E:vu}, 
 and through the weights $W_i$, which depend on the sign of $\eta_i$.
 
 We first observe that, since $u, v$ are both approximate solutions to the homogeneous system~\eqref{S1system-hom},
the map $t\mapsto \Phi_\er(u(t),v(t))$ is continuous in $L^1$ except at times of interaction of the waves of $v$ or $u$.
Moreover, 
as in \S~\ref{Ss:interactionTimes},
one can verify that at interaction times for $u$ or $v$, the map $t\mapsto\Phi_\er(u(t),v(t))$ is decreasing. 
Indeed, this follows immediately by the observation that, at an interaction time $t=\tau$,
one has $\eta_i(x,\tau+)=\eta_i(x,\tau-)$ for all points $x$ where none of $u(\tau)$, $v(\tau)$
or $\er$ has a jump.
Hence, at such points~$x$ the value of $W_i(\tau\pm,x)$ depends only on the strengths and left states 
of the waves in $u$ and $v$ and is not affected by the presence of $\er$.
Therefore, employing the same analysis in \S~\ref{Ss:interactionTimes}, we deduce
%
\begin{equation}
\label{phi-est-z}
\Phi_\er(u(\tau{+}),v(\tau{+}))\le \Phi_\er(u(\tau{-}),v(\tau{-})).
\end{equation}

Next, away from interaction times of $v$ or $u$, 
relying also on~\eqref{unifbvbound},~\eqref{V-totvar-est},~\eqref{sigma-def}, we will prove that
\begin{equation}
\label{E:phier-est21}
\ddn{t}{}\Phi_\er(u(t),v(t))\le\co(1)\big(\eps V(u(t))+(\eps+\sigma) V(v(t))+\sigma\big)\le\co(1)(\eps+\sigma)\,.
\end{equation}
Integrating~\eqref{E:phier-est21} between two interaction times and relying on~\eqref{phi-est-z},
we thus derive the estimate~\eqref{Phi0est1}, proving Theorem~\ref{stability-Phy}-(i).

In order to establish the first estimate in~\eqref{E:phier-est21}, we write as in \S~\ref{Ss:nointeractiontimesN} the derivative
\begin{equation}
\label{E:phier-est21new}
\ddn{t}{}\Phi_\er(u(t),v(t))=\left( \sum_{\alpha\in \cj} \sum_{i=1}^{2} E_{\alpha,i}^\er\right) e^{\kappa_{\cg} (\cg(u)+\cg(v)) }
\end{equation}
where
\be{E:errorz}
E_{\alpha,i}^\er \doteq W_{i}^{\alpha,r}|\eta_{i}^{\alpha,r}|(\lambda_{i}^{\alpha,r}-\dot x_{\alpha})- W_{i}^{\alpha,\ell}|\eta_{i}^{\alpha,\ell}|(\lambda_{i}^{\alpha,\ell}-\dot x_{\alpha})
\qquad
\alpha\in \cj
\ .
\ee
Here $\cj=\cj(u)\cup\cj(v)\cup\cj(\er)$ and the other quantities $\eta_i^{\alpha,\ell/r}$, $W_i^{\alpha,\ell,r}$, $\lambda_i^{\alpha,\ell,r}$ are used as in \S~\ref{Ss:nointeractiontimesN} taking into account that here $u$ and $v+\er$ are connected via~\eqref{E:vu}.
The goal here is to prove that 
\begin{align}
\label{E:mainestz-z1} E_{\alpha,1}^\er+E_{\alpha,2}^\er & \leq \co(1) \varepsilon |\sC_{\alpha}|\,, \qquad \forall \alpha\in \cj(u)\\
\label{E:mainestz-z2} E_{\alpha,1}^\er+E_{\alpha,2}^\er & \leq \co(1) |z(x_\alpha+)-z(x_\alpha-) |\,, \qquad \forall \alpha\in \cj(\er)\\
\label{E:mainestz-z3} E_{\alpha,1}^\er+E_{\alpha,2}^\er & \leq \co(1) \left[\varepsilon+\sigma\right] |\sC_{\alpha}|\,, \qquad \forall \alpha\in \cj(v)
\end{align}
where $\sC_\alpha$ is the strength of the $\alpha$-wave at $x_\alpha$. Estimate~\eqref{E:mainestz-z1} follows immediately from the analysis in \S~\ref{Ss:nointeractiontimesN} exchanging $v$ with $v+\er$ in the process of proving~\eqref{E:mainest}. Hence, it remains to prove~\eqref{E:mainestz-z2} and~\eqref{E:mainestz-z3}. 

Given $\alpha\in \cj(\er)$, we have for $i=1,2$
\be{S5.4 etadiffz} |\eta_i^{\alpha,r}-\eta_i^{\alpha,\ell} |\le\co(1) |z(x_\alpha+)-z(x_\alpha-) |,\qquad | \eta_i^{\alpha,r}\lambda_i^{\alpha,r}-\eta_i^{\alpha,\ell} \lambda_i^{\alpha,\ell} |\le\co(1) |z(x_\alpha+)-z(x_\alpha-) |
\ee
by Lipschitz continuity. Now, if $\eta_i^{\alpha,\ell}\,\eta_i^{\alpha,r}\le 0$, then 
\be{S5.4 etadiffz2} E_{\alpha,i}^\er\le\co(1)\left( |\eta_i^{\alpha,r}|+|\eta_i^{\alpha,\ell} |\right)=\co(1)|\eta_i^{\alpha,r}-\eta_i^{\alpha,\ell} | \le\co(1) |z(x_\alpha+)-z(x_\alpha-) |
\ee
from~\eqref{E:errorz} and~\eqref{S5.4 etadiffz}. On the other hand, if $\eta_i^{\alpha,\ell}\,\eta_i^{\alpha,r}\ge 0$, then using that $W_i^{\alpha, r}=W_i^{\alpha,\ell}$ when $\alpha\in \cj(\er)$, we write
\begin{align}\label{S5.4 etadiffz3}
E_{\alpha,i}^\er&= W_{i}^{\alpha,r}\left[ ( |\eta_i^{\alpha,\ell} |-|\eta_i^{\alpha,r} |) \dot x_{\alpha} + |\eta_i^{\alpha,r} | \lambda_{i}^{\alpha,r}- |\eta_i^{\alpha,\ell}| \lambda_{i}^{\alpha,\ell}\right]\nonumber\\
&\le\co(1)\left[ \hat{\lambda}|\eta_i^{\alpha,r}-\eta_i^{\alpha,\ell} |+ | \eta_i^{\alpha,r}\lambda_i^{\alpha,r}-\eta_i^{\alpha,\ell} \lambda_i^{\alpha,\ell} |\right]\nonumber\\
& \le\co(1) |z(x_\alpha+)-z(x_\alpha-) |
\end{align}
from~\eqref{S5.4 etadiffz} again. Thus, the error estimate~\eqref{E:mainestz-z2} follows from~\eqref{S5.4 etadiffz2} and~\eqref{S5.4 etadiffz3}.

Last, let $\alpha\in \cj(v)$. Using notation similar to \S~\ref{Ss:nointeractiontimesN}, we consider a jump in $v$ at $x_\alpha$ of the family $k_\alpha$ connecting $v^{\alpha,\ell}=v(x_\alpha-)$ with $v^{\alpha,r}=v(x_\alpha+)$ and which is of strength $\sC_\alpha$. As before, we assume that this jump is along a shock and denote the quantities
\ba
&v^{\alpha,r}=\SC{k_{\alpha}}{\sC_{\alpha}}{v^{\alpha,\ell}} \ , 
&& \dot x_{\alpha}=\lambda_{k_{\alpha}}\left(v^{\alpha,\ell},v^{\alpha,r}\right) \ ,
\\
&\omega^{\alpha,\ell / r} =\SC{1}{\eta_{1}^{\alpha,\ell / r}}{u } \ ,
&&v^{\alpha,\ell / r}+z =\SC{2}{\eta_{2}^{\alpha,\ell / r}}{\omega^{\alpha,\ell / r} } \ ,
\\
&\lambda_{1}^{\alpha,\ell / r}=\lambda_{1}\left(u,\omega^{\alpha,\ell / r}\right) \ ,
 &&\lambda_{2}^{\alpha,\ell / r}=\lambda_{2}\left(\omega^{\alpha,\ell / r}, v^{\alpha,\ell / r}+z\right) \ .
\ea
and in addition to those, we also define
\ba
&\tilde{v}:=\SC{k_{\alpha}}{\sC_{\alpha}}{v^{\alpha,\ell}+z} \ , 
&& \tilde{s}:=\lambda_{k_{\alpha}}\left(v^{\alpha,\ell}+z,\tilde{v} \right) \ .
\ea
The definition of the intermediate state $\tilde{v}$ prompts the connection of $u$ with $\tilde{v}$ via the waves $\tilde{\eta}_i$, i.e.
$$
\tilde{v}(t,x)=\SC{2}{\tilde{\eta}_{2}(t,x)}{\cdot}\circ{ \SC{1}{\tilde{\eta}_{1}(t,x)}{u(t,x)}} \ ,
$$
and hence, we obtain the intermediate quantities, that are the speeds $\tilde{\lambda}_i$, the weights $\widetilde{W}_i$ and the corresponding errors
\be{E:errorztil}
\widetilde{E}_{\alpha,i}^z=\widetilde{W}_i|\tilde{\eta}_i|(\tilde{\lambda}_i-\tilde{s})-W_{\alpha,i}^{\ell}|\eta_i^{\alpha,\ell}|(\lambda_i^{\alpha,\ell}-\tilde{s})\;.
\ee
By exchanging the roles of $(v^{\alpha,\ell}, v^{\alpha,r}, \dot{x}_\alpha, \eta^{\alpha,r}_i,\lambda_i^{\alpha,r},W_i^{\alpha,r})$ with $(v^{\alpha,\ell}+z, \tilde{v}, \tilde{s}, \tilde{\eta}_i,\tilde{\lambda}_i,\widetilde{W}_i)$ in \S~\ref{Ss:nointeractiontimesN} and in particular at~\eqref{E:mainest}, we immediately deduce that
\be{Ez:mainest}
 |\widetilde{E}_{\alpha,i}^z|\le\co(1)|\sC_\alpha|\eps.
\ee
Next, we note that if $\er=0$, these intermediate quantities become
\be{Ez:mainesv1}
\tilde{v}=v^{\alpha,r},\quad \tilde{\eta}_i=\eta_i^{\alpha,r},\quad \tilde{\lambda}_i=\lambda_i^{\alpha,r},\quad \tilde{s}=\dot x_\alpha\;.
\ee
On the other hand, when $\sC_\alpha=0$, we have
\be{Ez:mainesv2}
v^{\alpha,r}=v^{\alpha,\ell},\quad \tilde{\eta}_i=\eta_i^{\alpha,r}=\eta_i^{\alpha,\ell},\quad \tilde{\lambda}_i=\lambda_i^{\alpha,r}=\lambda_i^{\alpha,\ell}\;.
\ee
Combining the expressions~\eqref{E:errorz} and~\eqref{E:errorztil} together with the values~\eqref{Ez:mainesv1}--\eqref{Ez:mainesv2}, we arrive at
\be{Ez:mainest2}
|\tilde{E}_{\alpha,i}^z-E_{\alpha,i}^z|\le\co(1)|\sC_\alpha|\cdot[\|\er\|_\infty+\eps]
\ee
using the same strategy as in~\cite[p.1002]{AG}. Just note that the term of $\er$ here is denoted by $\omega$ in~\cite{AG}. By~\eqref{Ez:mainest} and~\eqref{Ez:mainest2} and since $\|\er\|_\infty\le\sigma$ , we obtain~\eqref{E:mainestz-z3}.

Having now bounds~\eqref{E:mainestz-z1}--\eqref{E:mainestz-z3} that hold true
at times that no interaction in $u$ and $v$ occurs and combining them with~\eqref{E:phier-est21new}, we arrive at~\eqref{E:phier-est21} immediately.  As already mentioned, due to the non-increase of $\Phi_z(u,v)$ at interaction times, see~\eqref{phi-est-z}, we obtain~\eqref{Phi0est1} by integrating~\eqref{E:phier-est21}  over $[t_1,t_2]$. The proof of Theorem~\ref{stability-Phy}-(i) is now complete.
\qed

\begin{remark}
As $\eps\to0+$ in~\eqref{Phi0est1}, combining with the equivalence relation~\eqref{S4-PhiL1} we deduce
\be{}
\|\cs_{t_2}\bar u-\cs_{t_2}\bar v-z(t_2)\|_{L^1}\le 2C_0^2\, W^* \| \cs_{t_1}\bar u-\cs_{t_1}\bar v-z(t_1)\|_{L^1}+\co(1)(t_2-t_1)\sigma\;,
\ee
for $0<t_1<t_2$.
\end{remark}

%
%
\section{A Lipschitz continuous evolution operator
- Proof of Theorem~\ref{exist-nonhom-smgr-1}}\label{S6}

In this section, we conclude the proof of Theorem~\ref{exist-nonhom-smgr-1}. First, we consider the limit $u(t)=\cp_t\bar u$ of the approximate flow $\cp^s_t$ and prove that this limit is unique independent of the subsequence by using a uniqueness result on quasi differential equations in metric spaces. 

First, by Theorem~\ref{T:AS} of Amadori and Shen, we have that:

\begin{proposition}\label{S6Prop1}
For the constants of Theorem~\ref{ThmPS5.4}, given $\bar u\in\mathcal{D}_0$, then there exists a subsequence $\{s_m\}$, $m\in\mathbb{N}$ such that the functions $\cp^{s_m}_t \bar u \in \mathcal{D}_0^*$ converge in $L^1(\R)$ as $m\to\infty$ for any $t>0$ to an entropic weak solutions $u(t,\cdot)=(h(t,\cdot),p(t,\cdot))$ of the system~\eqref{S1system} with data $\bar u=(\bar h,\bar p)$ and the map $t\mapsto u(t,\cdot)$ is Lipschitz continuous in the $L^1$ norm.
\end{proposition}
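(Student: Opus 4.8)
\textbf{Proof plan for Proposition~\ref{S6Prop1}.}
The plan is to extract a convergent subsequence from the family $\{\cp^s_t\bar u\}_{s>0}$ by a diagonal/Helly-type argument, and then to identify the limit as an entropy weak solution using the structure of the operator-splitting scheme. First I would fix $\bar u\in\mathcal D_0$ and recall that, by Theorem~\ref{ThmPS5.4}, each $\cp^s_\tau\bar u$ lies in the fixed domain $\mathcal D^*_0$; hence, by the definition~\eqref{Domain-def1} of $\mathcal D^*_0$, the functions $\cp^s_\tau\bar u$ have total variation bounded by $M^*_0$, take values in the compact set $K$ of~\eqref{E:reeeeeeeterete}, and are uniformly bounded in ${\bf L}^1$, all uniformly in $s$ and $\tau$. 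Next I would establish uniform ${\bf L}^1$-Lipschitz continuity in time of $\tau\mapsto\cp^s_\tau\bar u$: on each interval $(t_{k-1},t_k)$ the map is a front-tracking solution of the homogeneous system, hence Lipschitz in time with constant controlled by the (uniformly bounded) total variation times the maximal characteristic speed on $K$; across each time step $t_k$ the jump is, by the update~\eqref{updated}, of size $\Delta t\cdot\|(p-1)h\|_{{\bf L}^1}=\co(1)\,\Delta t$. Summing these contributions gives $\|\cp^s_{\tau_2}\bar u-\cp^s_{\tau_1}\bar u\|_{{\bf L}^1}\le L'|\tau_2-\tau_1|+\co(1)s$, which is property (iii) of Theorem~\ref{ThmPS5.4}; as $s\to0$ the error term vanishes, so the modulus of continuity is asymptotically uniform.

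With these bounds in hand, I would apply a generalized Helly/Ascoli argument (as in~\cite[Section~7]{Bre2} or~\cite{AS}): fix a sequence $s_m\to0$; for each fixed rational $\tau>0$ the family $\{\cp^{s_m}_\tau\bar u\}_m$ has uniformly bounded total variation and is tight (uniformly bounded in ${\bf L}^1$ with values in a compact set), so by Helly's theorem it has a subsequence converging in ${\bf L}^1_{loc}$; a diagonal extraction over all rational $\tau$ produces a single subsequence, still denoted $\{s_m\}$, such that $\cp^{s_m}_\tau\bar u\to u(\tau,\cdot)$ in ${\bf L}^1_{loc}$ for every rational $\tau$, and the uniform-in-$\tau$ Lipschitz bound in time extends the convergence to all $\tau>0$ and upgrades it to ${\bf L}^1$ (not merely ${\bf L}^1_{loc}$) because of the uniform tightness. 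The limit $u(\tau,\cdot)$ inherits $u(\tau,\cdot)\in\mathcal D^*_0$ (the constraints defining $\mathcal D^*_0$ in~\eqref{Domain-def1} are closed under ${\bf L}^1$-convergence), and $\tau\mapsto u(\tau,\cdot)$ is ${\bf L}^1$-Lipschitz with the same constant $L'$, since a uniform Lipschitz bound passes to the limit.

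It then remains to check that $u$ is an entropy weak solution of~\eqref{S1system} with data $\bar u$. This is the step I expect to require the most care, though it is essentially the consistency argument already carried out in~\cite{AS}: one shows that the approximate solutions $\cp^{s_m}_\tau\bar u$ are \emph{approximate} entropy solutions in the sense that, for every test function $\varphi\in C^1_c$ and every Lax entropy pair, the defect in the weak formulation is bounded by a quantity that tends to $0$ as $m\to\infty$. This defect splits into three contributions: (a) the error from front-tracking of the homogeneous system on each slab $(t_{k-1},t_k)$, controlled by the parameter $\varepsilon_m$ (rarefaction front sizes, speed errors, non-physical waves absent here by the~\cite{BD} scheme), which one arranges to go to $0$ by choosing $\varepsilon_m\to0$; (b) the error from replacing the true source dynamics by the piecewise-in-time update~\eqref{updated}, which is a standard Dafermos--Hsiao time-splitting estimate of order $\co(1)\,s_m$ on each step, summing to $\co(1)\,s_m\cdot(\text{final time})\to0$; and (c) the initial-data error $\|\cp^{s_m}_0\bar u-\bar u\|_{{\bf L}^1}$, which one takes to be $0$ or $\to0$ by construction. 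Passing to the limit along $\{s_m\}$, using the established ${\bf L}^1$-convergence together with the uniform BV bounds to pass limits through the (locally Lipschitz) flux $F$ and entropy fluxes, yields that $u$ satisfies the weak formulation and the Lax entropy inequalities, i.e. $u$ is an entropy weak solution with data $\bar u$. The main obstacle is bookkeeping the entropy defect across the time steps and confirming that the particular (non-dissipative, resonant) source term is nonetheless handled by the splitting estimate; but this is exactly what~\cite{AS} supplies, so I would invoke it rather than redo it.
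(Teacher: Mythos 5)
Your proposal is correct and follows essentially the same route as the paper, which gives no detailed proof of Proposition~\ref{S6Prop1} but simply invokes the Amadori--Shen compactness-and-consistency result (Theorem~\ref{T:AS} and the discussion in Subsection~\ref{Ss:Rsolv-appsol}) together with the uniform bounds and the approximate time-Lipschitz property of Theorem~\ref{ThmPS5.4}; your Helly/diagonal extraction plus splitting-consistency argument is precisely what that citation packages. The only point deserving a word of care is the upgrade from ${\bf L}^1_{loc}$ to ${\bf L}^1$ convergence, which requires tightness via finite propagation speed rather than the ${\bf L}^1$ bound alone, but this is a standard refinement (and the paper itself glosses over it).
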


Next we aim to prove the uniqueness of the solution $u(t,x)=(h(t,x),p(t,x))$. This result provides the fact that the whole sequence $\{\cp^s_t\}_s$ converges to $u(t,\cdot)$ as $s=\Delta t\to0+$. To establish uniqueness we follow a similar line to~\cite{AG}: We apply the uniqueness result on quasi differential equations in metric spaces established in~\cite{Bre2} in the case of the entropic weak solution $u$ of Proposition~\ref{S6Prop1}. For the convenience of the reader we state this uniqueness result:

\begin{theorem}\label{S6ThmBre}
Suppose that given a quasi-differential equation
\be{S6ThmBreeq}
\frac{du(t)}{dt}={\bf v}(u(t))
\ee
there exists a Lipschitz semigroup $\cp^*:E_0\times[0,\infty)\to E_0^*$, where $(E_0,d_0)$ and $(E_0^*,d_0^*)$ are two metric spaces, which enjoys the following properties $\forall \,\bar u,\bar v\in E_0$, and $\forall\, t_1,t_2\ge 0$:
\begin{enumerate}
\item[(a)] $\cp_0^*\bar u=\bar u$, $\cp^*_{t_1}\cp_{t_2}^*\bar u=\cp_{t_1+t_2}^* \bar u$, \qquad $\forall \,\bar u,\bar v\in E_0$,
\item[(b)] $\exists L>0$ such that $d_0^*(\cp_{t_1}^*\bar u,\cp_{t_2}^*\bar v)\le L\left( |t_1-t_2|+d_0(\bar u,\bar v) \right)$, 
\item[(c)] every trajectory $t\mapsto \cp_t^*\bar u$ provides a solution to the generalized Cauchy problem~\eqref{S6ThmBreeq} with initial data $u(0)=\bar u$.
\end{enumerate}
Then for every initial data $\bar u\in E_0$, the Cauchy problem~\eqref{S6ThmBreeq} with initial data $u(0)=\bar u$ has the unique solution $u(t)=\cp_t^*\bar u$.
\end{theorem}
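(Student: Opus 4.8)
The statement is the classical uniqueness principle for quasi-differential equations in metric spaces, and the plan is to reproduce the standard error-estimate (Gronwall-type) argument of Bressan (cf.~\cite[Section~7.4]{Bre2}). I would begin by making explicit what is meant by a \emph{solution} of~\eqref{S6ThmBreeq}: a Lipschitz continuous map $w\colon[0,\tau]\to E_0^*$ with $w(0)=\bar u$ such that $w(t)\in E_0$ for a.e.\ $t$ and
\[
\lim_{h\to0+}\frac{1}{h}\,d_0^*\bigl(w(t+h),\,\cp^*_h(w(t))\bigr)=0\qquad\text{for a.e.\ }t\in[0,\tau],
\]
this tangency property being exactly the content of hypothesis (c). Since the assertion is additive in time, it suffices to fix an arbitrary $\tau>0$ and an arbitrary such solution $w$ and to prove $w(\tau)=\cp^*_\tau\bar u$; letting $\tau$ range over $[0,+\infty)$ then yields $w(t)=\cp^*_t\bar u$ everywhere, and in particular uniqueness.

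The central device is the comparison function
\[
\Phi(s)\doteq d_0^*\bigl(\cp^*_{\tau-s}(w(s)),\ \cp^*_\tau(\bar u)\bigr),\qquad s\in[0,\tau],
\]
which gauges, at the intermediate time $s$, the distance between the semigroup trajectory issuing from $w(s)$ and the one issuing from $\bar u$. By the semigroup identity (a), $\cp^*_\tau(\bar u)=\cp^*_{\tau-s}\bigl(\cp^*_s(\bar u)\bigr)$, whence $\Phi(0)=d_0^*\bigl(\cp^*_\tau(\bar u),\cp^*_\tau(\bar u)\bigr)=0$, so the goal reduces to $\Phi(\tau)=0$. As a routine first step I would verify that $\Phi$ is Lipschitz on $[0,\tau]$: for $s<s'$ one bounds $|\Phi(s')-\Phi(s)|$ by $d_0^*\bigl(\cp^*_{\tau-s'}(w(s')),\cp^*_{\tau-s}(w(s))\bigr)$ and splits this, via the triangle inequality, into a term controlled by $L\,d_0(w(s'),w(s))\le L\,\mathrm{Lip}(w)\,|s'-s|$ and one controlled by $L\,|s'-s|$, both through hypothesis (b) (applied with equal data in one case, with equal times in the other). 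In particular $\Phi$ is differentiable at a.e.\ point.

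The heart of the argument is the one-sided bound $\Phi'(s)\le 0$ for a.e.\ $s$. At a point $s$ where $\Phi'(s)$ exists and the tangency limit holds, I would insert the state $\cp^*_h(w(s))$: the triangle inequality together with (a) gives, for $h>0$ small,
\[
\Phi(s+h)\ \le\ d_0^*\bigl(\cp^*_{\tau-s-h}(w(s+h)),\,\cp^*_{\tau-s-h}(\cp^*_h(w(s)))\bigr)\ +\ \Phi(s)\ \le\ L\,d_0^*\bigl(w(s+h),\cp^*_h(w(s))\bigr)\ +\ \Phi(s),
\]
the last inequality being hypothesis (b) with the common time $\tau-s-h$. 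Dividing by $h$ and letting $h\to0+$, the tangency condition kills the first term, so $\Phi'(s)\le 0$. Being Lipschitz with a.e.\ non-positive derivative, $\Phi$ is non-increasing, hence $0\le\Phi(\tau)\le\Phi(0)=0$; thus $w(\tau)=\cp^*_\tau\bar u$, as claimed.

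The main obstacle I anticipate is not analytic but a matter of tracking the two metric spaces consistently: the values $w(s)$ a priori lie only in $E_0^*$, whereas $\cp^*$ and the estimate (b) are stated for data in $E_0$. To make the argument rigorous one must therefore carry the requirement ``$w(s)\in E_0$ for a.e.\ $s$'' in the notion of solution (as recalled above) and dispose of a coherent sense in which (b) still applies to such arguments; in the concrete use of Section~\ref{S6} this is supplied by the nesting $\mathcal{D}_0\subset\mathcal{D}_0^*$ and by Proposition~\ref{S6Prop1}, which furnishes a solution taking values in $\mathcal{D}_0^*$ to which the previously established estimates remain applicable. Beyond this bookkeeping the result is a soft metric-space statement: no genuine nonlinearity, dissipativity, nor smallness of the data enters at all.
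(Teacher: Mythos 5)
The paper does not prove this statement at all: it is imported verbatim as a known result on quasi-differential equations in metric spaces, cited to~\cite{Bre2}, and the text explicitly says it is restated only ``for the convenience of the reader.'' So there is no in-paper proof to compare against. That said, your argument is the standard one underlying Bressan's error formula and it is correct: the comparison function $\Phi(s)=d_0^*\bigl(\cp^*_{\tau-s}(w(s)),\cp^*_\tau\bar u\bigr)$, the semigroup identity to get $\Phi(0)=0$, the Lipschitz bound on $\Phi$, and the insertion of $\cp^*_h(w(s))$ together with the tangency condition to force $\Phi'\le 0$ a.e.\ are exactly the ingredients of the proof in~\cite{Bre2}. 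Your closing caveat about the two metric spaces is the right one to flag, and in the paper's application it is harmless since $d_0=d_0^*$ is the ${\bf L}^1$ distance and the solution produced by Proposition~\ref{S6Prop1} takes values in $\mathcal{D}_0^*$ with the relevant estimates still applicable; one should also make sure the a.e.\ tangency is measured in a metric compatible with the one appearing in hypothesis (b), which again is automatic here.
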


We apply the above uniqueness result for $E_0=\mathcal{D}_0$, $E_1=\mathcal{D}_0^*$ and both metrics $d_0=d_0^*$ to be the $ L^{1}$ norm. Moreover, we consider ${\bf v} (u)$, to be the generalized tangent vector of the curve 
\be{S6ThmBreeqv}
\gamma(\theta)=\cs_\theta u+\theta g(u),\qquad \theta\ge 0,
\ee
with $\cs$ the semigroup accosiated with the homogeneous system~\eqref{S1system-hom}. The aim now is to prove that the solution $u(t,\cdot)$ obtained as limit of $\{\cp^{s_m}_t\bar u\}_m$ is also a solution to~\eqref{S6ThmBreeq}--\eqref{S6ThmBreeqv}, which admits a Lipschitz flow of solutions. As already mentioned, this is the strategy in~\cite{AG} for general systems of balance laws of small total variation that we also adopt here. However, we cannot quote the analysis in~\cite[\S6]{AG} since our stability functional and the metric spaces are different.

\begin{theorem}\label{S6Thm1}
The curve $t\mapsto u(t,\cdot)$, where $u(t,\cdot)$ is the entropy weak solution obtained in Proposition~\ref{S6Prop1}, satisfies the generalized differential equation~\eqref{S6ThmBreeq}--\eqref{S6ThmBreeqv} with initial data $u(0)=\bar u\in\mathcal{D}_0$ in the metric space $L^1(\R,\R^2)$ and there exists $\theta_0\in(0,1)$ so that it holds
\be{}
\|u(t+\theta)-\cs_\theta u(t)-\theta g(u(t))\|_{L^1}\le \co(1)\theta^2,\qquad 0<\theta<\theta_0\;,
\ee
where $\cs$ is the semigroup of the homogeneous system~\eqref{S1system-hom} and $g$ the source term of the inhomogeneous system~\eqref{S1system}. Moreover, the generalized differential equation~\eqref{S6ThmBreeq}--\eqref{S6ThmBreeqv} admits a Lipschitz semigroup of solutions.
\end{theorem}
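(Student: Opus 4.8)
The goal is twofold: first, to show that the limit solution $u(t,\cdot)=\mathcal{P}_t\overline u$ is a trajectory of the quasidifferential equation~\eqref{S6ThmBreeq}--\eqref{S6ThmBreeqv}, with the second-order tangency estimate $\|u(t+\theta)-\cs_\theta u(t)-\theta g(u(t))\|_{L^1}\le \co(1)\theta^2$; and second, to exhibit a Lipschitz semigroup of solutions for that equation so that the uniqueness theorem (Theorem~\ref{S6ThmBre}) can be applied. I would carry out the first part by comparing, on a short time interval $[t,t+\theta]$, the nonhomogeneous approximate flow $\cp^{s}_\theta$ issuing from $u(t,\cdot)$ with the curve $\gamma(\theta)=\cs_\theta u(t)+\theta g(u(t))$. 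The natural bridge is the estimate~\eqref{uniq-semigr-est-1}, which compares one step of the operator-splitting flow with the homogeneous semigroup plus the linear source correction $\theta\cdot((\overline p-1)\overline h,0)=\theta\,g$. Concretely, I would fix $\theta$, subdivide $[t,t+\theta]$ into $N=\theta/s$ operator-splitting steps, telescope the difference $\cp^s_\theta u(t) - \cs_\theta u(t) - \theta g(u(t))$ over these steps, and at each step pay an error of order $\co(1)s^2$ from~\eqref{uniq-semigr-est-1} (with $\theta$ replaced by $s$) plus an error of order $\co(1)s\cdot s$ coming from the fact that $g$ is evaluated at the current state rather than at $u(t)$ (Lipschitz continuity of $g$ together with the $\co(1)s$ drift of the state over each step, and the uniform BV/$L^\infty$ bounds from Theorem~\ref{T:AS}). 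Summing $N$ such errors gives $\co(1)\,N s^2 = \co(1)\,\theta s$; letting $s\to 0$ along the convergent subsequence of Proposition~\ref{S6Prop1} and using the $L^1$-Lipschitz continuity in time of all the curves involved, I obtain $\|u(t+\theta)-\cs_\theta u(t)-\theta g(u(t))\|_{L^1}\le \co(1)\theta^2$. This is exactly the statement that $t\mapsto u(t,\cdot)$ solves~\eqref{S6ThmBreeq} in the sense of generalized tangent vectors, since $\mathbf v(u)$ is by definition the generalized tangent vector to $\gamma$.

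\textbf{Construction of the Lipschitz semigroup.} For the last assertion of the theorem I would argue that the family $\{\cp^s\}_{s>0}$ from Theorem~\ref{ThmPS5.4} is an "approximate semigroup" in the sense of~\cite{Bre2}: by Theorem~\ref{ThmPS5.4}(ii)--(iv) it satisfies the near-semigroup property up to $\co(1)s$, is $L^1$-Lipschitz in time up to $\co(1)s$, and is uniformly $L^1$-Lipschitz in the initial datum with the exponentially growing constant $L'e^{C_4\tau}$. Passing to the limit $s\to 0$ (along the subsequence, for the moment) these properties become exact: $\cP_0=\mathrm{Id}$, $\cP_{t_1}\cP_{t_2}=\cP_{t_1+t_2}$, and $\|\cP_{\tau_1}\overline u-\cP_{\tau_2}\overline v\|_{L^1}\le L'(e^{C_4\tau_2}\|\overline u-\overline v\|_{L^1}+|\tau_2-\tau_1|)$, which are precisely hypotheses (a)--(b) of Theorem~\ref{S6ThmBre} (with $E_0=\mathcal D_0$, $E_0^*=\mathcal D_0^*$, and $d_0=d_0^*$ the $L^1$ metric; the mild non-homogeneity in the Lipschitz constant $e^{C_4\tau}$ is harmless for the uniqueness argument, which is local in time). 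Hypothesis (c) — that every trajectory $\tau\mapsto\cP_\tau\overline u$ solves~\eqref{S6ThmBreeq} — is exactly the tangency estimate just proved, now applied at every base point along the trajectory. Theorem~\ref{S6ThmBre} then yields that the solution of~\eqref{S6ThmBreeq}--\eqref{S6ThmBreeqv} with datum $\overline u$ is unique and equals $\cP_\tau\overline u$; in particular this forces the full family $\{\cp^{s}_\tau\overline u\}_{s>0}$ (not merely a subsequence) to converge as $s\to 0$, and the limit $\cP$ is the desired Lipschitz semigroup of solutions.

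\textbf{Main obstacle.} The technical heart is the per-step comparison with $\gamma$, i.e.\ controlling $\cp^s_\theta u(t) - \gamma(\theta)$ by telescoping. Two subtleties must be handled with care. First, the building-block estimate~\eqref{uniq-semigr-est-1} compares $\cp_\sigma$ with $\cs_\sigma$ plus the \emph{linearized} source $\sigma g$, but in the splitting scheme the update~\eqref{updated} at each step is itself the exact (not linearized) source increment; reconciling these — showing the discrepancy is $\co(1)s^2$ — uses that $g$ is $C^1$ on the compact set $K$ of Theorem~\ref{T:AS} and that $h$ stays of size $\co(1)\delta_0^*$. Second, the telescoping must absorb the accumulated $L^1$-distance between intermediate states of the two flows; this is where Theorem~\ref{ThmPS5.4}(iv) — the exponential $L^1$-Lipschitz dependence — enters, converting a pointwise-in-step error of order $s^2$ into a total error of order $\theta s\, e^{C_4\theta}=\co(1)\theta s$ over the interval $[t,t+\theta]$. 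A clean way to organize both points is to follow verbatim the scheme of~\cite[\S6]{AG} (Proposition 3.2 / Proposition 4.1 there), replacing their stability functional by $\Phi_0$ and their weighted-distance estimates by~\eqref{Phi0est2}--\eqref{Phi0est3} and~\eqref{S4-PhiL1}; since we have already reproduced all the ingredients (interaction estimates, time-step estimates, and the equivalence of $\Phi_0$ with the $L^1$-distance), the argument of~\cite{AG} transfers with only notational changes, which is why I expect the writeup to be short once the bookkeeping is set up.
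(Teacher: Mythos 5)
Your overall route coincides with the paper's: compare $\cp^s_\theta u(t)$ with $\gamma(\theta)=\cs_\theta u(t)+\theta g(u(t))$ by an induction over the operator-splitting steps, pass to the limit, and then feed the resulting tangency estimate together with the properties of Theorem~\ref{ThmPS5.4} into the metric-space uniqueness result. However, the implementation you propose has a genuine gap in the choice of the comparison tool. You suggest telescoping the ${\bf L}^1$-difference directly, using $\Phi_0$ and the estimates \eqref{Phi0est2}--\eqref{Phi0est3}. Those estimates compare two approximate solutions of the \emph{same} system; here one must compare the nonhomogeneous flow with the homogeneous flow \emph{shifted by the linear source correction} $ks\,g$. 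The quantity that has to be propagated from step $k-1$ to step $k$ is $\|\cs_s(A)-\cs_s(B)-z\|_{{\bf L}^1}$ with $z=(k-1)s\,g$, and the only available bound for this in terms of $\|A-B-z\|_{{\bf L}^1}$ carries a fixed multiplicative constant $2C_0^2W^*>1$ coming from the equivalence \eqref{S4-PhiL1}; iterating it over $N=\theta/s\to\infty$ steps destroys the estimate. This is precisely why the paper runs the induction on the functional itself, setting $\varphi_k^{\pm}=\Phi_{ks\,g(\omega)}(u_\eps,v_\eps)$ with the \emph{nonzero} shift $z=ks\,g(\omega)$ and invoking Theorem~\ref{stability-Phy}-(i) (estimate \eqref{Phi0est1} with $\sigma>0$): at the level of $\Phi_z$ the per-step growth is additive, $\varphi_k^-\le\varphi_{k-1}^++\co(1)s[\eps+\theta]$, and the equivalence with ${\bf L}^1$ is paid only once at the end. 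Your plan never uses $\Phi_z$ with $z\ne0$, yet this is the ingredient the whole construction of $\Phi_z$ in \S~\ref{Ss:Lyapfunct} was designed to supply.

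Two further points. First, you cite \eqref{uniq-semigr-est-1} as the per-step building block, but \eqref{uniq-semigr-est-1} \emph{is} the estimate being proved here; the one-step version you actually need is not that estimate but the trivial identity $\cp^s_s\overline u=\cs_s\overline u+s\,g(\cs_s\overline u)$ from \eqref{app-nonhom-flow-def} combined with the Lipschitz continuity of $g$, so the circularity is repairable but should be made explicit. Second, your error accounting is too optimistic: the discrepancy from evaluating $g$ at the current state rather than at $u(t)$ is $\co(1)s\cdot\theta$ per step (the state drifts by $\co(1)\theta$ over the window, not by $\co(1)s$), so the total is $\co(1)\theta^2$ rather than $\co(1)\theta s$; this is still sufficient for the theorem, but a bound of order $\theta s$ would wrongly imply exact first-order agreement in the limit $s\to0$. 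The final part of your argument (passing the semigroup properties of $\cp^s$ to the limit and applying Theorem~\ref{S6ThmBre} to conclude uniqueness and convergence of the full family) matches the paper.
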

\begin{proof} Let $\bar u\in\mathcal{D}_0$ and consider $u(t)\in\mathcal{D}_0^*$ the limit of $\{\cp^{s_m}_t\bar u\}$ for $t>0$ as $m\to\infty$ that is obtained in Proposition~\ref{S6Prop1}. By Theorem~\ref{ThmPS5.4}, note that $\cp_{t'}^{s_m}(\cp_t^{s_m}\bar u)\in \mathcal{D}_0^*$ $\forall \, t,t'>0$.
We first claim that there exists $\theta_0\in(0,1)$ such that for all $\theta\le \theta_0$ and $s>0$ satisfying $s<\theta^2$, it follows
\be{S6:claim1}
\|\cp_\theta^s u-\cs_\theta u-\theta g(u)\|_{L^1}\le \co(1)\theta^2
\ee
 for all $u\in\mathcal{D}_0^*$. To prove this, consider the approximate solution $u_\eps(\tau)$ to $P^s_\tau u$ of the non-homogeneous system~\eqref{S1system} and the approximate solution $v_\eps(\tau)$ to $\cs_\tau u$ of the homogeneous system~\eqref{S1system-hom} starting out from same initial data $ u$ and for a time interval of length $\theta>0$.					 Let $v_\eps(0,x)=u_\eps(0,x)$ and take $\omega(\tau,x):=u_\eps(0,x)$. Note that $u_\eps(\tau)$ is discontinuous in $L^1$ at the time steps $t_k=k s$, while $v_\eps(\tau)$ is continuous. On the other hand, $\omega$ is independent of time $\tau$ and discontinuous along vertical lines. In what follows, three limits are taken in the following order: first $\eps\to0+$ ,next $s\to0$ and last $\theta\to0+$. 
 To start with, fix $\theta_0$ to be determined, take $\theta<\theta_0$, choose $k_0$ so that $k_0s\le\theta<(k_0+1)s$ and consider $u_\eps$, $v_\eps$ and $\omega$ for $(\tau,x)\in[0,\theta]\times\mathbb{R}$. Also, define the quantities
 \be{}
 \varphi_k^+=\Phi_{ks g(\omega)}(u_\eps(ks),v_\eps(ks))
 \ee
 \be{}
 \varphi_k^-=\Phi_{(k-1)s g(\omega)}(u_\eps(ks-),v_\eps(ks))
 \ee
 for $k=1,\dots,k_0$. The aim is to estimate $\varphi_{k_0}^+$, and hence, we estimate the differences $\varphi_k^+-\varphi_k^-$ and $\varphi_k^--\varphi_{k-1}^+$ for $k=1,\dots, k_0$ and proceed by induction. First, we consider
 \be{}
 \varphi_k^--\varphi_{k-1}^+=\Phi_{(k-1)s g(\omega)}(u_\eps(ks-),v_\eps(ks))- \Phi_{(k-1)s g(\omega)}(u_\eps((k-1)s),v_\eps((k-1)s))
 \ee
and observe that the functions $u_\eps$, $v_\eps$, $(k-1) sg(\omega)$ within the time strip $(t_{k-1},t_{k})$ play the role of the functions $u$, $v$ and $z$, respectively of Theorem~\ref{stability-Phy}-(i). Hence, if $\theta_0$ is sufficiently small, we get
 \be{}
 \varphi_k^--\varphi_{k-1}^+\le \co(1) s[\eps+(k-1) s] \le \co(1) s[\eps+\theta]
 \ee
 since $\text{TotVar}\{ g(\omega)\}\le \co(1)$. By definition of $\Phi_\er$, the other difference can be rewritten as
 \be{}
 \varphi_k^+-\varphi_k^-=\sum_{i=1}^2 \int_{-\infty}^\infty \left[ |\eta_i^{+}(x) |W_i^+(x)-|\eta_i^-(x)| W_i^-(x) \right] dx
 \ee
 with $\eta_i^+$ connecting $u_\eps(ks)$ with $v_\eps(ks)+ks g(\omega)$ and $\eta_i^-$ connecting $u_\eps(ks-)$ with $v_\eps(ks)+(k-1)s g(\omega)$. Then using Lemma~\ref{S:ts-lemma} and a bound similar to~\eqref{S:ts-Abd2new}, we can reconstruct the work in~\cite[p. 1011]{AG} and get
 \be{}
 \varphi_k^+-\varphi_k^-\le\co(1) (\eps+s)\varphi_k^-+\co(1) s h_\eps
 \ee
 with $h_\eps=\max_{k=1,\dots,k_0}\|\omega-u_\eps(ks-)\|_{L^1}$. Proceeding as in~\cite[p. 1012]{AG}, by induction and the $L^1$ equivalence of the functional $\Phi_z$,
 \[
 \|u_\eps(k_0 s)-v_\eps(k_0 s)-k_0 s g(\omega)\|_{L^1}\le\co(1)\varphi_{k_0}^+\le\co(1)s(\eps+\theta+h_\eps+C(s+\eps)(\theta+\eps)) \frac{[1+C(s+\eps)]^{k_0}-1}{s+\eps}
 \]
 since $\varphi_0^+=0$. Letting $\eps\to0+$ and using property (iii) of Theorem~\ref{ThmPS5.4}
 and Theorems~\ref{exist-hom-smgr}--\ref{ThmPS5.4}, we arrive at~\eqref{S6:claim1} for $s<\theta^2$ and $\theta_0$ sufficiently small.
 
 Using now the subsequence $\{\cp^{s_m}_t \bar u \}$ of Proposition~\ref{S6Prop1} converging to $u=u(t)$, properties (ii), (iv)
 of Theorem~\ref{ThmPS5.4},
 and estimate~\eqref{S6:claim1} with $u=u(t)\in\mathcal{D}_0^*$ and letting $m\to\infty$, we get
 \be{}
\frac{\|u(t+\theta)-\cs_\theta u(t)-\theta g(u(t))\|_{L^1}}{\theta}\le \co(1)\theta,\qquad 0<\theta<\theta_0\;.
\ee
Taking the limit as $\theta\to0+$, this immediately implies that $u(t,\cdot)$ satisfies the generalized differential equation~\eqref{S6ThmBreeq}--\eqref{S6ThmBreeqv} with initial data $u(0)=\bar u$. Thus, to apply~Theorem~\ref{S6ThmBre} and conclude uniqueness and continuous dependence, it remains to prove the existence of a Lipschitz semigroup to~\eqref{S6ThmBreeq}--\eqref{S6ThmBreeqv}. Then arguing as in~\cite[p. 1013]{AG} and combining~\eqref{S6:claim1}, properties (ii)- (iv) of Theorem~\ref{ThmPS5.4}, 
we get that $\{\cp^{s_m}_t \bar u \}$ converges pointwise to a Lipschitz semigroup $\cp:[0,\infty)\times\mathcal{D}_0 \to \mathcal{D}_0^*$ enjoying the properties of Theorem~\ref{exist-nonhom-smgr-1} and its trajectories satisfy the generalized differential equation~\eqref{S6ThmBreeq}--\eqref{S6ThmBreeqv}. This immediately concludes the existence. Hence, applying Theorem~\ref{S6ThmBre} to $\cp^*=\cp$, the proof is complete.
\end{proof}

In view of the above analysis, Theorems~\ref{exist-hom-smgr} and~\ref{exist-nonhom-smgr-1} follow immediately. 
At the same time, the Lipschitz semigroup $u(t)=\cp_t\bar u$ obtained in Theorem~\ref{S6Thm1} satisfies the properties (i)--(iv) of Theorem~\ref{exist-nonhom-smgr-1} as indicated in the proof of Theorem~\ref{S6Thm1} by letting $s\to0+$ in Theorem~\ref{ThmPS5.4}-(iii)-(iv).

\newpage
\appendix

%
%
\section{Reduction to shock curves in the stability analysis}
\label{S:shockReduction}


In \S~\ref{Ss:nointeractiontimesN} we proved estimates for the stability analysis relatively to times which are neither of interaction nor time steps.
There, the right states and velocities were computed with the correct strength but along shock curves instead of along rarefaction curves even across a rarefaction discontinuity at $x_{\alpha}$ of the front-tracking approximation. The reason for this reduction is the same as in the reduction argument of \cite[\S~8.2, Page~161-162]{Bre} and we briefly remind this here, but we refer the reader to~\cite{Bre}  for the full explanations together with the computations.
Such values corresponding to approximated right states at rarefaction shall be more correctly denoted by $\eta_{i}^{\diamond},\lambda_{i}^{\diamond}, \dot x_{\alpha}^{\diamond}$ instead of $\eta_{i}^{r},\lambda_{i}^{r}, \dot x_{\alpha}^{r}$, as we do with an abuse of notation.
The goal was to prove~\eqref{E:mainest}: each correct error $E_{\alpha,i} $ can be written as in \cite[(8.44)]{Bre}, just by adding and subtracting equal terms, as
\[
E_{\alpha,i} = E_{\alpha,i}' +E_{\alpha,i}'' +E_{\alpha,i} '''
\quad,\quad
E_{\alpha,i}' \doteq W_{i}^{\alpha,r}|\eta_{i}^{\alpha,\diamond}|(\lambda_{i}^{\alpha,\diamond}-\dot x_{\alpha}^{\diamond})- W_{i}^{\alpha,\ell}|\eta_{i}^{\alpha,\ell}|(\lambda_{i}^{\alpha,\ell}-\dot x_{\alpha}^{\diamond})
\ .
\]
The error term $E_{\alpha,i}' $ is precisely what we estimate in \S~\ref{Ss:nointeractiontimesN}, where as mentioned with the abuse of notation we briefly write $\eta_{i}^{r},\lambda_{i}^{r}, \dot x_{\alpha}^{r}$ in place of $\eta_{i}^{\diamond},\lambda_{i}^{\diamond}, \dot x_{\alpha}^{\diamond}$.
The error term $E_{\alpha,i}''$ is estimated by 
\[
E_{\alpha,i}'' \doteq W_{i}^{\alpha,r}|\eta_{i}^{\alpha,\diamond}|(\lambda_{i}^{\alpha,r}-\lambda_{i}^{\alpha,\diamond})+ W_{i}^{\alpha,r}\left(|\eta_{i}^{\alpha,r}|-|\eta_{i}^{\alpha,\diamond}|\right)(\lambda_{i}^{\alpha,r}-\lambda_{i}^{\alpha,\diamond})
 \leq\co(1) |\sC_{\alpha}|^{3} \leq \co(1) \varepsilon |\sC_{\alpha}|
\]
just due to second orderer tangency among shock curves and rarefaction curves
which yields errors of order $\co(1) |\sC_{\alpha}|^{3}$, see \cite[(8.43)]{Bre}.
The error term $E_{\alpha,i}'''$, even with our functional~\eqref{S4Wi}--\eqref{S3G}--\eqref{S4A1}--\eqref{S4A2}, is estimated similarly to \cite[(8.46)]{Bre} by 
\[
E_{\alpha,i}''' \doteq 
(\dot x_{\alpha}^{\diamond}- \dot x_{\alpha} ) 
\left\{W_{i}^{\alpha,r}\left(|\eta_{i}^{\alpha,r}|-|\eta_{i}^{\alpha,\ell}|\right)
	+ \left( W_{i}^{\alpha,r}- W_{i}^{\alpha,\ell}\right) |\eta_{i}^{\alpha,\ell}| \right\}
 \leq \co(1) \varepsilon |\sC_{\alpha}| \ .
\]
We indeed stress, concerning the second addend within brackets, that by standard interaction estimates, 
$|\eta_{i}^{\alpha,\ell}|\leq \co(1) |\sC_{\alpha}|$ in case $\eta_{i}^{\alpha,\ell}$ and $\eta_{i}^{\alpha,r}$ have different sign. This follows immediately from~\eqref{E:stimaetarbasefull1}--\eqref{E:stimaetarbasefull} in Corollary~\ref{C:stimegenerali} and ~\eqref{E:primsfwewgrqgerqpall}--\eqref{E:primsfwewgrqgerqpall2} in Lemma~\ref{L:stimegenerali2}. On the other hand, if $\eta_{i}^{\alpha,\ell}$ and $\eta_{i}^{\alpha,r}$ have the same sign, it holds $| W_{i}^{\alpha,r}- W_{i}^{\alpha,\ell}|\leq \co(1) |\sC_{\alpha}|$ just by definition of $\ca_{i,j}$.

%
%
\section{Analysis of shock curves}\label{AppB}

\subsection{Analysis of 1-shock curves}
\label{S:1shocks}

By algebraic manipulations of the Rankine-Hugoniot equations, 
It is proven in \S~2 of~\cite{AS} that  from the Rankine-Hugoniot equations, the 1-shock curve $\SC{1}{h-h_{\ell}}{h_{\ell},p_{\ell}}$ through the point $(h_{\ell},p_{\ell})$ can take the form:
\begin{equation}
\label{E:p1shock}
p=p_{\ell}-\frac{s_{1}(h,p_{\ell})+1}{s_{1}(h,p_{\ell})}(h-h_{\ell})~\equiv p_{\ell}- \frac{ p_{\ell}-1}{ h-s_{1}(h,p_{\ell}) }(h-h_{\ell}) 
\end{equation}
and this wave connecting $(h_\ell,p_\ell)$ on the left with $(h,p)$ on the right has strength $h-h_\ell$ in Cartesian coordinates. Here, $s_{1}$ is the Rankine-Hugoniot speed of the 1-shock that is strictly negative and it has the following expression:
\begin{equation}
\label{E:p1shock2}
s_{1}=s_{1}(h,p_{\ell})\equiv\lambda_{1}(h,p_{\ell})\equiv 
-\frac{1}{2}\left[ p_{\ell}-h+\sqrt{(p_{\ell}-h)^{2}+4h}\right]\;.
\end{equation}
 and by~\eqref{E:p1shock}, we have
\bas
\frac{p-p_{\ell}}{h-h_{\ell}}=-\frac{s_{1}(h,p_{\ell})+1}{s_{1}(h,p_{\ell})} \;,
\eas
so that if we exchange $(h,p)$ and $(h^\ell,p^\ell)$ the left hand side remains the same. As a consequence,
\be{E:symmetry1HC}
s_{1}(h,p_{\ell})=s_{1}(h_{\ell},p)
\qquad \text{whenever }p=\SC{1}{h-h_{\ell}}{h_{\ell},p_{\ell}}\ .
\ee
Moreover, for small $h$, one has the expression
\bes
s_{1}(h,p_{\ell})= -p_{\ell }+\frac{p_{\ell}-1}{p_{\ell}}h+\mathcal{O}(h^{2})
\ees
and for $p_{\ell}=1$ one has that $s_{1}(h, p_{\ell}=1)\big|_{p_{\ell}=1}\equiv-1$.
Finally, one can compute
\ba
\label{E:coeff1s}
 \frac{s_{1}(h,p_{\ell})+1}{s_{1}(h,p_{\ell})} 
 &=\frac{2}{s_{1}(h,p_{\ell})}\cdot\frac{ p_{\ell}-1}{ p_{\ell}-h-2-\sqrt{(p_{\ell}-h)^{2}+4h} }
 = \frac{ p_{\ell}-1}{ h-s_{1}(h,p_{\ell}) }
 \\&=\frac{p_{\ell}-1}{p_{\ell}}-\frac{p_{\ell}-1}{p_{\ell}^{3}}h+o(h)\;.
\notag
\ea

Thanks to this explicit expression, one can compute the derivatives of the Rankine--Hugoniot speed when the 1-Hugoniot curve is parametrized by $h$:
\be{E:s1derh1}
\ptlls{h}{ s_{1}}(h,p_{\ell})=
\frac{1}{2} \left(\frac{-h+p_{\ell}-2}{\sqrt{(p_{\ell}-h)^{2}+4 h}}+1\right)
=
 \frac{ p_{\ell}-1 }{r(h,p_{\ell})(1+s_{2}(h,p_{\ell}))} 
\quad = \quad
\frac{p_{\ell}-1}{p_{\ell}}+2\frac{p_{\ell}-1}{ p_{\ell} ^{3}} h+o(h)\;,
\ee
\be{E:ddhs1}
\ddn{h}{2} s_{1}(h,p_{\ell})=-
\frac{2 (p_{\ell}-1)}{\left((p_{\ell}-h)^{2}+4 h \right)^{3/2}}\;.
\ee
We also have that
\be{E:s1derp1}
\ptlls{p}{ s_{1}}(h,p_{\ell})=\frac{1}{2} \left(\frac{h-p_{\ell}}{\sqrt{(h-p_{\ell})^2+4 h}}-1\right)
=
\frac{s_{1}(h,p_{\ell})}{\sqrt{(h-p_{\ell})^2+4 h}}
\quad
=
\quad
-1+\frac{h}{p_{\ell}^{2}}+o(h)\;,
\ee
\bes
\ptl{p}{2} s_{1}(h,p_{\ell})=-\frac{2 h}{\left((p_{\ell}-h)^{2}+4 h\right)^{3/2}}\;.
\ees
In the above, we have set \bes r(h,p):=\sqrt{(p-h)^{2}+4 h} \quad =\quad p+\frac{h}{2p}+o(h)\quad\text{for $h$ small}.\ees
Finally, we notice that if $p$ is defined by~\eqref{E:p1shock} then
\[
\ptlls{p_\ell}p=1- \frac{h-h_{\ell}}{ h-s_{1}(h,p_{\ell}) }- \frac{(p_{\ell}-1)(h-h_{\ell})}{ ( h-s_{1}(h,p_{\ell}))^2 }\frac{s_{1}(h,p_{\ell})}{\sqrt{(h-p_{\ell})^2+4 h}}\;,
\]
so that 
\ba\label{contiderivataS1}
&\ptlls{p_\ell}p{\Big|_{p_\ell=1}}=1- \frac{h-h_{\ell}}{ 1+ h }=\frac{1+h_\ell}{1+ h }
 &&\text{while}
&&\ptlls{h}p{\Big|_{p_\ell=1}}=\ptlls{h_\ell}p{\Big|_{p_\ell=1}}=0 \ .
\ea
It also holds that
\ba\label{contiderivataS1eta0}
&\ptlls{p_\ell}p{\Big|_{h_\ell=h}}=1 \ ,
&&\ptlls{h_\ell}p{\Big|_{h_\ell=h}}=0
 &&\text{while}
&&\ptlls{h}p{\Big|_{h_\ell=h}}=- \frac{ p_{\ell}-1}{ h-s_{1}(h,p_{\ell}) } \ .
\ea
When $0\leq h\leq \delta_0^*$ and $p_0^*\leq p\leq p_1^*$, with $p_0^*\leq 1\leq p_1^*$, it is also useful the bound
\be{stimadaldalvassos1}
p_{0}^*+h\leq\quad h-\lambda_{1}(h,p_{\ell})\equiv 
\frac{1}{2}\left[ p_{\ell}+h+\sqrt{(p_{\ell}-h)^{2}+4h} \right]\quad\leq p_{1}^*+h\;.
\ee

We also compute and estimate $\ptlls{h}p$ where $p$ is defined by~\eqref{E:p1shock}:
\ba
\ptlls{h}p&=-( p_{\ell}-1)\left( \frac{1}{ h-s_{1}(h,p_{\ell}) }-\frac{h-h_{\ell}}{ (h-s_{1}(h,p_{\ell}))^2 }\left(1-\ptlls{h}{s_{1}}(h,p_{\ell})\right) \right)\notag
\\
&\stackrel{\eqref{E:s1derh1}}{=}-\frac{p_{\ell}-1}{ h-s_{1}(h,p_{\ell}) }\left(1 -\frac{h-h_{\ell}}{ h-s_{1}(h,p_{\ell})  }\cdot\frac{1}{2} \left(\frac{h-p_{\ell}+2+\sqrt{(p_{\ell}-h)^{2}+4 h}}{\sqrt{(p_{\ell}-h)^{2}+4 h}}\right) \right)\notag\\
&\stackrel{\eqref{E:p1shock2}}{=}-\frac{p_{\ell}-1}{ h-s_{1}(h,p_{\ell}) }\left(1 -\frac{h-h_{\ell}}{ \sqrt{(p_{\ell}-h)^{2}+4 h}
 }\cdot\ \left(1+\frac{1-p_{\ell} }{h-s_{1}(h,p_{\ell}) } \right) \right)\ .
 \label{E:computationder}
\ea
For $0\leq h\leq \delta_0^*<p_0^*\leq p\leq p_1^*$,
 it is also useful to perform the estimates
 \ba\label{stimadaldalvassos1}
&\max\{p_{\ell}-h;2\sqrt h\}\leq\quad\sqrt{(p_{\ell}-h)^{2}+4h}\quad \leq\sqrt{(p_{1}^*-h)^{2}+4hp_{1}^*} =p_{1}^*+h\\
&p_{0}^*\leq p_{0}^*+h\leq\quad h-\lambda_{1}(h,p_{\ell})\equiv 
\frac{1}{2}\left[ p_{\ell}+h+\sqrt{(p_{\ell}-h)^{2}+4h} \right]\quad\leq p_{1}^*+h
\ea
Using now that $p_0^*=1-\delta_p^*$ and choosing $\delta_0^*$ and $\delta_p^*$ small enough so that $1-\delta_p^*-\delta_0^*>2\sqrt{\delta_0^*}$, we have $\max\{p_{\ell}-h;2\sqrt h\}\ge1-\delta_p^*-\delta_0^*$. This holds true for instance if we use the rough estimate that both $\delta_p^*$ and $\delta_p^*$ are less than $1/9$ in addition to Conditions ($\Sigma$) in Proposition~\ref{PropCond}. Combining these together with~\eqref{E:computationder}, we arrive at
\ba\label{esotedegaaggargrawgragar}
\left\lvert \ptlls{h}p\right\rvert&\leq
\frac{1}{p_{0}^*}\left(1+\frac{|h-h_\ell|}{\max\{p_{\ell}-h;2\sqrt h\}}\cdot\left(1+\frac{\delta_p^* }{p_{0}^*}\right)\right)
\cdot \left\lvert p_{\ell}-1\right\rvert\notag\\
& 
\leq \frac{1}{p_{0}^*} \left(1+\frac{\delta_0^*}{(1-\delta_p^*-\delta_0^*)p_{0}^*}\right)
\cdot \left\lvert p_{\ell}-1\right\rvert\notag\\
& 
\leq  \frac{1+\delta_0^*}{(p_{0}^*)^2(1-\delta_p^*-\delta_0^*)}\cdot \left\lvert p_{\ell}-1\right\rvert
\ea
where the coefficient of $ \left\lvert p_{\ell}-1\right\rvert$ in the above bound is a positive finite constant.

\subsection{Analysis of 2-shock curves}
\label{S:2shocks}

In \S~2 of~\cite{AS} it is shown from the Rankine-Hugoniot equations that the 2-shock curve~$\SC{2}{p-p_{\ell}}{h_{\ell},p_{\ell}}$ through the point $(h_{\ell},p_{\ell})$ has the following form:
\be{E:h2shock}
h=h_{\ell}-\frac{s_{2}(h_{\ell},p)}{s_{2}(h_{\ell},p)+1}(p-p_{\ell})\quad\equiv\quad h_{\ell}+\frac{h_{\ell}}{\lambda_{1}(h_{\ell},p)-h_{\ell}}(p-p_{\ell})
\quad\equiv\quad \left(1+\frac{p-p_{\ell}}{\lambda_{1}(h_{\ell},p)-h_{\ell}}\right)h_{\ell}
\ ,
\ee
 with the corresponding $2-$wave of strength $p-p_\ell$ in Cartesian coordinates.
Here, $s_{2}$ is the Rankine-Hugoniot speed of the 2-shock that is nonnegative and it has the following expression:
\be{E:s2proph}
s_{2}=s_{2}(h_{\ell},p)\equiv\lambda_{2}(h_{\ell},p)~\equiv\frac{1}{2}\left[-(p-h_{\ell})+\sqrt{(p-h_{\ell})^{2}+4h_{\ell}}\right]\equiv-\frac{h_{\ell}}{\lambda_{1}(h_{\ell},p)}
\ .
\ee
We know that $h=\SC{2}{p-p_{\ell}}{h_{\ell},p_{\ell}}$ is equivalent to $h_{\ell}=\SC{1}{p_{\ell}-p }{h,p }$ and from~\eqref{E:p1shock}, we have
\bas
\frac{p-p_{\ell}}{h-h_{\ell}}=-\frac{s_{2}(h,p_{\ell})+1}{s_{2}(h,p_{\ell})} 
\eas
so that if we exchange $(h,p)$ and $(h^\ell,p^\ell)$ the left hand side remains the same: as a consequence
\be{E:symmetry2HC}
s_{2}(h,p_{\ell})=s_{2}(h_{\ell},p)
\qquad \text{whenever }h=\SC{2}{p-p_{\ell}}{h_{\ell},p_{\ell}}\ .
\ee
Moreover, for small $h_{\ell}$ one has 
\bes
s_{2}(h_{\ell},p)= \frac{h_{\ell}}{p}+\frac{p-1}{p^{3}}{h_{\ell}^{2}}+\frac{(p-1)(p-2)}{p^{5}}h_{\ell}^{3}+\mathcal{O}(h_{\ell}^{4})
\ .
\ees
At $p=1$, one has that $s_{2}(h_{\ell},1)\equiv h_{\ell}$ while on the line $h_{\ell}=0$, it holds $s_{2}(0,p )\equiv0$. At $h_{\ell}=0$, we also have
\be{E:jacobianS2}
\ptl{s}{}\SC{2}{s}{0,p_{\ell}}=\left(\begin{array}{c}0 \\1\end{array}\right)
\qquad
J_{h_{\ell},p_{\ell}}\SC{2}{s}{0,p_{\ell}}=
\left(\begin{array}{cc}
 1-\frac{s}{p_{\ell}+s} & 0\\0&1\end{array}
\right)
\ .
\ee

Thanks to this explicit expression, one can compute again the derivatives of the Rankine-Hugoniot speed when the 2-Hugoniot curve is parametrized by $p$:
\ba
\label{E:ders2p1}
\ptlls{p}{s_{2}}(h_{\ell},p)&=\frac{1}{2} \left(\frac{p-h_{\ell}}{\sqrt{(h_{\ell}-p)^2+4 h_{\ell}}}-1\right)
= \frac{h_{\ell}}{s_{1}(h_{\ell},p)\cdot\sqrt{(h_{\ell}-p)^2+4 h_{\ell}}}
\\ \notag
&= 
-\frac{h_{\ell}}{p^{2}}+-2\frac{ 3-2 p }{ p^4 }h_{\ell}^{2}+ o(h_{\ell}^{2})
\ ,
\ea
\bas
\ptl{p}{2} s_{2}(h_{\ell},p)=\frac{2 h_{\ell}}{\left((h_{\ell}-p)^2+4 h_{\ell}\right)^{3/2}}
\ .
&&\ .
\eas
We also have that
\bas
\ptlls{h}{{s_{2}}}(h_{\ell},p)= \frac{1}{2} \left(\frac{h_{\ell}-p+2}{\sqrt{(h_{\ell}-p)^2+4 h_{\ell}}}+1\right)= \frac{\lambda_{2}(h_{\ell},p)+1}{\sqrt{(h_{\ell}-p)^2+4 h_{\ell}}} 
\ ,
\eas
\bas
\ptl{h}{2} s_{2}(h_{\ell},p)= \frac{2 (p-1)}{\left((h_{\ell}-p)^2+4 h_{\ell}\right)^{3/2}}\;.
\eas
In particular, when $h_\ell=0$, we get
\be{AppB.3s2h}
\ptlls{h}{{s_{2}}}(0,p)=  \frac{ 1}{ p } 
\ .
\ee

Finally, we notice that if $h$ is defined by~\eqref{E:h2shock} then
\[
\ptlls{h_\ell}h=1+\frac{p-p_{\ell}}{ \lambda_{1}(h_{\ell},p ) -h_\ell}- \frac{(p-p_{\ell})h_{\ell}}{ ( \lambda_{1}(h_{\ell},p) -h_\ell)^2 } \cdot \frac{1}{2} \left(\frac{-h_{\ell}+p-2}{\sqrt{(p -h_{\ell})^{2}+4 h_{\ell}}}-1\right)
\]
so that 
\be{contiderivataS2}
\ptlls{h_\ell}h{\Big|_{h_\ell=0}}=1- \frac{p-p_{\ell}}{ p }=\frac{p_\ell}{p }
\qquad \text{while}\qquad
\ptlls{p}h{\Big|_{h_\ell=0}}=0 \ .
\ee

\section{Finer interaction-type estimates}
\label{S:finerInteractions}
We consider the vector states $\underline v^{\alpha,\ell}$, $\underline v^{\alpha,r}$, $\underline u^\alpha$, $\underline \omega^{\alpha,\ell}$, $\underline \omega^{\alpha,r}$ in $K=[0,\delta^*_0] \times [p^*_{0}, p^*_1]$ as already defined in~\S~\ref{Ss:nointeractiontimesN} that are related as follows:
\begin{align*}
&\underline v^{\alpha,r}=\mathbf{S}_{k_\alpha}\big(\sC_\alpha; \underline v^{\alpha,\ell}\big),
&& 
\underline \omega^{\alpha,\ell}\doteq \mathbf{S}_1\big(\eta_1^{\alpha,\ell }; \underline u^\alpha\big),
&&
\underline v^{\alpha,\ell}= \mathbf{S}_2\big(\eta_2^{\alpha,\ell };\underline \omega^{\alpha,\ell}\big) \ ,
\\
&&& 
\underline \omega^{\alpha,r}\doteq \mathbf{S}_1\big(\eta_1^{\alpha, r};\underline u^\alpha \big),
&&
\underline v^{\alpha,r}= \mathbf{S}_2\big(\eta_2^{\alpha, r};\underline \omega^{\alpha,r}\big) 
\end{align*}
and we prove interaction-type estimates on the wave sizes $\eta_1^{\alpha,r}$, $\eta_2^{\alpha,r}$ and on their speeds. More precisely
\begin{itemize}
\item[$\bullet$] In \S~\ref{Ss:estimates1}, we derive auxiliary estimates when the jump of $v(t)$ at $x_{\alpha}$ is a $1$-wave and
\item[$\bullet$] In \S~\ref{Ss:estimates2}, we derives auxiliary estimates when the jump of $v(t)$ at $x_{\alpha}$ is a $2$-wave.
\end{itemize}
Let us recall that we only consider Hugoniot curves despite of the admissibility criteria and in a sense one could say that these are perturbed interaction estimates.
The estimates established in this appendix are fundamental for the error analysis derived in \S~\ref{Ss:nointeractiontimesN}.
For simplicity, as there is no ambiguity, we omit the index $\alpha$ for the remaining of the present appendix.

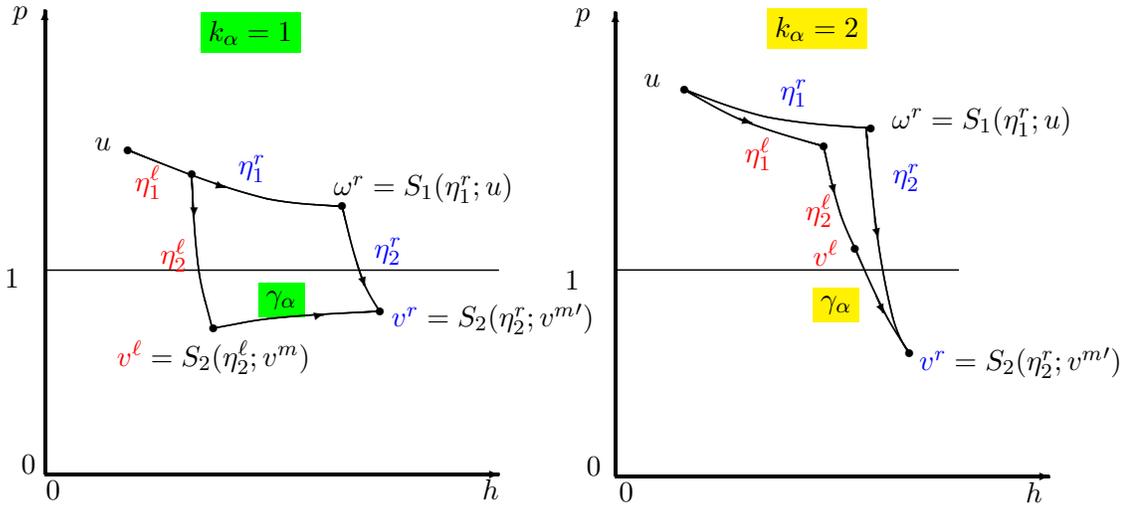
\begin{figure}[htbp]
 \label{fig:setting}
{\centering \scalebox{1}{\ifx\JPicScale\undefined\def\JPicScale{1}\fi
\unitlength \JPicScale mm
\begin{picture}(140.2,66.67)(0,0)
\linethickness{0.4mm}
\put(6.69,4.93){\line(1,0){60.35}}
\put(67.04,4.93){\vector(1,0){0.12}}
\linethickness{0.4mm}
\put(6.69,4.93){\line(0,1){61.74}}
\put(6.69,66.67){\vector(0,1){0.12}}
\linethickness{0.2mm}
\put(6.7,32.1){\line(1,0){60.35}}
\put(65.95,2.88){\makebox(0,0)[cc]{$h$}}

\put(3.4,65.98){\makebox(0,0)[cc]{$p$}}

\put(2.3,31){\makebox(0,0)[cc]{$1$}}

\put(4.5,6.31){\makebox(0,0)[cc]{$0$}}

\put(7.79,2.88){\makebox(0,0)[cc]{$0$}}

\put(14.4,48.9){\makebox(0,0)[cc]{$u$}}

\put(29.1,20.5){\makebox(0,0)[cc]{${\color{red}v^\ell}=S_2(\eta_2^\ell;v^m)$}}

\put(13.55,32.71){\makebox(0,0)[cc]{}}

\put(9.71,21.4){\makebox(0,0)[cc]{}}

\linethickness{0.4mm}
\multiput(82.51,4.69)(57.69,-0.09){1}{\line(1,0){57.69}}
\put(140.2,4.6){\vector(1,-0){0.12}}
\linethickness{0.4mm}
\put(82.51,4.69){\line(0,1){61.72}}
\put(82.51,66.41){\vector(0,1){0.12}}
\linethickness{0.2mm}
\put(82.51,32.12){\line(1,0){45.69}}
\put(138.2,2.6){\makebox(0,0)[cc]{$h$}}

\put(78.21,65.73){\makebox(0,0)[cc]{$p$}}

\put(76.78,30.75){\makebox(0,0)[cc]{$1$}}

\put(79.64,6.06){\makebox(0,0)[cc]{$0$}}

\linethickness{0.2mm}
\multiput(98.87,52.25)(0.37,-0.13){5}{\line(1,0){0.37}}
\put(100.7,51.6){\vector(3,-1){0.12}}
\linethickness{0.2mm}
\multiput(117.2,28.8)(0.12,-0.3){8}{\line(0,-1){0.3}}
\put(118.2,26.4){\vector(1,-2){0.12}}
\linethickness{0.2mm}
\multiput(111,45.2)(0.12,-0.52){5}{\line(0,-1){0.52}}
\put(111.6,42.6){\vector(1,-4){0.12}}
\put(83.95,2.63){\makebox(0,0)[cc]{$0$}}

\linethickness{0.3mm}
\put(17.7,48.1){\circle*{1}}

\linethickness{0.3mm}
\put(29.1,24.4){\circle*{1}}

\linethickness{0.3mm}
\put(51.2,26.6){\circle*{1}}

\linethickness{0.3mm}
\put(91.7,56.1){\circle*{1}}

\linethickness{0.3mm}
\put(121.6,21.1){\circle*{1}}

\linethickness{0.3mm}
\put(114.38,35){\circle*{1}}

\linethickness{0.3mm}
\put(110.2,48.6){\circle*{1}}

\linethickness{0.3mm}
\put(46.2,40.6){\circle*{1}}

\put(23.8,34.1){\makebox(0,0)[cc]{${\color{red}\eta_2^\ell}$}}

\put(38.12,28.12){\makebox(0,0)[cc]{\colorbox{green}{$\gamma_\alpha$}}}

\put(34.2,45.6){\makebox(0,0)[cc]{${\color{blue}\eta_1^r}$}}

\put(52.3,34.6){\makebox(0,0)[cc]{${\color{blue}\eta_2^r}$}}

\put(66.25,25.62){\makebox(0,0)[cc]{${\color{blue}v^r}=S_2(\eta_2^r;{v^m}')$}}

\put(57,43){\makebox(0,0)[cc]{${\omega^r}=S_1(\eta_1^r; u)$}}

\put(87.4,57.4){\makebox(0,0)[cc]{$u$}}

\put(106.25,55.62){\makebox(0,0)[cc]{${\color{blue}\eta_1^r}$}}

\put(110.5,34.4){\makebox(0,0)[cc]{${\color{red}v^\ell}$}}

\linethickness{0.2mm}
\qbezier(26.2,44.6)(26.2,44.2)(26.44,39.77)
\qbezier(26.44,39.77)(26.68,35.33)(27.2,31.6)
\qbezier(27.2,31.6)(27.51,29.75)(27.97,28.03)
\qbezier(27.97,28.03)(28.43,26.31)(28.9,24.5)
\linethickness{0.2mm}
\qbezier(29.28,24.42)(31.07,24.74)(32.76,25.07)
\qbezier(32.76,25.07)(34.45,25.4)(36.25,25.62)
\qbezier(36.25,25.62)(40.61,26.09)(45.76,26.35)
\qbezier(45.76,26.35)(50.92,26.62)(51.38,26.62)
\linethickness{0.2mm}
\qbezier(46.2,40.6)(46.25,40.27)(47.15,36.65)
\qbezier(47.15,36.65)(48.05,33.03)(49.2,30.1)
\qbezier(49.2,30.1)(49.68,29.04)(50.4,27.87)
\qbezier(50.4,27.87)(51.13,26.7)(51.2,26.6)
\linethickness{0.2mm}
\qbezier(17.7,48.1)(18.24,47.83)(24.47,45.4)
\qbezier(24.47,45.4)(30.71,42.97)(36.2,41.6)
\qbezier(36.2,41.6)(39.05,41.04)(42.47,40.82)
\qbezier(42.47,40.82)(45.89,40.59)(46.2,40.6)
\linethickness{0.2mm}
\qbezier(91.7,56.1)(91.92,55.96)(94.44,54.53)
\qbezier(94.44,54.53)(96.96,53.1)(99.2,52.1)
\qbezier(99.2,52.1)(102.3,50.87)(106.08,49.77)
\qbezier(106.08,49.77)(109.86,48.67)(110.2,48.6)
\linethickness{0.2mm}
\qbezier(110.2,48.6)(110.25,48.31)(110.96,45.15)
\qbezier(110.96,45.15)(111.67,41.99)(112.5,39.38)
\qbezier(112.5,39.38)(112.96,38.08)(113.63,36.6)
\qbezier(113.63,36.6)(114.31,35.13)(114.38,35)
\linethickness{0.2mm}
\multiput(29.2,43.6)(0.38,-0.12){4}{\line(1,0){0.38}}
\put(30.7,43.1){\vector(3,-1){0.12}}
\linethickness{0.2mm}
\multiput(26.3,41.7)(0.1,-1.25){2}{\line(0,-1){1.25}}
\put(26.5,39.2){\vector(0,-1){0.12}}
\linethickness{0.2mm}
\multiput(48.2,33.1)(0.12,-0.38){8}{\line(0,-1){0.38}}
\put(49.2,30.1){\vector(1,-3){0.12}}
\linethickness{0.2mm}
\multiput(42.7,26.1)(1.05,0.15){1}{\line(1,0){1.05}}
\put(43.75,26.25){\vector(4,1){0.12}}
\put(20.4,43.8){\makebox(0,0)[cc]{${\color{red}\eta_1^\ell}$}}

\put(34,49.7){\makebox(0,0)[cc]{}}

\put(58.4,37.9){\makebox(0,0)[cc]{}}

\linethickness{0.3mm}
\put(26.2,44.8){\circle*{1}}

\put(34,63.7){\makebox(0,0)[cc]{\colorbox{green}{$k_\alpha=1$}}}

\put(109.3,64.2){\makebox(0,0)[cc]{\colorbox{yellow}{$k_\alpha=2$}}}

\linethickness{0.2mm}
\qbezier(114.38,35)(114.49,34.71)(115.92,31.49)
\qbezier(115.92,31.49)(117.36,28.27)(118.75,25.62)
\qbezier(118.75,25.62)(119.46,24.37)(120.38,22.95)
\qbezier(120.38,22.95)(121.31,21.52)(121.4,21.4)
\put(136.4,19.8){\makebox(0,0)[cc]{${\color{blue}v^r}=S_2(\eta_2^r;{v^m}')$}}

\put(59.8,34.6){\makebox(0,0)[cc]{}}

\put(121.2,44.4){\makebox(0,0)[cc]{${\color{blue}\eta_2^r}$}}

\put(109.6,40){\makebox(0,0)[cc]{${\color{red}\eta_2^\ell}$}}

\put(101.5,47.6){\makebox(0,0)[cc]{${\color{red}\eta_1^\ell}$}}

\linethickness{0.2mm}
\qbezier(115.88,51)(115.87,50.22)(116.75,41.57)
\qbezier(116.75,41.57)(117.63,32.92)(119.4,25.8)
\qbezier(119.4,25.8)(119.71,24.67)(120.14,23.63)
\qbezier(120.14,23.63)(120.58,22.59)(121.2,21.6)
\put(111.88,27.5){\makebox(0,0)[cc]{\colorbox{yellow}{$\gamma_\alpha$}}}

\put(131.25,51.88){\makebox(0,0)[cc]{${\omega^r}=S_1(\eta_1^r; u)$}}

\linethickness{0.2mm}
\qbezier(92,56)(92.31,55.86)(95.85,54.55)
\qbezier(95.85,54.55)(99.4,53.25)(102.5,52.5)
\qbezier(102.5,52.5)(106.49,51.73)(111.28,51.36)
\qbezier(111.28,51.36)(116.07,51)(116.5,51)
\linethickness{0.3mm}
\put(116.5,51){\circle*{1}}

\linethickness{0.2mm}
\multiput(117,39.2)(0.13,-0.8){3}{\line(0,-1){0.8}}
\put(117.4,36.8){\vector(1,-4){0.12}}
\end{picture} } \par}
\caption{ {\bf Left}: The jump $\gamma_\alpha$ at $x_{\alpha}$ is along the first Hugoniot curve:
$\underline v^{\alpha,r}=S_1(\sC_\alpha; \underline v^{\alpha,\ell})$. {\bf Right}: The jump $\gamma_\alpha$  at $x_{\alpha}$
is along the second Hugoniot curve
$\underline v^{\alpha,r}=S_2(\sC_\alpha; \underline v^{\alpha,\ell})$. \label{S4:fig2}}
\end{figure}

\subsection{Case of a $1$-wave}
\label{Ss:estimates1}

Let $\gamma$ be a wave that belongs to the first family joining the states $\underline v^{\ell}$ and $\underline v^{r}$ of $v$, i.e.
 \[\underline v^{r}=\mathbf{S}_{1}\big(\sC;\underline v^{\ell}\big)\] where the Hugoniot curve $\SC{1}{\cdot}{\cdot}$ is given in~\eqref{E:p1shock}, while $\SC{2}{\cdot}{\cdot}$ is given in~\eqref{E:h2shock}.
The states $\underline v^{\ell}$ and $\underline v^{r}$ are related to a third state $\underline u$ by
\[\underline v^\ell=\SC{2}{\eta_2^\ell}{\SC{1}{\eta_1^\ell}{\underline u}}
\qquad
\underline v^r=\SC{2}{\eta_2^r}{\SC{1}{\eta_1^r}{\underline u}},\]
via the waves $\eta^{\ell/r}=(\eta_1^{\ell/r}, \eta_2^{\ell/r})$, respectively. We will also need the speed $\dot x$ connecting the states $\underline v^\ell$ and $\underline v^r$, that is 
\be{C1xdot}\dot x:=\lambda_1\left(v_1^\ell+\sC_{},v_2^\ell\right) \ee
and the corresponding ones connecting $\underline u$ with $\underline v^\ell$ and $\underline v^r$ are:
\be{C1lambdaell} \lambda_1^\ell:=\lambda_1\left(u_1+\eta_1^\ell,u_2\right),\qquad \lambda_2^\ell:=\lambda_2(u_1+\eta_1^\ell,v_2^\ell)\ee
\be{C1lambdar}\lambda_1^r:=\lambda_1\left(u_1+\eta_{1}^{r}, u_2\right),\qquad
     \lambda_2^r=\lambda_2(u_1+\eta_1^r, v_2^r )\,,\ee
respectively. 

The aim is to establish auxiliary estimates on the wave strengths $\eta_1^r$, $\eta_1^\ell$, $\eta_2^r$, $\eta_2^\ell$, on the waves velocities and on a commutator among waves and velocities. First, in Lemma~\ref{L:sstimaetarbase} we deal with the simpler case when $\eta_2^\ell=0$ and afterwards, we prove  the general case in Corollary~\ref{C:stimegenerali}.

\newcommand{\evi}[1]{{\color{purple}#1}}

\begin{lemma} 
\label{L:sstimaetarbase}
\begin{subequations} 
\label{E:sstimaetarbase}
Let $\underline v^{\ell}$, $\underline v^{r}$, $\underline u$, $\underline \omega^{\ell}$, $\underline \omega^{r}$, $\gamma$, $\eta^{\ell}$ and $\eta^r$  as denoted above and $\gamma$ be a wave of the first family, i.e. $\underline v^{r}=\mathbf{S}_{1}\big(\sC; \underline v^{\ell}\big)$.  
Suppose that $\eta_2^\ell=0$, so that $\underline v^\ell=\SC{1}{\eta_1^\ell}{\underline u}$. Then 
\begin{align}
\label{E:stimaetarbasepartial1}
&|\eta_{1}^{r}-\eta_{1}^{\ell}-\sC_{{{}}}|
\leq \co(1)( v^{\ell}_{1}+\gamma) (v_{2}^{\ell}-1)^{2} \left\lvert(\eta_{1}^{\ell}+\sC_{ }) \eta_{1}^{\ell}\sC_{ } \right\rvert \ ,
\\
&\label{E:stimaetarbasepartial}
|\eta_{2}^{r}|
\leq \co(1) (v_{2}^{\ell}-1)^{2} \left\lvert(\eta_{1}^{\ell}+\sC_{ }) \eta_{1}^{\ell}\sC_{ } \right\rvert\ .
\end{align}
\end{subequations}
Moreover, the speed $\dot x$ given in~\eqref{C1xdot}  satisfies
\begin{subequations} 
\label{E:stima11part}
\begin{align}
\label{E:stima11lpart}
&\left\lvert\dot x_{}-\lambda_{1}^{\ell}-\frac{(v_{2}^{\ell}-1) (\eta_{1}^{\ell}+\sC_{})}{v_2^\ell}\right\rvert\leq \co(1)\delta_0^*\cdot \left\lvert (v_{2}^{\ell}-1) (\eta_{1}^{\ell}+\sC_{})\right\rvert
\\
\label{E:stima11rpart}
&\left\lvert\dot x_{}-\lambda_{1}^{r}- \frac{(v_{2}^{\ell}-1) \eta_{1}^{\ell} }{v_2^\ell} \right\rvert\leq \co(1) \delta_0^* \cdot \left\lvert(v_{2}^{\ell}-1) \eta_{1}^{\ell}\right\rvert
\end{align}
\end{subequations}
while the following commutators satisfy the bounds
\begin{subequations} 
\label{E:fvgabfafvGroup}
\begin{align}
\label{E:fvgabfafvNew}
&\left\lvert\eta_{1}^{r}( \lambda_{1}^{r}-\dot x_{})-\eta_{1}^{\ell}(\lambda_{1}^{\ell}-\dot x_{})\right\rvert\leq 
\co(1)(\sC+v_{1}^{\ell}) (v_{2}^{\ell}-1)^2 \left\lvert(\eta_{1}^{\ell}+\sC_{ }) \eta_{1}^{\ell}\sC_{ } \right\rvert
\;,
\\
&\left\lvert \eta_{2}^{r}(\lambda_{2}^{r}-\dot x_{})-\eta_{2}^{\ell}(\lambda_{2}^{\ell}-\dot x_{ })\right\rvert\leq 
\co(1) (v_{2}^{\ell}-1)^{2} \left\lvert(\eta_{1}^{\ell}+\sC_{ }) \eta_{1}^{\ell}\sC_{ } \right\rvert \;,
\end{align}
where $\lambda_i^r$, $\lambda_i^\ell$ correspond to the speeds of the $i$-family that are given in~\eqref{C1lambdaell}--\eqref{C1lambdar}.
\end{subequations}
\end{lemma}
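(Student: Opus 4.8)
The plan is to treat this lemma as a collection of quantitative interaction-type estimates for the special configuration $\eta_2^\ell=0$ and to obtain all bounds from explicit Taylor expansions of the shock curves $\mathbf{S}_1$ and $\mathbf{S}_2$ (cfr.~\eqref{E:p1shock},~\eqref{E:h2shock}) together with the expansions of the Rankine--Hugoniot speeds $s_1, s_2$ collected in Appendix~\ref{AppB}. The geometric point that makes everything work is that, along $\{p=1\}$, the curves $\mathbf{S}_1(\cdot;\,\cdot)$ reduce to horizontal lines (see~\eqref{contiderivataS1}) and $\lambda_1(h,1)=-1$ is constant; hence the quantities $\eta_1^r-\eta_1^\ell-\gamma$ and $\eta_2^r$, which vanish identically when $(v_2^\ell-1)=0$ or when one of $\eta_1^\ell,\gamma,\eta_1^\ell+\gamma$ is zero, must carry a factor $(v_2^\ell-1)^2$ times the cubic product $(\eta_1^\ell+\gamma)\,\eta_1^\ell\,\gamma$. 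This is the ``almost Temple'' / third order tangency feature advertised in the introduction.

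\textbf{Step 1: the wave-size estimates \eqref{E:stimaetarbasepartial1}--\eqref{E:stimaetarbasepartial}.} Since $\eta_2^\ell=0$, the left state is $\underline v^\ell=\mathbf{S}_1(\eta_1^\ell;\underline u)$, and $\underline v^r=\mathbf{S}_1(\gamma;\underline v^\ell)$. I would introduce the map
\[
\Theta(u_1,u_2,\eta_1^\ell,\gamma)\doteq\big(\eta_1^r-\eta_1^\ell-\gamma,\ \eta_2^r\big),
\]
defined implicitly by $\mathbf{S}_2(\eta_2^r;\mathbf{S}_1(\eta_1^r;\underline u))=\mathbf{S}_1(\gamma;\mathbf{S}_1(\eta_1^\ell;\underline u))$, which is smooth on the relevant compact set. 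First one checks $\Theta\equiv 0$ whenever $\eta_1^\ell=0$, when $\gamma=0$, or when $\eta_1^\ell+\gamma=0$ (in the last case $\underline v^r=\underline u$): the first two are immediate from $\mathbf{S}_1(0;\cdot)=\mathrm{id}$ and the group-type relations, and the vanishing at $\eta_1^\ell+\gamma=0$ uses that composing two $1$-Hugoniot shocks whose sizes cancel returns the original state, a fact one reads off~\eqref{E:p1shock}. This already gives divisibility of $\Theta$ by the product $(\eta_1^\ell+\gamma)\eta_1^\ell\gamma$. Then one observes $\Theta\equiv 0$ when $v_2^\ell=1$ (equivalently $u_2=1$, since $p=1$ is invariant for $\mathbf S_1$): on the line $\{p=1\}$ all these curves are horizontal lines and the composition is trivially additive with no $2$-wave produced. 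By the symmetry $v_2^\ell-1\leftrightarrow 1-v_2^\ell$ of the structure — more precisely because $\partial\Theta/\partial u_2$ also vanishes at $u_2=1$, which follows from $\partial_h p|_{p_\ell=1}=0$ in~\eqref{contiderivataS1} — the quotient is in fact divisible by $(v_2^\ell-1)^2$. Invoking the Lipschitz-remainder lemma \cite[Lemma~2.5]{Bre} (the paper's auxiliary factoring lemma) on the remaining smooth factor, and bounding it uniformly on $K$, yields~\eqref{E:stimaetarbasepartial}; the extra factor $(v_1^\ell+\gamma)$ in~\eqref{E:stimaetarbasepartial1} comes because $\eta_1^r-\eta_1^\ell-\gamma$ additionally vanishes when $v_1^\ell=\gamma=0$ (the $h$-component of $\underline v^\ell$ is zero and $\gamma=0$), a zero that one sees from~\eqref{contiderivataS1eta0}.

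\textbf{Step 2: the speed estimates \eqref{E:stima11part} and the commutators \eqref{E:fvgabfafvGroup}.} For~\eqref{E:stima11lpart}--\eqref{E:stima11rpart} I would use $\dot x=\lambda_1(v_1^\ell+\gamma,v_2^\ell)$ and $\lambda_1^\ell=\lambda_1(u_1+\eta_1^\ell,v_2^\ell)=\lambda_1(v_1^\ell,v_2^\ell)$ (using $\eta_2^\ell=0$ so $u_2=v_2^\ell$), write the difference as an integral of $\partial_h\lambda_1$ over $[v_1^\ell,v_1^\ell+\gamma]$, and Taylor-expand $\partial_h\lambda_1(h,v_2^\ell)=\partial_h s_1=\frac{p-1}{p}+O(h)$ from~\eqref{E:s1derh1}; this produces the main term $(v_2^\ell-1)\gamma/v_2^\ell$ (noting $\eta_1^\ell+\gamma=v_1^\ell+\gamma-u_1$ up to the already-controlled error, so one may replace $\gamma$ by $\eta_1^\ell+\gamma$ at the cost of $\co(1)\delta_0^*|(v_2^\ell-1)(\eta_1^\ell+\gamma)|$), with the $O(h)$ correction absorbed in the $\co(1)\delta_0^*$ remainder; \eqref{E:stima11rpart} is the same with $\gamma$ replaced by $-\eta_1^\ell$ (i.e.\ comparing $\dot x$ with $\lambda_1^r=\lambda_1(u_1+\eta_1^r,u_2)$ and using Step~1 to replace $\eta_1^r$ by $\eta_1^\ell+\gamma$). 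The commutator bounds~\eqref{E:fvgabfafvGroup} then follow by writing $\eta_1^r(\lambda_1^r-\dot x)-\eta_1^\ell(\lambda_1^\ell-\dot x)=(\eta_1^r-\eta_1^\ell)(\lambda_1^r-\dot x)+\eta_1^\ell(\lambda_1^r-\lambda_1^\ell)$, substituting~\eqref{E:stimaetarbasepartial1} for the first factor and a second-order Taylor expansion of $\lambda_1$ (using $\partial_h^2 s_1$ in~\eqref{E:ddhs1}, which carries a factor $p_\ell-1$) for $\lambda_1^r-\lambda_1^\ell$; both contributions are of the form $\co(1)(\gamma+v_1^\ell)(v_2^\ell-1)^2|(\eta_1^\ell+\gamma)\eta_1^\ell\gamma|$ after the algebra. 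For the $2$-wave commutator one combines $|\eta_2^r|$ from~\eqref{E:stimaetarbasepartial} with the fact that $\lambda_2^{r},\lambda_2^\ell,\dot x$ are uniformly bounded on $K$ and $\eta_2^\ell=0$, so that $|\eta_2^r(\lambda_2^r-\dot x)-\eta_2^\ell(\lambda_2^\ell-\dot x)|=|\eta_2^r|\,|\lambda_2^r-\dot x|\le\co(1)|\eta_2^r|$, which is already of the claimed order.

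\textbf{Main obstacle.} The delicate point is establishing the \emph{second}-order (rather than merely first-order) vanishing in $(v_2^\ell-1)$ of $\eta_1^r-\eta_1^\ell-\gamma$ and $\eta_2^r$ — i.e.\ that not just $\Theta$ but also $\partial\Theta/\partial u_2$ vanishes on $\{p=1\}$ — and the simultaneous cubic divisibility in the wave sizes; getting both factorizations cleanly, with the correct extra $(v_1^\ell+\gamma)$ weight in~\eqref{E:stimaetarbasepartial1}, requires carefully organizing the multi-variable Taylor expansion of the composite map and repeatedly applying the factoring lemma \cite[Lemma~2.5]{Bre}, keeping track of which derivatives genuinely vanish (using~\eqref{contiderivataS1},~\eqref{contiderivataS1eta0}) versus which are merely small. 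Once the factorization structure is pinned down, the remaining estimates are routine bounds of smooth functions on the compact set $K$.
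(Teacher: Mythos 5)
Your treatment of \eqref{E:stimaetarbasepartial} is essentially the paper's argument (vanishing on $\{\eta_1^\ell=0\}$, $\{\sC=0\}$, $\{\eta_1^\ell+\sC=0\}$, $\{v_2^\ell=1\}$ plus the vanishing of the $v_2^\ell$-derivative at $v_2^\ell=1$, then the factoring lemma), and your instinct about the geometric source of the $(v_2^\ell-1)^2$ factor is right. But two steps, as written, do not go through. First, in Step~2 you assert that $\eta_2^\ell=0$ forces $u_2=v_2^\ell$. This is false: $\eta_2^\ell=0$ means $\underline v^\ell=\SC{1}{\eta_1^\ell}{\underline u}$, so it is the \emph{$h$-components} that satisfy $u_1+\eta_1^\ell=v_1^\ell$, while $u_2-v_2^\ell=\tfrac{(v_2^\ell-1)\eta_1^\ell}{v_1^\ell-\eta_1^\ell-\lambda_1(v_1^\ell-\eta_1^\ell,v_2^\ell)}\neq 0$ unless $v_2^\ell=1$ or $\eta_1^\ell=0$. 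With the incorrect identity your integral of $\partial_h\lambda_1$ runs over $[v_1^\ell,v_1^\ell+\sC]$ and produces the main term $(v_2^\ell-1)\sC/v_2^\ell$ rather than $(v_2^\ell-1)(\eta_1^\ell+\sC)/v_2^\ell$, and the proposed repair (replace $\sC$ by $\eta_1^\ell+\sC$ at cost $\co(1)\delta_0^*|(v_2^\ell-1)(\eta_1^\ell+\sC)|$) is inadmissible: the discrepancy is $(v_2^\ell-1)\eta_1^\ell/v_2^\ell$, which is not dominated by $\delta_0^*|(v_2^\ell-1)(\eta_1^\ell+\sC)|$ when $\sC$ is close to $-\eta_1^\ell$. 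The correct way to run your argument is via the Rankine--Hugoniot symmetry \eqref{E:symmetry1HC}, which gives $\lambda_1^\ell=\lambda_1(v_1^\ell,u_2)=\lambda_1(u_1,v_2^\ell)$, so the integration interval is $[u_1,v_1^\ell+\sC]$, of length $\eta_1^\ell+\sC$; the paper instead works with $\Psi_{1,\ell}=\dot x-\lambda_1^\ell$ and $\Psi_{1,r}=\dot x-\lambda_1^r$ directly, via their vanishing conditions and explicit first derivatives at the base point.

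The more serious gap is \eqref{E:fvgabfafvNew}: it cannot be deduced by inserting \eqref{E:stimaetarbasepartial1} and \eqref{E:stima11part} into the decomposition $(\eta_1^r-\eta_1^\ell)(\lambda_1^r-\dot x)+\eta_1^\ell(\lambda_1^r-\lambda_1^\ell)$. The leading terms do cancel, but the remainders carried by \eqref{E:stima11lpart}--\eqref{E:stima11rpart} are only of size $\co(1)\delta_0^*|(v_2^\ell-1)\eta_1^\ell|$ and $\co(1)\delta_0^*|(v_2^\ell-1)(\eta_1^\ell+\sC)|$, so the best this decomposition yields is $\co(1)\delta_0^*\,|v_2^\ell-1|\,|(\eta_1^\ell+\sC)\,\eta_1^\ell|$ --- one power of $(v_2^\ell-1)$ short and missing both the factor $\sC$ and the factor $(\sC+v_1^\ell)$. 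This fifth-order bound is precisely the ``sharp fourth order interaction estimate'' the paper needs; it is obtained by treating $\widehat\Psi_{1,1}=\eta_1^r(\lambda_1^r-\dot x)-\eta_1^\ell(\lambda_1^\ell-\dot x)$ as a function of $(v_1^\ell,v_2^\ell,\eta_1^\ell,\sC)$ in its own right, verifying that it vanishes on the \emph{five} hypersurfaces $\{v_2^\ell=1\}$, $\{\eta_1^\ell=0\}$, $\{\sC=0\}$, $\{\sC=-\eta_1^\ell\}$ and $\{\sC=-v_1^\ell\}$ (the last is a nontrivial algebraic identity and is what produces the factor $\sC+v_1^\ell$), together with $\partial_{v_2^\ell}\widehat\Psi_{1,1}\big|_{v_2^\ell=1}=0$, and only then applying the factoring lemma. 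A related, smaller point: your justification of the factor $(v_1^\ell+\sC)$ in \eqref{E:stimaetarbasepartial1} --- vanishing ``when $v_1^\ell=\sC=0$'' --- is a codimension-two condition and yields no linear factor; what is needed is vanishing on the whole hypersurface $\{v_1^\ell+\sC=0\}$ (i.e. $v_1^r=0$), or, as the paper does, the explicit identity $\eta_1^r-\eta_1^\ell-\sC=\tfrac{-v_1^r\,\eta_2^r}{\lambda_1(v_1^r,v_2^r-\eta_2^r)-v_1^r}$, which exhibits the factor $v_1^r=v_1^\ell+\sC$ at once and reduces \eqref{E:stimaetarbasepartial1} to \eqref{E:stimaetarbasepartial}.
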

\begin{proof}
Suppose that $\eta_2^\ell=0$, then we have $u_1+\eta_1^\ell=v_1^\ell$, and therefore, the speeds reduce to
\[ \lambda_1^\ell:=\lambda_1\left(v_1^\ell,u_2\right),\qquad \lambda_2^\ell=\lambda_2(\underline v ^\ell)\]
\[\lambda_1^r=\lambda_1\left(v_1^\ell-\eta_{1}^{\ell}+\eta_{1}^{r}, u_2\right),\qquad
     \lambda_2^r=\lambda_2(v_1^\ell-\eta_{1}^{\ell}+\eta_1^r, v_2^r )\,,\]
     and recall that \[\dot x:=\lambda_1\left(v_1^\ell+\sC_{},v_2^\ell\right)\;. \]
We now prove separately each estimate in the following steps.\\
{\sc Step 1}. Proof of~\eqref{E:stimaetarbasepartial}. \\
We consider $ \eta_{2}^{r}$ as smooth functions of the independent variables $\underline v^\ell=(v_{1}^{\ell},v_{2}^{\ell}), \eta_{1}^{\ell},\sC{}$ written in the form:
\bas
& \eta_{1}^{r}=\eta_{1}^{r}(v_{1}^{\ell},v_{2}^{\ell} ,\eta_{1}^{\ell}, \sC{}) \ ,
& \eta_{2}^{r}= \eta_{2}^{r}(v_{1}^{\ell},v_{2}^{\ell} ,\eta_{1}^{\ell}, \sC{}) \ .
\eas
Since $\underline w=\SC{i}{t}{\underline z}$ is equivalent to $\underline z=\SC{i}{-t}{\underline w}$, the functions $ \eta_{1}^{r}$ and $ \eta_{2}^{r}$ are implicitly defined also by the identity
\be{implicitdefinitionetar1}
 \SC{2}{-\eta_2^r}{\SC{1}{\gamma}{\underline v^{ \ell} }}\equiv\SC{1}{\eta_1^r}{\SC{1}{-\eta_1^\ell}{ \underline v^\ell}}\ .
\ee
We remark that although in the context of this model we consider $h\geq0$, the above functions are analytic in the larger domain
\ba
\label{E:domaknfknfgrjfwff1}
v_{1}^{\ell}, v_{1}^{\ell}-\eta_{1}^{\ell}, v_{1}^{\ell}+\sC\in[-\delta^*_0,\delta^*_0]\qquad v_{2}^{\ell}\in[p^*_{0}, p^*_1] \ .
\ea
Now, we observe that $\eta_2^r=\eta_{2}^{r}(v_{1}^{\ell},v_{2}^{\ell} ,\eta_{1}^{\ell},\sC)$ satisfies the following vanishing conditions
\begin{itemize}
\item[(i)] $\eta_{2}^{r}(v_{1}^{\ell},v_{2}^{\ell} ,\eta_{1}^{\ell},0)=0$. Indeed, if $\sC=0$, then $\eta_{2}^{r}\equiv\eta_{2}^{\ell}$, that is zero by assumption.
\item[(ii)]  $\eta_{2}^{r}(v_{1}^{\ell},v_{2}^{\ell} ,0,\sC)=0$. In this case, when $\eta_{1}^{\ell}=0$, we have $\underline v^\ell\equiv\underline u$ and thus $\underline v^{r}=\SC{1}{\sC}{ \underline u}{}$ and $\eta_{2}^{r}=0$.
\item[(iii)]  $\eta_{2}^{r}(v_{1}^{\ell}, 1 ,\eta_{1}^{\ell},\sC)=0$. This holds true because if $v_{2}^{\ell}=1$, then $\underline v^{r}$, $\underline v^{\ell}$ and $\underline u$ are connected following the $1$-Hugoniot curve that is a horizontal line at $p=1$ and we have $\eta_{2}^{r}\equiv\eta_{2}^{\ell}=0$ and $\eta_{1}^{r}\equiv\eta_{1}^{\ell}+\sC$.
\item[(iv)]  $\eta_{2}^{r}(v_{1}^{\ell},v_{2}^{\ell} ,\tau,-\tau)=0$ for $\tau\in\mathbb{R}$. This follows if 
$\sC+\eta_{1}^{\ell}=0$ since in this case $\underline v^{r}=\mathbf{S}_{1}\big(\sC; \underline v^{\ell}\big)=\mathbf{S}_{1}\left(- \eta_{1}^{\ell}; \mathbf{S}_{1}\left( \eta_{1}^{\ell};\underline u\right)\right)$ that implies $\underline u\equiv \underline v^r$
 and 
$\eta_{1}^{r}\equiv\eta_{2}^{r}\equiv \sC+\eta_{1}^{\ell}\equiv0 $.
\end{itemize}
In addition to these vanishing conditions, we claim that  
\be{E:claimvanishingderivative}
\ptlls{ {v_{2}^{\ell}}}{ } \eta_{2}^{r} \left(v_{1}^{\ell},v_2^\ell, \eta_{1}^{\ell},\sC{}\right) \Big|_{v_2^\ell=1}=0\ .
\ee
holds true as well. Before we prove the claim, we show that we can obtain~\eqref{E:stimaetarbasepartial}. Indeed, using Lemma~\ref{E:rewgwgeq} in Appendix~\ref{App:C},
we can express $\eta_2^r$ as
\begin{equation}
\label{psi1est-case1}
\begin{aligned}
\eta_2^r\left(v_{1}^{\ell},v_2^\ell, \eta_{1}^{\ell},\sC  \right)
&=\int_1^{v_2^\ell}\int_1^p \frac{\del^2 \eta_2^r}{\del\tilde p^2}\left(v_{1}^{\ell},\tilde{p},\eta_1^{\ell},\gamma\right) d\tilde p\,dp \;.
\end{aligned}
\end{equation}
On the other hand, relying on the vanishing conditions (i)-(ii) and (iv) and
applying Lemma 2.6 of~\cite{Bre}, we get
\bas 
\left|\frac{\del^2\eta_2^r}{\del\tilde p^2}\left(v_{1}^{\ell},\tilde{p},\eta_1^{\ell},\gamma\right)\right|= \co(1) |\eta_1^\ell| |\sC|   |\eta_1^{\ell} +\sC|\;.
\eas
Thus, from~\eqref{psi1est-case1} it follows immediately estimate~\eqref{E:stimaetarbasepartial}.

In view of the above, it remains to prove claim~\eqref{E:claimvanishingderivative}. To accomplish this, we write explicitly the second component of~\eqref{implicitdefinitionetar1}. Denoting by $\pSC{1}{\cdot}{\cdot}$ the second component of the Hugoniot curve $\SC{1}{\cdot}{\cdot}$ recalled in~\eqref{E:p1shock}, and recalling that we are using Cartesian coordinates, we get
\[
\pSC{1}{\gamma}{\underline v^{ \ell} }-\eta_2^r=\pSC{1}{\eta_1^r}{v_1^\ell-\eta_1^\ell,\pSC{1}{-\eta_1^\ell}{ \underline v^\ell}} \ .
\]
Now, we take the derivative $\ptll{v_2^\ell}$ of this equality and  by the chain rule, we arrive at
\be{accessrior030r2ke2}
\ptpS{v_2^\ell}{1}{\gamma}{\underline v^{\ell} }-\ptlls{v_2^\ell}{\eta_2^r}\left(\underline v^\ell , \eta_{1}^{\ell},\sC{}\right)
=\ptpS{\eta_1^r}{1}{\eta_1^r}{\underline u } 
\ptlls{v_2^\ell}{\eta_1^r}\left(\underline v^\ell , \eta_{1}^{\ell},\sC{}\right)
+\ptpS{u_2} {1}{\eta_1^r}{\underline u } 
\ptpS{v_2^\ell} {1}{-\eta_1^\ell}{ \underline v^{ \ell} }\ .
\ee
Next, using~\eqref{contiderivataS1}, we evaluate all terms for $v_2^\ell=u_2=1$ and hence, $\eta_2^r=0$ and $\eta_1^r=\gamma+\eta_1^\ell$ to get 
\bas
&\ptpS{v_2^\ell}{1}{\gamma}{v_1^{ \ell},v_2^{ \ell} }\Big|_{v_2^\ell=1}=\frac{1+v_1^{ \ell}}{1+v_1^r} \ ,
&&\ptpS{u_2}{1}{\eta_1^r=\gamma+\eta_1^\ell}{ u_1,u_2 }\Big|_{u_2=1}= \frac{1+u_1}{1+v_1^r} \ ,
\\
&\ptpS{v_2^\ell}{1}{-\eta_1^\ell}{ v_1^{ \ell},v_2^\ell }\Big|_{v_2^\ell=1}=\frac{1+v_1^\ell}{1+u_1} \ ,
&&\ptpS{\eta_1^r}{1}{\eta_1^r}{ u_1,u_2 }\Big|_{u_2=1} =0 \ .
\eas
Substituting these into~\eqref{accessrior030r2ke2} we compute 
\bas{}
\ptlls{v_2^\ell}{\eta_2^r}\left( v_1^\ell,v_2^\ell , \eta_{1}^{\ell},\sC{}\right)\Big|_{v_2^\ell=1}
&=
\ptpS{v_2^\ell}{1}{\gamma}{v_1^{ \ell},v_2^{ \ell} }\Big|_{v_2^\ell=1}
-\ptpS{\eta_1^r}{1}{\eta_1^r}{\underline u } \Big|_{u_2=1}
\ptlls{v_2^\ell}{\eta_1^r}\left( v_1^\ell,v_2^\ell , \eta_{1}^{\ell},\sC{}\right)\Big|_{v_2^\ell=1}
\\
&\qquad\qquad\qquad\qquad\qquad
-\ptpS{u_2}{1}{\eta_1^r}{\underline u } \Big|_{u_2=1}
\ptpS{v_2^\ell}{1}{-\eta_1^\ell}{ \underline v^{ \ell} }\Big|_{v_2^\ell=1}\\
&
=\frac{1+v_1^{ \ell}}{1+v_1^r}-0-\frac{\cancel{1+u_1}}{1+v_1^r}\cdot \frac{1+v_1^\ell}{\cancel{1+u_1}}
=0
\ .
\eas
and claim~\eqref{E:claimvanishingderivative} follows. This concludes the proof of~\eqref{E:stimaetarbasepartial}.

\noindent
{\sc Step 2}. Proof of~\eqref{E:stimaetarbasepartial1}. \\
Using the same notation as in the first step, we first recall that in the Cartesian coordinates $\eta_1^r$ represent the difference in the first components of the connected states. This means that $\eta_1^r$ is the difference of the $h$-component that is $\SC{2}{-\eta_2^r}{\underline v^r}-\underline u$. Denoting by $\hSC{1}{\cdot}{\cdot}$ the first component of the Hugoniot curve and 
using expression~\eqref{E:h2shock}, we get
\[
\eta_1^r=\hSC{2}{-\eta_2^r}{\underline v^r}-v_1^r+(v_1^r- u_1)\\
=\frac{-v_1^r\eta_2^r}{\lambda_{1}(v_1^r,v_2^r-\eta_2^r)-v_1^r} + (v_1^r-v_1^\ell)+(v_1^\ell-u_1)
\]
Since $\sC=v_1^r-v_1^\ell$ and $\eta_1^\ell=v_1^\ell-u_1$, we obtain
\[
|\eta_1^r-\eta_1^\ell -\sC|\stackrel{\eqref{stimadaldalvassos1}}{\leq} \frac{v_1^r|\eta_2^r|}{p^*_{0}}\stackrel{\eqref{E:stimaetarbasepartial}}{\leq}\co(1)(v_1^\ell+\gamma)|v^{ \ell}_{2}-1|^{2} | \eta_{1}^{\ell}+ \sC_{{{}}}||\eta_{1}^{\ell} \sC_{{{}}}|\ 
.\]
as claimed in~\eqref{E:stimaetarbasepartial1}.

\noindent
{\sc Step 3}. Proof of~\eqref{E:stima11part}. \\
We define the auxiliary functions of the independent variables $v_{1}^{\ell},v_{2}^{\ell}, \eta_{1}^{\ell} ,\sC{}$ as follows
\bas
&\Psi_{1,\ell} \left(v_{1}^{\ell},v_{2}^{\ell}, \eta_{1}^{\ell} ,\sC{}\right)=\dot x_{}-\lambda_{1}^{\ell}\equiv\lambda_1\left(v_1^\ell+\sC_{},v_2^\ell\right)-\lambda_1\left(v_1^\ell,u_2\right) \ ,
\\
&\Psi_{1,r}(v_{1}^{\ell},v_{2}^{\ell} ,\eta_{1}^{\ell} ,\sC{})=\dot x_{}-\lambda_{1}^{r}\equiv \lambda_1\left(v_1^\ell+\sC_{},v_2^\ell\right)-\lambda_1\left(v_1^\ell-\eta_{1}^{\ell}+\eta_{1}^{r}, u_2\right) \ .
\eas
and by evaluating each one of them and their derivatives at particular points of $ \left(v_{1}^{\ell},v_{2}^{\ell}, \eta_{1}^{\ell} ,\sC{}\right)$, we prove~\eqref{E:stima11part}. To begin with, we note that the functional $\Psi_{1,\ell}$ satisfies the following identities
\bas
\Psi_{1,\ell} \left(v_{1}^{\ell},v_{2}^{\ell}, s,-s\right)=0,\quad  \Psi_{1,\ell} \left(v_{1}^{\ell},1, \eta_{1}^{\ell} ,\sC\right)=0,\qquad\forall s\in\mathbb{R}
\eas
Indeed, if $\sC+\eta_{1}^{\ell} =0$, then $\underline u=\underline v^r$ and $\dot x=\lambda_1^r$ since $\eta_2^\ell=0$ by assumption and using the symmetry~\eqref{E:symmetry1HC}. On the other hand, if $v_{2}^{\ell}=1$, then $u_2=1$ and $\dot x=\lambda_1^\ell=-1$. Having now these two vanishing conditions, we can express $\Psi_{1,\ell}$ as
\be{Psi1lApC}
\Psi_{1,\ell} \left(v_{1}^{\ell},v_{2}^{\ell}, \eta_{1}^{\ell} ,\sC\right)= \int_{-\eta_{1}^{\ell} }^\sC \frac{\partial\Psi_{1,\ell} }{\partial\sC}  \left(v_{1}^{\ell},v_{2}^{\ell}, \eta_{1}^{\ell} ,\tau\right)d\tau\;.
\ee
Next, we show that
\bas
\ptlls{\sC_{}}{\Psi_{1,\ell} }\left(0,v_{2}^{\ell}, 0 ,0\right)=\frac{v_2^\ell-1}{v_2^\ell}\ .
\eas
This is a direct computation of the explicit expression of $\Psi_{1,\ell}$. Since $u_2$ is given by
\bas
 u_2=v_2^\ell+\frac{(v_2^\ell-1)\eta_1^\ell}{v_1^\ell-\eta_1^\ell-\lambda_1(v_1^\ell-\eta_1^\ell,v_2^\ell)} 
\eas
from $\underline u=\SC{1}{-\eta_1^\ell}{\underline v^\ell}$ computed using~\eqref{E:p1shock} and it is independent of $\sC_{}$, we get
\bas
\ptlls{\sC_{}}{\Psi_{1,\ell}}\left(0,v_{2}^{\ell}, 0 ,0\right)&=\ptll{\sC_{}} \left(\lambda_1\left(v_1^\ell+\sC_{},v_2^\ell\right)-\lambda_1\left(v_1^\ell,v_2^\ell+u_2\right) \right)\Big|_{\sC_{}=v_1^\ell=0}
\stackrel{\eqref{E:s1derh1}}{=}\frac{v_2^\ell-1}{v_2^\ell}\ .
\eas
Combining now with~\eqref{Psi1lApC}, we get
\be{}
\Psi_{1,\ell} \left(v_{1}^{\ell},v_{2}^{\ell}, \eta_{1}^{\ell} ,\sC\right)= (\eta_1^\ell+\sC) \frac{\partial\Psi_{1,\ell} }{\partial\sC} \left(0,v_{2}^{\ell}, 0 ,0\right)+\int_{-\eta_{1}^{\ell} }^\sC\int_1^{v_{2}^{\ell}} \left[\frac{\partial^2\Psi_{1,\ell} }{\partial\sC\partial v_2}  \left(v_{1}^{\ell},v_{2}, \eta_{1}^{\ell} ,\tau\right)-
\frac{\partial^2\Psi_{1,\ell} }{\partial\sC\partial v_2}  \left( 0 ,v_{2}, 0,0 \right)\right]
dv_2\,d\tau\;.
\ee
Then~\eqref{E:stima11lpart} follows since
\bas
\frac{\partial^2\Psi_{1,\ell} }{\partial\sC\partial v_2}  \left(v_{1}^{\ell},v_{2}, \eta_{1}^{\ell} ,\tau\right)-
\frac{\partial^2\Psi_{1,\ell} }{\partial\sC\partial v_2}  \left( 0 ,v_{2}, 0,0 \right)=\co(1)(|v_1^\ell |+|\eta_1^\ell|+|\sC|)=\co(1)\delta_0^*
\eas
in the domain~\eqref{E:domaknfknfgrjfwff1}.

We proceed in a similar way with $\Psi_{1,r}$, that is again analytic in~\eqref{E:domaknfknfgrjfwff1} and it vanishes as follows
\bas
\Psi_{1,r} \left(v_{1}^{\ell},v_{2}^{\ell}, 0,\sC\right)=0,\quad  \Psi_{1,r} \left(v_{1}^{\ell},1, \eta_{1}^{\ell} ,\sC\right)=0.
\eas
Indeed, if $\eta_1^\ell=0$, then $\eta_1^r=\sC$ and $\eta_2^r=0$ since by assumption $\eta_2^\ell=0$. Hence $\dot x=\lambda_1^r$. On the other hand, if $v_2^\ell=1$, then $u_2=1$ and $\dot x=\lambda_1^r=-1$. Thus the above two vanishing conditions hold true. Hence, we have the following expression
\be{Psi1rApC}
\Psi_{1,r} \left(v_{1}^{\ell},v_{2}^{\ell}, \eta_{1}^{\ell} ,\sC\right)= \int_{0}^{\eta_1^\ell} \frac{\partial\Psi_{1,r} }{\partial\eta_1^\ell}  \left(v_{1}^{\ell},v_{2}^{\ell}, \eta_{1} ,\sC\right)d\eta_1\;.
\ee
In order to find the leading coefficient in~\eqref{E:stima11rpart}, we now compute the derivative
\bas
\ptlls{\eta_1^\ell}{\Psi_{1,r}} \left(0,v_{2}^{\ell}, 0 ,0\right)=\ptll{\eta_1^\ell}\left(\lambda_1\left(v_1^\ell+\sC_{},v_2^\ell\right)-\lambda_1\left(v_1^\ell-\eta_{1}^{\ell}+\eta_{1}^{r}, u_2\right)\right)\Big|_{\sC_{}=v_1^\ell=\eta_1^\ell= 0}\ .
\eas
However, here not only $u_2$, but also $\eta_1^r$ depends on $\eta_1^\ell$.
Nevertheless, by~\eqref{E:stimaetarbasepartial1}, 
\bas
\ptlls{\eta_1^\ell}{\eta_1^r}\Big|_{\sC_{}= 0}=1
\qquad\Rightarrow\qquad
\ptll{\eta_1^\ell}\left(v_1^\ell-\eta_{1}^{\ell}+\eta_{1}^{r}\right)\Big|_{\sC_{}= 0}=0\ .
\eas
As this factor vanishes, by the explicit expression of $u_2$ and by~\eqref{E:s1derp1} we thus get, 
\bas
\ptlls{\eta_1^\ell}{\Psi_{1,r}} \left(0,v_{2}^{\ell}, 0 ,0\right)
\equiv 
-\ptll{\eta_1^\ell}\left(\lambda_1\left(v_1^\ell-\eta_{1}^{\ell}+\eta_{1}^{r}, u_2\right)\right)\Big|_{\sC_{}=v_1^\ell=\eta_1^\ell= 0}
=-\left(0-1\cdot \frac{v_2^\ell-1}{v_2^\ell}\right)
=\frac{v_2^\ell-1}{v_2^\ell}
\eas
since $\ptll{\eta_1^\ell}u_2=\frac{v_2^\ell-1}{v_2^\ell}$ when $v_1^\ell=\eta_1^\ell= 0$. Substituting into~\eqref{Psi1rApC}, we have
\be{}
\Psi_{1,r} \left(v_{1}^{\ell},v_{2}^{\ell}, \eta_{1}^{\ell} ,\sC\right)= \frac{\eta_1^\ell (v_2^\ell-1)}{v_2^\ell}+
\int_{0}^{\eta_1^\ell} \frac{\partial\Psi_{1,r} }{\partial\eta_1^\ell}  \left(v_{1}^{\ell},v_{2}^{\ell}, \eta_{1}^\ell ,\sC\right)-\ptlls{\eta_1^\ell}{\Psi_{1,r}} \left(0,v_{2}^{\ell}, 0 ,0\right)d\eta_1\;.
\ee
with
\[
\frac{\partial\Psi_{1,r} }{\partial\eta_1^\ell}  \left(v_{1}^{\ell},v_{2}^{\ell}, \eta_{1}^\ell ,\sC\right)-\ptlls{\eta_1^\ell}{\Psi_{1,r}} \left(0,v_{2}^{\ell}, 0 ,0\right)=\co(1) |v_2^\ell-1| (|v_1^\ell |+|\eta_1^\ell|+|\sC|)=\co(1)\delta_0^*|v_2^\ell-1| \;.
\]
This proves the desired estimate~\eqref{E:stima11rpart}.

\noindent
{\sc Step 4}. Proof of~\eqref{E:fvgabfafvGroup}. \\
Similarly to the previous step, we define the commutator functions of the independent variables $v_{1}^{\ell},v_{2}^{\ell}, \eta_{1}^{\ell} ,\sC{}$ as follows
\bas
&\widehat\Psi_{1,1} \left(v_{1}^{\ell},v_{2}^{\ell}, \eta_{1}^{\ell} ,\sC{}\right) :=\eta_{1}^{r}( \lambda_{1}^{r}-\dot x_{})-\eta_{1}^{\ell}(\lambda_{1}^{\ell}-\dot x_{}) \;,
\\
&\widehat\Psi_{1,2}\left(v_{1}^{\ell},v_{2}^{\ell} ,\eta_{1}^{\ell} ,\sC{}\right) :=\eta_{2}^{r}(\lambda_{2}^{r}-\dot x_{} ) -\eta_{2}^{\ell}(\lambda_{2}^{\ell}-\dot x_{})\ ,
\eas
which are analytic in the domain~\eqref{E:domaknfknfgrjfwff1}. These satisfy the following conditions:
\be{ApCvan111}
\widehat\Psi_{1,i} \left(v_{1}^{\ell},1, \eta_{1}^{\ell} ,\sC{}\right) =0,\quad \widehat\Psi_{1,i} \left(v_{1}^{\ell},v_{2}^{\ell}, 0 ,\sC{}\right)=0,\quad\widehat\Psi_{1,i} \left(v_{1}^{\ell},v_{2}^{\ell}, \eta_{1}^{\ell} ,0\right)=0\,\quad\widehat\Psi_{1,i} \left(v_{1}^{\ell},v_{2}^{\ell}, s,-s\right)=0
\ee
for $s\in\mathbb{R}$ and $i=1,\,2$. Indeed, recall that $\eta_{2}^{\ell}=0$ by assumption, then
\begin{itemize}
\item[(i)]  if $v_{2}^{\ell}=1$, we have $\lambda_{1}^{r}=\dot x_{}=\lambda_{1}^{\ell}=-1$, $\eta_{2}^{r}=0$ and $v_{2}^{r}=v_{2}^{\ell}=u_{2} =1$,
\item[(ii)]  if $\eta_{1}^{\ell}=0$, then $\underline u=\underline v^\ell$. Hence, $\underline v^r=\SC{1}{\sC_{}}{\underline u}$ so that $ \lambda_{1}^{r}=\dot x_{}$ and $\eta_{2}^{r}=0$,
\item[(iii)] if $\sC=0$, then $\underline v^r=\underline v^\ell$ and thus $\eta_{1}^{r}=\eta_{1}^{\ell}$, $\lambda_{1}^{r}=\lambda_{1}^{\ell}$ and $\eta_{2}^{r}=\eta_{2}^{\ell}=0$,
\item[(iv)]  if $\sC=-\eta_{1}^{\ell}$, then $\underline v^r=\underline u$ and hence $\eta_{1}^{r}=\eta_{2}^{r}=0$ and $\lambda_{1}^{\ell}=\dot x_{}$ by~\eqref{E:symmetry1HC}.
\end{itemize}
All the above cases imply immediately that $\widehat\Psi_{1,i}=0$ for $i=1,\,2$. For $i=1$, it also holds true that
\be{ApCvan112}\widehat\Psi_{1,1} \left(s,v_{2}^{\ell}, \eta_{1}^{\ell} ,-s\right)=0,\qquad \forall s\in\mathbb{R}\;.
\ee
To check this, assume that $\sC=-v_{1}^{\ell}$ then necessarily $v_1^r=u_1+\eta_1^r=0$ and this yields $\eta_1^r=\eta_1^\ell-v_{1}^{\ell}$, $ \lambda_{1}^{r}=-u_2$, $\dot x_{}=-v_2^\ell$. So 
\be{E:afqwrfggggggwggrw000}
\widehat\Psi_{1,1} \left(v_{1}^{\ell},v_{2}^{\ell}, \eta_{1}^{\ell} ,-v_{1}^{\ell}\right)= \eta_{1}^{r}( \lambda_{1}^{r}-\dot x_{})-\eta_{1}^{\ell}(\lambda_{1}^{\ell}-\dot x_{}) 
=-(\eta_1^\ell-v_{1}^{\ell}) (u_2-v_2^\ell)-\eta_{1}^{\ell} (\lambda_{1}^{\ell}+v_2^\ell) \ .
\ee
Since $\underline v^\ell= \SC{1}{\eta_1^\ell}{\underline u}$, it holds $\underline u= \SC{1}{-\eta_1^\ell}{\underline v^\ell}$ and by definition~\eqref{E:p1shock}, we get
\[
u_2-v_2^\ell=\frac{ v_2^\ell-1}{v_1^\ell-\eta_1^\ell-\lambda_{1}^{\ell}}\eta_1^\ell \ .
\]
We thus deduce
\be{E:afqwrfggggggwggrw}
\widehat\Psi_{1,1} \left(v_{1}^{\ell},v_{2}^{\ell}, \eta_{1}^{\ell} ,-v_{1}^{\ell}\right)
=-\frac{\eta_{1}^{\ell}}{v_1^\ell-\eta_1^\ell-\lambda_{1}^{\ell}} \left(( v_2^\ell-1)(\eta_1^\ell-v_{1}^{\ell})+(\lambda_{1}^{\ell}+v_2^\ell) (v_1^\ell-\eta_1^\ell-\lambda_{1}^{\ell}) \right)\ .
\ee
Now, by~\eqref{E:symmetry1HC}, the speed $\lambda_{1}^{\ell}$ is
\[
\lambda_{1}^{\ell}\equiv -\frac{1}{2}\left[ v_2^\ell-v_1^\ell+\eta_1^\ell+\sqrt{(v_2^\ell-v_1^\ell+\eta_1^\ell)^{2}+4(v_1^\ell-\eta_1^\ell)}\right]
\]
so that
\[
(\lambda_{1}^{\ell}+v_2^\ell) (v_1^\ell-\eta_1^\ell-\lambda_{1}^{\ell})=\frac{1}{4}(v_2^\ell+v_1^\ell-\eta_1^\ell)^2-\frac{1}{4}\left((v_2^\ell-v_1^\ell+\eta_1^\ell)^{2}+4(v_1^\ell-\eta_1^\ell)\right)
\equiv (v_2^\ell-1)(v_1^\ell-\eta_1^\ell)\ .
\] 
This immediately yields that the term within the parenthesis in~\eqref{E:afqwrfggggggwggrw} is zero and hence,~\eqref{E:afqwrfggggggwggrw000} holds true.

Next, we prove that
\be{ApCvan113}
\ptlls{v_2^\ell}{\widehat\Psi_{1,1}} \left(v_{1}^{\ell},v_2^\ell, \eta_{1}^{\ell} ,\sC{}\right)\Big|_{v_2^\ell=1}=\ptlls{v_2^\ell}{\widehat\Psi_{1,2}} \left(v_{1}^{\ell},v_2^\ell, \eta_{1}^{\ell} ,\sC{}\right)\Big|_{v_2^\ell=1}=0 \ .
\ee
To show these, we use~\eqref{E:claimvanishingderivative} and get $ \ptll{v_2^\ell}\eta_{2}^{r} =\eta_{2}^{r}=0$ when $v_2^\ell=1$. Then it follows easily
\[
\ptlls{v_2^\ell}{\widehat\Psi_{1,2}} \left(v_{1}^{\ell},v_2^\ell, \eta_{1}^{\ell} ,\sC{}\right)\Big|_{v_2^\ell=1}=
 \left(\ptlls{v_2^\ell}{\eta_{2}^{r}}\right)\Big|_{v_2^\ell=1}(\lambda_{2}^{r}-\dot x_{} )\Big|_{v_2^\ell=1}+\eta_{2}^{r}\Big|_{v_2^\ell=1}\ptll{v_2^\ell}\left(\lambda_{2}^{r}-\dot x_{} \right)\Big|_{v_2^\ell=1}
=0 
\]
Similarly $\ptll{v_2^\ell}\left(\eta_{1}^{r}( \lambda_{1}^{r}-\dot x_{})\right)=(\eta_1^\ell+\sC_{})\ptll{v_2^\ell}\left(\lambda_{1}^{r}-\dot x_{}\right)$ when $v_2^\ell=1$ since by~\eqref{E:stimaetarbasepartial1} we know that $ \ptll{v_2^\ell}\eta_{2}^{r}=0$. Moreover, we evaluate
\bas
\ptll{v_2^\ell}\left(\dot x_{}-\lambda_{1}^{\ell}\right)\Big|_{v_2^\ell=1}&=\ptlls{v_2^\ell}{\lambda_1}\left(v_1^\ell+\sC_{},v_2^\ell\right)\Big|_{v_2^\ell=1}-\ptlls{v_2^\ell}{\lambda_1}\left(v_1^\ell,u_2\right)\Big|_{v_2^\ell=1} \\
&\stackrel{\eqref{E:s1derp1}}{=}\frac{-1}{1+v_1^\ell+\sC_{}}-\frac{-1}{1+v_1^\ell} \cdot \left(1+\frac{\eta_1^\ell}{1+v_1^\ell-\eta_1^\ell}\right)
=\frac{\sC_{}+\eta_1^\ell}{(1+v_1^\ell-\eta_1^\ell)(1+v_1^\ell+\sC_{})}\;,
\\
\ptll{v_2^\ell}\left(\lambda_{1}^{r}-\dot x_{}\right) \Big|_{v_2^\ell=1}
&\stackrel{\eqref{E:s1derp1}}{=}\frac{-1}{1+v_1^\ell+\sC_{}} \cdot \left(1+\frac{\eta_1^\ell}{1+v_1^\ell-\eta_1^\ell}\right)-\frac{-1}{1+v_1^\ell+\sC_{}}
=\frac{ -\eta_1^\ell}{(1+v_1^\ell-\eta_1^\ell)(1+v_1^\ell+\sC_{})}\;.
\eas
Using these values, we are now able to compute
\[
\ptlls{v_2^\ell}{\widehat\Psi_{1,1}} \left(v_{1}^{\ell},v_2^\ell, \eta_{1}^{\ell} ,\sC{}\right)\Big|_{v_2^\ell=1}=(\eta_1^\ell+\sC_{})\cdot \frac{ -\eta_1^\ell}{(1+v_1^\ell-\eta_1^\ell)(1+v_1^\ell+\sC_{})}+\eta_1^\ell\cdot\frac{\sC_{}+\eta_1^\ell}{(1+v_1^\ell-\eta_1^\ell)(1+v_1^\ell+\sC_{})}=0\ .
\]
Now, estimates~\eqref{E:fvgabfafvGroup} follow immediately combining~\eqref{ApCvan111},~\eqref{ApCvan112} and~\eqref{ApCvan113} with Lemma~\ref{E:rewgwgeq} in Appendix~\ref{App:C}.
\end{proof}

Now, we extend the previous result to the general case when $\eta_2^\ell$ is not necessarily zero.
\begin{corollary} 
\label{C:stimegenerali}
Suppose $\underline v^\ell=\SC{2}{\eta_2^\ell}{\SC{1}{\eta_1^\ell}{\underline u}}$,  and $\underline v^r=\SC{2}{\eta_2^r}{\SC{1}{\eta_1^r}{\underline u}}$  and let $\gamma$ be a wave of the first family, i.e. $\underline v^{r}=\mathbf{S}_{1}\big(\sC; \underline v^{\ell}\big)$. 
Then 
\begin{subequations} 
\begin{align}
\label{E:stimaetarbasefull1}
|\eta_{1}^{r}-\eta_{1}^{\ell}-\sC_{{{}}}|
&\leq \co(1) ( v^{\ell}_{1}+\gamma)|v^{ \ell}_{2}-1|^{2} | \eta_{1}^{\ell}+  \sC_{{{}}}||\eta_{1}^{\ell} \sC_{{{}}}| +\co(1)|\eta_2^\ell \sC_{{{}}}| \ ,
\\
 \label{E:stimaetarbasefull}
|\eta_{2}^{r}-\eta_2^\ell|
&\leq \co(1)   |v^{ \ell}_{2}-1|^{2} | \eta_{1}^{\ell}+ \sC_{{{}}}||\eta_{1}^{\ell} \sC_{{{}}} |+\co(1) |\eta_2^\ell \sC_{{{}}}| \ .
\end{align}
\end{subequations}
Moreover, the speed $\dot x$ given in~\eqref{C1xdot}  satisfies
\begin{subequations} 
\label{E:stima11full}
\begin{align}
\label{E:stima11lfull}
&\left\lvert\dot x_{}-\lambda_{1}^{\ell}-\frac{(v_{2}^{\ell}-1) (\eta_{1}^{\ell}+\sC_{})}{v_2^\ell}\right\rvert\leq \co(1)\delta_0^*\cdot \left\lvert (v_{2}^{\ell}-1) (\eta_{1}^{\ell}+\sC_{})\right\rvert+\co(1)\left\lvert \eta_2\right\rvert
\\
\label{E:stima11full2}
&\left\lvert\dot x_{}-\lambda_{1}^{r}- \frac{(v_{2}^{\ell}-1) \eta_{1}^{\ell} }{v_2^\ell} \right\rvert\leq \co(1) \delta_0^* \cdot \left\lvert(v_{2}^{\ell}-1) \eta_{1}^{\ell}\right\rvert+\co(1)\left\lvert \eta_2\right\rvert\;,
\end{align}
\end{subequations}
while the commutators satisfy the estimates
\begin{subequations} 
\label{E:fvgabfafvGroupfull}
\begin{align}
\label{E:fvgabfafvNewfull}
&\left\lvert\eta_{1}^{r}( \lambda_{1}^{r}-\dot x_{})-\eta_{1}^{\ell}(\lambda_{1}^{\ell}-\dot x_{})\right\rvert\leq 
\co(1)(\sC+v_{1}^{\ell}) (v_{2}^{\ell}-1)^2 \left\lvert(\eta_{1}^{\ell}+\sC_{ }) \eta_{1}^{\ell}\sC_{ } \right\rvert+\co(1)|\eta_2^\ell \sC_{{{}}}|
\;,
\\
&\label{E:fvgabfafvNewfull2}
\left\lvert \eta_{2}^{r}(\lambda_{2}^{r}-\dot x_{})-\eta_{2}^{\ell}(\lambda_{2}^{\ell}-\dot x_{ })\right\rvert\leq 
\co(1) (v_{2}^{\ell}-1)^{2} \left\lvert(\eta_{1}^{\ell}+\sC_{ }) \eta_{1}^{\ell}\sC_{ } \right\rvert +\co(1)|\eta_2^\ell \sC_{{{}}}| \;,
\end{align}
\end{subequations}
where $\lambda_i^r$, $\lambda_i^\ell$ correspond to the speeds of the $i$-family that are given in~\eqref{C1lambdaell}--\eqref{C1lambdar}.
\end{corollary}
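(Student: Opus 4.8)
\textbf{Proof plan for Corollary~\ref{C:stimegenerali}.}
The plan is to deduce the general case (with $\eta_2^\ell\neq 0$) from Lemma~\ref{L:sstimaetarbase} (the case $\eta_2^\ell=0$) by a perturbation argument in the variable $\eta_2^\ell$. First I would introduce an auxiliary state $\underline u^\flat\doteq \SC{2}{\eta_2^\ell}{\underline u}$, so that the configuration with left state $\underline v^\ell=\SC{2}{\eta_2^\ell}{\SC{1}{\eta_1^\ell}{\underline u}}$ is \emph{not} directly of the form treated in Lemma~\ref{L:sstimaetarbase}; instead I would define the quantities $\eta_i^{\flat,r}$ by connecting $\underline u$ to $\underline v^r$ first along the $1$-Hugoniot curve issuing from $\underline v^\ell$ with the \emph{same} $\gamma$ but starting from the state $\SC{1}{\eta_1^\ell}{\underline u}$ (i.e. treating $\eta_2^\ell$ as a formal parameter). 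Concretely, I would regard all of $\eta_1^r$, $\eta_2^r$, $\dot x$, $\lambda_i^{\ell/r}$ and the commutators as analytic functions of the independent variables $(v_1^\ell, v_2^\ell, \eta_1^\ell, \eta_2^\ell, \gamma)$ on the enlarged domain analogous to~\eqref{E:domaknfknfgrjfwff1}, and write each target quantity $F$ via the first-order Taylor identity
\[
F(v_1^\ell,v_2^\ell,\eta_1^\ell,\eta_2^\ell,\gamma)=F(v_1^\ell,v_2^\ell,\eta_1^\ell,0,\gamma)+\int_0^{\eta_2^\ell}\frac{\partial F}{\partial\eta_2^\ell}(v_1^\ell,v_2^\ell,\eta_1^\ell,\tau,\gamma)\,d\tau.
\]
The first term on the right is controlled by Lemma~\ref{L:sstimaetarbase}, giving exactly the $\eta_2^\ell=0$ parts of~\eqref{E:stimaetarbasefull1}--\eqref{E:fvgabfafvGroupfull}. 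The task then reduces to bounding each $\partial F/\partial\eta_2^\ell$ by $\co(1)|\gamma|$, which upon integration produces the extra error term $\co(1)|\eta_2^\ell\gamma|$ appearing in every estimate (and $\co(1)|\eta_2^\ell|$ for the speed estimates~\eqref{E:stima11full}).

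The key structural fact I would exploit is that all the target quantities vanish identically when $\gamma=0$: indeed, if $\gamma=0$ then $\underline v^r=\underline v^\ell$, hence $\eta_i^r=\eta_i^\ell$, $\lambda_i^r=\lambda_i^\ell$, $\dot x=\lambda_1^\ell$, so $\eta_1^r-\eta_1^\ell-\gamma$, $\eta_2^r-\eta_2^\ell$, and both commutators are zero, while the speed differences reduce to the already-known interaction expressions. Consequently each $F$ (or $F$ minus its known leading term) is divisible by $\gamma$, so $\partial F/\partial\eta_2^\ell$ is also divisible by $\gamma$, and by analyticity on a compact domain $|\partial F/\partial\eta_2^\ell|\le\co(1)|\gamma|$. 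For the speed estimates~\eqref{E:stima11full} the leading term $\tfrac{(v_2^\ell-1)(\eta_1^\ell+\gamma)}{v_2^\ell}$ is already present in~\eqref{E:stima11part}, and the remainder is divisible by $\gamma$ in the same way, but here the $\eta_2^\ell$-derivative contributes a term of size $\co(1)|\eta_2^\ell|$ coming from the dependence of $\lambda_1^\ell=\lambda_1(u_1+\eta_1^\ell,u_2)$ and of the intermediate state $\omega^\ell$ on $\eta_2^\ell$ through the $2$-Hugoniot curve — this is why~\eqref{E:stima11lfull}--\eqref{E:stima11full2} carry the additive $\co(1)|\eta_2|$ rather than $\co(1)|\eta_2\gamma|$. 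I would verify these divisibility claims by invoking Lemma~\ref{E:rewgwgeq} of Appendix~\ref{App:C} (or the standard Hadamard-type factorization Lemma~2.5/2.6 of~\cite{Bre}) applied in the variable $\gamma$, exactly as in the proof of Lemma~\ref{L:sstimaetarbase}.

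The main obstacle I expect is bookkeeping rather than conceptual: one must check that differentiating the implicit relation~\eqref{implicitdefinitionetar1} (suitably generalized to incorporate $\eta_2^\ell$) with respect to $\eta_2^\ell$ does not destroy the vanishing-at-$\gamma=0$ structure, i.e. that $\partial_{\eta_2^\ell}\eta_i^r$ and $\partial_{\eta_2^\ell}\dot x$ inherit the appropriate factor of $\gamma$ (for the higher-order estimates) or are merely $\co(1)$-bounded (for the speed estimates), and that the bounds are uniform on the domain~\eqref{:setKbd-1bound} with constants depending only on the system. A secondary point is that the $2$-Hugoniot curve through $\underline u$ must be handled using~\eqref{E:h2shock}, \eqref{contiderivataS2} and the bound~\eqref{E:jacobianS2}, so that the composition $\SC{2}{\eta_2^\ell}{\cdot}$ contributes only smooth, uniformly bounded Jacobian factors; since $h$ stays in $[0,\delta_0^*]$ these factors are close to the identity, which keeps all the error constants under control. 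Once these derivative bounds are in place, integrating in $\eta_2^\ell$ and combining with Lemma~\ref{L:sstimaetarbase} yields~\eqref{E:stimaetarbasefull1}--\eqref{E:fvgabfafvGroupfull} directly.
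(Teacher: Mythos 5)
Your proposal is correct and follows essentially the same route as the paper: a first-order Taylor expansion in $\eta_2^\ell$ whose base term $F(\cdot,0)$ is controlled by Lemma~\ref{L:sstimaetarbase}, with the $\eta_2^\ell$-derivative bounded by $\co(1)|\sC|$ via its vanishing at $\sC=0$ (mean value / factorization in $\sC$), and merely by $\co(1)$ for the speed differences, which is exactly why the additive errors are $\co(1)|\eta_2^\ell\sC|$ and $\co(1)|\eta_2^\ell|$ respectively. The auxiliary state $\underline u^\flat$ you introduce at the start is not needed and plays no role in the argument you actually carry out, but this does not affect correctness.
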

\begin{proof}
The estimates can be derived using the following general argument: Given a smooth function $\Psi(\underline x, \eta_2^\ell)$, one can write \[
\Psi(\underline x, \eta_2^\ell)=\Psi(\underline x, 0)+\int_0^{\eta_2^\ell}\ptlls{\eta_2^\ell}{\Psi}(\underline x,\xi)\,d\xi
\]
and then use Lemma~\ref{L:sstimaetarbase} to control the term $\Psi(\underline x, 0)$ and an estimate on $\ptlls{\eta_2^\ell}{\Psi}$ to conclude
\[
\left|\Psi(\underline x, \eta_2^\ell)- \Psi(\underline x, 0)\right|\leq 
\max_{0\leq \xi\,\sgn\eta_2^\ell\leq |\eta_2^\ell|}\ptlls{\eta_2^\ell}{\Psi}(\underline x,\xi) \cdot \left\lvert\eta_2^\ell\right\rvert \ .
\]
We apply this argument to prove~\eqref{E:stimaetarbasefull1}. Set \[\Psi(v_{1}^{\ell}, v_{2}^{\ell} , \eta_{1}^{\ell} , \sC{},\eta_2^\ell):=\eta_{1}^{r}(v_{1}^{\ell}, v_{2}^{\ell} , \eta_{1}^{\ell} , \sC{},\eta_2^\ell)-\eta_{1}^{\ell}-\sC_{{{}}},\]
and recall from Lemma~\ref{E:sstimaetarbase} that
\[|\Psi(v_{1}^{\ell}, v_{2}^{\ell} , \eta_{1}^{\ell} , \sC{},0)|\le \co(1)( v^{\ell}_{1}+\gamma) (v_{2}^{\ell}-1)^{2} \left\lvert(\eta_{1}^{\ell}+\sC_{ }) \eta_{1}^{\ell}\sC_{ } \right\rvert \ ,
\]
We need to prove that
\[\left\lvert\ptlls{\eta_2^\ell} {\eta_{1}^{r}}(v_{1}^{\ell}, v_{2}^{\ell} , \eta_{1}^{\ell} , \sC{}, \xi)\right\rvert\leq \co(1)|\sC_{}|\
\qquad\text{for $v_{1}^{\ell} , \ v_{1}^{\ell} -\eta_{1}^{\ell} ,\ v_{1}^{\ell} +\sC{}\in [0,\delta^*_0] $ and $v_{2}^{\ell},v_{2}^{\ell}+\xi\in [p^*_{0}, p^*_1]$}
 .\]
When $\sC_{}=0$ of course $\eta_{1}^{r}=\eta_{1}^{\ell}$ for all $\eta_2^\ell$, thus $\ptll{\eta_2^\ell} \eta_{1}^{r}(v_{1}^{\ell},v_{2}^{\ell} ,\eta_{1}^{\ell} , 0,\xi)=0$. Therefore, there exists $\tilde{\sC}$, that satisfies $\tilde{\sC}\in(0,\sC)$ for $\sC>0$ or $\tilde{\sC}\in(\sC,0)$ for $\sC<0$, so that 
\[
\ptlls{\eta_2^\ell}{ \Psi}=\ptlls{\eta_2^\ell}{ \eta_{1}^{r}}(v_{1}^{\ell},v_{2}^{\ell} ,\eta_{1}^{\ell}, \sC{},\ \xi)=\cancel{\ptlls{\eta_2^\ell} {\eta_{1}^{r}}(v_{1}^{\ell},v_{2}^{\ell} ,\eta_{1}^{\ell}, 0, \xi)}+ \frac{\partial^2  {\eta_{1}^{r}} }{\partial\eta_2^\ell\partial\sC} 
(v_{1}^{\ell},v_{2}^{\ell} ,\eta_{1}^{\ell},\tilde{\sC}, \xi)\cdot \gamma \ .
\]
We conclude the proof of~\eqref{E:stimaetarbasefull1} since $\partial^2_{\eta_2^\ell\sC_{}} \eta_{1}^{r}$ is bounded on the given compact domain.

The proofs of~\eqref{E:stimaetarbasefull} and of~\eqref{E:fvgabfafvGroupfull} are completely analogous since for $\sC_{}=0$, the following functions vanish since left and right states are the same:
\[
\eta_{2}^{r}-\eta_2^\ell\ , \qquad
\eta_{1}^{r}( \lambda_{1}^{r}-\dot x_{})-\eta_{1}^{\ell}(\lambda_{1}^{\ell}-\dot x_{})\ , 
\qquad
\eta_{2}^{r}( \lambda_{2}^{r}-\dot x_{})-\eta_{2}^{\ell}(\lambda_{2}^{\ell}-\dot x_{}) \ .
\]
In a similar manner, we also treat estimate~\eqref{E:stima11full} since the derivatives $\ptll{\eta_2^\ell}{(\dot x_{}-\lambda_{1}^{\ell})}$, $\ptll{\eta_2^\ell}{(\dot x_{}-\lambda_{1}^{r})}$ are continuous and hence bounded on the given compact domain.
\end{proof}

\subsection{Case of a $2$--wave}\label{Ss:estimates2}
Let now a wave $\sC$ that belongs to the second family joining states $\underline v^{\ell}$ and $\underline v^{r}$, this means \[\underline v^{r}=\mathbf{S}_{2}\big(\sC;\underline v^{\ell}\big)\ .\] 
and recall that the states $\underline v^{\ell}$ and $\underline v^{r}$ are related to a third state $\underline u$ via
\[\underline v^\ell=\SC{2}{\eta_2^\ell}{\SC{1}{\eta_1^\ell}{\underline u}}
\qquad
\underline v^r=\SC{2}{\eta_2^r}{\SC{1}{\eta_1^r}{\underline u}}.\]
and recall  in~\eqref{E:p1shock} the expression of the Hugoniot curve $\SC{1}{\cdot}{\cdot}$, while n~\eqref{E:h2shock}  the expression of Hugoniot curve $\SC{2}{\cdot}{\cdot}$.
We will also need the speed $\dot x$ connecting the states $\underline v^\ell$ ind $\underline v^r$, that is 
\be{C2xdot}\dot x:=\lambda_2\left(v_1^\ell,v_2^\ell+\sC_{}\right)  \ee
and the corresponding ones connecting $\underline u$ with $\underline v^\ell$ and $\underline v^r$ are:
\be{C2lambdaell}\lambda_1^\ell:=\lambda_1\left(u_1+\eta_1^\ell,u_2\right),\qquad \lambda_2^\ell:=\lambda_2(u_1+\eta_1^\ell,v_2^\ell)\ee
\be{C2lambdar}\lambda_1^r:=\lambda_1\left(u_1+\eta_{1}^{r}, u_2\right),\qquad
     \lambda_2^r=\lambda_2(u_1+\eta_1^r, v_2^r )\,,\ee
respectively. 

In the rest of the section, we work as in the previous subsection and derive auxiliary estimates on the wave strengths $\eta_1^r$, $\eta_1^\ell$, $\eta_2^r$, $\eta_2^\ell$, on the waves velocities and on a commutator among waves and velocities. These are established in the next lemma.
\begin{lemma} 
\label{L:stimegenerali2}
\begin{subequations}\label{E:8.49improved2Nall}

Let $\underline v^{\ell}$, $\underline v^{r}$, $\underline u$, $\underline \omega^{\ell}$, $\underline \omega^{r}$, $\gamma$, $\eta^{\ell}$ and $\eta^r$  be as denoted above and $\gamma$ be a wave of the second family, i.e. $\underline v^{r}=\mathbf{S}_{2}\big(\sC; \underline v^{\ell}\big)$.  Then 
\begin{align}
\label{E:primsfwewgrqgerqpall}
|\eta_{1}^{r}-\eta_{1}^\ell | 
\leq 
& \co(1) \left| \left(\eta_{2}^{\ell}+\sC_{{}}\right)\eta_{2}^{\ell}\sC_{{}} \right|  (v_{1}^{\ell})^{2}  \ ,
\\
 |\eta_{2}^{r}-\eta_{2}^{\ell}-\sC_{{}}|
\leq 
&\co(1)  \left| \  \left(v_2^\ell-\eta_2^\ell-1\right) \right| \left| \left(\eta_{2}^{\ell}+\sC_{{}}\right) \eta_{2}^{\ell}\sC_{{}} \right|  (v_{1}^{\ell})^{2}  \ .
\label{E:primsfwewgrqgerqpall2}
\end{align}
\end{subequations}
Moreover, the speed $\dot x$ given in~\eqref{C2xdot}  satisfies
\begin{subequations} 
\label{E:sstimaetarbase2all}
\ba
\label{E:8.50all}
&\left|\dot x_{{}}-\lambda_{2}^{\ell}
+ \frac{ v_{1}^{\ell}\cdot(\eta_{2}^{\ell}+\sC_{{}})}{(v_{2}^{\ell}-\eta_{2}^{\ell})(v_{2}^{\ell}+\sC_{{}}) }
\right|
\leq
 \co(1) |\eta_{2}^{\ell}+\sC_{{}}|(v_{1}^{\ell})^{2}  \ ,
\\
\label{E:8.51all}
&\left|\dot x_{{}}-\lambda_{2}^{r}
+
 \frac{ v_{1}^{\ell}\cdot\eta_{2}^{\ell} }{(v_{2}^{\ell}-\eta_{2}^{\ell})(v_{2}^{\ell}+\sC_{{}}) }
\right|
\leq
  \co(1 ) |\eta_{2}^{\ell}| (v_{1}^{\ell})^{2}  \ ,
\ea
\end{subequations}
while the following estimate of the commutators holds true
\ba
\label{E:sqFNDKAVDUAall}
&|\eta_{1}^{r}( \lambda_{1}^{r}-\dot x_{{}})-\eta_{1}^{\ell}(\lambda_{1}^{\ell}-\dot x_{{}})|+
|\eta_{2}^{r}(\lambda_{2}^{r}-\dot x_{{}})-\eta_{2}^{\ell}(\lambda_{2}^{\ell}-\dot x_{{}})|
\leq 
\co(1) \left| \left(\eta_{2}^{\ell}+\sC_{{}}\right)\eta_{2}^{\ell}\sC_{{}} \right|  (v_{1}^{\ell})^{2} \ ,
\ea
where $\lambda_i^r$, $\lambda_i^\ell$ correspond to the speeds of the $i$-family that are given in~\eqref{C2lambdaell}--\eqref{C2lambdar}.
\end{lemma}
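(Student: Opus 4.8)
\textbf{Proof plan for Lemma~\ref{L:stimegenerali2}.}
The plan is to mirror the strategy already used for the $1$-wave case in Lemma~\ref{L:sstimaetarbase} and Corollary~\ref{C:stimegenerali}, exploiting the near-Temple structure near $\{h=0\}$ of the second family. First I would set up the functionals of the independent variables $v_1^\ell, v_2^\ell, \eta_2^\ell, \sC$ (with $\eta_1^\ell$ kept as a parameter, or first treated at $\eta_1^\ell=0$ and then extended exactly as in Corollary~\ref{C:stimegenerali}); namely
\[
\Psi_1(v_1^\ell,v_2^\ell,\eta_2^\ell,\sC)\doteq\eta_1^r-\eta_1^\ell,\qquad
\Psi_2(v_1^\ell,v_2^\ell,\eta_2^\ell,\sC)\doteq\eta_2^r-\eta_2^\ell-\sC,
\]
using the implicit relation $\mathbf S_1(-\eta_1^r;\,\mathbf S_2(\sC;\,\underline v^\ell))=\mathbf S_2(-\eta_2^r;\,\underline\omega^\ell)$ (equivalently $\underline v^r=\mathbf S_2(\eta_2^r;\mathbf S_1(\eta_1^r;\underline u))$) together with the explicit formulas~\eqref{E:p1shock} and~\eqref{E:h2shock}. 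The analyticity of these functions holds on the enlarged domain~\eqref{E:domaknfknfgrjfwff1} where $h$ is allowed to be negative, and this is what licenses the use of the auxiliary Lemma~\ref{E:rewgwgeq}.

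The heart of the matter is to record the vanishing conditions. For $\Psi_1$: it vanishes when $\sC=0$ (then $\underline v^r=\underline v^\ell$), when $\eta_2^\ell=0$ (then $\underline v^\ell=\mathbf S_1(\eta_1^\ell;\underline u)$ lies on a $1$-curve and a pure $2$-wave does not change the $1$-coordinate, again by the Temple-like geometry), when $\eta_2^\ell+\sC=0$ (then $\underline v^r=\mathbf S_1(\eta_1^\ell;\underline u)$ so $\eta_1^r=\eta_1^\ell$), and — crucially — when $v_1^\ell=0$, since by~\eqref{E:jacobianS2} and~\eqref{contiderivataS2} the line $\{h=0\}$ is invariant, so $\underline u$, $\underline v^\ell$, $\underline v^r$ all have zero $h$-component and $\eta_1^r=\eta_1^\ell=0$. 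The vanishing at $v_1^\ell=0$ is a double zero: one checks $\partial_{v_1^\ell}\Psi_1|_{v_1^\ell=0}=0$ using~\eqref{E:jacobianS2}. Feeding the four simple zeros plus this double zero into Lemma~\ref{E:rewgwgeq} yields the factor $(\eta_2^\ell+\sC)\eta_2^\ell\sC\,(v_1^\ell)^2$, which is~\eqref{E:primsfwewgrqgerqpall}. For $\Psi_2$ one has the same zeros in $\sC$, $\eta_2^\ell$, $\eta_2^\ell+\sC$, the double zero in $v_1^\ell$, and in addition the zero at $v_2^\ell-\eta_2^\ell=1$ (then $\underline\omega^\ell$ sits on $\{p=1\}$, which together with~\eqref{E:jacobianS2} forces the remaining $2$-wave to be horizontal-free in the relevant sense), producing the extra factor $|v_2^\ell-\eta_2^\ell-1|$ and hence~\eqref{E:primsfwewgrqgerqpall2}.

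For the speed estimates~\eqref{E:8.50all}--\eqref{E:8.51all} I would introduce $\Psi_{2,\ell}\doteq\dot x-\lambda_2^\ell$ and $\Psi_{2,r}\doteq\dot x-\lambda_2^r$, identify their vanishing sets ($\Psi_{2,\ell}=0$ when $\eta_2^\ell+\sC=0$ by the symmetry~\eqref{E:symmetry2HC}, $\Psi_{2,r}=0$ when $\eta_2^\ell=0$, and both vanish to second order at $v_1^\ell=0$ since $s_2(0,p)\equiv0$), then expand to extract the leading coefficient $-v_1^\ell(\eta_2^\ell+\sC)/\big((v_2^\ell-\eta_2^\ell)(v_2^\ell+\sC)\big)$ using~\eqref{E:ders2p1} and~\eqref{AppB.3s2h}, with remainder controlled by $|\eta_2^\ell+\sC|(v_1^\ell)^2$ as the second derivative in $v_1^\ell$ is bounded on the compact domain. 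Finally the commutator bound~\eqref{E:sqFNDKAVDUAall} follows by the same template: the two commutator functions $\eta_1^r(\lambda_1^r-\dot x)-\eta_1^\ell(\lambda_1^\ell-\dot x)$ and $\eta_2^r(\lambda_2^r-\dot x)-\eta_2^\ell(\lambda_2^\ell-\dot x)$ vanish at $\sC=0$, at $\eta_2^\ell=0$, at $\eta_2^\ell+\sC=0$, and to second order at $v_1^\ell=0$, so Lemma~\ref{E:rewgwgeq} again delivers the factor $(\eta_2^\ell+\sC)\eta_2^\ell\sC(v_1^\ell)^2$; the general $\eta_1^\ell\neq0$ case is reduced to $\eta_1^\ell=0$ by the integration-in-$\eta_1^\ell$ argument of Corollary~\ref{C:stimegenerali}, noting that at $\sC=0$ all these functions vanish identically. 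The main obstacle I anticipate is bookkeeping: verifying the \emph{double} zero at $\{v_1^\ell=0\}$ (rather than a simple one) for each functional, since that is exactly the $(v_1^\ell)^2$ that makes these estimates sharp enough for the fourth-order analysis in \S~\ref{Ss:estimPhys2wNN}, and it requires careful use of the Jacobian formula~\eqref{E:jacobianS2} and of $\partial_h s_2(0,p)=1/p$ in~\eqref{AppB.3s2h}.
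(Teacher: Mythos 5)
Your plan follows the paper's own template almost step for step: the same functionals of the five variables, the same vanishing conditions ($\gamma=0$, $\eta_2^\ell=0$, $\eta_2^\ell+\gamma=0$), and above all the same key point, namely the \emph{double} zero at $\{v_1^\ell=0\}$ verified by differentiating through the implicit relation with \eqref{contiderivataS2}/\eqref{E:jacobianS2}; the speed and commutator estimates are then obtained exactly as you describe. The one place where you genuinely diverge is \eqref{E:primsfwewgrqgerqpall2}: you obtain the factor $|v_2^\ell-\eta_2^\ell-1|$ as a fifth vanishing condition (at $\omega_2^\ell=u_2=1$ the whole configuration collapses onto horizontal $1$-curves, so indeed $\eta_2^r=\eta_2^\ell+\gamma$ there) and feed it into Lemma~\ref{E:rewgwgeq} after an affine change of variables making $\{v_2^\ell-\eta_2^\ell=1\}$ a coordinate hyperplane. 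This is correct and self-contained. The paper instead deduces \eqref{E:primsfwewgrqgerqpall2} \emph{from} \eqref{E:primsfwewgrqgerqpall}: it writes the second component of the implicit relation as $\eta_2^\ell+\gamma-\eta_2^r=\mathfrak{p}\mathbf{S}_1(\eta_1^r;\underline u)-\mathfrak{p}\mathbf{S}_1(\eta_1^\ell;\underline u)$ and applies the mean value theorem with the bound $\bigl|\partial_h\,\mathfrak{p}\mathbf{S}_1\bigr|=\co(1)\,|p_\ell-1|$ from \eqref{esotedegaaggargrawgragar}, so that $|v_2^\ell-\eta_2^\ell-1|$ appears as $|u_2-1|$. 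Your route costs one more vanishing condition to verify; the paper's costs the derivative bound on the $1$-Hugoniot curve but recycles the already-proved estimate on $\eta_1^r-\eta_1^\ell$. Both work.

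Three corrections. First, drop the fallback option of "first treating $\eta_1^\ell=0$ and then extending as in Corollary~\ref{C:stimegenerali}": that extension mechanism produces an additional error term of order $|\eta_1^\ell\gamma|$ (the analogue of the $|\eta_2^\ell\gamma|$ terms in \eqref{E:stimaetarbasefull1}), which is absent from \eqref{E:primsfwewgrqgerqpall}--\eqref{E:sqFNDKAVDUAall} and would destroy the $(v_1^\ell)^2$ smallness on which \S~\ref{Ss:estimPhys2wNN} depends. You must keep $\eta_1^\ell$ as a free variable throughout, which is harmless because every vanishing condition holds uniformly in $\eta_1^\ell$. Second, at $v_1^\ell=0$ one has $u_1=-\eta_1^\ell\neq0$ in general (only $\omega_1^\ell=\omega_1^r=v_1^r=0$ on the invariant line); the conclusion $\eta_1^r=\omega_1^r-u_1=\eta_1^\ell$ still holds, but not because all the states sit on $\{h=0\}$. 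Third, $\Psi_{2,\ell}$ and $\Psi_{2,r}$ have only a \emph{simple} zero at $v_1^\ell=0$ — their first $v_1^\ell$-derivative there is precisely the nonzero leading coefficient you later extract — and it is the remainder after subtracting that linear term that vanishes to second order and carries the factor $|\eta_2^\ell+\gamma|$ (resp. $|\eta_2^\ell|$). Your subsequent description of the expansion is the correct one, so this is a wording slip, but as written the sentence contradicts what follows it.
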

\begin{proof}
We now prove separately each estimate in the following steps.\\
{\sc Step 1}. Proof of~\eqref{E:primsfwewgrqgerqpall}. \\
We consider $ \eta_{1}^{r}$ and $ \eta_{2}^{r}$ as smooth functions of the independent variables $\underline v^\ell=(v_{1}^{\ell},v_{2}^{\ell}), \eta_{1}^{\ell},\eta_{2}^{\ell},\sC{}$ 
given implicitly via the relation
\be{implicitdefinitionetar}
 \SC{2}{-\eta_2^r}{\SC{2}{\gamma}{\underline v^{ \ell} }}\equiv\SC{1}{\eta_1^r}{\SC{1}{-\eta_1^\ell}{\SC{2}{-\eta_2^\ell}{ \underline v^\ell}}}\ ,
\ee
since $\underline w=\SC{i}{t}{\underline z}$ is equivalent to $\underline z=\SC{i}{-t}{\underline w}$. We now study the functions
\bas
& \Psi_{1}^{r}(v_{1}^{\ell},v_{2}^{\ell} ,\eta_{1}^{\ell},\eta_{2}^{\ell}, \sC{}) :=\eta_{1}^{r}(v_{1}^{\ell},v_{2}^{\ell} ,\eta_{1}^{\ell},\eta_{2}^{\ell}, \sC{}) -\eta_1^\ell\ ,
& \Psi_{2}^{r}(v_{1}^{\ell},v_{2}^{\ell} ,\eta_{1}^{\ell},\eta_{2}^{\ell}, \sC{}) := \eta_{2}^{r}(v_{1}^{\ell},v_{2}^{\ell} ,\eta_{1}^{\ell}, \eta_{2}^{\ell},\sC{}) -\left(\eta_2^\ell+\sC\right)\ ,
\eas
that are  analytic in the larger domain
\ba\label{E:domaknfknfgrjfwff}
v_{1}^{\ell} \ , \ v_{1}^{\ell} -\eta_1^\ell\in[-\delta^*_0,\delta^*_0]\qquad v_{2}^{\ell}\ ,\  v_{2}^{\ell}-\eta_{2}^{\ell}\ , \ v_{2}^{\ell}+\sC\in[p^*_{0}, p^*_1] \ .
\ea
First,  we observe that $\Psi_i^r$, for $i=1,\,2$, satisfy the following vanishing conditions:
\begin{itemize}
\item[(i)] $ \Psi_{i}^{r}(v_{1}^{\ell},v_{2}^{\ell} ,\eta_{1}^{\ell},\eta_{2}^{\ell}, 0{})=0$. Indeed, if $ \sC=0 $, we have
 $\underline v^\ell\equiv\underline v^r$ and therefore, $\eta_{1}^{r}\equiv\eta_{1}^{\ell}$ and $\eta_{2}^{r}\equiv\eta_{2}^{\ell}=\eta_{2}^{\ell}+\sC$, since $ \sC=0 $.
\item[(ii)] $ \Psi_{i}^{r}(v_{1}^{\ell},v_{2}^{\ell} ,\eta_{1}^{\ell},0, \sC{})=0$. This is true because if
$ \eta_{2}^{\ell}=0 $, then it holds $\underline v^{r}=\SC{2}{\sC}{ \SC{1}{\eta_1^\ell}{ \underline u}}{}$. This implies
$\eta_1^r=\eta_1^\ell$ and $\eta_2^r=\sC$.
\item[(iii)] $ \Psi_{i}^{r}(0,v_{2}^{\ell} ,\eta_{1}^{\ell},\eta_{2}^{\ell}, \sC{})=0$. In this case we use that the $2$-Hugoniot curve is vertical at $h=0$. So if
$ v_{1}^{\ell}=0 $, then  $\eta_{2}^{r}\equiv\eta_{2}^{\ell}+\sC$ and $\eta_{1}^{r}\equiv\eta_{1}^{\ell}$.
\item[(iv)] $ \Psi_{i}^{r}(v_{1}^{\ell},v_{2}^{\ell} ,\eta_{1}^{\ell},\tau, -\tau)=0$ for $\tau\in\mathbb{R}$. Since we have
$\underline v^\ell=\SC{2}{\eta_2^\ell}{ \SC{1}{\eta_1^\ell}{ \underline u}}{}$, then if
$ \sC= -\eta_{2}^{\ell} $, it holds 
$$\underline v^r=\SC{2}{-\eta_2^\ell}{\SC{2}{\eta_2^\ell}{ \SC{1}{\eta_1^\ell}{ \underline u}}{}}\equiv \SC{1}{\eta_1^\ell}{ \underline u}$$ and this implies $\eta_{1}^{r}=\eta_{1}^{\ell}$ and $\eta_{2}^{r}\equiv \sC+\eta_{2}^{\ell}\equiv0 $.
\end{itemize}
All the above cases yield that $\Psi_i^r=0$. Next, we show that
\be{E:claimvanishingderivative2}
\ptlls{ {v_{1}^{\ell}}}
{\Psi_{1}^{r}}\left(v_{1}^{\ell},v_2^\ell, \eta_{1}^{\ell}, \eta_{2}^{\ell},\sC{}\right)\Big|_{v_{1}^{\ell}=0}=0\ .
\ee
To begin with, 
we write explicitly the first component of~\eqref{implicitdefinitionetar}, which is
\[{\hSC{2}{-\eta_2^r}{\hSC{2}{\sC}{\underline v^\ell},v_2^\ell-\sC }}
= \hSC{2}{-\eta_2^\ell}{ \underline v^\ell}-\eta_1^\ell+\eta_1^r\ .
\]
where $\hSC{2}{\cdot}{\cdot}$ denotes
 the first component of the Hugoniot curve $\SC{2}{\cdot}{\cdot}$ given at~\eqref{E:h2shock} and we recall that we use 
 Cartesian coordinates. Differentiating the above identity with respect to $v_1^\ell$, we obtain
\bas 
\ptlls{ {v_{1}^{\ell}}}{\eta_{1}^{r}}\left(v_1^\ell, v_2^\ell , \eta_{1}^{\ell}, \eta_{2}^{\ell},\sC{}\right)
=-\pthS{v_{1}^{\ell}}{2}{-\eta_2^\ell}{\underline v^\ell} 
&-\pthS{\eta_2^r}{2}{-\eta_2^r}{\underline v^r } 
\ptlls{v_1^\ell}{\eta_2^r}\left(\underline v^\ell ,\eta_{1}^{\ell},  \eta_{2}^{\ell},\sC{}\right)
\\
& 
+\pthS{v_1^r} {1}{-\eta_2^r}{\underline v^r } 
\pthS{v_1^\ell} {1}{\sC}{ \underline v^{ \ell} }\ .
\eas
Here, if  $v_1^\ell=0$, then $v_1^r=0$, $\eta_1^r=\eta_1^\ell=u_1$ and $\eta_2^r=\gamma+\eta_2^\ell$ and $v_2^r=v_2^\ell+\sC$. Therefore, by~\eqref{contiderivataS2}, we compute
\bas
\ptlls{ {v_{1}^{\ell}}}{\eta_{1}^{r}}\left( v_1^\ell,v_2^\ell , \eta_{1}^{\ell}, \eta_{2}^{\ell},\sC{}\right)\Big|_{v_1^\ell=0}
&=-\frac{v_2^\ell}{v_2^\ell-\eta_2^\ell}+
0\cdot
\ptlls{v_1^\ell}{\eta_2^r}\left(\underline v^\ell , \eta_{1}^{\ell}, \eta_{2}^{\ell},\sC{}\right)
+
\frac{\cancel{v_2^r}}{v_2^r-\eta_2^r}\cdot
\frac{v_2^\ell}{\cancel{v_2^\ell+\gamma}}
\\
&=-\frac{v_2^\ell}{v_2^\ell-\eta_2^\ell}+\frac{v_2^\ell}{v_2^r-\eta_2^r}=0\ ,
\eas
and~\eqref{E:claimvanishingderivative2} follows immediately. 
In view of the vanishing conditions (i)-(iv) and~\eqref{E:claimvanishingderivative2}, estimate~\eqref{E:primsfwewgrqgerqpall} is established using Lemma~\ref{E:rewgwgeq} in Appendix~\ref{App:C} and repeating a similar argument as the one in Step 1 in the proof of Lemma~\ref{L:sstimaetarbase}.

\noindent
{\sc Step 2}. Proof of~\eqref{E:primsfwewgrqgerqpall2}. \\
Here, we write explicitly the second component of the implicit relation~\eqref{implicitdefinitionetar} using as before
$\pSC{1}{\cdot}{\cdot}$  for the second component of the Hugoniot curve $\SC{1}{\cdot}{\cdot}$ given in~\eqref{E:p1shock} and $\hSC{2}{\cdot}{\cdot}$ for the first component of the Hugoniot curve $\SC{1}{\cdot}{\cdot}$ given in~\eqref{E:h2shock}. Using Cartesian coordinates, we have 
\be{impllicitsecond}
 v_2^\ell+\gamma-\eta_2^r= {\pSC{1}{\eta_1^r}{\underline u}}
 \qquad\text{where $\underline u=\SC{1}{-\eta_1^\ell}{\SC{2}{-\eta_2^\ell}{ \underline v^\ell}}$.}
\ee
and also
$
v_2^\ell=\pSC{1}{\eta_1^\ell}{\underline u}+\eta_2^\ell$. This means that we obtain
\[
\eta_2^\ell+\gamma-\eta_2^r
= 
\pSC{1}{\eta_1^r}{\underline u}-\pSC{1}{\eta_1^\ell}{\underline u}\ .
\]
Since by~\eqref{esotedegaaggargrawgragar}, the derivative of $\pSC{1}{\eta}{}$ with respect to the strength $\eta$ is bounded, we apply mean value theorem to get immediately
\bas
\left|\eta_2^\ell+\gamma-\eta_2^r\right|\stackrel{\eqref{esotedegaaggargrawgragar}}{\leq}  \frac{1+\delta_0^*}{(p_{0}^*)^2(1-\delta_p^*-\delta_0^*)} \left| u_2-1\right| \left|(\eta_1^r-\eta_1^\ell)\right|
=\co(1)\left|  v_2^\ell-\eta_2^\ell-1\right| \left| \eta_1^r-\eta_1^\ell\right|  \ ,
\eas
where we used that $|u_2-1|\leq (1+\delta_0^*/p_0^*) |v_2^\ell-\eta_2^\ell-1|$ by definition~\eqref{E:p1shock} of $\SC{1}{\cdot}{\cdot}$ because $\underline u$ is as in \eqref{impllicitsecond}. We also note that from the calculations in the end of Appendix ~\ref{AppB}, we know that $1-\delta_p^*-\delta_0^*>\frac{7}{9}$ if $\delta_0^*<\frac{1}{9} $ and $\delta_p^*<\frac{1}{9} $. This is to assure the reader that the coefficient in~\eqref{esotedegaaggargrawgragar} remains uniformly bounded.
Substituting now~\eqref{E:primsfwewgrqgerqpall} into this, estimate~\eqref{E:primsfwewgrqgerqpall2} is proven.

\noindent
{\sc Step 3}. Proof of~\eqref{E:sstimaetarbase2all}. \\
Here, we proceed as in the Lemma~\ref{L:sstimaetarbase} and define the auxiliary functions of the independent variables $v_{1}^{\ell},v_{2}^{\ell}, \eta_{1}^{\ell},  \eta_{2}^{\ell} ,\sC{}$ as follows
\bas
&\Psi_{2,\ell} \left(v_{1}^{\ell},v_{2}^{\ell},\eta_1^\ell, \eta_{2}^{\ell} ,\sC{}\right)=\dot x_{}-\lambda_{2}^{\ell}\equiv \lambda_2\left(v_1^\ell,v_2^\ell+\sC_{}\right)-\lambda_2\left(\omega_1^\ell,v_2^\ell\right) \ ,
\\
&\Psi_{2,r}(v_{1}^{\ell},v_{2}^{\ell} ,\eta_1^\ell, \eta_{2}^{\ell} ,\sC{})=\dot x_{}-\lambda_{2}^{r}\equiv \lambda_2\left(v_1^\ell,v_2^\ell+\sC_{}\right)-\lambda_2\left(u_1+\eta_{1}^{r}, v_2^\ell+\sC\right) \ ,
\eas
where $\omega_1^\ell=u_1+\eta_1^\ell $, $v_2^r=v_2^\ell+\sC$ and
\ba\label{E:expressionu1}
u_1=\hSC{}{-\eta_2^\ell}{\SC{1}{-\eta_1^\ell}{\underline v^\ell}}\ ,
&&
\omega_1^\ell&=\hSC{}{-\eta_2^\ell}{\underline v^\ell} \ ,
&&
v_1^r=\hSC{}{\sC}{\underline v^\ell}\ .
\ea
It is useful to recall the implicit relations~\eqref{implicitdefinitionetar} and the Hugoniot curve $\SC{1}{\cdot}{\cdot}$  given in~\eqref{E:p1shock}, while $\SC{2}{\cdot}{\cdot}$ is given in~\eqref{E:h2shock}.\\
\noindent
{\sc Step 3A}. We note that $\Psi_{2,\ell}$ is analytic in the domain~\eqref{E:domaknfknfgrjfwff} and it satisfies:
\begin{itemize}
\item $\Psi_{2,\ell} \left(0,v_{2}^{\ell},\eta_1^\ell, \eta_{2}^{\ell} ,\sC{}\right)=0$. If $ v_{1}^{\ell}=0 $, then $\omega_1=0$, because the $2$-Hugoniot curve is vertical at $h=0$, and hence $\dot x_{}=\lambda_{2}^{\ell}=0$;
\item  $\Psi_{2,\ell} \left(v_{1}^{\ell},v_{2}^{\ell},\eta_1^\ell, \tau ,-\tau\right)=0$, for all $\tau\in\mathbb{R}$. Indeed if $ \sC= -\eta_{2}^{\ell} $, then $\underline v^r=\underline \omega^\ell$ and the identity follows using 
 the symmetry condition~\eqref{E:symmetry2HC}.
\end{itemize}
Now, from relation~\eqref{E:h2shock}, we write the first component $w_1^\ell$
$$w_1^\ell=v_1^\ell-\frac{v_1^\ell \eta_2^\ell}{\lambda_1(v_1^\ell,v_2^\ell-\eta_2^\ell)-v_1^\ell}$$
and compute its derivative
\be{E:aboveexpressionw1ell}
\ptlls{v_1^\ell}{w_1^\ell}\Big|_{v_1^\ell=0}=\frac{v_2^\ell}{v_2^\ell-\eta_2^\ell}\;.
\ee
Moreover, we calculate the derivative of the speeds for $v_1^\ell=0$ to find
\[
\ptll{v_1^\ell}\dot x\Big|_{v_1^\ell=0}= \ptll{v_1^\ell} \left(\lambda_2\left(v_1^\ell,v_2^\ell+\sC_{}\right) \right)\Big|_{v_1^\ell=0}\stackrel{\eqref{AppB.3s2h}}{=}\frac{1}{v_2^\ell+\sC}\;,
\]
and%
\[
\ptll{v_1^\ell} \lambda_{2}^{\ell}\Big|_{v_1^\ell=0}=\ptll{w_1^\ell} \lambda_2\left(w_1^\ell,v_2^\ell\right) \cdot \ptlls{v_1^\ell}{w_1^\ell} \Big|_{v_1^\ell=0}\stackrel{\eqref{E:aboveexpressionw1ell}}{=}\frac{1}{v_2^\ell-\eta_2^\ell}\;.
\]
Then
\be{E:vrkjfenjkafnjkavnjkvnkfv}
\ptlls{v_1^\ell}{\Psi_{2,\ell}}\left(v_1^\ell,v_{2}^{\ell},\eta_{1}^{\ell} ,\eta_{2}^{\ell} ,\sC{}\right)\Big|_{v_1^\ell=0}=\frac{1}{v_2^\ell+\sC} - \frac{1}{v_2^{\ell}-\eta_2^\ell}
=
-\frac{ \eta_{2}^{\ell}+\sC_{{}} }{\left(v_2^\ell+\sC\right)\left(v_2^{\ell}-\eta_2^r\right)}\ .
\ee
This value together with the two vanishing conditions of $\Psi_{2,\ell}$ yield estimate~\eqref{E:8.50all} immediately.

\noindent
{\sc Step 3B}. Next, we check that
$\Psi_{2, r}$ satisfies the following conditions:
\begin{itemize}
\item  $\Psi_{2,r} \left(0,v_{2}^{\ell},\eta_1^\ell, \eta_{2}^{\ell} ,\sC{}\right)=0$. Here, if $v_1^\ell=0$ then we have $v_1^r=u_1+\eta_1^r=0$, since $\SC{2}{\cdot}{0,p}{}$ is vertical for $p>0$, and thus $\dot x_{}=\lambda_{2}^{r}=0$; 
\item  $\Psi_{2,r} \left(v_1^\ell,v_{2}^{\ell},\eta_1^\ell, 0 ,\sC{}\right)=0$. For $\eta_2^\ell=0$, we observe that
 $ \underline v^r\equiv\SC{2}{\eta_2^r}{\SC{1}{\eta_1^r}{\underline u}}\equiv\SC{2}{\sC}{\SC{1}{\eta_1^\ell}{\underline u}}$, so $\eta_1^r=\eta_1^\ell$ and so $v_1^\ell=w_1^\ell=u_1+\eta_1^\ell=u_1+\eta_1^r$.
\end{itemize}
and also the property
\ba\label{E:rvonveianriqbrbqbqbqb}
\ptlls{v_1^\ell}{\Psi_{2,r} }\left(v_1^\ell,v_{2}^{\ell},\eta_{1}^{\ell} ,\eta_{2}^{\ell} ,\sC{}\right)\Big|_{v_1^\ell=0}
=- \frac{  \eta_{2}^{\ell} }{(v_{2}^{\ell}-\eta_{2}^{\ell})(v_{2}^{\ell}+\sC_{{}}) }\ .
\ea
in the domain~\eqref{E:domaknfknfgrjfwff}. Indeed, 
we apply the chain rule in the expression of $\Psi_{2, r}$ and get
\bas
\ptlls{v_1^\ell}{\Psi_{2,r}}\left(v_1^\ell,v_{2}^{\ell},\eta_{1}^{\ell} ,\eta_{2}^{\ell} ,\sC{}\right)\Big|_{v_1^\ell=0}&=\ptll{v_1^\ell} \left(\lambda_2\left(v_1^\ell,v_2^\ell+\sC_{}\right)-\lambda_2\left(u_1+\eta_{1}^{r}, v_2^\ell+\sC\right) \right)\Big|_{v_1^\ell=0}
\\
&\stackrel{\eqref{AppB.3s2h}}{=}\frac{1}{v_2^\ell+\sC}-\frac{1}{v_2^\ell+\sC}\cdot\ptll{v_1^\ell}{\left(u_1+\eta_{1}^{r}\right)} \Big|_{v_1^\ell=0} \ .
\eas
Now from ~\eqref{E:primsfwewgrqgerqpall} $\ptll{v_1^\ell}{\eta_{1}^{r}}=0$ if $v_1^\ell=0$ and the identity $u_1=w_1^\ell-\eta_1^\ell$, we compute
\[\ptll{v_1^\ell}{\left(u_1+\eta_{1}^{r}\right)} \Big|_{v_1^\ell=0}=\ptll{v_1^\ell}{u_1} \Big|_{v_1^\ell=0}=\ptlls{v_1^\ell}{w_1^\ell}\Big|_{v_1^\ell=0}=\frac{v_2^\ell}{v_2^\ell-\eta_2^\ell}\;,
\]
which follows from~\eqref{E:aboveexpressionw1ell}. Substituting above, we get~\eqref{E:rvonveianriqbrbqbqbqb}. As before,~\eqref{E:rvonveianriqbrbqbqbqb} and the vanishing conditions of  ${\Psi_{2,r}}$ imply~\eqref{E:8.51all}.

\noindent
{\sc Step 4}. Proof of~\eqref{E:sqFNDKAVDUAall}. \\
We define in the domain~\eqref{E:domaknfknfgrjfwff} the auxiliary functions $\widehat{\Psi}_{2,1}$ and $\widehat{\Psi}_{2,2}$ as follows:
\bes
\widehat{\Psi}_{2,1}(v_{1}^{\ell},v_{2}^{\ell}, {}\eta_{1}^{\ell},\eta_{2}^{\ell},\sC_{\alpha})
=
\eta_{1}^{r}( \lambda_{1}^{r}-\dot x_{\alpha})-\eta_{1}^{\ell}(\lambda_{1}^{\ell}-\dot x_{\alpha}) \ ,
\ees
\bes
\widehat{\Psi}_{2,2}(v_{1}^{\ell},v_{2}^{\ell},{}\eta_{1}^{\ell},\eta_{2}^{\ell},\sC_{\alpha})=\eta_{2}^{r}(\lambda_{2}^{r}-\dot x_{\alpha})-\eta_{2}^{\ell}(\lambda_{2}^{\ell}-\dot x_{\alpha}) \ .
\ees 
We observe that $\widehat{\Psi}_{2,i}$, for $i=1,\,2$, is analytic and vanishes whenever:
\begin{itemize}
\item[(i)] $\widehat{\Psi}_{2,i}(v_{1}^{\ell},v_{2}^{\ell},{}\eta_{1}^{\ell},\eta_{2}^{\ell},0)=0$. We can see this as follows: if $\sC=0$, then $\underline v^\ell\equiv \underline v^r$ and thus, by definition, $\eta_{1}^{r}=\eta_{1}^{\ell}$, $\eta_{2}^{r}=\eta_{2}^{\ell}$ and $\lambda_{1}^{r}=\lambda_{1}^{\ell}$, $\lambda_{2}^{r}=\lambda_{2}^{\ell}$;
\item[(ii)] $\widehat{\Psi}_{2,i}(v_{1}^{\ell},v_{2}^{\ell},{}\eta_{1}^{\ell},0,\sC_{\alpha})=0$. Here, if $\eta_2^\ell=0$, then $\underline v^{r}=\SC{2}{\sC}{ \SC{1}{\eta_1^\ell}{ \underline u}}{}$ since $\sC$ is a $2$- wave and therefore, 
$\eta_1^r=\eta_1^\ell$, $\eta_2^r=\sC$ and $\lambda_1^r=\lambda_1^\ell$, $\lambda_2^r= \dot{x}_\alpha$;
\item[(iii)] $\widehat{\Psi}_{2,i}(0,v_{2}^{\ell},{}\eta_{1}^{\ell},\eta_{2}^{\ell},\sC_{\alpha})=0$. In this case, if $v_{1}^{\ell}=0$, then $w_1^\ell=v_1^r=0$
and hence, $\lambda_1^r=\lambda_1^\ell\equiv u_2$ and $\lambda_2^r=\dot{x}_\alpha=\lambda_2^\ell\equiv0$;
\item[(iv)] $\widehat{\Psi}_{2,i}(v_{1}^{\ell},v_{2}^{\ell},{}\eta_{1}^{\ell},\tau,-\tau)=0$ for $\tau\in\mathbb{R}$. In the last case, if
$\eta_2^\ell=-\sC$, then $\underline v^r=\SC{1}{\eta_1^\ell}{\underline u}$ and therefore, we get
 $\eta_1^r=\eta_1^\ell$, $\lambda_{1}^{r}=\lambda_{1}^{\ell}$ and $\eta_2^r=0$, $\lambda_2^r=\dot{x}_\alpha$.\end{itemize}
We can conclude now~\eqref{E:sqFNDKAVDUAall} in a similar way as the previous estimates as long as we show
\bas
&
\ptlls{ {v_{1}^{\ell}}}{\widehat{\Psi}_{2,1}} \left(v_{1}^{\ell},v_{2}^{\ell}, \eta_{1}^{\ell}, \eta_{2}^{\ell},\sC_{}\right)\Big|_{v_{1}^{\ell}=0}=\ptlls{ {v_{1}^{\ell}}}{\widehat{\Psi}_{2,2}} \left(v_{1}^{\ell},v_{2}^{\ell}, \eta_{1}^{\ell}, \eta_{2}^{\ell},\sC_{}\right)\Big|_{v_{1}^{\ell}=0}= 0 \ .
\eas
To justify this identity, we differentiate $\widehat{\Psi}_{2,2}$ and use the values~\eqref{E:vrkjfenjkafnjkavnjkvnkfv}--\eqref{E:rvonveianriqbrbqbqbqb} for $v_{1}^{\ell}=0$, to obtain
\bas
\ptlls{ {v_{1}^{\ell}}}{\widehat{\Psi}_{2,2}} \left(v_{1}^{\ell}, v_{2}^{\ell},\eta_{2}^{\ell},\eta_{2}^{\ell},\sC{}\right)\Big|_{v_{1}^{\ell}=0}&=
\left[\eta_{2}^{r}\ptl{ {v_{1}^{\ell}}}{}( \lambda_{2}^{r}-\dot x_{\alpha})-\eta_{2}^{\ell}\ptl{ {v_{1}^{\ell}}}{}(\lambda_{2}^{\ell}-\dot x_{\alpha})\right]_{v_{1}^{\ell}=0}
+\frac{\partial\eta_{2}^{r}}{ \partial v_{1}^{\ell}} \Big|_{v_{1}^{\ell}=0}\cdot ( \lambda_{2}^{r}-\dot x_{\alpha}) \Big|_{v_{1}^{\ell}=0} \\
&= \eta_2^r\, \Big|_{v_{1}^{\ell}=0}\cdot \left(-\frac{ \eta_{2}^{\ell} }{ (v_{2}^{\ell}-\eta_{2}^{\ell})(v_{2}^{\ell}+\sC) }\right)\, -\eta_2^\ell\cdot \left(-\frac{ \eta_{2}^{\ell} +\sC{}}{(v_2^\ell-\eta_2^\ell)(v_2^\ell+\sC)}\right)+0=0\;,
\eas
since $\eta_2^r=\eta_2^\ell+\sC$ and $\lambda_2^\ell=\lambda_2^r=\dot{x}_\alpha=0$ when $v_1^\ell=0$ and from~\eqref{E:primsfwewgrqgerqpall2}, we know that the derivative  $\partial_{v_1^\ell}\eta_{2}^{r}$ is zero at $v_1^\ell=0$. On the other hand, the derivative of $\widehat{\Psi}_{2,1}$ is
\bas
\ptlls{ {v_{1}^{\ell}}}{\widehat{\Psi}_{2,1}} \left(v_{1}^{\ell}, v_{2}^{\ell},\eta_{2}^{\ell},\eta_{2}^{\ell},\sC{}\right)\Big|_{v_{1}^{\ell}=0}&=
\left[\eta_{1}^{r}\ptl{ {v_{1}^{\ell}}}{}( \lambda_{1}^{r}-\dot x_{\alpha})-\eta_{1}^{\ell}\ptl{ {v_{1}^{\ell}}}{}(\lambda_{1}^{\ell}-\dot x_{\alpha})\right]_{v_{1}^{\ell}=0}
+0=\eta_{1}^{\ell}\left[\ptl{ {v_{1}^{\ell}}}{}( \lambda_{1}^{r}- \lambda_{1}^{\ell})
\right]_{v_{1}^{\ell}=0}
\eas
since $\eta_1^\ell=\eta_1^r$ and $\ptll{v_{1}^{\ell}}{\eta_1^\ell}=\ptll{v_{1}^{\ell}}{\eta_1^r}=0$ for $v_1^\ell=0$
 by~\eqref{E:primsfwewgrqgerqpall}.  Now
 \[
\ptll{v_{1}^{\ell}}{ \lambda_1^r}\Big|_{v_{1}^{\ell}=0}=\ptl{ {v_{1}^{\ell}}}{}\lambda_1(u_1+\eta_1^r,u_2)\Big|_{v_{1}^{\ell}=0}\stackrel{\eqref{E:s1derh1}}{=}\left(\frac{u_2-1}{u_2} \right) 
\frac{\partial u_1}{ \partial v_{1}^{\ell}}\Big|_{v_{1}^{\ell}=0} 
 \]
 and 
  \[
\ptll{v_{1}^{\ell}}{ \lambda_1^\ell}\Big|_{v_{1}^{\ell}=0}=\ptl{ {v_{1}^{\ell}}}{}\lambda_1(u_1+\eta_1^\ell,u_2)\Big|_{v_{1}^{\ell}=0}\stackrel{\eqref{E:s1derh1}}{=}\left(\frac{u_2-1}{u_2} \right)
\frac{\partial u_1}{ \partial v_{1}^{\ell}}\Big|_{v_{1}^{\ell}=0} 
 \]
Thus, we deduce
\bas
\ptlls{ {v_{1}^{\ell}}}{\widehat{\Psi}_{2,1}} \left(v_{1}^{\ell}, v_{2}^{\ell},\eta_{2}^{\ell},\eta_{2}^{\ell},\sC{}\right)\Big|_{v_{1}^{\ell}=0} &=
\eta_{1}^{\ell}\frac{\partial u_1}{ \partial v_{1}^{\ell}}\Big|_{v_{1}^{\ell}=0} 
\left[\frac{u_2-1}{u_2} -\frac{u_2-1}{u_2} \right]=0\;.
\eas
The proof is now complete.
\end{proof}

%
%
\section{Auxiliary lemma}\label{App:C}

\begin{lemma}
\label{E:rewgwgeq}
Let $J\subset\{0,1,\dots,m\}$ be a set of indices.
Suppose that, for each $j\in J$, $F:\R^{m}\to\R$ is $k_{j}$-times differentiable in the variable $x_{j}$ with 
$\ptl{x_{j}}{k_{j}}$ derivative continuous on $\R^{m}$ and
\[
 F\Big|_{x_{j}=0} =\ptl{x_{j}}{} F\Big|_{x_{j}=0} =\dots= \ptl{x_{j}}{k_{j}} F\Big|_{x_{j}=0} =0
 \ .
\]
One has then that for $(x_{1},\dots,x_{m})\in[-M,M]^{m}$ there is a constant $C=C(M)$ such that
\[
|F(x_{1},\dots,x_{m})|
\leq
C\prod_{j\in J}
|x_{j}|^{k_{j}+1} \ .
\]
\end{lemma}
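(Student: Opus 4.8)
The statement is a multivariable Hadamard-type factorization lemma, and the natural approach is induction on the cardinality of the index set $J$, peeling off one variable at a time. First I would recall the single-variable case: if $g:\R\to\R$ is $(k+1)$-times differentiable with $g(0)=g'(0)=\dots=g^{(k)}(0)=0$, then by Taylor's theorem with integral remainder one can write $g(t)=\frac{t^{k+1}}{k!}\int_0^1(1-s)^{k}g^{(k+1)}(st)\,ds$, so that $|g(t)|\le \frac{|t|^{k+1}}{(k+1)!}\sup_{|\tau|\le|t|}|g^{(k+1)}(\tau)|$. Applied with a uniform bound on the top derivative over the compact cube, this gives $|g(t)|\le C|t|^{k+1}$ for $t\in[-M,M]$. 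This is exactly the content invoked informally as ``Lemma 2.5/2.6 of~\cite{Bre}'' elsewhere in the paper.

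For the full statement I would enumerate $J=\{j_1,\dots,j_n\}$ and argue by induction on $n$. The base case $n=0$ ($J=\emptyset$) is trivial, with $C$ the sup of $|F|$ on the cube. For the inductive step, fix $j=j_1\in J$ and freeze all other variables; view $F$ as a function of $x_j$ alone with parameters $\hat{x}=(x_i)_{i\ne j}$. The hypotheses $F|_{x_j=0}=\partial_{x_j}F|_{x_j=0}=\dots=\partial_{x_j}^{k_j}F|_{x_j=0}=0$ hold for every value of the frozen variables, so the single-variable estimate yields the representation
\[
F(x_1,\dots,x_m)=\frac{x_j^{k_j+1}}{k_j!}\int_0^1 (1-s)^{k_j}\,\big(\partial_{x_j}^{k_j+1}F\big)(x_1,\dots,s x_j,\dots,x_m)\,ds\,.
\]
Now define, for $s\in[0,1]$ fixed, the function $G_s(x_1,\dots,x_m):=\big(\partial_{x_j}^{k_j+1}F\big)(x_1,\dots,sx_j,\dots,x_m)$. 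The key observation is that $G_s$ still satisfies the vanishing hypotheses in every remaining variable $j'\in J\setminus\{j\}$: differentiating the relation $F|_{x_{j'}=0}=\dots=\partial_{x_{j'}}^{k_{j'}}F|_{x_{j'}=0}=0$ with respect to $x_j$ (which commutes with setting $x_{j'}=0$, since these are distinct coordinates) gives $\partial_{x_j}^{k_j+1}F|_{x_{j'}=0}=\dots=0$, and replacing $x_j$ by $sx_j$ does not affect the hypothesis at $x_{j'}=0$. Hence the inductive hypothesis applies to $G_s$ with the smaller index set $J\setminus\{j\}$, giving $|G_s(x)|\le C'\prod_{j'\in J\setminus\{j\}}|x_{j'}|^{k_{j'}+1}$ with $C'$ uniform in $s\in[0,1]$ (the bound comes from the sup of $|\partial_{x_j}^{k_j+1}F|$ and its relevant mixed derivatives over the compact cube, which are finite by the differentiability assumptions). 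Plugging this into the integral representation and bounding $\int_0^1(1-s)^{k_j}\,ds=\frac{1}{k_j+1}$ yields $|F(x)|\le \frac{C'}{(k_j+1)!}|x_j|^{k_j+1}\prod_{j'\in J\setminus\{j\}}|x_{j'}|^{k_{j'}+1}$, which is the desired bound with $C=C'/(k_j+1)!$.

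The main technical point to be careful about — the only place where anything could go wrong — is the regularity bookkeeping: I must check that $G_s=\partial_{x_j}^{k_j+1}F(\dots,sx_j,\dots)$ is genuinely $k_{j'}$-times differentiable in $x_{j'}$ with continuous top derivative for each remaining $j'\in J$, so that the inductive hypothesis is legitimately applicable. This requires that the mixed partials $\partial_{x_{j'}}^{\ell}\partial_{x_j}^{k_j+1}F$ exist and are continuous for $0\le\ell\le k_{j'}$; since the lemma only assumes one-variable-at-a-time differentiability, one should phrase the induction so that at each stage one only ever needs iterated pure partial derivatives in distinct variables, and invoke (or note) that such iterated derivatives are well-defined and continuous under the stated hypotheses — in the application to the wave-curve estimates the functions are real-analytic, so this is automatic, but for the clean statement of the lemma one either strengthens the hypothesis slightly to ``$C^{k_j}$ jointly'' or remarks that the proof only uses the pure iterated derivatives $\partial_{x_{j_n}}^{k_{j_n}+1}\cdots\partial_{x_{j_1}}^{k_{j_1}+1}F$, all of which are continuous by repeated application of the differentiability assumption. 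I expect this to be the only place requiring genuine attention; the rest is the standard Taylor-with-remainder computation.
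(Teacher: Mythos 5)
Your proof is correct and follows essentially the same route as the paper's: induction on the cardinality of $J$, peeling off one variable at a time via the one-dimensional Taylor expansion made possible by the vanishing conditions at $x_j=0$. The only (minor) difference is that you apply the inductive hypothesis to the integrand $\partial_{x_j}^{k_j+1}F(x_1,\dots,sx_j,\dots,x_m)$ under the integral sign, whereas the paper applies it to the quotient $F/x_j^{k_j+1}$; your explicit flagging of the mixed-partial regularity needed to legitimize the recursion is a point the paper glosses over, and you are right that both arguments in fact use $k_j+1$ derivatives in $x_j$, one more than the statement literally assumes.
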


\begin{proof}
Given a vector $\underline{x}=(x_{1},\dots,x_{m})$, consider the projected vector $\tau_{j}(s)[\underline{x}]:=\underline{x}+(s-x_{j})\hat{\mathrm{e}}_{j}$ that has the same components of $\underline x$ but the $j$-th one which is set to be $s$.
By the smoothness of $F$, and since
\[
F\left(\tau_{j}(0)[\underline{x}]\right)=\ptl{x_{i}}{h} F\left(\tau_{j}(0)[\underline{x}]\right) =0
\qquad\text{for $h=1,\dots,k_{j}$}, \quad j\in J
\]
one has that for every $j\in J$ the function $F$ can be written as
\bas
 F(\underline{x})
 &=\int_{0}^{ x_{{j}}} \ptl{x_{j}}{} F\left(\tau_{j}(s_{1})[\underline{x}]\right) \, ds_{1}=\dots
 \\
 &=\int_{0}^{x_j}\int_0^{s_1}\int_0^{s_2}\dots\int_0^{s_{k_j}} \ptl{x_{j}}{k_{j}+1} F\left(\tau_{j}(s_{k_j+1})[\underline{x}]\right) ds_{k_{j}+1}\dots\, ds_{3}\,ds_{2} ds_{1}
 \\
 &=\ptl{x_{j}}{k_{j}+1} F\left(\tau_{j}(\hat s)[\underline{x}]\right) \cdot 
 \frac{x_{j}^{k_{j}+1}}{(k_{j}+1)!}
 \qquad \text{for some $\hat s\in(0,x_{j})$}
\eas
and therefore 
\[
\left| F(\underline{x}) \right|
\leq  \max_{[\![\tau_{j}(0)[\underline{x}],\underline{x}]\!]}\left| \ptl{x_{j}}{k_{j}} F  \right| \cdot \left|x_{j}\right|^{k_{j}+1}
\leq  \max_{[-M,M]^{m}}\left| \ptl{x_{j}}{k_{j}} F  \right| \cdot \left|x_{j}\right|^{k_{j}+1}
\qquad\text{for $j\in J$, $\underline{x}\in[-M,M]^{m}$.}
\]
Recursively, for the same reason for $j\in J$ the function
\[
F_{j}(\underline{x})=\frac{ F(\underline{x})}{x_{j}^{k_{j}+1}}
\]
is continuous and it satisfies the hypotheses of the theorem in each index $i\in J\setminus\{j\}$.
Applying the same argument as above we conclude that 
\[
\left| F_{j}(\underline{x}) \right|\leq  \max_{[-M,M]^{m}}\left| \ptl{x_{i}}{k_{i}} F_{j}  \right| \cdot \left|x_{i}\right|^{k_{i}+1}
\qquad\text{for $i\in J\setminus\{j\}$, $\underline{x}\in[-M,M]^{m}$}
\]
and therefore
\[
\left| F(\underline{x}) \right|\leq  \max_{[-M,M]^{m}}\left| \ptl{x_{i}}{k_{i}} F_{j}  \right| \cdot \left|x_{i}\right|^{k_{i}+1}\cdot \left|x_{j}\right|^{k_{j}+1}
\qquad\text{for $i,j\in J $, $i\neq j$, $\underline{x}\in[-M,M]^{m}$.}
\]
Repeating the argument recursively for the indices in $J\setminus\{i,j\}$, we deduce the result.
\end{proof}

\section*{Acknowledgement}
Fabio Ancona and Laura Caravenna are partially supported by the Gruppo
Nazionale per l’Analisi Matematica, la Probabilità e le loro Applicazioni (GNAMPA)
of the Istituto Nazionale di Alta Matematica (INdAM), and by the PRIN 2020 "Nonlinear
evolution PDEs, fluid dynamics and transport equations: theoretical foundations and
applications".  Christoforou was partially supported by the Internal grant SBLawsMechGeom \#21036 from University of Cyprus . 

%
%

\end{document}